\theoremstyle{plain} 
\newtheorem{theorem}[definition]{Theorem}
\newtheorem{lemma}[definition]{Lemma}
\newtheorem{corollary}[definition]{Corollary}
\newcommand{\Ker}{\mathrm{ker}}
\newcommand{\End}{\mathrm{End}}
\newcommand{\im}{\mathrm{im }}
\newcommand{\ann}{\mathrm{Ann}}
\newcommand{\Ann}{\mathrm{Ann}}
\newcommand{\ad}{\mathrm{ad}}
\renewcommand{\hom}{\mathrm{Hom}}
\newcommand{\Hom}{\mathrm{Hom}}
\newcommand{\rk}{\mathrm{rk }}
\newcommand{\ch}{\mathrm{ch}}
\newcommand{\res}{\mathrm{Res}}
\newcommand{\ind}{\mathrm{Ind}}
\newcommand{\Ind}{\mathrm{Ind}}
\newcommand{\GL}{{GL}}
\newcommand{\id}{\mathrm{id}}
\renewcommand{\sl}{\mathrm{sl}}
\newcommand{\gl}{\mathrm{gl}}
\renewcommand{\sp}{\mathrm{sp}}
\newcommand{\so}{\mathrm{so}}
\newcommand{\sign}{\mathrm{sign}}
\renewcommand{\span}{\mathrm{span}}
\newcommand {\ZZ} {\mathbb {Z}}
\newcommand {\CC} {\mathbb {C}}
\newcommand {\EE} {\mathbf {E}}
\newcommand {\PP} {\mathbb {P}}
\newcommand {\HH} {\mathcal {H}}
\newcommand {\NN} {\mathbb {N}}
\renewcommand{\ss}{\mathfrak{s}}
\newcommand{\kk}{\mathfrak{k}}
\renewcommand{\aa}{\mathfrak{a}}
\newcommand{\hh}{\mathfrak{h}}
\renewcommand{\gg}{\mathfrak{g}}
\newcommand{\bb}{\mathfrak{b}}
\newcommand{\pp}{\mathfrak{p}}
\newcommand {\BB} {\mathfrak {B}}
\newcommand {\DD} {\mathscr {D}}
\newcommand {\OO} {\mathscr {O}}
\newcommand {\MM} {\mathscr {M}}
\renewcommand{\phi}{\varphi}
\newcommand{\eps}{\epsilon}
\newcommand{\del}{\partial}
\renewcommand{\mod}{\mathrm{mod~}}
\newcommand{\doublebrace}[4]{\left\{\begin{array}{ll}#1 & #2 \\ #3 & #4\end{array}\right.}
\newcommand{\triplebrace}[6]{\left\{\begin{array}{ll}#1 & #2 \\ #3 & #4\\ #5 & #6\end{array}\right.}
\newcommand{\half}{\frac{1}{2}}
\newcommand{\threehalfs}{\frac{3}{2}}
\newcommand {\eqdef} {:=}
\newcommand {\eqdefrev} {=:}
\newcommand{\refle}[1]{Lemma \ref{#1}}
\newcommand{\refth}[1]{Theorem \ref{#1}}
\newcommand{\refcor}[1]{Corollary \ref{#1}}
\newcommand{\refse}[1]{Section~ \ref{#1}}
\newcommand{\refsec}[1]{Section~ \ref{#1}}
\newcommand{\refeq}[1]{(\ref{#1})}
\newcommand {\univ} {U}
\newcommand {\gkdim} {\mathrm{GKdim }}
\newcommand {\gkm} {$(\gg,\kk)$-module}
\newcommand{\Fou}{\mathrm{Fou}}
\begin{document}

\bibliographystyle{amsplain}

\ifx\useHugeSize\undefined
\else
\Huge
\fi

\relax

\title{ Bounded Generalized Harish-Chandra Modules}

\author{Ivan Penkov}
\address{
Jacobs University Bremen \\
School of Engineering and Science,
Campus Ring 1,
28759 Bremen, Germany}
\email{i.penkov@jacobs-university.de}

\author{Vera Serganova}
\address{Department of Mathematics, University of California Berkeley, Berkeley
  CA 94720, USA}
\email{serganov@math.berkeley.edu}

\subjclass[2000]{Primary 17B10, Secondary 22E46}
\begin{abstract}
Let $\gg$ be a complex reductive Lie algebra and $\kk\subset\gg$ be any
reductive in $\gg$ subalgebra. We call a $(\gg,\kk)$-module $M$
bounded if the $\kk$-multiplicities of $M$ are uniformly bounded. In
this paper we initiate a general study of simple bounded
$(\gg,\kk)$-modules. We prove a strong necessary condition for a subalgebra
$\kk$ to be bounded (Corollary \ref{cor1.6}), i.e. to admit an 
infinite-dimensional simple
bounded $(\gg,\kk)$-module, and then establish a sufficient condition
for a subalgebra $\kk$ to be bounded (Theorem \ref{thGroups2}). As a
result we are able to classify all maximal bounded reductive
subalgebras of $\gg=\sl(n)$. 

In the second half of the paper we describe in detail simple
bounded infinite-dimensional $(\gg,\sl(2))$-modules, and in particular
compute their characters and minimal $\sl(2)$-types. We show that if
$\sl(2)$ is a bounded subalgebra of $\gg$ which is not contained in a
proper ideal of $\gg$, then $\gg\simeq \sl(2)\oplus \sl(2),
\sl(3),\sp(4)$; alltogether, up to conjugation there  are five
possible embeddings of $\sl(2)$ as a bounded subalgebra into $\gg$ as
above. In two of these cases $\sl(2)$ is a symmetric subalgebra, and
many results about simple bounded $(\gg,\sl(2))$-modules are known. A
case where our results are entirely new is the case of a principal
$\sl(2)$-subalgebra in $\sp(4)$.
\end{abstract}
\maketitle
\section{Introduction}

In recent years several constructions of generalized Harish-Chandra modules have been given, \cite{PS1}, \cite{PSZ}, \cite{PZ1}, \cite{PZ2}, \cite{PZ3}, and a classification of such modules with generic minimal $\kk$-type has emerged, \cite {PZ2}. Recall that if $\gg$ is a finite dimensional Lie algebra and $\kk\subset\gg$ is a reductive in $\gg$ subalgebra, a $\gg$-module $M$ is a\emph{ \gkm~ of finite type} if as a $\kk$-module $M$ is isomorphic to a direct sum of simple finite dimensional $\kk$-modules with finite multiplicities. In the present paper we study $(\gg,\kk)$-modules with bounded $\kk$-multiplicities, or as we call them, \emph{bounded generalized Harish-Chandra modules}.

There are two important cases of generalized Harish-Chandra modules on
which there is extensive literature: the case when $\kk$ is a
symmetric subalgebra (Harish-Chandra modules) and the case when $\hh$
is a Cartan subalgebra (weight modules). In the latter case there is a
complete description of simple bounded modules, \cite{M}. In the former case several constructions of simple bounded modules are known, but there is still no complete description of all such modules in the literature, see the discussion in \refsec{secOnBounded} below.

Our main interest in this paper is the case when $\kk$ is neither a symmetric nor a Cartan subalgebra, and our first main result is that, if there exists an infinite dimensional simple bounded \gkm, then $r_\gg\leq b_\kk$, where $b_\kk$ is the dimension of a Borel subalgebra of $\kk$ and $r_\gg$ is the half-dimension of a nilpotent orbit of minimal positive dimension in the adjoint representation of $\gg$. This limits severely the possibilities for $\kk$. Our second main result is an explicit geometric construction of simple bounded generalized Harish-Chandra modules, which in particular gives a sufficient condition for a subalgebra $\kk\subset\gg$ with $r_\gg\leq b_\kk$ to be bounded.

As an application we clasify all bounded reductive maximal subalgebras $\kk$ in $\gg=\sl(n)$ and give examples of non-maximal reductive bounded subalgebras of $\sl(n)$. We also classify the reductive bounded subalgebras of all semisimple Lie algebras of rank 2.

The second part of the paper is devoted to a detailed analysis of the
case when $\kk\subset \gg$ is an $\sl(2)$-subalgebra not
contained in a proper ideal fo $\gg$. Here $\gg$ must have rank 2 and,
up to conjugation, there are 5 possibilities for embeddings of
$\sl(2)$ which yield bounded subalgebras: $\sl(2)$ as a
 diagonal subalgebra of $\sl(2)\oplus\sl(2)$, $\sl(2)$ as a root subalgebra or a principal $\sl(2)$ subalgebra of $\sl(3)$, and $\sl(2)$ as a root subalgebra corresponding to a short root or as a principal subalgebra of $\sp(4)$. We give an explicit description of all simple bounded $(\gg,\kk)$-modules in each of the above cases: in some of them the results are known, in some they are new. The most interesting new case is the case of a principal $\sl(2)$-subalgebra of $\gg=\sp(4)$.

\textbf{Acknowledgement.} This paper has been written in close contact
with Gregg Zuckerman who has supported us on several occasions with
valuable advice. David Vogan, Jr. has also generously shared his
knowledge of Harish-Chandra modules with us, and A. Joseph and
D. Panyushev have pointed out useful references. We thank T. Milev for
reading the manuscript carefully and checking some of the
calculations. Finally, we acknowledge the hospitality and support of the Max-Planck Institute for Mathematics in Bonn.

\section{Notation}

All vector spaces, Lie algebras and algebraic groups are defined over $\CC$. The sign $\otimes$ stands for $\otimes_\CC$. $S_n$ is the symmetric group of order $n$, and $S^\cdot(\cdot)$ and $\Lambda^\cdot(\cdot)$ denote respectively symmetric and exterior algebra. By $\gg$ we denote a finite dimensional Lie algebra, subject to further conditions; $\univ=\univ(\gg)$ denotes the enveloping algebra of $\gg$, and $Z_\univ$ stands for the center of $\univ$. The filtration $(\CC=\univ(\gg)_0)\subset \univ(\gg)_1\subset\univ(\gg)_2\subset\dots$ is the standard filtration on $\univ =\univ(\gg)$. If $M$ is a $\gg$-module, then
\[
\gg [M] \eqdef \left\{ g \in \gg | \dim \span \{ m, g \cdot m, g^2 \cdot m, \dots \} <\infty \right\}.
\]
It is proven by V. Kac, \cite{K2}, and by S. Fernando \cite{F} that $\gg[M]$ is a Lie subalgebra of $\gg$. We call $\gg[M]$ the Fernando-Kac subalgebra of $M$. If $M'\subset M$ is any subspace of a $\gg$-module $M$, by $\ann M'$ we denote the annihilator of $M'$ in $U(\gg)$. If $\kk$ is a Lie subalgebra of $\gg$, we put $M^\kk \eqdef \left\{m\in M | g\cdot m=0\quad \forall g\in\kk \right\}$. 

If $\sigma$ is an automorphism of $\gg$ and $M$ is a $\gg$-module, $M^\sigma$ stands for the $\gg$-module twisted by $\sigma$. If $\gg$ is a reductive Lie algebra, $(~,~)$ stands for any non-degenerate invariant form on $\gg^*$.

If $X$ is an algebraic variety, $\OO_X$ is the sheaf of regular functions on $X$, $\mathcal T_X$ is the tangent and cotangent bundle on $X$, $\Omega_X$ is the bundle of forms of maximal degree on $X$, and $\DD_X$ denotes the sheaf of linear differential operators on $X$ with coefficients in $\OO_X$.

\section{Preliminary Results}

\begin {lemma}\label{le0}
Let $\{V_i\}$ be a family of vector spaces whose dimension is bounded by a positive integer $C$, and let $R$ be any associative subalgebra of $\prod_i \End V_i$. Then any simple $R$-module has dimension less than or equal to $C$.
\end{lemma}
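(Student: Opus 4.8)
The plan is to reduce to a statement about a single endomorphism algebra acting on a bounded-dimensional space. Let $S$ be a simple $R$-module, and let $0\neq s\in S$, so that $S=Rs$ by simplicity. First I would exploit the product structure: the composition $R\hookrightarrow\prod_i\End V_i\to\End V_j$ gives, for each index $j$, an algebra homomorphism $\rho_j\colon R\to\End V_j$, and by definition of the subalgebra structure the map $r\mapsto (\rho_j(r))_j$ is injective, i.e. $\bigcap_j\Ker\rho_j=0$.

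Next I would produce an index $j$ for which $\rho_j$ ``sees'' all of $S$. Consider $\ann S\subset R$ (the two-sided ideal of elements acting as $0$ on $S$); since $S$ is simple and nonzero, $R/\ann S$ embeds in $\End S$ and acts faithfully and irreducibly there, so it suffices to bound $\dim S$ in terms of this quotient. The key point is that $\ann S=\bigcap_j(\Ker\rho_j+\ann S)$ cannot equal $\bigcap_j\Ker\rho_j=0$ unless $\ann S=0$; more usefully, I would argue that there exists $j$ with $\Ker\rho_j\subseteq\ann S$ — equivalently $\rho_j(\Ker\rho_j\cdot\text{nothing})$ — no: rather, since the $\Ker\rho_j$ are two-sided ideals whose intersection is $0\subseteq\ann S$, and since $S$ is simple, the annihilator $\ann S$ is a \emph{primitive} ideal, hence in particular prime; a prime ideal containing an intersection $\bigcap_j\Ker\rho_j$ of ideals need not contain one of them when the family is infinite, so this naive route fails. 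Instead I would pick $0\neq s\in S$ and choose $j$ so that $\rho_j$ is nonzero on the cyclic module: concretely, pick $r_0\in R$ with $r_0 s\neq 0$ is automatic ($1\cdot s = s$), and observe that the map $R\to S$, $r\mapsto rs$ is surjective with kernel $I_s:=\{r: rs=0\}$, a left ideal; so $S\cong R/I_s$ as left $R$-modules and $\dim S=\dim R/I_s$. Now $I_s\not\supseteq\bigcap_j\Ker\rho_j=0$ tells us nothing, but $1\notin I_s$ does: there is $j$ with $\rho_j(1)=\id_{V_j}\neq 0$, so $\Ker\rho_j\neq R$; I must arrange $\Ker\rho_j\subseteq I_s$.

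The clean way is the following. Regard $S$ itself via the structure map: for each $j$, $V_j$ is an $R$-module through $\rho_j$, and the product embedding says $R\hookrightarrow\prod_j\End_{} V_j = \prod_j\End_R(V_j\text{ as }R\text{-mod is automatic})$. I would now use that a simple module over an algebra that embeds in a product of small matrix algebras must be a subquotient of one of the factors: more precisely, since $\bigcap_j\ann_R V_j = \bigcap_j\Ker\rho_j = 0\subseteq \ann_R S$ and $\ann_R S$ is primitive hence prime, and a prime ideal containing $\bigcap_{j\in J} P_j$ for two-sided ideals $P_j$ — here I would invoke that for a \emph{simple} (not merely prime) quotient this does force containment in some $P_j$ only in the Noetherian case, so to be safe I argue directly: pick $s\neq 0$, pick $r\in R$; if $rs=0$ for all witnesses this is fine, otherwise for each $j$ either $\rho_j$ kills the relevant data or not. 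This is getting circular, so the honest main step — and the main obstacle — is exactly this: \textbf{showing that a simple module over $R$ is ``detected'' on a single $V_j$.} I expect the resolution to be: take $s\neq0$ in $S$; since $R s = S$ and $1\in R$, there is $j$ with $\rho_j$ not annihilating $s$ — but $s\in S$, not $V_j$, so instead reverse roles: the faithful embedding means that for $r\neq 0$ in $R$ there is $j(r)$ with $\rho_{j(r)}(r)\neq 0$. Apply this to $r$ ranging over a basis of $R/I_s$ (finite, since $\dim S<\infty$ would be circular) — so instead: $\ann_R S$ is a two-sided ideal with $\ann_R S\supseteq$ nothing special. Ultimately the correct argument, which I would write cleanly, is: let $s\in S\setminus\{0\}$; the left ideal $I_s$ is proper, so $I_s+\ann_R S$ is a proper two-sided... no, $I_s$ is only a left ideal. \emph{Direct finish:} Since $\bigcap_j \Ker\rho_j = 0$, there is $j$ with $\Ker\rho_j\subseteq \ann_R S$ is what I want but can't get for infinite families; however I \emph{can} get: the $R$-module $S$ is a quotient of the $R$-module $R$, and $R\hookrightarrow\prod V_j$ as $R$-modules, so $R$ is a submodule of $\prod V_j$; hence $R$ has a composition series (as far as it goes) with factors among the simple subquotients of the $V_j$'s. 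Since $S$ is simple and a quotient of $R$, $S$ is isomorphic to a subquotient of some $V_j$ — here I use that $S$ is finitely generated (by $s$) so $Rs$, being cyclic, has a maximal submodule, and $S=Rs/(\text{that})$ appears as a subquotient of $\prod V_j$, and a simple subquotient of a product, being finitely generated, is a subquotient of a \emph{finite} sub-product, hence of some single $V_j$ after projecting. Therefore $\dim S\le \dim V_j\le C$, which is the claim. $\hfill\square$

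(I expect the write-up to streamline the last paragraph: the essential content is (i) the embedding $R\hookrightarrow\prod_j\End V_j$ makes each $V_j$ an $R$-module and makes $R$ itself, as a left $R$-module, embed into $\prod_j V_j^{\oplus\dim V_j}$; (ii) a cyclic — a fortiori simple — $R$-module is a subquotient of $R$, hence of $\prod_j V_j^{\oplus\dim V_j}$; (iii) a simple, hence cyclic, hence finitely generated submodule-quotient of an arbitrary product of modules of dimension $\le C$ is itself of dimension $\le C$, because finitely many coordinates suffice and projection to one surviving coordinate is injective on a simple module. The one genuine subtlety, which I flag as the main obstacle, is step (iii) for an \emph{infinite} product, where one must not assume the product is a directed union of its sub-products as a module — but a finitely generated submodule of a product does land in a finite sub-product, which suffices.)
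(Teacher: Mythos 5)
Your route is genuinely different from the paper's and has a gap that I do not see how to close. The fatal step is the final assertion that ``a finitely generated submodule of a product does land in a finite sub-product.'' This is false: take $R=\CC[t]$ and $V_j=\CC$ for $j\in\ZZ$, with $t$ acting on $V_j$ as the scalar $j$; then the map $R\to\prod_j\End V_j=\prod_j\CC$, $p\mapsto(p(j))_j$, is injective, and $R$ itself, viewed as the cyclic left $R$-submodule of $\prod_j V_j$ generated by $(1,1,1,\dots)$, is infinite-dimensional and lies in no finite sub-product. Its simple quotients $\CC[t]/(t-\lambda)$ for $\lambda\notin\ZZ$ are not subquotients of any $V_j$. (These particular quotients are still $1$-dimensional, so this example does not refute the \emph{conclusion} of the lemma --- but it does refute the mechanism you invoke, namely reduction of $S$ to a subquotient of a single $V_j$.) You correctly observed earlier in the proposal that for an infinite family the primitive ideal $\ann_R S$ need not contain any single $\Ker\rho_j$; the ``finite sub-product'' move was an attempt to sidestep that obstruction, and it fails for essentially the same reason.

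The paper avoids the whole difficulty by going through polynomial identities rather than trying to locate $S$ in a single factor. Since $\dim V_i\leq C$, the Amitsur--Levitzki theorem gives that the standard polynomial $s_{2C}(x_1,\dots,x_{2C})=\sum_{\sigma\in S_{2C}}\sign(\sigma)\,x_{\sigma(1)}\cdots x_{\sigma(2C)}$ vanishes identically on each $\End V_i$, hence on $\prod_i\End V_i$, hence on the subalgebra $R$. If a simple $R$-module $W$ had $\dim W\geq C+1$, fix a $(C+1)$-dimensional subspace $W'\subset W$; by the Chevalley--Jacobson density theorem one can realize arbitrary $y_1,\dots,y_{2C}\in\End(W')$ by elements of $R$ acting the same way on $W'$, and since $M_{C+1}(\CC)$ does \emph{not} satisfy $s_{2C}$ one can choose the $y_i$ with $s_{2C}(y_1,\dots,y_{2C})\neq 0$, contradicting the identity on $R$. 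The structural point your proposal misses is that a polynomial identity is a global constraint that automatically passes to every quotient and every module of $R$, whereas ``$S$ is a subquotient of some $V_j$'' is a local statement that is simply not true for infinite families.
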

\begin {proof}
The Amitsur - Levitzki Theorem, \cite{AL}, yields the equality
\[
\sum_{s\in S_{2C}}\sign (s)x_{s(1)}\dots x_{s(2C)}=0
\]
for any $x_1,\dots, x_{2C}\in R$. Let $W$ be a simple $R$-module. Assume $\dim W \geq C+1$, fix a subspace $W'\subset W$ with $\dim W'=C+1$, and choose $y_1,\dots, y_{2C}\in \End (W')$, such that $\sum_{s\in S_{2C}}\sign (s)y_{s(1)}\dots y_{s(2C)}\neq 0$. By the Chevalley-Jacobson density theorem, \cite{FA}, there exist $x_1,\dots, x_{2C}\in R$ such that
\[
x_i\cdot w = y_i(w)
\]
for all $i$ and any $w\in W'$. Hence
\[
\sum_{s\in S_{2C}}\sign(s) y_{s(1)}\dots y_{s(2C)}=0.
\]
Contradiction.
\end{proof}

\begin{lemma}
\label{le1}\myLabel{lm4}\relax  Let $ \kk $ be a semisimple Lie algebra and $ C $ be a positive
integer. There are finitely many non-isomorphic finite dimensional
$ \kk $-modules of dimension less or equal than $ C $.
\end{lemma}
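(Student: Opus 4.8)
The plan is to reduce to simple modules and then exploit the growth of their dimensions. First I would invoke Weyl's complete reducibility theorem: since $\kk$ is semisimple, every finite dimensional $\kk$-module decomposes as a direct sum of simple submodules. A module of dimension at most $C$ is therefore a direct sum of at most $C$ simple $\kk$-modules, each of dimension at most $C$. Hence it suffices to prove that, up to isomorphism, there are only finitely many \emph{simple} finite dimensional $\kk$-modules of dimension $\leq C$; the number of isomorphism classes of arbitrary $\kk$-modules of dimension $\leq C$ is then bounded by the number of multisets of size $\leq C$ drawn from this finite list.

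Next I would parametrize the simple modules by highest weights. Fix a Cartan subalgebra $\hh\subset\kk$, a system of positive roots with simple roots $\alpha_1,\dots,\alpha_\ell$, the fundamental weights $\omega_1,\dots,\omega_\ell$, and the half-sum of positive roots $\rho$. Every simple finite dimensional $\kk$-module is isomorphic to the module $L(\lambda)$ of highest weight $\lambda=\sum_i a_i\omega_i$ for a unique tuple $(a_1,\dots,a_\ell)$ of non-negative integers. By the Weyl dimension formula
\[
\dim L(\lambda)=\prod_{\alpha>0}\frac{(\lambda+\rho,\alpha)}{(\rho,\alpha)},
\]
and since $(\lambda+\rho,\alpha)\geq(\rho,\alpha)>0$ for every positive root $\alpha$ whenever $\lambda$ is dominant, every factor is $\geq 1$; moreover the factor attached to a simple root $\alpha_i$ equals $a_i+1$. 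Therefore $\dim L(\lambda)\geq\prod_i(a_i+1)$.

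Finally, $\dim L(\lambda)\leq C$ forces $a_i+1\leq C$, i.e. $0\leq a_i\leq C-1$, for each $i$, so at most $C^{\ell}$ dominant weights $\lambda$ satisfy $\dim L(\lambda)\leq C$. Combined with the first paragraph, this gives the finiteness asserted by the lemma.

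The main point to notice is that there is no real obstacle here: the argument uses only complete reducibility and the fact that $\dim L(\lambda)\to\infty$ as the highest weight $\lambda$ grows. In fact one can avoid the Weyl dimension formula altogether — the $\alpha_i$-root string through a highest weight vector of $L(\lambda)$ already has length $a_i+1$, so $\dim L(\lambda)\geq a_i+1$ for every $i$, which bounds each $a_i$ by $C-1$ and suffices for the same conclusion.
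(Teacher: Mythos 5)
Your proof is correct and follows essentially the same route as the paper: both reduce the problem to bounding the set of dominant weights with $\dim L(\lambda)\leq C$ via the Weyl dimension formula. Your version is a bit tighter in two respects — you make the reduction to simple modules via complete reducibility explicit, and by singling out the factors attached to the simple roots you get the clean bound $a_i\leq C-1$ (and hence at most $C^{\ell}$ dominant weights); the closing remark that the root-string argument already gives $\dim L(\lambda)\geq a_i+1$ without invoking the dimension formula is a genuinely more elementary observation, though not needed.
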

\begin{proof}
Let $ M_{\mu} $ be a simple finite dimensional $ \kk $-module with highest weight $\mu$ with respect to a fixed Borel subalgebra $\bb_\kk\subset \kk$. Recall
that
\begin{equation}
\dim  M_{\mu}=\Pi_{\alpha\in\Delta_{+}}\frac{\left(\mu+\rho,\alpha\right)}{\left(\alpha,\rho\right)},
\notag\end{equation}
where $ \Delta_{+} $ is the set of roots of $\bb_\kk$ and $\rho:=\half\sum_{\alpha\in\Delta_+}\alpha$. If $\displaystyle \frac{\left(\mu+\rho,\alpha\right)}{\left(\alpha,\rho\right)}>C $ at least
for one $ \alpha $, then $ \dim  M_{\mu}>C $. But the number of all weights $ \mu $ such that $\displaystyle \frac{\left(\mu+\rho,\alpha\right)}{\left(\alpha,\rho\right)}<C $
for all $ \alpha\in\Delta_{+} $ is finite. Hence the number of modules $ M_{\mu} $ of dimension less or equal
than $ C $ is finite. Therefore the number of all finite dimensional $ \kk $-modules
with dimension less or equal than $ C $ is finite.
\end{proof}

In what follows, $\kk \subset \gg$ will denote a reductive in $\gg$ subalgebra. By definition, the latter means that $\gg$ is a semisimple $\kk$-module. For the purpose of this paper, we call a $\gg$-module $M$ a \emph{$(\gg,\kk)$-module} if $\kk\subset \gg[M]$ and $M$ is a semisimple $\kk$-module. For any $(\gg,\kk)$-module $M$,
\[
M=\bigoplus_{r\in R_{\kk}}V^r\otimes M^r,
\]
where $R_{\kk}$ is the set of isomorphism classes of simple finite
dimensional $\kk$-modules, $V^r$ denotes a representative of $r\in
R_\kk$, and $M^r\eqdef \hom_{\kk}(V^r,M)$. In addition, each $M^r$ has
a natural structure of a $\univ(\gg)^{\kk}$ - module. The following is
a well known statement, \cite{Dix} 
[Prop. 9.1.6], whose proof we present for the convenience of the reader.

\begin{lemma} \label{le2}
If $M$ is a simple $(\gg,\kk)$-module, then $M^r$ is a simple $\univ(\gg)^{\kk}$ - module for each $r$.
\end{lemma}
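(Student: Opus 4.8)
The plan is to exploit the decomposition $M=\bigoplus_{r\in R_\kk}V^r\otimes M^r$ together with the fact that $M^r=\Hom_\kk(V^r,M)$ carries a natural $\univ(\gg)^\kk$-module structure, and to deduce simplicity of $M^r$ from simplicity of $M$ as a $\gg$-module. First I would fix $r\in R_\kk$ with $M^r\neq 0$ and let $0\neq N\subset M^r$ be a nonzero $\univ(\gg)^\kk$-submodule; the goal is to show $N=M^r$. The natural object to consider is the $\gg$-submodule $\univ(\gg)\cdot(V^r\otimes N)\subset M$ generated by $V^r\otimes N$. Since $M$ is simple, this submodule is all of $M$.

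The key step is then to recover $N$ from $\univ(\gg)\cdot(V^r\otimes N)=M$ by applying a suitable projection. Concretely, I would use the isotypic projector $p_r:M\to V^r\otimes M^r$ (projection onto the $r$-isotypic component, which is $\kk$-equivariant hence commutes with the $\kk$-action), and more precisely identify $M^r$ with $\Hom_\kk(V^r,M)$ and apply $\Hom_\kk(V^r,-)$ to the equality $\univ(\gg)\cdot(V^r\otimes N)=M$. The point is that $\Hom_\kk(V^r,\univ(\gg)\cdot(V^r\otimes N))$ is contained in $\univ(\gg)^\kk\cdot N$: any element of $\univ(\gg)$ can be decomposed according to the $\kk$-module structure of $\univ(\gg)$ (which is semisimple since $\kk$ is reductive in $\gg$), and only the components of $\univ(\gg)$ that map $V^r$-isotypic vectors back into the $V^r$-isotypic component contribute to $M^r$; those components, when restricted appropriately, act through $\univ(\gg)^\kk$ on $\Hom_\kk(V^r,M)=M^r$. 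Hence $M^r=\Hom_\kk(V^r,M)\subseteq \univ(\gg)^\kk\cdot N\subseteq N$, giving $N=M^r$ and simplicity.

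Slightly more carefully, for the inclusion $\Hom_\kk(V^r,M)\subseteq \univ(\gg)^\kk\cdot N$ I would argue as follows: write $\univ(\gg)=\bigoplus_s \univ(\gg)_{(s)}$ as a $\kk$-module (under the adjoint action), decompose a given $u\in\univ(\gg)$ accordingly, and observe that for $v\otimes n\in V^r\otimes N$, the $r$-isotypic component of $u\cdot(v\otimes n)$ only involves those pieces of $u$ lying in the $r^*\otimes r$-type part; averaging over $\kk$ (using semisimplicity to project onto $\kk$-invariants in $\Hom_\kk(V^r,V^r\otimes\univ(\gg))$) one checks these pieces act on $M^r$ through elements of $\univ(\gg)^\kk$. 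Summing over all such $u$ and all $v\otimes n$, and using that $\univ(\gg)\cdot(V^r\otimes N)=M$ already exhausts the $r$-isotypic component $V^r\otimes M^r$, yields $M^r\subseteq\univ(\gg)^\kk\cdot N$.

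The main obstacle is making the bookkeeping of the last paragraph precise: one must be careful that the "$\univ(\gg)^\kk$-action recovered from the projected $\gg$-action" genuinely coincides with the a priori $\univ(\gg)^\kk$-module structure on $M^r=\Hom_\kk(V^r,M)$, and that the projection of $u\cdot(v\otimes n)$ onto the $r$-isotypic component can indeed be realized by a single element (or finite sum of elements) of $\univ(\gg)^\kk$ applied to $n$. This is a standard multiplicity-space / commuting-algebra argument (it is essentially the statement that $\Hom_\kk(V^r,-)$ sets up a bijection between $\gg$-submodules of $M$ and $\univ(\gg)^\kk$-submodules of $M^r$ on the $r$-isotypic level, analogous to double-centralizer / Clifford theory), so once the projector formalism is set up correctly the argument closes; I expect no serious difficulty beyond this verification.
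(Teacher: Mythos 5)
Your proposal is in the right spirit — reduce to multiplicity spaces, use $\kk$-semisimplicity of $\univ(\gg)$ under the adjoint action — but the crucial inclusion
\[
\Hom_\kk\bigl(V^r,\,\univ(\gg)\cdot(V^r\otimes N)\bigr)\;\subseteq\;\univ(\gg)^\kk\cdot N
\]
is asserted, not proved, and this is precisely where the work lies. The issue is that elements $u\in\univ(\gg)$ lying in a \emph{non-trivial} $\kk$-isotypic component $V^s\subset\univ(\gg)$ genuinely can contribute to the $V^r$-isotypic piece of $u\cdot(v\otimes n)$: this happens whenever $V^s$ occurs as a $\kk$-constituent of $\End(V^r)$, which is the generic situation. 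Your ``averaging'' produces a $\kk$-equivariant map $V^r\to M$ of the schematic form $v\mapsto\sum_j c_j(v)\,u_j\cdot(v_j\otimes n)$ coming from a $\kk$-invariant tensor in $V^s\otimes\End(V^r)$; there is no a priori reason this map coincides with $v\mapsto v\otimes(a\cdot n)$ for some $a\in\univ(\gg)^\kk$. Indeed, the assertion that the commutant $\univ(\gg)^\kk$ exhausts the action on the multiplicity space coming from all of $\univ(\gg)\otimes\End(V^r)$ is equivalent to the lemma you are trying to prove, so the argument as written is circular.

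The paper closes this gap with the Jacobson density theorem, and that step is the essential content you are missing. Fix $0\neq w, w'\in M^r$. By density there is a single $x\in\univ(\gg)$ with $x\cdot(v\otimes w)=v\otimes w'$ for all $v\in V^r$. From this one computes $[t,x]\cdot(v\otimes w)=0$ for $t\in\kk$, i.e.\ $[\kk,x]\subset\ann(V^r\otimes w)$. Since $\ann(V^r\otimes w)$ is ad-$\kk$-stable and $\univ(\gg)$ is $\kk$-semisimple, the non-invariant part of $x$ lies in $\ann(V^r\otimes w)$, so $x=y+z$ with $y\in\univ(\gg)^\kk$ and $z\in\ann(V^r\otimes w)$; hence $y\cdot w=w'$. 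Density is what lets one pin down a \emph{concrete} element of $\univ(\gg)$ realizing the desired transition on the $r$-isotypic component, after which semisimplicity allows replacing it by its $\kk$-invariant part modulo the annihilator. Without a step of this kind your proof does not go through.
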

\begin {proof}
Let $0\neq w$, $w'\in M^r$. By the density theorem (\cite{FA}), for any $v\in V^r$ there exists $x\in\univ(\gg)$ such that $x\cdot (v\otimes w)=v\otimes w'$. If $t\in\kk$, then $xt\cdot(v\otimes w)=t\cdot v\otimes w'=tx\cdot(v\otimes w)$, hence $[\kk,x]\subset \ann(V^r\otimes w)$. Since $\ann(V^r\otimes w)$ is $\kk$-invariant under the adjoint action, and since $\univ(\gg)$ is a semisimple $\kk$ -module, we can write $x=y+z$ with $z\in \ann(V^r\otimes w)$ and $y\in \univ(\gg)^\kk$. Therefore $y\cdot w=w'$, i.e. $M^r$ is a simple $\univ(\gg)^\kk$ - module.
\end{proof}

\begin {lemma}\label {le3}
Let $M$ be a \gkm~ with $M_r\neq 0$ for finitely many $r\in R_\kk$.
\begin{itemize}
\item[(a)] Then $\gg[M]+\gg^\kk=\gg$.
\item[(b)] If in addition $\gg$ is simple and $M$ is finitely generated, then $M$ is finite dimensional.
\end{itemize}
\end{lemma}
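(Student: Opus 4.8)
The plan is to derive both statements from the single assertion that $\gg[M]$ contains the whole ``non-trivial part'' of $\gg$. Using that $\kk$ is reductive in $\gg$, write the $\kk$-isotypic decomposition $\gg=\gg^\kk\oplus\gg'$, where $\gg'$ is the sum of all non-trivial simple $\kk$-submodules of $\gg$; then (a) is exactly the inclusion $\gg'\subseteq\gg[M]$. First I would record that $\gg[M]$ is a Lie subalgebra of $\gg$ (\cite{K2}, \cite{F}) and that $\kk\subseteq\gg[M]$ by the definition of a $(\gg,\kk)$-module; hence $\gg[M]$ is $\ad\kk$-stable and, $\kk$ being reductive in $\gg$, a $\kk$-submodule of $\gg$. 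So it will suffice to place inside $\gg[M]$ a set of $\kk$-module generators of $\gg'$.

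For this, fix a Cartan subalgebra $\hh_\kk$ of $\kk$. Since $M$ is a semisimple $\kk$-module with only finitely many isotypic components, $\hh_\kk$ acts semisimply on $M$ and there is a finite set $\Xi$ of $\hh_\kk$-weights occurring in $M$. The key computation is this: if $g\in\gg'$ is an $\hh_\kk$-weight vector of weight $\nu\ne 0$ and $m\in M$ is an $\hh_\kk$-weight vector of weight $\xi$, then $[h,g^n]=n\,\nu(h)\,g^n$ for $h\in\hh_\kk$, so $g^nm$ is an $\hh_\kk$-weight vector of weight $\xi+n\nu$; as $\nu\ne 0$ these weights are pairwise distinct, so $g^nm=0$ whenever $\xi+n\nu\notin\Xi$, and since $\Xi$ is finite we get an $N$, independent of $m$, with $g^NM=0$; in particular $g\in\gg[M]$. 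Next I would observe that each non-trivial $\kk$-isotypic component of $\gg$ is generated over $\univ(\kk)$ by its $\hh_\kk$-weight vectors of nonzero weight — because the highest weight of a non-trivial simple $\kk$-module is nonzero, so a highest weight vector has nonzero $\hh_\kk$-weight and already generates the component. Since $\gg[M]$ is a $\kk$-submodule containing all such vectors, it contains $\gg'$, which proves (a).

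For (b): by (a) we have $\gg'\subseteq\gg[M]$, hence the Lie subalgebra $\langle\gg'\rangle$ generated by $\gg'$ is contained in $\gg[M]$. Now $\langle\gg'\rangle$ is an ideal of $\gg$: indeed $[\gg',\langle\gg'\rangle]\subseteq\langle\gg'\rangle$ trivially, while for $z\in\gg^\kk$ the map $\ad z$ is a derivation of $\gg$ preserving $\gg'$ (it commutes with $\ad\kk$), hence it preserves $\langle\gg'\rangle$; since $\gg=\gg^\kk+\gg'$ this gives $[\gg,\langle\gg'\rangle]\subseteq\langle\gg'\rangle$. As $\gg$ is simple and $\kk\ne\{0\}$ (so that $\gg'\ne 0$), it follows that $\langle\gg'\rangle=\gg$, i.e.\ $\gg[M]=\gg$ and $M$ is a locally finite $\gg$-module. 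I would finish with the standard fact that a finitely generated locally finite module over a finite-dimensional Lie algebra is finite-dimensional: fixing a basis $g_1,\dots,g_n$ of $\gg$, a descending induction using the PBW theorem shows that $\span\{g_1^{a_1}\cdots g_n^{a_n}v\}$ lies in a fixed finite-dimensional subspace for each $v\in M$, so each of the finitely many generators of $M$ generates a finite-dimensional submodule.

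The one genuinely non-formal point, and where I expect the real work to be, is the treatment of $\hh_\kk$-weight vectors of $\gg'$ of \emph{weight zero} — for instance the zero-weight line of an extra copy of the adjoint representation of $\kk$ inside $\gg$ — on which the ``weight escapes $\Xi$'' argument says nothing, and which need not act locally finitely on an arbitrary module. Passing through the $\kk$-module structure of $\gg[M]$ is precisely the device that removes this difficulty: those weight-zero elements are recovered for free as $\univ(\kk)$-combinations of the nonzero-weight ones. The remaining ingredients — that $\gg[M]$ is a subalgebra, and that finite generation together with local finiteness forces finite-dimensionality — are routine.
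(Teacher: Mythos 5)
Your proof is correct and takes essentially the same route as the paper's: both arguments show that each nontrivial $\kk$-isotypic component of $\gg$ lies in $\gg[M]$ because a weight-raising element must act nilpotently when $M$ has only finitely many $\kk$-types, and both then deduce (b) by observing that the subalgebra these components generate is an ideal. The only cosmetic difference is that you phrase the nilpotency argument via arbitrary $\hh_\kk$-weight vectors and the finite set of $\hh_\kk$-weights of $M$, whereas the paper works with $\bb_\kk$-singular vectors and pairwise non-isomorphic $\kk$-submodules — your variant in fact makes the step from singular vectors to all of $M$ slightly more transparent.
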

\begin {proof}
(a)Let $\gg=\bigoplus_i \gg_i$ be a decomposition of $\gg$ into a sum
of simple $\kk$-modules. It suffices to prove that
$\gg_i\subset\gg[M]$ for every non-trivial $\kk$-module
$\gg_i$. Assuming that the Borel subalgebra $\bb_\kk\subset \kk$ is
fixed, let $x_i$ be a non-zero $\bb_\kk$- singular vector of
$\gg_i$. For any $\bb_\kk$ - singular vector $m\in M$, $x_i^l\cdot m$
is a $\bb_\kk$ - singular vector for any $l\in \NN$. If $\gg_i$ is not
a trivial $\kk$-module, all non-zero vectors of the form $x_i^l\cdot
m$ generate pairwise non-isomorphic simple $\kk$-submodules of
$M$. Hence, $x_i^l\cdot m =0$ for large $l$ whenever $\gg_i$ is
non-trivial. Since $M$ is generated as a $\kk$-module by
$\bb_\kk$-singular vectors, we have $x_i\in\gg[M]$, and moreover $\gg_i\subset\gg[M]$ as $\kk\subset\gg[M]$.

(b) Note that the subalgebra $\tilde{\gg}$ generated by all non-trivial $\kk$-submodules $\gg_i$ is an ideal in $\gg$. On the other hand, by (a), $\tilde{\gg}\subset \gg[M]$. The simplicity of $\gg$ yields now  $\gg=\gg[M]$. Hence $M$ is finite dimensional as it is finitely generated.\nolinebreak[4]
\end{proof}

\section{First results on bounded modules and bounded subalgebras}\label{secFirstResults}

Recall (see the Introduction) that a \gkm~ $M$ has \emph{finite type} if $M^r$ is
  finite dimensional for all $r\in R_\kk$, and that
a \gkm~ of finite type is a \emph{generalized
  Harish-Chandra module} according to the definition in \cite{PZ1} and
\cite{PSZ}. Any \gkm~ $M$ of finite type is also automatically a $(\gg,\kk')$-module of finite type for any intermediate subalgebra $\kk'$, $\kk\subset\kk'\subset\gg[M]$. Note also that $\kk+\gg^\kk\subset \gg[M]$. If $\gg$ is reductive, then for any proper reductive in $\gg$ subalgebra $\kk$, there exist infinite dimensional simple $(\gg,\kk)$-modules of finite type over $\kk$. A stronger statement is proved in \cite{PZ2}. A \gkm~ is \emph{bounded} if, for some positive integer $C_M$, $\dim M^r<C_M$ for all $r\in R_\kk$, and is \emph{multiplicity free} if $\dim M^r\leq 1$ for all $r\in R_\kk$.

\begin {theorem}\label{th4}
Let $\gg=\bigoplus \gg_i$, where $\gg_i$ are simple Lie algebras, let $\kk\subset\gg$ be a reductive in $\gg$ subalgebra, and let $M$ be a simple bounded \gkm. Then $\gg^\kk=\bigoplus_i\gg_i^\kk$, and $\gg_i\subset \gg[M]$ whenever $\gg_i^\kk$ is not abelian. Furthermore, $M\simeq M'\otimes M''$ for some simple finite dimensional $\gg'\eqdef\displaystyle \bigoplus_{\gg_i\subset\gg[M]}\gg_i$-module $M'$ and some simple bounded $(\gg'',\kk'')$-module $M''$, where $\gg''\eqdef\displaystyle \bigoplus_{\gg_i\nsubseteq\gg[M]}\gg_i$ and $\kk''\eqdef \kk\cap \gg''$.
\end{theorem}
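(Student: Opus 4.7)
The plan is to prove the three claims in succession. The first, $\gg^\kk = \bigoplus_i \gg_i^\kk$, is immediate: each $\gg_i$ is an ideal of $\gg$, hence $\kk$-stable under adjoint, so the $\kk$-fixed points split across the direct sum $\gg = \bigoplus_i \gg_i$.

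For the second claim I would introduce $\ss_i := [\gg_i^\kk, \gg_i^\kk]$; this is semisimple (as $\gg_i^\kk$ is reductive) and non-zero precisely when $\gg_i^\kk$ is non-abelian. Since $\ss_i \subset \gg^\kk$ commutes with $\kk$, Schur's lemma shows $\ss_i$ preserves each isotypic component $V^r \otimes M^r$ and acts as $\id_{V^r} \otimes \phi_r$ for some $\phi_r : \ss_i \to \End(M^r)$ that is finite-dimensional (using $\dim M^r \leq C_M$); hence $\ss_i \subset \gg[M]$. By \refle{le1} only finitely many isomorphism classes of simple $\ss_i$-modules appear in $\{M^r\}$, so $M$ has only finitely many $\ss_i$-isotypic components.

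Assuming $\ss_i \neq 0$, I would apply \refle{le3}(a) to the $(\gg, \ss_i)$-module $M$ to get $\gg[M] + \gg^{\ss_i} = \gg$. Bracketing with $\ss_i$ and using $[\ss_i, \gg^{\ss_i}] = 0$ yields $[\ss_i, \gg[M]] = [\ss_i, \gg]$; the right side equals the sum $\gg_{\neq 0}$ of non-trivial $\ss_i$-isotypic components of $\gg$, because for any simple non-trivial $\ss_i$-module $W$ the subspace $\ss_i \cdot W$ is an $\ss_i$-submodule (by $s' \cdot (s \cdot w) = [s',s] \cdot w + s \cdot (s' \cdot w)$) and is non-zero, so equals $W$. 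Thus $\gg_{\neq 0} \subset \gg[M]$, and since $\ss_i \subset \gg_i$ commutes with $\gg_{\neq i}$ one has $\gg_{\neq 0} \subset \gg_i$. The Lie subalgebra of $\gg_i$ generated by $\gg_{\neq 0}$ is invariant under $\ad \gg_i^{\ss_i}$ (which preserves the $\ss_i$-decomposition) and under $\ad \gg_{\neq 0}$ (trivially), hence is an ideal of $\gg_i = \gg_i^{\ss_i} + \gg_{\neq 0}$; since it contains the non-zero $\ss_i$ and $\gg_i$ is simple, this subalgebra equals $\gg_i$, so $\gg_i \subset \gg[M]$.

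For the tensor decomposition, set $\gg' := \bigoplus_{\gg_i \subset \gg[M]} \gg_i$ and $\gg'' := \bigoplus_{\gg_i \not\subset \gg[M]} \gg_i$. The semisimple ideal $\gg' \subset \gg[M]$ acts locally finitely on $M$, so $M$ decomposes as a direct sum of finite-dimensional simple $\gg'$-modules; each $\gg'$-isotypic component is a $\gg$-submodule, being preserved by $\gg'$ by construction and by $\gg''$ because $\gg''$ commutes with $\gg'$. Simplicity of $M$ forces a single isotypic type, so $M \simeq M' \otimes M''$ with $M' = U$ a simple finite-dimensional $\gg'$-module and $M'' = \hom_{\gg'}(U, M)$ a simple $\gg''$-module (any $\gg''$-submodule of $M''$ yields a $\gg$-submodule of $M$). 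Boundedness of $M''$ over $\kk'' := \kk \cap \gg''$ should then follow by tracking the $\kk$-decomposition of $M' \otimes M''$, exploiting that $\kk''$ acts trivially on the finite-dimensional factor $M'$. The main obstacle is the bootstrap in paragraph three: the key leverage is that $\gg[M]$, being a subalgebra containing $\ss_i$, is automatically $\ad \ss_i$-invariant, so \refle{le3}(a) forces the entire non-trivial $\ss_i$-isotypic part of $\gg$ into $\gg[M]$, after which simplicity of $\gg_i$ closes the gap.
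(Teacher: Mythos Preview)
Your proof is correct and follows essentially the same strategy as the paper's. The only differences are cosmetic: you work with each $\ss_i$ individually while the paper bundles them into $\ss=\bigoplus_i\ss_i$; and where the paper extracts $\aa\subset\gg[M]$ by appealing to the \emph{proof} of \refle{le3}, you recover the same conclusion from the \emph{statement} of \refle{le3}(a) via the bracketing identity $[\ss_i,\gg[M]]=[\ss_i,\gg]$, which is a nice touch. Your isotypic-component argument for the tensor decomposition is slightly cleaner than the paper's direct injectivity check, and your sketch for the boundedness of $M''$ over $\kk''$ is in line with the paper's, which invokes the inclusion $\kk\subset\gg'\oplus\kk''$ to pass to boundedness over $\gg'\oplus\kk''$.
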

\begin {proof}
The equality $\gg^\kk = \bigoplus_i\gg_i^\kk$ follows directly from
the definition of $\gg^\kk$. In addition, each subalgebra $\gg_i^\kk$
is reductive in $\gg_i$, hence $\ss_i\eqdef[\gg_i^\kk, \gg_i^\kk]$ is
semisimple. Set 
$\ss\eqdef\bigoplus_i \ss_i$. Consider the decomposition
\[
M=\bigoplus_{r\in R_\kk}V^r\otimes M^r.
\]
Since the dimensions of $M^r$ are bounded, Lemmas \ref{le1} and
\ref{le2} imply that  
at most finitely many simple $\ss$-modules $M^r$ are non-isomorphic. Hence, $M$ considered as a $(\gg,\ss)$-module satisfies the condition of \refle{le3}. Thus $\gg[M] +\gg^\ss=\gg$. Note that the trivial $\ss$-submodule $\gg^\ss$ of $\gg$ has a unique $\ss$-submodule complement $\aa$. Moreover, $\aa\subset\gg[M]$ by \refle{le3}. In addition, as we already noted in the proof of \refle{le3} (b), the subalgebra of $\gg$ generated by $\aa$ is an ideal in $\gg$. Since $\ss\subset \aa$, we have $\bigoplus_{\ss_i\neq 0}\gg_i\subset \gg[M]$, i.e. we have proved that $\gg_i\subset\gg[M]$ whenever $\gg_i^\kk$ is not abelian.

We prove next that $M=M'\otimes M''$. Since $\gg'\subset \gg[M]$, there is a simple finite dimensional $\gg'$-submodule $M'$ of $M$. Set $M''\eqdef \hom_{\gg'}(M',M)$. Clearly $M''$ is a $\gg''$-module, and there is a non-zero homomorphism of $\gg$-modules
\[
\Phi:M'\otimes M''\to M,
\]
\[
\Phi(m'\otimes \phi)\eqdef \phi(m'), \quad m'\in M'.
\]
Since $M$ is simple, $\Phi$ is surjective. To prove that $\Phi$ is
injective, fix a nonzero vector $m \in M'$. If $\phi_1,\dots,\phi_n\in
M''$ are linearly independent, the vectors $\phi_1(m),\dots,
\phi_n(m)\in M$ are linearly independent, as the contrary would imply
that $\phi_1(m'),\dots,\phi_n(m')$ are linearly dependent for any
$m'\in M$ (since $m$ generates $M'$), which is contradictory. Since
$\phi_1(m), \dots \phi_n(m)$ are linearly independent, the sum $\sum_i
\phi_i(M')$ is direct, hence no non-zero vector of the form
$\sum_i\phi_i(m_i')$ for $m_i'\in M$ belongs to the kernel of
$\Phi$. This implies $\ker\Phi =0$. The irreducibility of $M$ now
yields the irreducibility of $M''$. To see that $M''$ is a bounded
$(\gg'',\kk'')$-module it suffices to notice that $M$ is a bounded
$(\gg,\gg'\oplus \kk'')$-module as $\kk \subset\gg'\oplus \kk'' $ and
that the multiplicity of $M'\otimes V^{r''}$ in $M$ equals the
multiplicity of $ V^{r''}$ in $M''$ for any $r''\in R_{\kk''}$.
\end{proof}

In the rest of this section and in Sections \ref{secAconstruction} and \ref{secOnBounded} below, $\gg$ is a reductive Lie algebra unless further restrictions are explicitly stated. We call $\kk$ a \emph{bounded subalgebra} of $\gg$ if there exists an
infinite dimensional bounded simple $(\gg,\kk)$-module. \refth{th4}
suggests
also the following stronger notion: a bounded subalgebra $\kk$ of
$\gg$ is \emph{strictly bounded}, if there exists an
infinite dimensional bounded simple $(\gg,\kk)$-module $M$ such that
$\gg[M]$ contains no simple ideal of $\gg$. Clearly, if $\gg$ is
simple, a subalgebra $\kk$ is bounded if and only if it is strictly bounded.

\begin{corollary}\label{corcen}
If  $\kk$ is a strictly bounded subalgebra of a reductive Lie algebra $\gg$, then $\gg^\kk\subset\gg$ is an abelian subalgebra.
\end{corollary}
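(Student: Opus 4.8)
The plan is to derive this directly from \refth{th4}. Write $\gg=\mathfrak{z}(\gg)\oplus\bigoplus_i\gg_i$, where $\mathfrak{z}(\gg)$ is the center and the $\gg_i$ are the simple ideals of $\gg$. Since $\kk$ acts on $\gg$ by the adjoint action and each $\gg_i$ is an ideal, this is simultaneously a decomposition of $\gg$ as a $\kk$-module, whence $\gg^\kk=\mathfrak{z}(\gg)\oplus\bigoplus_i\gg_i^\kk$. The summand $\mathfrak{z}(\gg)$ is abelian and central in $\gg$, and the summands $\gg_i^\kk$ pairwise commute because $[\gg_i,\gg_j]=0$ for $i\neq j$; therefore $\gg^\kk$ is abelian as soon as each $\gg_i^\kk$ is abelian.

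Now let $M$ be an infinite dimensional bounded simple \gkm~ whose Fernando--Kac subalgebra $\gg[M]$ contains no simple ideal of $\gg$; such an $M$ exists by the definition of a strictly bounded subalgebra. Applying \refth{th4} (the central summand $\mathfrak{z}(\gg)$ being a trivial $\kk$-direct summand plays no role, and $M$ is still simple over $[\gg,\gg]$ since by Dixmier's form of Schur's Lemma $\mathfrak{z}(\gg)\subset Z_\univ$ acts by scalars on $M$), we conclude that $\gg_i\subset\gg[M]$ for every index $i$ with $\gg_i^\kk$ non-abelian. But each $\gg_i$ is a simple ideal of $\gg$, so by hypothesis $\gg_i\nsubseteq\gg[M]$; hence $\gg_i^\kk$ is abelian for every $i$. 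Together with the previous paragraph this shows that $\gg^\kk$ is abelian.

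There is essentially no obstacle here: the corollary is just the contrapositive of the assertion in \refth{th4} that $\gg_i\subset\gg[M]$ whenever $\gg_i^\kk$ is not abelian, read through the definition of strict boundedness. The only point requiring a word of care is the passage from the semisimple hypothesis of \refth{th4} to the reductive $\gg$ of the corollary, and this is dispatched by splitting off the center, which is automatically contained in $\gg^\kk$ and is itself abelian.
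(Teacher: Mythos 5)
Your proof is correct and is essentially the argument the paper intends (the paper states Corollary \ref{corcen} without proof, as an immediate consequence of Theorem \ref{th4} together with the definition of strict boundedness). You correctly identify that the only real work is passing from the semisimple $\gg$ of Theorem \ref{th4} to the reductive $\gg$ of the corollary, and your decomposition $\gg=\mathfrak{z}(\gg)\oplus\bigoplus_i\gg_i$ together with the observation that the central summand is automatically abelian and commutes with everything does that job.

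One small point worth making explicit: when $\kk\cap\mathfrak{z}(\gg)\neq 0$ (e.g., $\kk$ a Cartan subalgebra of $\gl(n)$), the subalgebra $\kk$ is not contained in $[\gg,\gg]$, so Theorem \ref{th4} cannot be applied verbatim to the pair $([\gg,\gg],\kk)$. One either replaces $\kk$ by its image $\bar\kk$ under the projection $\gg\to[\gg,\gg]$ --- noting that $\gg_i^\kk=\gg_i^{\bar\kk}$ since the adjoint action factors through $\bar\kk$, that $M$ remains a simple bounded $([\gg,\gg],\bar\kk)$-module because $\mathfrak{z}(\gg)$ acts by scalars (so the $\kk$- and $\bar\kk$-isotypic decompositions of $M$ coincide up to a twist by a character), and that $\gg_i\subset[\gg,\gg][M]$ iff $\gg_i\subset\gg[M]$ --- or one simply observes that the proof of Theorem \ref{th4} goes through unchanged for reductive $\gg$. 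Your phrase ``the central summand plays no role'' is accurate but compresses this step; spelling it out as above would make the argument airtight.
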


\begin{theorem}\label{th5}
Let $C$ be a positive integer and $M$ be a simple bounded \gkm~ with $\dim M^r < C$ for all $r\in R_\kk$. Let $N$ be a simple \gkm~ with $\ann N =\ann M$. Then $N$ is also bounded and $\dim N^r<C$ for all $r\in R_\kk$.
\end{theorem}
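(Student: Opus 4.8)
The plan is to transfer the hypothesis $\ann N=\ann M$ from $\univ(\gg)$ down to the algebra $A:=\univ(\gg)^\kk$, then further down to the individual $\kk$-multiplicity spaces, after which the bound on $\dim N^r$ falls out of \refle{le0}.

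First I would record how $A$ acts on isotypic components. For any \gkm~ $L$ and any $a\in A$, the operator $a$ is a $\kk$-module endomorphism of $L$, hence preserves each $\kk$-isotypic component $V^r\otimes L^r$; since $\End_\kk(V^r)=\CC$ by Schur's lemma, $a$ acts on $V^r\otimes L^r$ as $\id_{V^r}\otimes a^r$, where $a^r\in\End(L^r)$ is precisely the action on $L^r=\hom_\kk(V^r,L)$ described before \refle{le2}. Thus $a$ annihilates $L$ if and only if $a^r=0$ for all $r$, i.e.
\[
A\cap\ann L=\bigcap_{r\in R_\kk}\ann_A(L^r).
\]
Applying this with $L=M$ and with $L=N$, and using $\ann N=\ann M$, I get that the two-sided ideals $J_M:=\bigcap_r\ann_A(M^r)$ and $J_N:=\bigcap_r\ann_A(N^r)$ of $A$ coincide; write $J$ for this common ideal.

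Next I would invoke \refle{le0}. The assignment $a\mapsto(a^r)_{r\in R_\kk}$ is an algebra homomorphism $A\to\prod_{r}\End(M^r)$ with kernel $J_M=J$, so $A/J$ embeds as a subalgebra of $\prod_r\End(M^r)$. Since $M\neq 0$, some $M^r$ satisfies $\dim M^r\geq 1$, so the hypothesis $\dim M^r<C$ forces $C\geq 2$, and all the spaces $M^r$ have dimension $\leq C-1$, a positive integer. By \refle{le0}, every simple $A/J$-module has dimension $\leq C-1<C$. Finally, fix $r\in R_\kk$ with $N^r\neq 0$ (if $N^r=0$ there is nothing to check). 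By \refle{le2} applied to $N$, the space $N^r$ is a simple $A$-module, and it is annihilated by $J_N=J$, hence it is a simple $A/J$-module; therefore $\dim N^r\leq C-1<C$. As $r$ was arbitrary, $N$ is bounded with $\dim N^r<C$ for all $r$.

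I do not expect a genuine obstacle: the substance is the Schur-lemma identification $a|_{V^r\otimes L^r}=\id_{V^r}\otimes a^r$ together with the resulting formula $A\cap\ann L=\bigcap_r\ann_A(L^r)$, which is the exact mechanism by which an equality of annihilators in $\univ(\gg)$ descends to an equality of annihilators of the multiplicity spaces over $A=\univ(\gg)^\kk$; once that is in place, \refle{le0} and \refle{le2} finish the argument. The only mild subtlety is bookkeeping the strict inequality, i.e. feeding the integer $C-1$ rather than $C$ into \refle{le0}.
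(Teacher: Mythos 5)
Your proof is correct and follows essentially the same route as the paper's: you replace the paper's $Z_M=(\univ(\gg)/\ann M)^\kk$ by the canonically isomorphic algebra $\univ(\gg)^\kk/(\univ(\gg)^\kk\cap\ann M)$ (these agree because $\kk$ acts semisimply on $\univ(\gg)$), and then use Lemma \ref{le2} and Lemma \ref{le0} exactly as the paper does. The Schur-lemma identification and the $C-1$ bookkeeping you spell out are details the paper leaves implicit, but the mechanism is identical.
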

\begin{proof}
Set $\univ_M\eqdef \univ(\gg) / \ann M$ and $Z_M\eqdef (\univ_M)^{\kk}$. The \gkm~ $M$ determines an injective algebra homomorphism
\[
Z_M\to \prod_{r\in R_\kk}\End(M^r),
\]
and $\dim M^r <C$ for all $r$. By \refle{le2}, $N^r$ is a simple $Z_M$-module for any $r$. Therefore, by \refle{le0}, $\dim N^r<C$.
\end{proof}

Recall that, for any simple $\gg$-module $M$, its \emph{Gelfand-Kirillov dimension} $\gkdim M\in \ZZ_{\geq 0}$ is defined by the formula

\[
\gkdim M = \overline{\lim_{n\to \infty}}\frac{\log \dim \left(\univ(\gg)_n\cdot  v\right)}{\log n}
\]
for any non-zero $v\in M$, \cite{KL} [p. 91]. Recall also that the \emph{associated variety} $X_M$ of $M$ is the nil-variety in $\gg^*$ of the associated graded ideal in $S^{\cdot}(\gg)$ of $\ann M$. We next prove an explicit bound for $\dim X_M$ by $\dim \kk+\rk \kk$ for any simple bounded $(\gg,\kk)$-module $M$. For this purpose we will use the well known inequality
\[
\gkdim M \geq \frac{\dim X_M}{2},
\]
see \cite{KL} [p. 135].

\begin {theorem}\label {th6}
Let $M$ be a simple bounded $(\gg,\kk)$-module. Then
\begin{equation}\label{eqGK}
\gkdim M\leq b_\kk,
\end{equation}
where $\displaystyle b_\kk\eqdef \frac {\dim \kk+\rk \kk}{2}$.
\end{theorem}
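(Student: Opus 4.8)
The plan is to bound the Gelfand--Kirillov dimension of $M$ by estimating how fast $\dim(\univ(\gg)_n\cdot v)$ can grow, using the decomposition $M=\bigoplus_{r}V^r\otimes M^r$ together with the boundedness hypothesis $\dim M^r<C_M$ and the finiteness statement in \refle{le1}. The key point is that, because the $\kk$-multiplicities are uniformly bounded, the growth of $M$ is governed entirely by the growth of the set of $\kk$-types that actually occur: if $\univ(\gg)_n\cdot v$ meets only $N(n)$ isomorphism classes $r\in R_\kk$, then $\dim(\univ(\gg)_n\cdot v)\leq C_M\cdot\max_{r}\dim V^r$ summed over those classes, and one must control both the number of classes and the dimensions $\dim V^r$ of the classes occurring in $\univ(\gg)_n\cdot v$. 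So the first step is to pick a $\bb_\kk$-singular vector $v\in M$ and describe the $\kk$-highest weights appearing in $\univ(\gg)_n\cdot v$: acting by $\univ(\gg)_n$ can only shift the highest weight by a bounded amount (a sum of at most $n$ weights of $\gg$ as a $\kk$-module), so the occurring highest weights lie in a ball of radius $O(n)$ in the weight lattice of $\kk$, whence $N(n)=O(n^{\rk\kk})$ and $\max\dim V^r=O(n^{|\Delta_+^\kk|})$ by the Weyl dimension formula as in \refle{le1}. This already gives $\gkdim M\leq \rk\kk+|\Delta_+^\kk|=\rk\kk+(\dim\kk-\rk\kk)/2=b_\kk$, which is exactly \refeq{eqGK}.

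To make this precise I would argue as follows. Fix a Borel $\bb_\kk=\hh_\kk\oplus\nn_\kk\subset\kk$ with root system $\Delta^\kk$, positive roots $\Delta_+^\kk$, and Weyl vector $\rho_\kk$. Choose $0\neq v\in M$ that is $\nn_\kk$-singular of some weight $\lambda$; since $M$ is simple, $\univ(\gg)_n\cdot v$ exhausts $M$ as $n\to\infty$, so $\gkdim M=\overline{\lim}\,\log\dim(\univ(\gg)_n\cdot v)/\log n$. Let $\{w_1,\dots,w_k\}$ be the finite set of $\hh_\kk$-weights of $\gg$ in the adjoint representation; then every $\hh_\kk$-weight of $\univ(\gg)_n\cdot v$ is of the form $\lambda+\sum_{j=1}^n \epsilon_j w_{i_j}$ with $\epsilon_j\in\{0,1\}$, hence lies in the set $S_n\eqdef\{\lambda+\mu : \mu\in n\cdot\mathrm{conv}(0,w_1,\dots,w_k)\cap(\text{weight lattice})\}$, a set of cardinality $O(n^{\rk\kk})$. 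Each $\kk$-type $V^r$ occurring in $\univ(\gg)_n\cdot v$ has highest weight in $S_n$, so $\dim V^r=\prod_{\alpha\in\Delta_+^\kk}\frac{(\mathrm{hw}(r)+\rho_\kk,\alpha)}{(\rho_\kk,\alpha)}=O(n^{|\Delta_+^\kk|})$. Since $\dim(V^r\otimes M^r)=\dim V^r\cdot\dim M^r<C_M\,\dim V^r$ and $\univ(\gg)_n\cdot v\subset\bigoplus_{r:\,\mathrm{hw}(r)\in S_n}V^r\otimes M^r$, we get
\[
\dim(\univ(\gg)_n\cdot v)\;\leq\;C_M\cdot|S_n|\cdot O(n^{|\Delta_+^\kk|})\;=\;O\!\left(n^{\rk\kk+|\Delta_+^\kk|}\right).
\]
Taking logarithms and dividing by $\log n$ gives $\gkdim M\leq\rk\kk+|\Delta_+^\kk|$, and since $|\Delta^\kk|=\dim\kk-\rk\kk$ we have $|\Delta_+^\kk|=\frac12(\dim\kk-\rk\kk)$, so $\gkdim M\leq\frac{\dim\kk+\rk\kk}{2}=b_\kk$.

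One technical wrinkle to handle carefully: the highest weight of a $\kk$-type occurring in $\univ(\gg)_n\cdot v$ is an $\hh_\kk$-weight of $\univ(\gg)_n\cdot v$, but not every $\hh_\kk$-weight of a $\kk$-module is a highest weight; this only makes the count smaller, so the bound $|S_n|=O(n^{\rk\kk})$ on the relevant highest weights is still valid. A second point: when $\kk$ is reductive but not semisimple, the Weyl dimension formula should be applied to the semisimple part $[\kk,\kk]$ and the central part contributes only to the weight-counting, not to the dimension; this changes nothing in the exponents since $\rk\kk$ still counts the full Cartan and $|\Delta_+^\kk|$ is unchanged. The main obstacle, such as it is, is purely bookkeeping — getting the polynomial degree of $|S_n|$ right (it is the rank, i.e. the dimension of the real span of the weight lattice, which can be smaller than $\rk\kk$ if $\gg$ has $\hh_\kk$ acting with weights spanning a proper subspace, but bounding it above by $\rk\kk$ suffices) and checking that the Weyl-formula estimate is genuinely polynomial of degree $|\Delta_+^\kk|$ in $n$ (clear, since each factor $\frac{(\mathrm{hw}(r)+\rho_\kk,\alpha)}{(\rho_\kk,\alpha)}$ is affine-linear in $\mathrm{hw}(r)\in S_n$ and hence $O(n)$). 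No deeper input is needed beyond \refle{le1} and the definition of $\gkdim$.
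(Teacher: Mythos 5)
Your argument follows the same route as the paper's proof: fix a $\bb_\kk$-singular $v$, bound the number of $\kk$-types contributing to $\univ(\gg)_n\cdot v$ by $O(n^{\rk\kk})$, bound the dimension of each such type by $O(n^{|\Delta_+^\kk|})$ via Weyl's dimension formula, and multiply by the uniform bound on $\dim M^r$; the resulting exponent $\rk\kk+|\Delta_+^\kk|=b_\kk$ is exactly the paper's. So this is the same approach, not a different one.

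The one step that needs repair is in the ``technical wrinkle'' paragraph. You assert that the highest weight of a $\kk$-type $V^r$ occurring in $\univ(\gg)_n\cdot v$ must itself be an $\hh_\kk$-weight of $\univ(\gg)_n\cdot v$. This does not follow: $\univ(\gg)_n\cdot v$ is only $\hh_\kk$-stable, not $\kk$-stable (unless $V^{r_0}:=\univ(\kk)\cdot v$ is one-dimensional, applying $\nn_\kk^-$ moves you out of $\univ(\gg)_n\cdot v$). If the isotypic projection $\pi_r$ is nonzero on $\univ(\gg)_n\cdot v$, you only learn that some weight of $V^r$ lies in $S_n$, not that $\mathrm{hw}(r)$ does, and a priori that leaves $\mathrm{hw}(r)$ unbounded. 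The standard fix — and what the paper does — is to enlarge to $M_n:=\univ(\gg)_n\cdot V^{r_0}$. Since the standard filtration of $\univ(\gg)$ is $\ad\kk$-stable and $V^{r_0}$ is a $\kk$-module, $M_n$ is a $\kk$-submodule of $M$ containing $\univ(\gg)_n\cdot v$; hence the highest weight of any $\kk$-type of $M_n$ really is a weight of $M_n$, and every weight of $M_n$ is $\leq n\nu+\mu_0$ where $\mu_0$ is the weight of $v$ and $\nu$ is the sum of the $\bb_\kk$-highest weights of the simple $\kk$-constituents of $\gg$. The count $O(n^{\rk\kk})$ of dominant integral $\mu\leq n\nu+\mu_0$ and the Weyl-formula estimate $O(n^{|\Delta_+^\kk|})$ then give $\dim M_n=O(n^{b_\kk})$, and $\gkdim M\leq b_\kk$ follows as you wrote.
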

\begin {proof}
Fix a Cartan subalgebra $\hh_\kk \subset \kk$ and a Borel subalgebra $\bb_\kk\subset \kk$ with $\hh_\kk\subset \bb_\kk$. Note that $b_\kk = \dim \bb_\kk$. Fix also $r\in R_\kk$ with $M^r\neq 0$ and let $\mu_0\in \hh_\kk^*$ be the $\bb_\kk$ -highest weight of $V^r$. Set
\[
M_n\eqdef \univ(\gg)_n\cdot V^r
\]
for $n\in\ZZ_{\geq0}$. It suffices to prove that there exists a polynomial $f(n)$ of degree $b_\kk$ such that $\dim M_n\leq f(n)$.

Let $\nu_1, \dots, \nu_s$ be the $\bb_\kk$-highest weights of all simple $\kk$-submodules of $\gg$. Put $\nu\eqdef\sum_i \nu_i$. Then, if $V_\mu$ is the simple finite dimensional $\kk$-module with $\bb_\kk$- highest weight $\mu$, $\hom_\kk (V_\mu,M_n)\neq 0$ implies
\begin{equation}
\label {neq1}
\mu\leq n\nu+\mu_0
\end{equation}
where $\leq$ is the partial order on $\hh_\kk^*$ determined by $\bb_\kk$. The cardinality of the set of all integral- $\bb_\kk$-dominant weights $\mu$ satisfying \refeq{neq1} is bounded by some polynomial $g(n)$ of degree $\rk \kk$. Weyl's dimension formula implies that the dimension of $V_\mu$ is bounded by a polynomial $h(n)$ of degree equal to the number of simple roots of $\bb_\kk$. If $\dim M^r <C$, then
\[
\dim M_n \leq C h(n)g(n).
\]
\end{proof}

The inequality \refeq{eqGK} is very much in the spirit of A. Joseph who was the first to establish the equality $\dim \kk=2 \dim X_M$ in the particular case when $\kk$ is a Cartan subalgebra of $\gg$ and $M$ is a simple bounded $(\gg,\kk)$-module, \cite{J}.

\begin {corollary}\label {cor1.5}
Let $M$ be a bounded simple \gkm. Then
\[
\frac{\dim X_M}{2}\leq b_{\kk}.
\]
\end {corollary}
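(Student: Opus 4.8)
The plan is to derive \refcor{cor1.5} directly from \refth{th6} by combining the Gelfand--Kirillov bound established there with the standard inequality relating Gelfand--Kirillov dimension to the dimension of the associated variety. Concretely, the statement is essentially immediate once both ingredients are in place, so the ``proof'' will be a two-line deduction.

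First I would recall, as already quoted in the text immediately preceding \refth{th6}, the inequality
\[
\frac{\dim X_M}{2}\leq \gkdim M,
\]
valid for any finitely generated $\gg$-module $M$ (see \cite{KL} [p.~135]); here $X_M$ is the associated variety of $M$, i.e. the nil-variety in $\gg^*$ of the associated graded ideal of $\ann M$. Since a simple $\gg$-module is in particular finitely generated, this applies to our $M$. Then I would invoke \refth{th6}, which gives $\gkdim M\leq b_\kk$ for a simple bounded $(\gg,\kk)$-module $M$. Chaining the two inequalities yields
\[
\frac{\dim X_M}{2}\leq \gkdim M\leq b_\kk,
\]
which is exactly the assertion of \refcor{cor1.5}.

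There is no real obstacle here: the corollary is a formal consequence of \refth{th6} together with a cited general fact, and the only thing to check is that the hypotheses match — namely that ``bounded simple \gkm'' in the corollary is the same class of modules for which \refth{th6} was proved (it is, by definition), and that $M$ being simple suffices to apply the Gelfand--Kirillov/associated-variety inequality (it does, since simple modules are cyclic hence finitely generated). If one wanted to be slightly more careful one could note that $\gkdim M$ and $X_M$ are independent of the choice of generating vector, which is already built into the definitions recalled above. Thus the entire proof is: apply the displayed inequality from \cite{KL}, then apply \refth{th6}.

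\begin{proof}
Since $M$ is simple, it is finitely generated, so the inequality
$\dim X_M \leq 2\,\gkdim M$ recalled above (see \cite{KL} [p.~135]) applies.
Combining it with \refth{th6} gives
$\dfrac{\dim X_M}{2}\leq \gkdim M\leq b_\kk$.
\end{proof}
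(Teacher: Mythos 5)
Your proof is correct and is exactly the argument the paper intends: the corollary is stated without a separate proof precisely because it follows immediately by chaining the inequality $\dim X_M/2\leq \gkdim M$ from \cite{KL} (quoted just before \refth{th6}) with the bound $\gkdim M\leq b_\kk$ of \refth{th6}. Nothing further is needed.
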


In the remainder of the paper $G$ will be a fixed reductive algebraic
group with Lie algebra $\gg$.
Denote by $r_\gg$ the half-dimension
of a nilpotent orbit of minimal positive dimension in $\gg$.
If $\gg$ is simple, such an orbit is unique. It coincides with the orbit of a highest vector in the adjoint representation, and
\[
r_\gg=\left\{\begin{array}{ll}
\rk\gg=n &  \text{~for~} \gg=\sl(n+1),\sp(2n) \\
2n-2& \text{~for~} \gg=\so(2n+1)\\
2n-3& \text{~for~} \gg=\so(2n)\\
3& \text{~for~} \gg=G_2\\
8& \text{~for~} \gg=F_4\\
11& \text{~for~}\gg=E_6 \\
17& \text{~for~}\gg=E_7 \\
29& \text{~for~}\gg=E_8{}. \\
\end{array}\right.
\]

\begin {corollary}\label{cor1.6}
If $\kk$ is a bounded subalgebra. Then
\begin{equation}\label{eq42}
r_\gg\leq b_\kk.
\end{equation}
If $\gg=\gg_1\oplus...\oplus \gg_s$ is a sum of simple ideals and
$\kk\subset\gg$ is strictly bounded, then
\begin{equation}\label{eq42'}
r_{\gg_1}+...+r_{\gg_s}\leq b_\kk.
\end{equation}
\end{corollary}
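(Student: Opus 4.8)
The plan is to derive both inequalities directly from \refcor{cor1.5} (equivalently, from \refth{th6}), by exhibiting in each case an infinite-dimensional bounded simple module whose associated variety is forced to be large.

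For the first inequality, since $\kk$ is a bounded subalgebra, fix an infinite-dimensional bounded simple \gkm\ $M$. As $M$ is simple and infinite-dimensional, $\univ(\gg)/\ann M$ is infinite-dimensional, so $X_M\neq\{0\}$; moreover $X_M$ is a $G$-stable closed cone contained in the nilpotent cone of $\gg^*$ (the latter because $M$ has an infinitesimal character, so $\ann M$ contains a finite-codimensional ideal of $Z_\univ$). A non-zero $G$-stable closed cone inside the nilpotent cone contains the closure of a nilpotent orbit of positive dimension, and by the very definition of $r_\gg$ such a closure, hence $X_M$ itself, has dimension at least $2r_\gg$. Now \refcor{cor1.5} gives $r_\gg\leq \frac{\dim X_M}{2}\leq b_\kk$, which is \refeq{eq42}.

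For the second inequality, write $\gg=\gg_1\oplus\dots\oplus\gg_s$ with $\gg_i$ simple and, using that $\kk$ is strictly bounded, fix an infinite-dimensional bounded simple \gkm\ $M$ such that $\gg[M]$ contains no simple ideal of $\gg$. Since $\gg$ is semisimple, $M$ is an outer tensor product $M\cong M_1\otimes\dots\otimes M_s$ of simple $\gg_i$-modules $M_i$, and $\gg[M]=\bigoplus_i\gg_i[M_i]$. By hypothesis $\gg_i[M_i]\neq\gg_i$ for every $i$; since a finite-dimensional simple $\gg_i$-module $N$ satisfies $\gg_i[N]=\gg_i$, each $M_i$ is infinite-dimensional. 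Applying the previous paragraph to each $M_i$ gives $\dim X_{M_i}\geq 2r_{\gg_i}$, and since $X_M=X_{M_1}\times\dots\times X_{M_s}$ (one identifies $\gr(\univ(\gg)/\ann M)$ with $\bigotimes_i\gr(\univ(\gg_i)/\ann M_i)$), we get $\dim X_M=\sum_i\dim X_{M_i}\geq 2\sum_i r_{\gg_i}$. Then \refcor{cor1.5} yields \refeq{eq42'}. Alternatively one may bypass the product formula by invoking additivity of Gelfand--Kirillov dimension, $\gkdim M=\sum_i\gkdim M_i$ (see \cite{KL}), combined with $\gkdim M_i\geq \frac{\dim X_{M_i}}{2}\geq r_{\gg_i}$ and \refth{th6}.

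The substantive points, as opposed to bookkeeping, are: (i) that for a simple infinite-dimensional $M$ the associated variety $X_M$ is positive-dimensional and lies in the nilpotent cone, which rests on the existence of an infinitesimal character together with the standard theory of associated varieties; and (ii) the multiplicativity of $X_M$ under outer tensor products (equivalently, additivity of $\gkdim$). Both are classical, and once they are in place each part of the corollary is an immediate comparison of dimensions. I expect (i) to be the only spot where the reductive (rather than semisimple) case of \refeq{eq42} requires a word of care; there the center of $\gg$ acts on $M$ by a character, which pushes $X_M$ into the semisimple part of $\gg^*$ and reduces everything to the semisimple picture.
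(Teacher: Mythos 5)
Your proof is correct and follows essentially the same strategy as the paper's: exhibit an infinite-dimensional bounded simple module, observe that its associated variety is a positive-dimensional $G$-stable closed cone inside the nilpotent cone, and compare dimensions using Corollary \ref{cor1.5}. For the second inequality you are somewhat more explicit than the paper, decomposing $M$ as an outer tensor product $M_1\otimes\dots\otimes M_s$ and invoking $X_M=X_{M_1}\times\dots\times X_{M_s}$ (equivalently, additivity of $\gkdim$), whereas the paper states only that $X_M\cap\gg_i\neq 0$ for each $i$ and passes directly to $\tfrac{1}{2}\dim X_M\geq \sum_i r_{\gg_i}$ --- a step that in fact rests on the same product structure you spell out; so this is a clearer rendering of the same argument rather than a different one.
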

\begin{proof}
$X_M$ is a closed $G$-invariant subvariety of the nilpotent cone in $\gg$. Since $M$ is infinite dimensional, the dimension of $X_M$ is positive. Hence $\frac{\dim X_M}{2} \geq r_{\gg}$, and \refeq{eq42} follows from
\refcor{cor1.5}. If $\kk$ is strictly bounded, then there exists a simple bounded
$(\gg,\kk)$-module $M$ such that $\gg[M]$ does not contain $\gg_i$ for all $i=1,...,s$. This implies that $X_M\cap\gg_i\neq 0$ for all $i=1,...,s$, and hence $\frac{\dim X_M}{2} \geq r_{\gg_1}+...+r_{\gg_s}$.
\end{proof}
\begin {example}\label{ex47}
\refcor{cor1.6} implies that if $\kk\simeq\sl(2)$ is a strictly bounded
subalgebra of a semisimple Lie algebra $\gg$, then there are only
following three choices for $\gg$:
\begin{equation}\label{eq43}
\gg \simeq \sl(2)\oplus\sl(2),\quad \gg \simeq \sl(3),\quad \gg \simeq\sp(4).
\end{equation}
As we show below, up to conjugation there are five possible embeddings $\sl(2)\hookrightarrow\gg$ (with $\gg$ in \refeq{eq43}) whose image is a bounded subalgebra.
\end {example}
\begin{example}
This example shows that the inequality $r_\gg\leq b_\kk$ together with the requirement that $\gg^\kk$ is abelian are not sufficient for a reductive in $\gg$ subalgebra $\kk$ to be bounded. Let $\gg=\sl(n+1)$ and $\kk=\so(n)\subset\gg$ for $n\geq 5$, where the natural $\sl(n+1)$-module decomposes as a $\kk$-module as $V\oplus\CC$, $V$ being the natural $\so(n)$-module. Then $r_\gg=n$ and $b_\kk=\frac{n(n-1)}{4}+\half\left[\frac{n}{2}\right]$, hence $r_\gg\leq b_\kk$. In addition, $\dim\gg^\kk=1$, therefore $\gg^\kk$ is abelian. We will show that nevertheless $\kk$ is not a bounded subalgebra of $\gg$.

Note first that as a $\kk$-module $\gg$ contains two copies of $V$
which are $\gg^\kk$-eigenspaces with opposite eigenvalues, therefore
we can fix an element $t\in\gg^\kk$ such that its corresponding eigenvalues are $\pm$1. This allows us to fix non-zero $\bb_\kk$- singular vectors $x,y\in\gg$ with $[t,x]=x$, $[t,y]=-y$. Then it is easy to check that $[x,z]=[y,z]=[t,z]=0$.

Let $M$ be an infinite dimensional simple bounded
$(\gg,\kk)$-module. We claim that $\gg[M]$ contains $\span\{x,z\}$ or $\span\{y,z\}$. Indeed, let $m$ be a $\bb_\kk$-singular vector in $M$ of $\kk$-weight $\eta$. If $y,z\notin \gg[M]$, all vectors of the form $(z^ay^b)\cdot m$ for $a,b\in\ZZ_{\geq 0}$ are linearly independent $\bb_\kk$-singular vectors in $M$. Then if the weight of $y$ is $\kappa$, the weight of $z$ is equals $2\kappa$ and the multiplicity of the weight $n\kappa+\eta$ in $\span\{(z^ay^b)\cdot m\}_{a,b\in\ZZ_{\geq 0}}$ is at least $\left[\frac{n}{2}\right]$. Since all vectors of $\span\{(z^ay^b)\cdot m\}_{a,b\in\ZZ_{\geq 0}}$ are $\bb_\kk$-singular, $M$ has unbounded $\kk$-multiplicities, and we have a contradiction. This implies $y\in\gg[M]$ or $z\in\gg[M]$.

Arguing in the same way, we obtain $x\in\gg[M]$ or $z\in\gg[M]$. If $x,y\in\gg[M]$, then $z=[x,y]\in\gg[M]$. If $z\in\gg[M]$, but $x,y\notin\gg[M]$, we repeat the above argument for the pair $(x,y)$ instead of $(x,z)$ under the assumption that $m$ is $\bb_\kk$-singular vector with $z\cdot m =0$. Then all vectors $\{(x^ay^b)\cdot m\}_{a,b\in\ZZ_{\geq 0}}$ for $a,b\in\ZZ_{\geq 0}$ are linearly independent $\bb_\kk$-singular vectors and $M$ has unbounded $\kk$-multiplicities, which is a contradiction.

Without loss of generality we can therefore assume that
$x,z\in\gg[M]$. The subalgebra $\pp\subset\gg$ generated by $\kk,
x,z,t$ is a maximal parabolic subalgebra whose semisimple part $\gg'$ is isomorphic to $\sl(n)$. Note also that $\gg'\cdot V=V$. Let
$M_\mu$ be a finite dimensional $\gg'$ submodule of $M$ with highest
weight $\mu$ and highest weight vector $0\neq m\in M_\mu$ with respect
to a fixed Borel subalgebra $\bb'\subset\gg'$. Then $y^n\cdot m$ is a
$\bb'$-singular vector for any $n$, and $y^n\cdot m\neq 0$ for any $n$ since $y\notin \gg[M]$. This shows that for any $n$ the multiplicity of $M_{\mu+n\eps}$ in $M$ is non-zero, where $\eps$ is the $\bb'$-highest weight of the $\gg'$-module $V$.

We claim that this implies that $M$ is a $(\gg,\kk)$-module of
infinite type. Indeed, for any positive $n$
\[
\hom_{\gg'}(S^n V\otimes M_{\mu},M_{\mu+n\eps})=
\hom_{\gg'}(S^n(V),M_{\mu}^*\otimes M_{\mu+n\eps})\neq 0.
\]
However, for any even $n$ $S^n( V)$ contains a trivial $\kk$-constituent. Therefore
\[
( M_{\mu}^*\otimes M_{\mu+n\eps})^{\kk}=
\hom_{\kk}(M_{\mu}, M_{\mu+n\eps})\neq 0.
\]
Since $M_\mu$ has finitely many simple $\kk$-constituents, there
is a simple $\kk$-constituent $V^r$ of $M_\mu$ such that
$\hom_{\kk}(V^r, M_{\mu+n\eps})\neq 0$ for infinitely many $n$. That
implies $\dim M^r = \infty$. Contradiction.
\end{example}

We conclude this section by a brief discussion of the action of the
translation functor on bounded $(\gg,\kk)$-modules. For any
$\xi\in\hh^*$, denote by $U^{\chi(\xi)}$ the quotient of $U(\gg)$ by
the two sided ideal generated by the kernel of the character
$\chi(\xi):Z_{\univ}\to\CC$ via which $Z_{\univ}$ acts on the Verma
module with $\bb$-highest weight $\xi-\rho$. Let now
$\xi,\eta\in\hh^*$ be two weights with the same stabilizer in the Weyl group $W_\gg$
and such that the difference $\eta-\xi$ is a $\gg$-integral
weight. Assume furthermore that 
$(\xi,\check\alpha)\in\ZZ_{\geq 0}\iff (\eta,\check\alpha)\in\ZZ_{\geq
  0}$ and $(\xi,\check\alpha)\in\ZZ_{\leq 0}\iff
(\eta,\check\alpha)\in\ZZ_{\leq 0}$ for any root $\alpha$ of $\bb$
(
as usual, $\check\alpha=\frac{2\alpha}{(\alpha,\alpha)}$
). There is a unique simple finite dimensional $\gg$-module $E$ such that $\eta-\xi$ is its extremal weight. It is well known, see \cite{BG} and \cite{Z}, that the translation functors
\begin{eqnarray*}
T_\xi^\eta:U^{\chi(\xi)}-\mod&\to& U^{\chi(\eta)}-\mod\\
M&\mapsto&U^{\chi(\eta)}\otimes_{U(\gg)}(M\otimes E),\\
T_\eta^\xi:U^{\chi(\eta)}-\mod&\to& U^{\chi(\xi)}-\mod\\
M&\mapsto&U^{\chi(\xi)}\otimes_{U(\gg)}(M\otimes E^*),\\
\end{eqnarray*}
are mutually inverse equivalences of categories. It will be important for us that the image of a bounded $(\gg,\kk)$-module under the translation functor is clearly a bounded $(\gg,\kk)$-module. Therefore, if $\BB_\kk^{\chi(\xi)}$ (respectively, $\BB_\kk^{\chi(\eta)}$) is the full subcategory of $U^{\chi(\xi)}-\mod$ (resp., of $U^{\chi(\eta)}-\mod$) whose objects are bounded generalized $(\gg,\kk)$-modules, $T_\xi^\eta$ and $T_\eta^\xi$ induce mutually inverse equivalences of the categories $\BB_\kk^{\chi(\xi)}$ and $\BB_\kk^{\chi(\eta)}$.

\section{A construction of bounded $(\gg,\kk)$-modules}\label{secAconstruction}

Let $\DD^\xi$ be the sheaf of twisted differential operators on $G/B$ as introduced in \cite{BB}. Recall that if $( \xi,\check\alpha)\neq 0$ for any $\alpha\in\Delta$, then $\Gamma(G/B,\DD^\xi)=\univ^{\chi(\xi)}$. Furthermore, if $( \xi,\check\alpha)\notin\ZZ_{\leq 0}$ for any root $\alpha$ of $\bb=\mathrm{Lie} B$, then the functors
\[
\Gamma:\quad \DD^\xi-\mod \leadsto \univ^{\chi(\xi)}-\mod
\]
\[
\DD^\xi\otimes_{\univ^\chi}\cdot:\quad \univ^{\chi(\xi)}-\mod \leadsto \DD^\chi-\mod
\]
are mutually inverse equivalences of categories. Here $\DD^\xi-\mod$ denotes the category of sheaves of left $\DD^\xi$-modules on $G/B$ which are quasicoherent as sheaves of $\OO=\OO_{G/B}$-modules, \cite{BB}.

\newcommand{\funcT}{T_\xi^{\eta}}

Note that if $\xi,\eta\in \hh^*$ satisfy $(\xi,\check\alpha)\notin \ZZ_{\leq 0}$, $(\eta,\check\alpha)\notin \ZZ_{\leq 0}$ for any root $\alpha$ of $\bb$, and $\xi-\eta$ is a $\gg$-integral weight, then the translation functor
\[
\funcT: \univ^{\chi(\eta)}-\mod\leadsto \univ^{\chi(\xi)}-\mod
\]
coincides with the composition $\Gamma\circ\left(\OO(\xi-\eta)\otimes_\OO\cdot\right)\circ\left(\DD^\eta\otimes_{\univ^\eta}\cdot\right)$, where $\OO(\xi-\eta)$ stands for the invertible sheaf on $G/B$ on whose geometric fibre at the point $B\in G/B$ the Lie algebra $\bb$ acts via the weight $w_m(\xi-\eta)$, $w_m$ being the element of maximal length in the Weyl group $W_\gg$. This yields a geometric description of the translation functor $T_\xi^\eta$.


We need one more basic $\DD$-module construction. For any parabolic
subgroup $P\subset G$ there is a well-known ring homomorphism
$U(\gg)\to \Gamma(G/P,\DD_{G/P})$ which extends the obvious
homomorphism $\gg\to \Gamma(G/P,\mathcal {T}_{G/P})$. Therefore the
functor
\[
\Gamma: \DD_{G/P}-\mod \to \Gamma(G/P,\DD_{G/P})-\mod
\]
can be considered as a functor into $U(\gg)$-mod.

Let $Z$ be a smooth closed subvariety of $G/P$, and let $(\DD_{G/P}-\mod)^Z$ be the full subcategory of $\DD_{G/P}$-mod with objects $\DD_{G/P}$-modules supported on $Z$ as sheaves. Furthermore, denote by $\DD_{X\leftarrow Z}$ the $(\DD_{G/P},\DD_Z)$-bimodule $((\DD_{G/P}\otimes_{\OO_{G/P}}\Omega^*_{G/P})_{|_Z})\otimes_{\OO_Z}\Omega_Z$. A well-known theorem of Kashiwara \cite{K} claims that the functor
\[
i_\bigstar:\DD_{Z}-\mod \leadsto (\DD_{G/P}-\mod)^Z
\]
\[
\mathcal F \mapsto \DD_{X\leftarrow Z}\otimes_{\DD_{Z}}\mathcal F
\]
is an equivalence of categories. In addition, it is easy to see that $\Gamma(G/P,i_*\OO_Z)$ is an infinite dimensional $\gg$-module whenever $\dim Z< \dim G/B$.

Next, we recall the following result.
\begin{theorem} \label{thGroups1}
(\cite{VK} [Thm.2]) Let $K$ be a reductive algebraic group and $B_{K}$ be a Borel subgroup of $K$. Then, for any (finite dimensional) $K$-module $V$ such that $B_{K}$ has an open orbit in $V$, the symmetric algebra $S^\cdot(V)$ is a multiplicity free $K$-module.
\end{theorem}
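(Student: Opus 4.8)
The plan is to deduce the theorem from the classical criterion that a dense Borel orbit on an irreducible affine variety forces its coordinate ring to be a multiplicity free module, and then to pass from $\CC[V]$ to $S^\cdot(V)$ by a short duality argument. Fix a maximal torus $T_K\subset B_K$ and let $U_K$ be the unipotent radical of $B_K$, so that $B_K=T_KU_K$; ``weight'', ``dominant'' and ``semiinvariant'' refer to this choice, and for a locally finite rational module $M$ over the (connected) reductive group $K$ and a dominant weight $\lambda$ I write $[M:V_\lambda]$ for the multiplicity of the simple $K$-module $V_\lambda$ in $M$.

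The first ingredient is the standard dictionary $[M:V_\lambda]=\dim M^{(B_K)}_\lambda$, where $M^{(B_K)}_\lambda$ denotes the space of $B_K$-semiinvariants in $M$ of weight $\lambda$ (equivalently, the $U_K$-invariants that are $T_K$-eigenvectors of weight $\lambda$): indeed $M^{U_K}=\bigoplus_\mu [M:V_\mu]\,V_\mu^{U_K}$, and $V_\mu^{U_K}$ is the one-dimensional highest weight line of $V_\mu$, of weight $\mu$. Hence $M$ is multiplicity free if and only if $\dim M^{(B_K)}_\lambda\le1$ for all dominant $\lambda$. The second, and only substantive, ingredient is this: if $B_K$ has a dense orbit in an irreducible affine $K$-variety $X$, then any $B_K$-invariant rational function on $X$ is constant on that orbit (the orbit, being dense, meets the domain of definition of the function) and hence constant on $X$, i.e. $\CC(X)^{B_K}=\CC$; therefore two nonzero $B_K$-semiinvariants $f,g\in\CC[X]$ of the same weight have $f/g\in\CC(X)^{B_K}=\CC$, so $\dim\CC[X]^{(B_K)}_\lambda\le1$, and $\CC[X]$, being a locally finite rational $K$-module, is multiplicity free by the dictionary. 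Applying this to $X=V$, which is irreducible and affine and in which $B_K$ has a dense orbit (the hypothesised open orbit), we conclude that $\CC[V]=S^\cdot(V^*)$ is a multiplicity free $K$-module.

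To finish I transfer this to $S^\cdot(V)$. Over $\CC$ one has $K$-module isomorphisms $\bigl(S^n(V^*)\bigr)^*\cong S^n(V)$ for all $n$, and $V_\lambda^*\cong V_{\lambda^*}$ with $\lambda^*\eqdef-w_0\lambda$, where $w_0$ is the longest element of the Weyl group of $K$; moreover $\lambda\mapsto\lambda^*$ is a bijection of the set of dominant weights. Consequently, for every dominant $\lambda$,
\[
[S^\cdot(V):V_\lambda]=\sum_{n\ge0}[S^n(V):V_\lambda]=\sum_{n\ge0}\bigl[(S^n(V^*))^*:V_\lambda\bigr]=\sum_{n\ge0}[S^n(V^*):V_{\lambda^*}]=[S^\cdot(V^*):V_{\lambda^*}]\le1,
\]
the last inequality being the conclusion of the previous paragraph. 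Hence $S^\cdot(V)$ is multiplicity free, as claimed.

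Since the result is classical (it is exactly the statement recorded in the cited reference), I do not expect a genuine obstacle: the heart of the matter is the second ingredient above, and it is a two-line argument once one knows $\CC(X)^{B_K}=\CC$. The only point to watch is purely formal, namely that $\CC[X]$ and $S^\cdot(V)$ are infinite dimensional, so one should consistently regard them as unions of their finite-dimensional $K$-stable subspaces (polynomials of bounded degree, respectively the individual symmetric powers) whenever invoking isotypic decompositions or the multiplicity/highest-weight dictionary.
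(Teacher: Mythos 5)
Your proof is correct. The paper does not reprove Theorem~\ref{thGroups1}; it cites \cite{VK} [Thm.~2], and your argument is precisely the classical Vinberg--Kimelfeld one: identify $[M:V_\lambda]$ with $\dim M^{(B_K)}_\lambda$, observe that a dense $B_K$-orbit on the irreducible affine variety $V$ forces $\CC(V)^{B_K}=\CC$, so that any two $B_K$-semiinvariants of the same weight are proportional, whence $\CC[V]$ is multiplicity free. The closing dualization step, passing from $\CC[V]=S^\cdot(V^*)$ to $S^\cdot(V)$ via $\lambda\mapsto -w_0\lambda$, is a tidy way to match the theorem exactly as stated (which is about $S^\cdot(V)$, not the coordinate ring) without separately invoking the fact that $V$ is a spherical $K$-module if and only if $V^*$ is.
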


A $K$-module $V$ is called \emph {spherical} if it satisfies the condition
of \refth{thGroups1}. Moreover, assume now that $K$ is a reductive proper subgroup of our fixed reductive algebraic group $G$, and let $P\subset G$ be a proper parabolic subgroup such that $Q:=K\cap P$ is a parabolic subgroup in $K$. Let $Q_0$ be a reductive part of $Q$. There is a closed immersion
\[
K\cdot P=K/Q\hookrightarrow G/P.
\]
Since $P$ is $Q$-stable,  $Q$ acts in the fiber $\mathcal{N}_P\simeq \gg / (\kk \oplus \mathfrak {p})$ at the point $P$ of the normal bundle $\mathcal{N}$ of $K/Q$ in $G/P$.

The following result is one of the key observations in this paper.

\begin{theorem}\label{thGroups2}
If $\mathcal N_P$ is a spherical $Q_0$-module, then $\Gamma(G/P,i_\bigstar \OO_{K/Q})$ is an infinite dimensional multiplicity free $(\gg,\kk)$-module.
\end{theorem}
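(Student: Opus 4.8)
The plan is to produce the module $M := \Gamma(G/P, i_\bigstar \OO_{K/Q})$ explicitly as a $\kk$-module and read off its $\kk$-multiplicities from the geometry of the embedding $K/Q \hookrightarrow G/P$ together with Kashiwara's theorem. First I would use Kashiwara's equivalence to identify $i_\bigstar\OO_{K/Q}$ with the $\DD_{G/P}$-module obtained by pushing forward the structure sheaf. Its global sections, as a vector space, can be computed from the normal bundle: the formal completion of $\OO_{G/P}$ along $Z = K/Q$ filters $i_*\OO_Z$ so that the associated graded is $\bigoplus_{n\geq 0} S^n(\mathcal N_P^*)$ twisted appropriately (here one must be careful with the twist by $\Omega_Z \otimes \Omega_{G/P}^*$ coming into the definition of $\DD_{X\leftarrow Z}$, and with whether one gets $\mathcal N_P$ or its dual — both are spherical for $Q_0$ iff one is, since $\mathcal N_P^*$ is $B_K$-spherical exactly when $\mathcal N_P$ is, using that $Q$ is parabolic and dualizing). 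Concretely, $\Gamma(G/P, i_\bigstar\OO_{K/Q})$ is, as a $K$-module, $\Gamma(K/Q, \text{Sym}^\bullet(\mathcal N_P^\vee)\otimes(\text{twist}))$, i.e. the sections over $K/Q$ of the sheaf of sections of the symmetric algebra of the (co)normal bundle.

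Next I would invoke the Borel–Weil–Bott type analysis on $K/Q$: since $K/Q$ is a partial flag variety of $K$ and $\text{Sym}^\bullet(\mathcal N_P^\vee)$ is a $Q$-equivariant (hence $Q_0$-equivariant) sheaf built from the $Q_0$-module $\mathcal N_P$, the global sections over $K/Q$ are computed by inducing $Q_0$-representations up to $K$. The crucial input is Theorem \ref{thGroups1}: the hypothesis that $\mathcal N_P$ (equivalently $\mathcal N_P^\vee$) is a spherical $Q_0$-module means $S^\bullet(\mathcal N_P^\vee)$ is a multiplicity-free $Q_0$-module. Inducing a multiplicity-free module from $Q_0$ up to $K$ — more precisely, taking $\Gamma(K/Q, \cdot)$ of the associated sheaf — preserves multiplicity-freeness: this is the standard fact that $\dim\Hom_K(V^r, \Gamma(K/Q, \mathcal F)) = \dim\Hom_{Q_0}(V^r, \text{(fiber)})$ after passing through the Leray/Bott spectral sequence, combined with the branching being multiplicity-free. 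Thus $M$ is a multiplicity-free $\kk$-module, hence in particular bounded (with bound $1$). Infinite-dimensionality follows from the remark already in the excerpt that $\Gamma(G/P, i_*\OO_Z)$ is infinite-dimensional whenever $\dim Z < \dim G/B$, which holds since $K$ is a proper subgroup (so $K/Q$ is a proper closed subvariety of $G/P$, and $\dim G/P \leq \dim G/B$). Finally, $M$ is a genuine $(\gg,\kk)$-module because the $U(\gg)$-action comes from the homomorphism $U(\gg)\to\Gamma(G/P,\DD_{G/P})$ and restricts on $\kk\subset\gg$ to the locally finite action integrating the $K$-action on sections; semisimplicity of $M$ as a $\kk$-module is automatic since $K$ is reductive and the isotypic decomposition above is literally a direct sum decomposition.

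The main obstacle I expect is the bookkeeping in the first step: identifying the $\kk$-module structure of $\Gamma(G/P, i_\bigstar\OO_{K/Q})$ correctly, i.e. verifying that the filtration of the Kashiwara pushforward by order of the differential operators (equivalently, by the conormal degree) has associated graded exactly $\bigoplus_n S^n(\mathcal N_P^\vee)$ up to the line-bundle twist $\Omega_{K/Q}\otimes(\Omega_{G/P}|_{K/Q})^\vee = \det(\mathcal N_P^\vee)$, and then checking that this twist does not spoil the multiplicity-free conclusion — it does not, because $S^\bullet(\mathcal N_P^\vee)\otimes \det(\mathcal N_P^\vee)$ is still a sub-symmetric-algebra-type module and, more to the point, the relevant statement we need is only the multiplicity-freeness over $Q_0$ of each graded piece, which follows from sphericity applied to $\mathcal N_P^\vee$. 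A secondary point requiring care is that one works with $Q$, not $Q_0$, as the isotropy group, so one must pass from $Q$-equivariant sheaves to $Q_0$-representations (the unipotent radical of $Q$ acts trivially on the relevant semisimplifications, or one replaces $Q$ by $Q_0$ via the standard reduction for induction from parabolics), and that $\Gamma$ over the compact (projective) variety $K/Q$ of a $K$-equivariant quasi-coherent sheaf decomposes into finite-dimensional $K$-isotypic components — this is where reductivity of $K$ and properness of $K/Q$ are used.
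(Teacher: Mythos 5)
Your argument is essentially the paper's: filter the Kashiwara pushforward, read off the graded pieces of the fiber at $P$, apply Theorem~\ref{thGroups1} to conclude $Q_0$-multiplicity-freeness, and pass to $K$-multiplicity-freeness via Bott--Borel--Weil (the infinite-dimensionality and the $(\gg,\kk)$-module structure are handled as you describe). The only slip is a duality bookkeeping error --- the graded pieces of $(i^{-1}i_\bigstar\OO_{K/Q})_P$ are $\Lambda^{\max}(\mathcal N_P)\otimes_\CC S^i(\mathcal N_P)$, symmetric powers of the \emph{normal} bundle rather than the conormal bundle, with twist $\det\mathcal N_P$ rather than its dual --- but, as you yourself anticipate, this is immaterial since the one-dimensional twist preserves multiplicity-freeness and $\mathcal N_P$ is a spherical $Q_0$-module if and only if $\mathcal N_P^\vee$ is.
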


\begin{proof}
Recall that $i^{-1}i_\bigstar\OO_{K/Q}$ has a natural $\OO_{K/Q}$-module filtration with successive quotients
\[
\Lambda^{max}(\mathcal {N})\otimes_{\OO_{K/Q}}S^i(\mathcal {N}).
\]
($\Lambda^{max}$ stands here for maximal exterior power). Moreover, $i^{-1}i_\bigstar\OO_{K/Q}$ is $K$-equivariant, and at the point $P$, the above filtration induces a $Q$-module filtration and thus also a $Q_0$-module filtration of the fiber $(i^{-1}i_\bigstar\OO_{K/Q})_P$ with successive quotients
\begin{equation}\label{eqGroups1}
\Lambda^{max}(\mathcal N_P)\otimes_\CC S^i(\mathcal N_P).
\end{equation}
\refth{thGroups2} implies that the direct sum of all modules
\refeq{eqGroups1} for $i\geq 0$ is a multiplicity free $Q_0$-module. The
Bott-Borel-Weil Theorem implies therefore that $\Gamma(K/Q,$
$\bigoplus_{i\geq 0}(\Lambda^{max}(\mathcal
N)\otimes_{\OO_{K/Q}}S^i(\mathcal N)))$ is a multiplicity free
$K$-module. Since as a $K$-module
$\Gamma(G/P,i_\bigstar\OO_{K/Q})$ is a submodule of
$\Gamma(K/Q,\bigoplus_{i\geq 0}(\Lambda^{max}(\mathcal
N)\otimes_{\OO_{K/Q}}S^i(\mathcal N)))$,
$\Gamma(G/P,i_\bigstar\OO_{K/Q})$ is itself K-multiplicity free.\end{proof}

We would like to point out that it is relatively straightforward to generalize \refth{thGroups2} to the case when $\OO_{K/Q}$ is replaced by a $K$-equivariant line bundle on $K/Q$. This more general theorem should play an important role in a future study of bounded $(\gg,\kk)$-modules with central characters different from that of a trivial $\gg$-module. In the present paper we discuss this construction briefly in a very special case, see \refle{le105odd} below.

\section{On Bounded Subalgebras}\label{secOnBounded}

\refth{thGroups2} leads to the following results about bounded subalgebras.

\begin{corollary}\label{corGroups3}
Let $K\subset G\subset GL(V)$ be a chain of reductive algebraic
groups, and let $V'\subset V$ be a 1-dimensional space whose
stabilizers in $G$ and $K$ are parabolic subgroups $P\subset G$ and
$Q\subset K$. Then, if $(V')^*\otimes(\gg\cdot V'/\kk\cdot V')$ is a spherical $Q_0$-module, then $\kk$ is a bounded subalgebra of $\gg$.
\end{corollary}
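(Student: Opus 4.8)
The plan is to derive Corollary~\ref{corGroups3} from Theorem~\ref{thGroups2} by simply unwinding how the parabolic subgroups $P$ and $Q$ act on the line $V'$ and on the relevant normal space. First I would record that, by hypothesis, $P = \mathrm{Stab}_G(V')$ and $Q = \mathrm{Stab}_K(V')$ are parabolic, so the orbit map $g \mapsto g\cdot V'$ identifies $G/P$ with the $G$-orbit of $V'$ inside $\PP(V)$, and similarly $K/Q$ with the $K$-orbit of $V'$. Since $Q = K\cap P$ and the $K$-orbit of $V'$ sits inside the $G$-orbit, we get the closed immersion $K/Q = K\cdot P \hookrightarrow G/P$ exactly as in the setup preceding Theorem~\ref{thGroups2}; the smoothness needed there is automatic because $K/Q$ is a partial flag variety. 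Thus the hypotheses of Theorem~\ref{thGroups2} are met except for the sphericity condition on the normal-space fiber $\mathcal N_P \simeq \gg/(\kk\oplus\pp)$, which is precisely what remains to be matched with the stated hypothesis $(V')^*\otimes(\gg\cdot V'/\kk\cdot V')$.

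The key step is the identification of $\mathcal N_P$ with $(V')^*\otimes(\gg\cdot V'/\kk\cdot V')$ as $Q_0$-modules (equivalently, as $Q$-modules, since the unipotent radical of $Q$ acts trivially on any one-dimensional or spherical-relevant constituent — but in fact I want the full $Q$-module identification and then restrict). Differentiating the orbit map $G\to \PP(V)$, $g\mapsto g\cdot V'$, at the identity gives a $P$-equivariant identification of the tangent space $T_{[V']}(G/P)$ with $\Hom(V', V/V') \supset$ the image of $\gg$, namely with $(V')^*\otimes (\gg\cdot V'/V')$ where $\gg\cdot V'$ denotes the subspace of $V$ spanned by all $g\cdot v$, $g\in\gg$, $v\in V'$ — here one uses that $V'$ is $P$-stable so $\gg\cdot V'/V'$ makes sense, but more carefully the derivative lands in $\Hom(V', V/V')$ and has image $(V')^*\otimes((\gg\cdot V' + V')/V')$. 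Restricting to the subvariety $K/Q$, its tangent space at $[V']$ is correspondingly $(V')^*\otimes((\kk\cdot V'+V')/V')$, and the normal space is the quotient, which is canonically $(V')^*\otimes(\gg\cdot V'/\kk\cdot V')$ once one checks $V'\subset \kk\cdot V'$ (true as long as $\kk$ acts nontrivially; the degenerate case is harmless since then the formula still reads correctly modulo a trivial summand). All of these identifications are $Q$-equivariant by naturality of the derivative, so they are in particular $Q_0$-equivariant. Hence $\mathcal N_P$ is a spherical $Q_0$-module by hypothesis.

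With that identification in hand, Theorem~\ref{thGroups2} gives that $\Gamma(G/P, i_\bigstar\OO_{K/Q})$ is an infinite-dimensional multiplicity-free, hence bounded, $(\gg,\kk)$-module. It need not be simple, but any simple subquotient of a bounded module is bounded, and it is infinite-dimensional provided we pick an infinite-dimensional simple subquotient; such exists because $\Gamma(G/P,i_\bigstar\OO_{K/Q})$ is infinite-dimensional (as $\dim K/Q < \dim G/B$ whenever $K$ is proper in $G$, which holds here since $P$ is a proper parabolic and $Q$ is a parabolic of the proper subgroup $K$ — more precisely $i_*\OO_{K/Q}$ restricted to $K/Q$ has infinitely many sections by the remark before Theorem~\ref{thGroups1}, and the $\gg$-module structure cannot be finite-dimensional for dimension reasons) while a finitely generated module all of whose simple subquotients are finite-dimensional would be finite-dimensional. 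Therefore $\kk$ admits an infinite-dimensional bounded simple $(\gg,\kk)$-module, i.e. $\kk$ is a bounded subalgebra of $\gg$.

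The main obstacle is purely the bookkeeping in the middle paragraph: one must be careful about the two a-priori different meanings of "$\gg\cdot V'$" (the span in $V$ versus its image in $V/V'$), about whether $V'$ is contained in $\kk\cdot V'$, and about checking that the reductive part $Q_0$ — rather than all of $Q$ — is what acts, which is fine because sphericity of a $Q_0$-module and of the associated $Q$-module (inflated through $Q\to Q_0$) coincide for the symmetric-algebra criterion of Theorem~\ref{thGroups1}. None of this is deep, but it is where an error would most likely creep in.
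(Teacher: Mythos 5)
Your proof follows the paper's argument exactly: identify $G/P$ and $K/Q$ with the $G$- and $K$-orbits of $V'$ in $\PP(V)$, compute the tangent spaces and normal fiber to obtain $\mathcal N_P\simeq (V')^*\otimes(\gg\cdot V'/\kk\cdot V')$, and invoke Theorem~\ref{thGroups2}. You are in fact somewhat more careful than the paper at two points: you spell out that the tangent space is really $(V')^*\otimes((\gg\cdot V'+V')/V')$ (the paper writes $(V')^*\otimes\gg\cdot V'$ as shorthand, with the quotient by $V'$ suppressed), and you flag that Theorem~\ref{thGroups2} only produces an infinite-dimensional multiplicity-free module, not a simple one, so one must pass to a simple subquotient --- a step the paper leaves implicit. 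For that last step the cleanest justification is that the module carries a fixed central character (being a module over $\Gamma(G/P,\DD_{G/P})$, a quotient of $U(\gg)$), so there are only finitely many finite-dimensional simple subquotients up to isomorphism, each occurring with bounded multiplicity by the multiplicity-free property; thus if every simple subquotient were finite-dimensional the whole module would have finite length and be finite-dimensional, a contradiction.
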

\begin{proof}
We identify $G/P$ with the $G$-orbit of $V'$ in $\PP(V)$. Then $K/Q$
is identified with the $K$-orbit of $V'$ in $\PP(V)$. Moreover
$(\mathcal T_{G/P})_{V'}=(V')^*\otimes\gg \cdot V', (\mathcal
T_{K/Q})_{V'}=(V')^*\otimes\kk\cdot V'$, and hence $\mathcal {N}_P$
is identified with
$((\mathcal T_{G/P})_{V'}/(\mathcal T_{K/P})_{V'})=
(V')^*\otimes(\gg \cdot V'/\kk \cdot V')$.
Therefore the claim follows from \refth{thGroups2}.
\end{proof}

\begin{corollary}\label{corGroups4}
Let $K$ be a reductive subgroup in $GL(\tilde V)$ such that  $\tilde V$ is a spherical $K$-module. Then $\text{Lie} K$ is a bounded subalgebra of $\gl(\tilde V\oplus\CC)$, where $\text{Lie} K$ is embedded in $\gl(\tilde V\oplus\CC)$ via the composition $\text{Lie}K\subset\gl(\tilde V)\subset\gl(\tilde V\oplus\CC)$.
\end{corollary}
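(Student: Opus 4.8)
The plan is to deduce Corollary \ref{corGroups4} from Corollary \ref{corGroups3} by exhibiting the right chain of groups and the right one-dimensional subspace. Set $V\eqdef \tilde V\oplus\CC$, let $G\eqdef GL(V)=GL(\tilde V\oplus\CC)$, and embed $K$ into $G$ via $\text{Lie}K\subset\gl(\tilde V)\subset\gl(\tilde V\oplus\CC)$, i.e. $K$ acts on $\tilde V$ in the given way and trivially on the $\CC$-summand. Let $V'\eqdef\CC\subset V$ be this distinguished line. Then $V'$ is $K$-stable, so its stabilizer $Q$ in $K$ is all of $K$, which is (trivially) a parabolic subgroup of $K$ with reductive part $Q_0=K$ itself. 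Its stabilizer $P$ in $G=GL(V)$ is the maximal parabolic subgroup preserving the line $V'$, so the hypotheses of Corollary \ref{corGroups3} on $P$ and $Q$ are met.

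Next I would identify the $Q_0$-module $(V')^*\otimes(\gg\cdot V'/\kk\cdot V')$ appearing in Corollary \ref{corGroups3}. Here $\gg=\gl(V)$, and $\gg\cdot V'$ is the image of $V'$ under the action of all of $\gl(V)$, which is all of $V=\tilde V\oplus\CC$. Meanwhile $\kk=\text{Lie}K$ acts trivially on $V'$, so $\kk\cdot V'=0$, and therefore $\gg\cdot V'/\kk\cdot V'=V=\tilde V\oplus\CC$ as a $K$-module. Since $V'=\CC$ is the trivial one-dimensional $K$-module, $(V')^*\otimes(\gg\cdot V'/\kk\cdot V')\simeq \tilde V\oplus\CC$ as $K=Q_0$-modules.

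It remains to check that $\tilde V\oplus\CC$ is a spherical $K$-module given that $\tilde V$ is. By definition sphericity means a Borel subgroup $B_K$ of $K$ has a dense orbit, equivalently (by Theorem \ref{thGroups1} and its converse, which follows from the fact that $S^\cdot$ of a non-spherical module has multiplicities) $S^\cdot(\tilde V\oplus\CC)$ is a multiplicity-free $K$-module. But $S^\cdot(\tilde V\oplus\CC)\simeq S^\cdot(\tilde V)\otimes S^\cdot(\CC)\simeq S^\cdot(\tilde V)\otimes\CC[z]\simeq \bigoplus_{n\geq0} S^\cdot(\tilde V)$ as $K$-modules, and this is multiplicity free as a $K$-module if and only if each graded piece $S^n(\tilde V)$ is multiplicity free and, more to the point, if and only if $S^\cdot(\tilde V)$ itself is multiplicity free — adjoining a trivial summand only rescales the grading and does not affect which irreducibles of $K$ appear nor with what multiplicity within a fixed total degree. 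Alternatively and more directly: if $B_K\subset K$ has a dense orbit on $\tilde V$, then $B_K$ has a dense orbit on $\tilde V\times\{1\}\subset\tilde V\oplus\CC$ since $B_K$ fixes the $\CC$-coordinate, and this affine subspace is not contained in any $B_K$-stable proper subspace, so $\overline{B_K\cdot(v,1)}=\tilde V\oplus\CC$ for generic $v$. Hence $\tilde V\oplus\CC$ is $Q_0$-spherical, Corollary \ref{corGroups3} applies, and $\text{Lie}K$ is a bounded subalgebra of $\gl(\tilde V\oplus\CC)$.

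The only genuinely delicate point is the sphericity statement, and specifically the convention on the definition of "spherical": Theorem \ref{thGroups1} is stated as "open $B_K$-orbit $\Rightarrow$ multiplicity-free $S^\cdot$", so I should argue with the open-orbit characterization directly (as in the last argument above) rather than relying on a converse that is not stated in the excerpt; this keeps the proof self-contained. Everything else is a routine unwinding of definitions and of the identifications already made in the proof of Corollary \ref{corGroups3}.
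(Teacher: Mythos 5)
Your reduction to Corollary \ref{corGroups3} starts correctly (same $V=\tilde V\oplus\CC$, same $V'=\CC$, same $P$, $Q=K$, $Q_0=K$), but you miscompute the normal space and this leads the proof astray. The object $\mathcal N_P\simeq(V')^*\otimes(\gg\cdot V'/\kk\cdot V')$ is the normal space to $K/Q$ inside $G/P=\PP(V)$ at the point $[V']$, so $\gg\cdot V'$ and $\kk\cdot V'$ must be read modulo $V'$ (equivalently, as $V'+\gg(V')$ and $V'+\kk(V')$): the tangent space to $\PP(V)$ at $[V']$ is $\hom(V',V/V')$, not $\hom(V',V)$. A dimension count confirms this: $\dim\mathcal N_P=\dim(G/P)-\dim(K/Q)=n-0=n=\dim\tilde V$, not $n+1$. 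The paper's own line ``$\gg\cdot V'=V$, $\kk\cdot V'=V'$, hence $(V')^*\otimes(\gg\cdot V'/\kk\cdot V')=\tilde V$'' is exactly this computation. You instead obtain $\tilde V\oplus\CC$, which is off by one dimension.

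Worse, the compensating claim that $\tilde V\oplus\CC$ is $K$-spherical is false whenever $\tilde V\neq 0$, and the argument offered for it contains a concrete error. Since $B_K$ fixes the $\CC$-coordinate, $B_K\cdot(v,1)$ lies inside the Zariski-closed subvariety $\tilde V\times\{1\}$, so its closure is contained in $\tilde V\times\{1\}$ and has dimension at most $\dim\tilde V<\dim(\tilde V\oplus\CC)$; the observation that $\tilde V\times\{1\}$ is not contained in a $B_K$-stable \emph{linear} subspace is irrelevant, because closures are taken in the Zariski topology, not in the lattice of linear subspaces. On the algebraic side, $S^\cdot(\tilde V\oplus\CC)\simeq S^\cdot(\tilde V)\otimes\CC[z]$, so every simple $K$-constituent of $S^\cdot(\tilde V)$ occurs infinitely often --- the opposite of multiplicity-freeness. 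Once you correct $\mathcal N_P$ to $\tilde V$, there is nothing to check: $\tilde V$ is spherical by hypothesis, and Corollary \ref{corGroups3} applies directly.
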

\begin{proof}
One sets $V:=\tilde V\oplus \CC$ and applies \refcor{corGroups3} to the chain $K\subset G:=GL(V)$ with the choice of $V'$ as the fixed one dimensional subspace $\CC\subset V$. Then $(V')^*\otimes(\gg\cdot V'/\kk \cdot V')=\tilde V$ as $\gg\cdot V'=V$, $\kk\cdot V'=V'$.
\end{proof}

All faithful simple spherical modules of reductive Lie groups are listed
in \cite{K1} [Thm. 3]. This list provides via \refcor{corGroups4} a
lot of examples of bounded subalgebras of $\gl(n)$.

Before we proceed to applications of \refcor{corGroups3}, let us briefly discuss what is known in the cases when $\kk$ is a symmetric or a Cartan subalgebra of $\gg$. In the first case, there is the celebrated classification of Harish-Chandra modules, see \cite{V1}, \cite{KV} and the references therein. In addition, bounded Harish-Chandra modules have been studied in detail in many cases, and the corresponding very interesting results are somewhat scattered throughout the literature. It is an important fact that every symmetric subalgebra of a semisimple Lie algebra is bounded, and this follows from a combination of published and unpublished results, communicated to us by D. Vogan, Jr. and G. Zuckerman.

More precisely, if the pair $(\gg,\kk)$ is Hermitian, i. e. if $\kk$ is contained in a proper maximal parabolic subalgebra, any simple highest weight Harish-Chandra module is bounded. This follows from results of W. Schmid, \cite{Sch}. If $\gg$ is simply laced, then (published and unpublished) results of D. Vogan, Jr. imply that any symmetric subalgebra $\kk\subset\gg$ is bounded. In all remaining cases, the boundedness of a symmetric subalgebra follows from the existence of a simple ladder module (this is a special type of multiplicity free $(\gg,\kk)$-module, see the proof of \refth{thPairsList}), or a bounded degenerate principal series module, or a bounded Zuckerman derived functor module. The corresponding results can be found in \cite{V1}, \cite{V3}, \cite{BS}, \cite{GW}, \cite{Str}, and \cite{EPWW}. A systematic study of bounded Harish-Chandra modules would be very desirable but is not part of this paper.

In the case when $\kk=\hh$ is a Cartan subalgebra of $\gg$ the simple bounded $(\gg,\kk)$-modules have played a quite visible role in the literature on weight modules. Here it is easy to check that, if $\gg$ is simple, \refeq{eq42} is satisfied only for $\gg\simeq\sl(m),\sp(n)$. This observation, made by A. Joseph in the 1980's, easily implies that a Cartan subalgebra is a bounded subalgebra of a simple Lie algebra $\gg$ if and only if $\gg\simeq\sl(m),\sp(n)$. Furthermore, the works of S. Fernando, O. Mathieu and others, see \cite{M}, \cite{F} and the references therein, have lead to an explicit description of all simple bounded $(\gg,\hh)$-modules for $\gg=\sl(m),\sp(n)$, see \cite{M} for comprehensive results.

We now proceed to direct applications of \refcor{corGroups3}: we classify all bounded reductive subalgebras $\kk\subset \sl(n)$ which are maximal as subalgebras, and give examples of bounded non-maximal subalgebras of $\sl(n)$.

\begin{theorem}\label{th51}
Let $\gg=\sl(n)$. A proper reductive in $\gg$ subalgebra $\kk$ which is maximal as a subalgebra of $\gg$ is bounded if and only if it satisfies the inequality \refeq{eq42}, i.e. iff $b_\kk\geq n-1$.
\end{theorem}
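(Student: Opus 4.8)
If $\kk$ is bounded then $b_\kk\geq n-1$: this is immediate from \refcor{cor1.6}, since $r_{\sl(n)}=n-1$. For the converse I would argue as follows. A reductive in $\gg$ subalgebra which is maximal as a subalgebra of $\sl(n)$ must act irreducibly on $V\eqdef\CC^n$, for otherwise it stabilizes a proper nonzero subspace and hence lies in a proper parabolic subalgebra, which, being non-reductive, cannot equal $\kk$. By the classification of maximal subalgebras of $\sl(n)$ (Dynkin), such a $\kk$ is therefore, up to conjugation, either $\so(n)$ or $\sp(n)$ (the isometry algebra of a nondegenerate bilinear form on $V$), or $\sl(k)\oplus\sl(m)$ acting on $V=\CC^k\otimes\CC^m$ with $km=n$, or a simple Lie algebra $\ss$ acting via an irreducible representation $\rho$ that preserves no bilinear form and is not of tensor-product type. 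In the last case Weyl's dimension formula shows that the inequality $b_\kk\geq n-1$, i.e. $\dim\rho\leq\tfrac12(\dim\ss+\rk\ss)+1$, leaves only the possibilities $\ss=\sl(m)$ with $\rho=\Lambda^2\CC^m$ ($m\geq5$, where $b_\kk-(n-1)=m$) or $\rho=S^2\CC^m$ ($m\geq3$, where $b_\kk=n-1$); every other irreducible $\rho$ either violates $b_\kk\geq n-1$ or is self-dual or of tensor type and so reduces to one of the earlier cases. It remains to show that $\so(n)$, $\sp(n)$, $\sl(k)\oplus\sl(m)$, $\sl(m)\subset\sl(\Lambda^2\CC^m)$ and $\sl(m)\subset\sl(S^2\CC^m)$ are bounded.

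Each of these will follow from \refth{thGroups2} applied with $G=SL(V)$. For $\kk=\sl(k)\oplus\sl(m)$, $\sl(m)$ on $\Lambda^2\CC^m$, or $\sl(m)$ on $S^2\CC^m$ I take $V'\subset V$ to be the line through a $\bb_\kk$-highest weight vector $v$ of the (now irreducible) $\kk$-module $V$ and $P=\mathrm{Stab}_{SL(V)}(V')$, which is the setting of \refcor{corGroups3}. Then $K\cdot V'$ is the closed $K$-orbit in $\PP(V)$ --- a Segre product of two projective spaces, a Grassmannian $Gr(2,m)$, or a Veronese variety, respectively --- so $Q=K\cap P$ is parabolic in $K$; since $\sl(V)\cdot v=V$ one has $\mathcal N_P\simeq(V')^*\otimes(V/\kk\cdot v)$. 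A short computation of $\kk\cdot v$ then identifies $\mathcal N_P$, up to a one-dimensional character twist which does not affect sphericity, with the $GL(k-1)\times GL(m-1)$-module $\CC^{k-1}\otimes\CC^{m-1}$, with the $GL(m-2)$-module $\Lambda^2\CC^{m-2}$, and with the $GL(m-1)$-module $S^2\CC^{m-1}$, respectively (the Levi $Q_0$ acting through these groups together with a torus of scalars).

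For $\kk=\so(n)$ the same recipe works provided $V'$ is taken to be an \emph{isotropic} line: then $K/Q$ is a smooth quadric hypersurface in $G/P=\PP^{n-1}$, so $Q$ is parabolic in $K$ and $\mathcal N_P$ is one-dimensional. For $\kk=\sp(n)$ the line construction degenerates, since the closed $Sp(n)$-orbit in $\PP(V)$ is all of $\PP(V)$; instead I would take $P=\mathrm{Stab}_{SL(V)}(U)$ for a $2$-dimensional isotropic subspace $U$ of $(V,\omega)$, so that $Q=K\cap P$ is the parabolic of $Sp(n)$ stabilizing $U$, $K/Q$ is the isotropic Grassmannian --- a hypersurface in $G/P=Gr(2,n)$ --- and $\mathcal N_P$ is again one-dimensional. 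In all five cases $\dim K/Q<\dim G/B$, so $\Gamma(G/P,i_\bigstar\OO_{K/Q})$ is infinite dimensional; and $\mathcal N_P$ is a spherical $Q_0$-module: a one-dimensional module is trivially spherical, $\CC^{k-1}\otimes\CC^{m-1}$ is spherical for $GL(k-1)\times GL(m-1)$ because $S^\cdot(\CC^{k-1}\otimes\CC^{m-1})$ is multiplicity free by the Cauchy identity, and $\Lambda^2\CC^p$, $S^2\CC^p$ are spherical $GL(p)$-modules by \cite{K1}. Therefore \refth{thGroups2} produces an infinite-dimensional bounded simple $(\gg,\kk)$-module in each case, and $\kk$ is bounded.

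I expect the main obstacles to be, first, running the classification carefully enough that no maximal reductive subalgebra with $b_\kk\geq n-1$ is overlooked --- in particular verifying via Weyl's formula that the simple-$\ss$ case collapses to exactly the two families above --- and, second, the $\sp(n)$ case, where the ``line'' construction of \refcor{corGroups3} yields only the trivial module and one must instead pass to the parabolic of $SL(n)$ stabilizing a $2$-plane. The remaining computations of $\kk\cdot v$ and the sphericity verifications are routine or are direct quotations from the classification of spherical modules.
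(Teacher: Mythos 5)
There is a genuine gap in the classification step. You assert that, once self-dual and tensor-type representations are excluded, the Weyl-dimension-formula bound $\dim\rho\leq\tfrac12(\dim\ss+\rk\ss)+1$ leaves only $\ss=\sl(m)$ with $\rho=\Lambda^2\CC^m$ or $\rho=S^2\CC^m$. This is false: it also leaves $\ss=\so(10)$ acting on the $16$-dimensional half-spin representation (here $b_\kk=25\geq 15=n-1$), and $\ss=E_6$ acting on the $27$-dimensional minimal representation (here $b_\kk=42\geq 26=n-1$). Both of these representations are non-self-dual (the half-spin representations of $\so(4p+2)$ are dual to each other, and the minimal representations $V_{\omega_1}$, $V_{\omega_6}$ of $E_6$ are dual to each other), and neither is of tensor-product type, so neither reduces to your earlier cases. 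These are precisely cases (2) and (3) of the paper's Lemma \ref{le52}, which in turn rests on the systematic table of Lemma \ref{thA}; the paper's two-lemma structure exists exactly to prevent this kind of omission. Your proof therefore does not prove the ``if'' direction as written, because two of the maximal subalgebras satisfying $b_\kk\geq n-1$ are never shown to be bounded. (They \emph{are} bounded, and the same Corollary \ref{corGroups3} mechanism you use handles them — for $\so(10)\subset\sl(16)$ one gets $Q_0\simeq GL(5)$ acting on the natural $5$-dimensional module twisted by a determinant, and for $E_6\subset\sl(27)$ one gets $Q_0\simeq SO(10)\times\CC^\ast$ acting on the natural $10$-dimensional module with nontrivial central twist — but you must actually verify this.)

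Two lesser remarks. First, for $\kk=\sl(m)$ acting on $\Lambda^2\CC^m$ the reductive part $Q_0$ of $Q$ is $SL(2)\times GL(m-2)$, not just $GL(m-2)$; the extra $SL(2)$ acts trivially on $\mathcal N_P$, so sphericity is unaffected, but the statement as written is imprecise. Second, your treatment of the $\sp(n)$ case is a genuinely different (and correct) route from the paper's: the paper realizes $K/Q$ as the Lagrangian Grassmannian inside the full Grassmannian of $p$-planes, where $\mathcal N_P\simeq\Lambda^2$ of the natural $GL(p)$-module, whereas you realize $K/Q$ as the hypersurface $IG(2,n)\subset Gr(2,n)$ so that $\mathcal N_P$ is one-dimensional. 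Your variant is simpler to check (one-dimensional nontrivial modules are automatically spherical) and is a nice observation; either works.
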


We need the following preparatory statements. For a simple Lie algebra
$\kk$ we denote by $\omega_{1},...,\omega_{\rk \kk}$ the fundamental
weights of $\kk$, where for the enumeration of simple roots we follow
the convention of \cite{OV}. Furthermore, in what follows we denote by
$V_{\lambda}$ the simple finite dimensional $\kk$-module with highest
weight $\lambda$.

\begin{lemma}\label{thA}
Let $\kk$ be a simple Lie algebra and $V$ be a simple $\kk$ module. Assume that
\begin{equation}\label{eqdim}
\dim V -1 \leq \frac{\dim\kk +\rk \kk}{2}.
\end{equation}
Then $V$ is trivial, or we have the following possibilities for $\kk$ and $V$:
\begin{itemize}
\item[(1)] $\kk = \sl(m)$, $V=V_{\omega_1}$, $V_{\omega_{m-1}}$, $V_{\omega_2}$, $V_{\omega_{m-2}}$, $V_{2\omega_1}$, $V_{2\omega_{m-1}}$,
\item[(2)] $\kk=\so(m)$ or $\sp(m)$, $V=V_{\omega _{1}}$,
\item[(3)] $\kk=\so(m)$, $5\leq m\leq 10$ or $m=11$, $V=V_{\omega_{(m-1)/2}}$ for odd $m$, $V=V_{\omega_{m/2}}$ and $V=V_{\omega_{m/2-1}}$ for even $m$,
\item[(4)] $\kk= G_2$, $V=V_{\omega_1}$,
\item[(5)] $\kk=F_4$, $V=V_{\omega_1}$,
\item[(6)] $\kk = E_6$, $V=V_{\omega_1}$ or $V_{\omega_6}$,
\item[(7)] $\kk=E_7$, $V=V_{\omega_1}$.
\end{itemize}
\end{lemma}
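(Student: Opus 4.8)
The plan is to reduce the inequality \refeq{eqdim} to a finite set of checkable cases by exploiting the fact that both $\dim\kk+\rk\kk$ and $\dim V$ grow predictably in the rank, and that $\dim V$ grows much faster than linearly once $V$ is not a ``small'' representation. First I would record the precise values: $\dim\sl(m)+\rk\sl(m)=m^2-2+(m-1)=m^2+m-2$, so $b_\kk=\frac{(m-1)(m+2)}{2}$; similarly for $\so(m)$ one has $b_\kk=\binom{\lfloor m/2\rfloor+1}{2}\cdot$(appropriate adjustment)$=\frac{1}{2}\left(\binom{m}{2}+\lfloor m/2\rfloor\right)$, for $\sp(2n)$ one has $\dim+\rk=2n^2+n+n=2n^2+2n$ so $b_\kk=n^2+n=n(n+1)$, and for the exceptional algebras $b_\kk$ equals $3,28,39,66,69,133,\,$ wait --- I would just tabulate $b_{G_2}=\frac{14+2}{2}=8$, $b_{F_4}=\frac{52+4}{2}=28$, $b_{E_6}=\frac{78+6}{2}=42$, $b_{E_7}=\frac{133+7}{2}=70$, $b_{E_8}=\frac{248+8}{2}=128$. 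The target inequality is $\dim V\leq b_\kk+1$.

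Next I would dispose of the classical families by going through the fundamental weights in order of dimension. For $\kk=\sl(m)$: $\dim V_{\omega_k}=\binom{m}{k}$, and $\dim V_{2\omega_1}=\binom{m+1}{2}$, $\dim V_{\omega_1+\omega_{m-1}}=m^2-1$; comparing with $b_\kk=\frac{(m-1)(m+2)}{2}$ one checks directly that $\binom{m}{1},\binom{m}{2},\binom{m+1}{2}$ are $\leq b_\kk+1$ for all $m$ (with small-$m$ coincidences like $\sl(2),\sl(3),\sl(4)$ handled by hand), $\binom{m}{3}$ exceeds $b_\kk+1$ once $m\geq 7$ or so and must be checked for $m\le 6$, and adjoint and all higher weights are too big; by self-duality of the dimension under $\omega_k\leftrightarrow\omega_{m-k}$ this yields exactly list (1). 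For $\kk=\so(m)$: the vector representation $V_{\omega_1}$ has dimension $m\leq b_\kk+1$ always; the adjoint $\Lambda^2$ is $\binom{m}{2}$, which is $>b_\kk+1$, so no $V_{\omega_k}$ with $2\leq k<\lfloor m/2\rfloor$ survives; the spin representation(s) have dimension $2^{\lfloor(m-1)/2\rfloor}$ (or $2^{m/2-1}$), which is $\leq b_\kk+1\approx \frac{m^2}{4}$ precisely in the range $5\le m\le 10$ and $m=11$ --- this is the arithmetic behind case (3) and should be checked $m$ by $m$. For $\kk=\sp(2n)$: $V_{\omega_1}$ has dimension $2n$, fine; the next fundamental representation $V_{\omega_2}$ has dimension $\binom{2n}{2}-1=2n^2-n-1$, which must be compared with $b_\kk+1=n^2+n+1$ --- this fails for all $n\geq 2$, and $V_{2\omega_1}$ has dimension $n(2n+1)>b_\kk+1$ as well, so only (2) survives.

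Then I would handle the five exceptional types one at a time, using the known dimensions of their smallest nontrivial representations: for $G_2$ the seven-dimensional $V_{\omega_1}$ has $7\leq 8+1$ (ok) while the adjoint is $14>9$; for $F_4$ the $26$-dimensional $V_{\omega_4}$ (in \cite{OV} numbering possibly $V_{\omega_1}$) has $26\leq 29$ (ok) while the next is $52>29$; for $E_6$ the two $27$-dimensional representations $V_{\omega_1},V_{\omega_6}$ have $27\leq 43$ (ok) while the adjoint $78>43$; for $E_7$ the $56$-dimensional $V_{\omega_7}$ (or $V_{\omega_1}$ in the relevant numbering) has $56\leq 71$ (ok) while the adjoint $133>71$; and for $E_8$ even the smallest nontrivial representation is the adjoint of dimension $248>129$, so nothing survives --- which is why $E_8$ is absent from the list. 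This fixes cases (4)--(7). The main obstacle is not any single hard argument but rather the bookkeeping: one must be confident that no ``medium-sized'' representation has been overlooked, i.e. one needs the fact that for each simple $\kk$ the nontrivial irreducible representations ordered by dimension have no gaps small enough to sneak past the bound other than the ones listed --- this is where I would invoke the explicit tables of small-dimensional representations (e.g. from \cite{OV} or standard references) and, for the classical families, the monotonicity of $\binom{m}{k}$ in $k$ up to the middle together with the elementary inequality $\binom{m}{3}>\frac{(m-1)(m+2)}{2}+1$ for $m\geq 7$ and its analogues. Once the finitely many borderline ranks ($\sl(m)$ for $m\le 6$, $\so(m)$ for $m\le 11$, $\sp(4)$, and the five exceptional algebras) are checked by direct computation, the lemma follows.
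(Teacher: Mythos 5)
Your overall strategy is the same as the paper's: compute $b_\kk+1$ for each simple type, then run through the low-dimensional irreducible representations and check the inequality. The paper makes one structural observation that you leave implicit but that is worth isolating, because it is what turns the argument from a table look-up into a proof: Weyl's dimension formula shows that if $(\lambda,\alpha_i)=k>0$ for some simple root $\alpha_i$, then $\dim V_\lambda\geq\dim V_{k\omega_i}$, with equality only when $\lambda=k\omega_i$. This is what rigorously reduces the problem to weights of the form $k\omega_i$ and, in particular, lets one discard all $\lambda$ that are not multiples of a single fundamental weight (e.g.\ one need not separately rule out $V_{\omega_1+\omega_2}$, $V_{\omega_1+\omega_{m-1}}$, etc.). Your phrase ``monotonicity of $\binom{m}{k}$'' and the appeal to tables play the same role but do not by themselves justify excluding non-fundamental highest weights; you should make the reduction explicit.

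There is also a concrete arithmetic slip in your treatment of $\sp(2n)$: you assert that $\dim V_{\omega_2}=2n^2-n-1$ exceeds $b_\kk+1=n^2+n+1$ for all $n\geq 2$, but for $n=2$ one has $\dim V_{\omega_2}=5\leq 7=b_{\sp(4)}+1$, so $V_{\omega_2}$ does satisfy \refeq{eqdim}. This is not a counterexample to the lemma because $\sp(4)\simeq\so(5)$ and that representation is the vector representation of $\so(5)$, which appears in item (2) of the list; but your argument as written would fail to produce it. The paper sidesteps the issue by explicitly assuming $p\geq 3$ for $\sp(2p)$ and folding $\sp(4)$ into the orthogonal family. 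You should do the same (or correct the inequality and note the identification with $\so(5)$). Apart from these two points — the missing reduction lemma and the $\sp(4)$ case — the computation matches the paper's.
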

\begin{proof}
We start with the observation that $(\lambda, \alpha_i)=k\in\ZZ_{\geq 0}$ implies $\dim V_\lambda > \dim V_{k\omega_i}$. This follows immediately from Weyl's dimension formula. Therefore it suffices to find all fundamental representations for which the inequality \refeq{eqdim} holds.

Let $\kk=\sl(m)$. The dimensions of the fundamental representations are $\binom{m}{k}$ for $k=1,\dots, m-1$. The condition
\[
\binom{m}{k}\leq \frac{m(m+1)}{2}=\half (\dim\kk+\rk \kk)+1
\]
is equivalent to \refeq{eqdim} and implies $k=1,2,m-2, m-1$. Obviously, $\dim V_{2\omega_{m-2}}=\dim V_{2\omega_2}$ is greater than $\frac{m(m+1)}{2}$. On the other hand, $\dim V_{2\omega_1}=\dim V_{2\omega_{m-1}}=\frac{m(m+1)}{2}$. Hence (1).

Let $\kk=\so(m)$, $m=2p$. We may assume $m\geq 8$. The inequality \refeq{eqdim} is equivalent to
\[
\dim V\leq p^2 +1.
\]
The dimensions of the fundamental representations are $\binom {m}{k}$ for $k\leq p-2$ and $2^{p-1}$. It is not hard to check that for an arbitrary $p$ the inequality holds only for $V_{\omega_1}$; moreover it holds for $V_{\omega_{p-1}}$, $V_{\omega_p}$ if $p=4,5,6$.

Let $\kk=\so(m)$, $m=2p+1$. The inequality \refeq{eqdim} is equivalent to
\[
\dim V \leq p^2+p+1,
\]
and holds for $V_{\omega_1}$ for any $p$, and for $V_{\omega_p}$ if $p\leq 4$.

Let $\kk=\sp(m),m=2p$. Assume $p\geq 3$.
The inequality is the same as in the previous case, but
\[
\dim V_{\omega_k}=\binom{2p}{k}-\binom{2p}{k-2}.
\]
One can check that here the inequality holds only for $k=1$. This proves (2) and (3).

The cases (4)-(7) can be checked using the tables in \cite{OV}.
\end{proof}
\begin{lemma}\label{le52}
Let $\kk$ and $V$ be as in \refle{thA}. The following is a complete list of pairs $\kk,V$ such that $V$ has no non-degenerate $\kk$-invariant bilinear form:
\item[(1)] $\kk=\sl(m)$, $V=V_{\omega_1}$, $V_{\omega_{m-1}}$, $V_{\omega_2}$ $(m\geq 5)$, $V_{\omega_{m-2}}$, $(m\geq 5)$, $V_{2 \omega_1}$, $V_{2\omega_{m-1}}$;
\item[(2)] $\kk=\so(10)$, $V=V_{\omega_4}$ or $V_{\omega_5}$;
\item[(3)] $\kk=E_6$, $V=V_{\omega_1}$ or $V_{\omega_6}$.
\end{lemma}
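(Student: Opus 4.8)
The plan is to go through the short list of pairs $(\kk, V)$ produced by \refle{thA} and, for each one, determine whether $V$ carries a non-degenerate $\kk$-invariant bilinear form. Recall the standard dichotomy: a simple $\kk$-module $V_\lambda$ admits such a form (necessarily unique up to scalar, and either symmetric or skew-symmetric) if and only if $V_\lambda \simeq V_\lambda^*$, i.e. if and only if $-w_0(\lambda) = \lambda$, where $w_0$ is the longest element of the Weyl group of $\kk$. So the task reduces to deciding, for each $\lambda$ in the list of \refle{thA}, whether $\lambda$ is fixed by the involution $-w_0$. I would organize the proof by going case by case through items (1)--(7) of \refle{thA}.

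First I would dispose of the self-dual cases, which explains why most of the \refle{thA} list does \emph{not} appear here: for $\kk = \sp(m)$ every irreducible module is self-dual (since $-w_0 = \id$), so case (2) of \refle{thA} with $\kk=\sp(m)$ contributes nothing; likewise for $\so(m)$ with $m$ odd, and for $G_2$, $F_4$, $E_7$, $E_8$ one has $-w_0 = \id$, so cases (2) (the $\so(m)$ part), (4), (5), (7) of \refle{thA} contribute nothing. For $\so(2p)$ the element $-w_0$ is the identity when $p$ is even and the diagram automorphism swapping the two spin nodes $\omega_{p-1}, \omega_p$ when $p$ is odd; since the surviving spin cases in \refle{thA}(3) are $\so(m)$ with the relevant half-spin weights for $m=8,9,10,11$, only $\so(10)$ (i.e. $p=5$ odd) gives non-self-dual modules, namely $V_{\omega_4}$ and $V_{\omega_5}$, which swap under $-w_0$; this yields item (2) of the lemma. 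The vector representation $V_{\omega_1}$ of any $\so(m)$ carries the defining symmetric form, hence is excluded, consistent with the list.

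Next I would treat $\kk = \sl(m)$. Here $-w_0$ is the nontrivial diagram automorphism $\omega_i \mapsto \omega_{m-i}$ (for $m \geq 3$), so a fundamental-type weight $\lambda$ is self-dual iff it is symmetric under $i \mapsto m-i$. Running through the \refle{thA}(1) list: $V_{\omega_1}$ and $V_{\omega_{m-1}}$ are dual to each other (not self-dual for $m\geq 3$), $V_{\omega_2}$ and $V_{\omega_{m-2}}$ are dual to each other and coincide only when $m=4$ (so for $m\geq 5$ neither is self-dual), and $V_{2\omega_1}$, $V_{2\omega_{m-1}}$ are dual to each other and distinct for $m\geq 3$. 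For $m = 2$ one has $\sl(2)$, where every module is self-dual, so $V_{\omega_1}=V_{2\omega_1}$ is self-dual and excluded; for $m=3,4$ the weights $\omega_2, \omega_{m-2}$ need the case restrictions noted. Assembling exactly the non-self-dual survivors gives item (1) of the lemma, with the parenthetical constraints $m\geq 5$ on $V_{\omega_2}, V_{\omega_{m-2}}$. Finally, for $\kk = E_6$, $-w_0$ is the order-two diagram automorphism interchanging $\omega_1$ and $\omega_6$, so $V_{\omega_1}$ and $V_{\omega_6}$ (the $27$-dimensional representations) are dual to each other and not self-dual, giving item (3); this exhausts \refle{thA}(6), and \refle{thA}(1)--(7) are now fully accounted for.

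I do not anticipate a serious obstacle here: the entire argument is bookkeeping with the tables of Dynkin diagram automorphisms and the criterion "$V \simeq V^*$ iff $-w_0\lambda = \lambda$", together with Weyl's dimension formula to confirm equalities or inequalities of dimensions in the borderline small-rank cases (e.g. checking whether $V_{\omega_2}$ and $V_{\omega_{m-2}}$ coincide for small $m$). The only place that warrants care is getting the small-$m$ exceptional coincidences for $\sl(m)$ exactly right, so that the side conditions "$m\geq 5$" are neither too weak nor too strong; this can be settled by direct inspection using the tables in \cite{OV}.
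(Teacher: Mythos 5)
Your proof is correct and takes essentially the same approach as the paper: both reduce the question to self-duality of $V$, characterized by whether the highest weight is fixed by $-w_0$ (equivalently, whether the nontrivial Dynkin diagram involution preserves it), together with the parity condition on $\so(2p)$. The paper's proof is considerably terser—three sentences—while yours carries out the case-by-case bookkeeping explicitly, but the underlying criterion and the resulting list are identical. (One trivial slip: for $\sl(2)$, $V_{\omega_1}$ and $V_{2\omega_1}$ are distinct modules, not equal; both are self-dual, so your conclusion is unaffected.)
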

\begin{proof}
If $V$ is not self-dual, the Dynkin diagram of $\kk$ admits an involutive automorphism which does not preserve the highest weight. Moreover, in the case of $\so(2p)$, $p$ must be odd. These conditions reduce the list of representations in \refle{thA} to the list in the Lemma.
\end{proof}

\textbf{{Proof of \refth{th51}}} According to E. Dynkin's
classification \cite{D} [Ch.1.], if $\kk\subset\gg=\sl(n)$ is a reductive in $\gg$ subalgebra which is maximal as a subalgebra of $\gg$, one of the following alternatives holds:
\begin{itemize}
\item[(i)]$\kk$ is simple, the natural $\sl(n)$-module $V$ is a simple $\kk$-module with no non-degenerate invariant bilinear form, or $\kk=\so(n)$ and $\sp(n)$.
\item[(ii)]$\kk\simeq\sl(r)\oplus\sl(s)$ with $rs=n$, and $V\simeq S_r\otimes S_s$, where $S_r$ and $S_s$ are respectively the natural modules of $\sl(r)$ and $\sl(s)$.
\end{itemize}

If (i) holds, then $\kk\simeq\so(n),\sp(n)$ or $\kk$ is among the Lie algebras listed in \refle{le52}, where $\gg$ is identified with $\sl(V)$. Consider first the case $\kk\simeq\sp(n)$, $n=2p$. To show that $\kk$ is bounded in $\gg$, we apply \refth{thGroups2} with $G/P$ being the Grassmannian of p-dimensional subspaces in $\CC^{n}$ and $K/Q$ being the Grassmannian of Lagrangian subspaces in $\CC^{n}$. Then $Q_0=GL(p)$ and $\mathcal N_P$ is the exterior square of the natural representation. The $Q_0$-module $\mathcal N_P$ is spherical, \cite{K1}.

We now consider the remaining cases of (i), which can all be settled using \refcor{corGroups3}. Note that, if $\kk$ is embedded into $\sl(n)$ via a simple $\kk$-module or via its dual, the corresponding embeddings are conjugate by an automorphism of $\sl(n)$,
hence it suffices to consider only one such embedding. The list of
\refle{le52} reduces therefore to the following cases, in which all
$Q_0$-modules are spherical, \cite{K1}:

-$\kk=\sl(k)$, $V=V_{\omega_2}$, $Q_0\simeq SL(2)\times GL(k-2)$ and
$(V')^*\otimes(V/\kk\cdot V')$ is isomorphic to the tensor product
of the exterior square of the natural representation with the determinant representation of $GL(k-2)$, the
action of $SL(2)$ being trivial;

-$\kk=\sl(k)$, $V=V_{2\omega_1}$, $Q_0\simeq GL(k-1)$ and
$(V')^*\otimes(V/\kk\cdot V')$ is isomorphic to the tensor product
of the symmetric square of the natural representation with the determinant representation of $GL(k-1)$;

-$\kk=\so(10)$, $V=V_{\omega_4}$, $Q_0=GL(5)$ and
$(V')^*\otimes(V/\kk\cdot V')$ is isomorphic to the tensor product of
the natural representation of $GL(5)$ with the
determinant representation of $GL(5)$; the case $V=V_{\omega_5}$ can
be reduced to the case $V=V_{\omega_4}$ by dualization; 

-$\kk=E_6$, $V=V_{\omega_1}$, then
$Q_0=SO(10)\times \CC^*$ and
$(V')^*\otimes(V/\kk\cdot V')$ is isomorphic to the natural 10-dimensional representation of $SO(10)$, and the action of the center
of $Q_0$ is not trivial.

The only remaining case in (i) is when $\kk=\so(n)$, $Q_0\simeq SO(n-2)\times\CC^*$ and $(V')^*\otimes(V/\kk\cdot V')$ is a one-dimensional non-trivial, hence spherical, $Q_0$-module.

If (ii) holds, then $\kk\simeq\sl(r)\oplus\sl(s)$ for some $rs$ with
$rs=n$, and we claim that in this case all pairs $r,s$ with $rs=n$
yield a bounded subalgebra $\kk$. To see this, fix $V'$ of the form
$S'_r\otimes S'_s$ for some 1-dimensional spaces $S_r'\subset S_r,
S'_s\subset S_s$.
Then $Q_0$ is isomorphic to $GL(S_r/S_r')\times GL(S_s/S_s')$ and
$\gg\cdot V'/\kk\cdot V'=V/\kk\cdot V'\simeq (S_r/S_r')\otimes(S_s/S_s')$. Since the action of $GL(r-1)\times GL(s-1)$ on $V'$ is given by
 the inverse of the determinant, $(V')^*\otimes (V/\kk\cdot V')$ is isomorphic as a
 $GL(r-1)\times GL(s-1)$-module to $S_{r-1}\boxtimes S_{s-1}$
 twisted by the determinant. This representation is spherical, \cite {K1}.
\qed

We give now three more examples of bounded subalgebras of $\sl(n)$
which are not maximal in the class of reductive subalgebras of $\sl(n)$.

(i) Let $\kk\simeq \sl(k+1)$, $k\geq 2$. The $\kk$-module
$V:=V_{\omega_1}\oplus V_{\omega_k}$ defines an embedding $\kk\subset \gg=\sl(V)$, and \refcor{corGroups3} implies that $\kk$ is a bounded
subalgebra of $\gg$. Indeed, choose $V'$ to be a 1-dimensional
subspace $V'\subset V_{\omega_1}$ and note that the conditions of
\refcor{corGroups3} are satisfied. In this case $Q_0\simeq GL(k)$ and
$(V')^*\otimes(V/\kk\cdot V')$ is isomorphic to 
$\Lambda^{k}(S_{k})\otimes  (\Lambda^{k}(S_{k})\oplus S_{k}^*)$, $S_k$ being the natural $Q_0$-module. A straightforward calculation shows that this representation is spherical.

(ii) Consider the embedding $\kk=\so(7)\subset\gg=\sl(8)$, where the
 natural $\sl(8)$-module restricts to the 8-dimensional spinor
 representation of $\so(7)$. \refcor{corGroups3} implies that $\kk$ is
 a bounded subalgebra of $\gg$. Here $V=\CC^8$, $G=SL(V)$, $K=Spin(7)$
 and $V'$ is a $B_K$-stable line, where $B_K$ is a fixed Borel
 subgroup of $K$. Then $\gg\cdot V'=V$ and $\dim \kk\cdot V'=7$, hence
 $\dim (\gg\cdot V'/\kk\cdot V')=1$. Since $Q_0$ acts non-trivially on
 $(V')^*\otimes(V/\kk\cdot V')$, the latter $Q_0$-module is spherical.

(iii) Let $\kk=G_2\subset\gg=\sl(7)$. Then again, \refcor{corGroups3}
 implies that $\kk$ is a bounded subalgebra. The argument is similar
 to the argument in
(ii) as $\dim \gg\cdot V/\kk\cdot V'=1$.

We conclude this section by the following conjecture which is supported by all the empirical evidence collected in this paper.
\begin{conjecture}
\textit{Let $\kk\subset\gg$ be a reductive in $\gg$ subalgebra. Then $\kk$ is bounded if and only in there exists a simple infinite dimensional multiplicity free $(\gg,\kk)$-module.}
\end{conjecture}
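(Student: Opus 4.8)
We sketch a possible line of attack on the Conjecture; we do not claim a proof.

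The ``if'' direction is immediate from the definitions: a simple infinite-dimensional multiplicity free $(\gg,\kk)$-module $M$ has $\dim M^r\le 1$ for all $r$, so $\kk$ is bounded. Thus the substance is the converse. The first step is to reduce to the essential case. If $\kk$ is bounded, pick an infinite-dimensional bounded simple $(\gg,\kk)$-module $M$; by \refth{th4} we have $M\simeq M'\otimes M''$ with $M''$ a simple bounded $(\gg'',\kk'')$-module, $\gg''=\bigoplus_{\gg_i\nsubseteq\gg[M]}\gg_i$, and it suffices to produce a multiplicity free $(\gg'',\kk'')$-module, since tensoring with the one-dimensional $\gg'$-module $M'$ is harmless. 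So we may assume $\gg$ is semisimple, $\kk$ is strictly bounded, and (applying \refth{th4} together with the inequalities $X_M\cap\gg_i\ne 0$ from the proof of \refcor{cor1.6} to the simple summands) that the problem is genuinely about a strictly bounded $\kk$ in a simple, or nearly simple, $\gg$. Finally, using \refth{th5} and the translation-functor equivalences $\BB_\kk^{\chi(\xi)}\simeq\BB_\kk^{\chi(\eta)}$ of \refsec{secFirstResults}, we are free to replace the central character of the sought module by any other character in its translation class; since we only need to exhibit \emph{some} simple infinite-dimensional multiplicity free $(\gg,\kk)$-module, there is no need to translate back, and the plan is to move to the central character at which the geometry below is cleanest (typically a regular integral one, or the central character of the trivial module).

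The core of the plan is to realize the module geometrically, mirroring \refth{thGroups2} and \refcor{corGroups3}. There are two natural routes. The first is to classify all bounded reductive subalgebras $\kk\subset\gg$ --- for which \refcor{cor1.6} and \refth{th4} already impose severe restrictions, and which the present paper carries out for $\kk$ maximal in $\sl(n)$ and for all $\gg$ of rank $2$ --- and then to verify case by case that each such $\kk$ admits a multiplicity free module; in every instance encountered here the construction of \refth{thGroups2} supplies one. This route is conceptually transparent but amounts to a substantial classification program. The second, uniform, route is: given a bounded simple $M$, show that its associated variety $X_M$ is the closure $\overline{\OO}$ of a single nilpotent orbit $\OO$ and that the $K$-action on $\overline{\OO}$ (equivalently, on a suitable partial flag variety $G/P$ dominating $\overline{\OO}$) is spherical; then realize $M$ --- or, by \refth{th5}, a simple $N$ with $\ann N=\ann M$ --- as a $\DD$-module on $G/P$ supported on the closure of one $K$-orbit $K/Q$; and finally check that the normal-bundle fiber $\mathcal N_P$ is a spherical $Q_0$-module, so that $\Gamma(G/P,i_\bigstar\OO_{K/Q})$ (or its twisted analogue, cf. the remark after \refth{thGroups2}, when the central character is shifted) is the required multiplicity free $(\gg,\kk)$-module.

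The main obstacle is the passage from ``bounded'' to ``spherical.'' Boundedness of the $\kk$-multiplicities of $M$ is, in essence, the finiteness of a generic $\kk$-multiplicity, while sphericity in \refth{thGroups2} demands that a generic multiplicity be exactly one, and it is a condition on the normal bundle of a $K$-orbit rather than on $M$ itself. In the two classical cases this gap is bridged by serious additional input --- Mathieu's coherent families when $\kk$ is a Cartan subalgebra, and a case-by-case supply of ladder modules, degenerate principal series, and Zuckerman derived functor modules when $\kk$ is symmetric --- so a general argument will require either a ``coherent family'' formalism for bounded $(\gg,\kk)$-modules, allowing one to deform a bounded simple module within a family some member of which is multiplicity free, or the full classification mentioned above. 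A secondary obstacle is that a simple bounded $M$ need not a priori be $\DD$-affine, nor supported on a single $K$-orbit closure on any $G/P$; establishing such a localization --- possibly only after translating to a regular integral central character, where $G/B$ is $\DD$-affine --- is itself part of the program. I would therefore attack the problem in the order: (1) prove that $X_M$ is irreducible and $K$-spherical for every bounded simple $M$; (2) construct the $\DD$-module realization on an appropriate $G/P$; (3) verify the sphericity of $\mathcal N_P$ as a $Q_0$-module; and I expect step (1) to be the decisive difficulty.
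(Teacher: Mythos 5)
The statement you are addressing is the paper's \emph{Conjecture}, not a theorem: the authors state it explicitly as a conjecture ``supported by all the empirical evidence collected in this paper'' and offer no proof. You have correctly recognized this, and your write-up is a candid sketch of a possible attack rather than a claimed proof; that is the right posture here, so there is no paper proof to compare against.

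On the merits of the sketch: the ``if'' direction is indeed immediate, exactly as you say. For the ``only if'' direction, you have correctly isolated the two real obstacles --- first, establishing that the associated variety $X_M$ of a bounded simple module is a single $K$-spherical orbit closure, and second, bridging the genuine gap between ``bounded'' ($\dim M^r\le C$) and ``multiplicity free'' ($\dim M^r\le 1$), which is exactly the gap that forces case-by-case input (ladder modules, Mathieu's coherent families, etc.) in the two known special cases. One small repair is needed in the reduction via Theorem \ref{th4}: if you tensor a multiplicity-free $(\gg'',\kk'')$-module $N''$ with a one-dimensional (trivial) $\gg'$-module, the resulting $\kk$-action on $N''$ goes through the projection $\kk\subset\gg'\oplus\kk''\to\gg''$, whose image is the projection of $\kk$ to $\gg''$ and need not equal $\kk''=\kk\cap\gg''$; so multiplicity-freeness of $N''$ over $\kk''$ does not automatically give multiplicity-freeness over $\kk$. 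This is a benign technical point but worth flagging, since the reduction is the first step of your program. Your overall strategy (prove irreducibility and $K$-sphericity of $X_M$, localize on a suitable $G/P$ after translating to a good central character, verify sphericity of $\mathcal N_P$ via Theorem \ref{thGroups2} or its twisted analogue) is consistent with everything the paper establishes, and your judgment that step (1) --- sphericity of the associated variety --- is the decisive difficulty seems sound.
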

\section{The rank 2 case}\label{secRank2Case}

In this section we list all bounded pairs $(\gg,\kk)$, where $\gg$ is a semisimple Lie algebra of rank 2, and we fix notation used in the subsequent sections.
\begin{theorem}\label{thPairsList}
Let $\gg$ be a semisimple Lie algebra of rank 2 and $\kk\subset\gg$ be a reductive in $\gg$ bounded subalgebra. The following is a complete list of such pairs.
\begin{itemize}
\item[(1)] $\gg\simeq\sl(2)\oplus\sl(2)$: $\kk\simeq \gl(2)$,
$\kk\simeq\sl(2)$ is a
  diagonal subalgebra, or $\kk$ is any toral subalgebra;
\item[(2)] $\gg\simeq\sl(3)$: $\kk$ is a root subalgebra isomorphic to $\sl(2)$ or $\gl(2)$, $\kk$ is a principal   $\sl(2)$-subalgebra, or $\kk$ is a Cartan subalgebra;
\item[(3)] $\gg\simeq\sp(4)$: $\kk \simeq \sl(2)\oplus \sl(2)$, $\kk \simeq \gl(2)$, $\kk\simeq \sl(2)$ is a root   subalgebra corresponding to a short root, $\kk$ is a principal $\sl(2)$-subalgebra, or $\kk$ is a Cartan subalgebra;
\item[(4)] $\gg\simeq G_2$: $\kk\simeq \sl(3)$, $\kk\simeq\sl(2)\oplus\sl(2)$, or $\kk\simeq \gl(2)$.
\end{itemize}
\end{theorem}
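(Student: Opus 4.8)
The plan is to treat each of the four rank~$2$ semisimple Lie algebras $\gg\simeq\sl(2)\oplus\sl(2)$, $\sl(3)$, $\sp(4)$, $G_2$ in turn, in three steps: (i) enumerate, up to conjugation, all subalgebras $\kk$ reductive in $\gg$; (ii) use the necessary conditions of \refsec{secFirstResults} to discard those that cannot be bounded; (iii) exhibit an infinite-dimensional simple bounded $(\gg,\kk)$-module for each of the survivors. For (i) I would start from E.~Dynkin's classification of semisimple subalgebras of a rank~$2$ simple Lie algebra and then, for each semisimple $\ss$ obtained, adjoin every toral subalgebra of the centralizer $\gg^\ss$; in each case this produces a short explicit list.

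For step (ii), when $\gg$ is simple the inequality $r_\gg\le b_\kk$ of \refcor{cor1.6} disposes of $\kk=0$, of all one-dimensional toral subalgebras, of the Cartan subalgebra of $G_2$, and of every $\sl(2)$-subalgebra of $G_2$; it leaves precisely the subalgebras in the theorem together with a few borderline candidates with $b_\kk=r_\gg$. These last ones are eliminated by \refcor{corcen}: for instance the $\sl(2)$ attached to a long root of $\sp(4)$ has $b_\kk=2=r_{\sp(4)}$ but $\gg^\kk\simeq\sl(2)$ is non-abelian, so it is not bounded. For $\gg=\sl(2)\oplus\sl(2)$ one uses \refth{th4} instead: a simple bounded module is $M'\otimes M''$ with $M''$ a simple bounded $(\gg'',\kk\cap\gg'')$-module, and when $\kk$ equals one of the two ideals $\sl(2)\oplus 0$ this forces $\kk\cap\gg''=0$, which is not a bounded subalgebra of $\sl(2)$, hence $M$ is finite-dimensional; so $\sl(2)\oplus 0$ is not bounded, while the diagonal $\sl(2)$, $\gl(2)=\sl(2)\oplus\hh$, and all toral subalgebras survive.

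For step (iii) the surviving $\kk$ are handled as follows. The Cartan subalgebras of $\sl(3)$ and $\sp(4)$ are bounded by the known description of bounded weight modules over $\sl(m)$ and $\sp(n)$ recalled in \refsec{secOnBounded}; the toral subalgebras of $\sl(2)\oplus\sl(2)$ are handled by tensor products $M_1\otimes M_2$ of $\sl(2)$-weight modules --- both infinite-dimensional for the full Cartan, one of them finite-dimensional for a one-dimensional torus, so that each $\kk$-weight is attained finitely often. The symmetric subalgebras that occur --- the principal $\sl(2)\simeq\so(3)$ and the root $\gl(2)\simeq\mathfrak{s}(\gl(1)\oplus\gl(2))$ in $\sl(3)$; the subalgebra $\sl(2)\oplus\sl(2)\simeq\sp(2)\oplus\sp(2)$ and the Siegel Levi $\gl(2)$ in $\sp(4)$; the subalgebra $\sl(2)\oplus\sl(2)\simeq\so(4)$ in $G_2$; and the diagonal $\sl(2)$ in $\sl(2)\oplus\sl(2)$ --- are bounded by the facts about symmetric subalgebras in \refsec{secOnBounded}. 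The short-root $\sl(2)$ in $\sp(4)$ is bounded because the two halves $\omega_\pm$ of the oscillator representation of $Sp(4)$ restrict to it as multiplicity-free $\sl(2)$-modules. Finally the remaining ``new'' reductive cases --- the root $\sl(2)$ and the root $\gl(2)$ in $\sl(3)$, the non-Siegel $\gl(2)$ in $\sp(4)$, and the subalgebras $\sl(3)$ and $\gl(2)$ in $G_2$ --- are settled by \refcor{corGroups3} (and \refcor{corGroups4} for the root $\sl(2)\subset\sl(3)$, using that $\CC^2$ is a spherical $SL(2)$-module and that boundedness in $\gl(n)$ and in $\sl(n)$ coincide): one chooses a highest-weight line $V'$ in a suitable $G$-module $V$ (the standard module in the $\sl(n)$ cases, the adjoint module for $\sl(3)\subset G_2$), notes that the stabilizers of $V'$ in $G$ and in $K$ are parabolic, and verifies via Kac's list \cite{K1} that $(V')^*\otimes(\gg\cdot V'/\kk\cdot V')$ is a spherical $Q_0$-module.

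The main obstacle is the principal $\sl(2)$ in $\sp(4)$ --- the case singled out as new in the introduction. Here $b_\kk=2=r_{\sp(4)}$, so \refcor{cor1.6} barely holds; $\kk$ is not symmetric; the oscillator representation is useless (the principal $\sl(2)$ does not act locally finitely on it); and \refth{thGroups2} cannot apply either, since with $G=Sp(4)$ the group $Q_0$ is a torus of rank $\le\rk K=1$ while $\mathcal N_P$ is always at least two-dimensional (every flag variety of $Sp(4)$ has dimension $\ge 3$ whereas a $B_K$-orbit in it has dimension $\le 1$), and enlarging the ambient group would violate \refcor{cor1.6}. The boundedness of the principal $\sl(2)\subset\sp(4)$ therefore has to come from the explicit construction of simple bounded $(\sp(4),\sl(2))$-modules of Gelfand--Kirillov dimension $2$ carried out in the subsequent sections. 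A final, minor point of care is that $\sp(4)$ --- and likewise $G_2$ --- contains more than one conjugacy class of subalgebra isomorphic to $\gl(2)$, namely the Levi subalgebras of its two maximal parabolic subalgebras, and each must be checked (in $\sp(4)$ one of them is the symmetric Siegel Levi and the other is settled by \refcor{corGroups3}).
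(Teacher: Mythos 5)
Your overall strategy coincides with the paper's: use the necessary conditions of Section~\ref{secFirstResults} (the inequality~\refeq{eq42}, \refcor{corcen}, and \refth{th4}) to eliminate candidates, and then verify sufficiency case by case. The exclusion step is essentially identical. Where you diverge from the paper is in how you verify boundedness for the survivors. The paper handles the five $\sl(2)$ embeddings by pointing forward to the explicit constructions of Sections~8--11, disposes of $\gl(2)$ in $\sl(3)$, $\sl(2)\oplus\sl(2)$ and $\sp(4)$ with generalized Verma modules, uses the ladder modules of Vogan for $\sl(2)\oplus\sl(2)$ and $\gl(2)$ in $\sp(4)$ and $G_2$, and reserves \refcor{corGroups3} solely for $\sl(3)\subset G_2$, taking $V$ to be the $7$-dimensional $G_2$-module. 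You instead invoke \refcor{corGroups3} and \refcor{corGroups4} for several additional cases (the root $\sl(2)$ and root $\gl(2)$ in $\sl(3)$, the $\gl(2)$'s in $\sp(4)$ and $G_2$) and for $\sl(3)\subset G_2$ you propose the adjoint module of $G_2$ rather than the $7$-dimensional one; this does work --- there $Q_0$ is a two-dimensional torus acting on a two-dimensional $\mathcal N_P$ with the two linearly independent weights $-(\alpha_1+\alpha_2)$ and $-(\alpha_1+2\alpha_2)$, giving a spherical module --- but it requires its own verification, whereas the paper's choice yields a $\gl(2)$-sphericity check it has already carried out. Your observation that \refth{thGroups2} structurally cannot apply to the principal $\sl(2)$ in $\sp(4)$ (since $K/Q\cong\PP^1$ forces $Q_0$ to be a one-dimensional torus while $\dim\mathcal N_P\ge 2$) is a nice explicit justification of why that case must rest on the explicit Weyl-algebra construction of Sections~\ref{se6}--\ref{secKchars4-2}, which is exactly where the paper too defers it. Two small inaccuracies worth tightening: the ``root $\gl(2)$ in $\sl(3)$'' appears on your list both as a symmetric subalgebra and again as a ``new'' case to be handled by \refcor{corGroups3}, and it is not clear which conjugacy class ``the non-Siegel $\gl(2)$ in $\sp(4)$'' denotes --- the Siegel Levi is the only $\gl(2)$ arising as a Levi factor, and the other candidate $\gl(2)$'s sit inside $\sl(2)\oplus\sl(2)$ and are handled by the ladder-module argument in the paper. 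Neither of these affects the validity of the overall proof.
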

\begin{proof} The inequality \refeq{eq42} implies that a 1-dimensional toral subalgebra is not bounded in all cases but (1). In (1) any 1-dimensional
  toral subalgebra $\mathfrak {t}$ is bounded as the outer tensor
  product of a Verma module over a suitable ideal of $\gg$ with the
  trivial module of the complementary ideal of $\gg$ is always bounded as a
  $(\gg,\mathfrak t)$-module.

Similarly, \refeq{eq42} implies that a Cartan subalgebra is not bounded in $G_2$. In all other cases it is well known to be bounded, see for instance \cite{F}.

If $\kk \simeq \sl(2)$ then $\kk$ is not bounded in $G_2$ again by
\refeq{eq42}, and if $\kk$ is an ideal of $\gg=\sl(2)\oplus \sl(2)$, it
is not bounded by  \refth{th4}. Furthermore, if $\kk \simeq \sl(2)$ is
a root subalgebra of $\gg=\sp(4)$ corresponding to a long root, then
$\kk$ is not bounded by \refcor{corcen}. For the remaining five possible embeddings of $\sl(2)$ into a Lie algebra of rank 2, the image
$\kk$ is always a bounded subalgebra. This follows for instance from
the explicit description of bounded $(\gg,\kk)$-modules which we present in
Sections 8-11 of this paper.

For any embedding of $\gl(2)$ into a Lie algebra $\gg$ of rank 2,
$\gg\ncong G_2$, any generalized Verma module, corresponding to
a parabolic subalgebra $\pp$ which contains the image $\kk$ of
$\gl(2)$, is a bounded $(\gg,\kk)$-module.

Consider next the case $\kk\simeq \sl(2)\oplus \sl(2)\subset \gg$ for $\gg=\sp(4)$
or $G_2$. Here the pair $(\kk,\gg)$ is symmetric. In
\cite{V1} and \cite{V3} ladder $(\gg,\kk)$-modules are
constructed. Fix a Borel subalgebra $\bb_{\kk}\subset \kk$. By
definition, a ladder module $M$ has the $\kk$
decomposition $M=\bigoplus_{n\in \ZZ_{\geq 0}}V_{\mu + n\beta}$, where
$\mu$ is some integral $\bb_{\kk}$-dominant weight and $\beta$ is
the $\bb_{\kk}$-highest weight of $\gg/{\kk}$. Clearly, a ladder
module is multiplicity free and hence bounded. Moreover, it remains bounded with respect to any
$\gl(2)$-subalgebra of $\kk$. Hence any image of $\gl(2)$ in $\sp(4)$
or $G_2$ is bounded.

The only remaining case is $\gg=G_2, \kk\simeq \sl(3)$. To show that $\kk$ is bounded we use \refcor{corGroups3} with $V$ being the 7-dimensional $G_2$-module. Then as a $\kk$-module $V$ is isomorphic to $V_{\omega_1}\oplus V^*_{\omega_1}\oplus \CC$. One can fix a Borel subalgebra $\bb\subset \gg$ so that there exists a $\bb$-invariant one-dimensional subspace $V'\subset
V_{\omega_1}^*$. Then $Q_0\simeq \GL(2)$ and
\[
(V')^*\otimes (\gg\cdot V'/\kk\cdot V')\simeq \Lambda^2 (S_2) \otimes (S_2 \oplus \CC)
\]
is a spherical $Q_0$-module.
\end{proof}

In the rest of this paper $\gg$ will be of rank 2, and $\kk$ will be isomorphic to $\sl(2)$. By $V_k$ we denote the $k+1-$dimensional $\kk$-module, and we write $c(M)$ for the $\kk$-character of any semisimple $(\kk,\kk)$-module $M$ of finite type over $\kk$:
\[
c(M):=\sum_{k\geq 0} (\dim M^k)z^k.
\]
By definition, $c(M)$ is a formal power series in $z$. The
\emph{minimal $\kk$-type} of $M$ is $V_t$ where $t\in\ZZ_{\geq 0}$ is
minimal with $M^t\neq 0$. A $(\gg,\kk)$-module of finite type $M$ is
\emph{even} (respectively, \emph{odd}) if $M^t=0$ for all $t\in 1+2\ZZ$ (resp. $t\in 2\ZZ$).

Let $\CC((z))$ be the algebra of Laurent series and $\CC ((z))'$ be the span of vectors in $\CC((z))$ of the form $z^j+z^{-j-2}$ for $j\in\ZZ$ ($\CC((z))'$ is not a subalgebra). Note that $\CC((z))'$ is a complement to the subspace $\CC[[z]]$ of $\CC((z))$. In what follows we denote by $\pi$ the projection onto the second summand in the direct sum $\CC((z))=\CC((z))'\oplus\CC[[z]]$, and we set $z^p\otimes z^q:=\sum_{0\leq k\leq q}z^{p+q-2k}$ for $p\geq q$ and $z^p\otimes z^q:=z^q\otimes z^p$ for $p<q$.

\begin {lemma}\label{le202}~
\begin{itemize}
\item[(a)] For any $f(z)\in \CC((z))$ and any $j\in \ZZ$,  $\pi(f(z)(z^j+z^{-j}))=\pi(\pi(f(z)(z^j+z^{-j})))$.
\item[(b)] For any $(\kk,\kk)$-module $M$ of finite type over $\kk$
\[
c(M\otimes V_i)=\pi(c(M)\sum_{0\leq k\leq i}z^{i-2k}),
\]
for all $i\in \NN$.
\end{itemize}
\end{lemma}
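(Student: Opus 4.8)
The plan for (a) is to observe that multiplication by $z^{j}+z^{-j}$ preserves the subspace $\CC((z))'=\Ker\pi$. This will be checked on the spanning vectors: for $a\in\ZZ$,
\[
(z^{a}+z^{-a-2})(z^{j}+z^{-j})=(z^{a+j}+z^{-(a+j)-2})+(z^{a-j}+z^{-(a-j)-2}),
\]
again a sum of two spanning vectors of $\CC((z))'$. Writing $f=\pi(f)+f'$ with $f'\in\CC((z))'$, we then get $(f-\pi(f))(z^{j}+z^{-j})\in\Ker\pi$, so that $\pi\bigl(f(z)(z^{j}+z^{-j})\bigr)=\pi\bigl(\pi(f(z))(z^{j}+z^{-j})\bigr)$, which is (a).

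For (b) I will first record how $\pi$ acts on monomials: $\pi(z^{n})=z^{n}$ for $n\geq 0$ (as $z^{n}\in\CC[[z]]$); $\pi(z^{-1})=0$ (as $2z^{-1}=z^{-1}+z^{-(-1)-2}\in\CC((z))'$); and $\pi(z^{n})=-z^{-n-2}$ for $n\leq -2$ (as then $z^{n}+z^{-n-2}\in\CC((z))'$ while $z^{-n-2}\in\CC[[z]]$). Next, both sides of (b) are additive in $M$ over finite-type direct sums, $c(M)$ involves only non-negative powers of $z$, and the relevant direct sums are locally finite, so one reduces to $M=V_{p}$. Since $c(V_{p}\otimes V_{i})=z^{p}\otimes z^{i}$ by the Clebsch--Gordan rule, the claim to prove becomes
\[
z^{p}\otimes z^{i}=\pi\Bigl(z^{p}\sum_{0\leq k\leq i}z^{i-2k}\Bigr)=\pi\bigl(z^{p+i}+z^{p+i-2}+\cdots+z^{p-i}\bigr)
\]
for all $p,i\geq 0$. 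If $p\geq i$ all exponents on the right are non-negative, $\pi$ is the identity, and the right-hand side is literally $z^{p}\otimes z^{i}$. If $p<i$, subtracting $z^{p}\otimes z^{i}=z^{p+i}+z^{p+i-2}+\cdots+z^{i-p}$ leaves $z^{p-i}+z^{p-i+2}+\cdots+z^{i-p-2}$, whose exponents are symmetric about $-1$ (the extremes are $p-i$ and $i-p-2=-(p-i)-2$); pairing them as $z^{m}+z^{-m-2}$, with a single leftover $z^{-1}$ when $i-p$ is odd, displays this difference as an element of $\CC((z))'$. Since $z^{p}\otimes z^{i}\in\CC[[z]]$, applying $\pi$ kills the difference and yields the identity.

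An alternative, and perhaps more structural, route to (b) uses (a) together with induction on $i$: the cases $i=0,1$ are the monomial computation above, and the inductive step combines $V_{i}\otimes V_{1}\simeq V_{i+1}\oplus V_{i-1}$, the case $i=1$ applied to $M\otimes V_{i}$, the inductive hypothesis, part (a) with $z^{j}+z^{-j}$ taken to be $z+z^{-1}$, and the classical identity $\bigl(\sum_{0\le k\le i}z^{i-2k}\bigr)(z+z^{-1})=\sum_{0\le k\le i+1}z^{i+1-2k}+\sum_{0\le k\le i-1}z^{i-1-2k}$; the two resulting copies of $\pi\bigl(c(M)\sum_{0\le k\le i-1}z^{i-1-2k}\bigr)$ then cancel. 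I do not expect a serious obstacle anywhere; the only point requiring care is the bookkeeping with the negative powers of $z$ and the precise description of the complement $\CC((z))'$, both of which are dispatched by the monomial formulas for $\pi$ recorded above.
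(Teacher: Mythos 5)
Your proof is correct and takes essentially the same approach as the paper's. For (a), you correctly read the intended statement as $\pi\bigl(f(z)(z^j+z^{-j})\bigr)=\pi\bigl(\pi(f(z))(z^j+z^{-j})\bigr)$ (the paper's displayed formula has a misplaced parenthesis, as its own proof confirms), and your argument—that multiplication by $z^j+z^{-j}$ preserves $\CC((z))'=\Ker\pi$, verified on the spanning vectors $z^a+z^{-a-2}$—is precisely what the paper does. For (b), the paper simply asserts that the identity reduces to monomials and is then "obvious"; you carry out the same reduction and fill in the verification, including the useful monomial formulas $\pi(z^{-1})=0$ and $\pi(z^n)=-z^{-n-2}$ for $n\le -2$, and the observation that the discrepancy in the case $p<i$ is a Laurent polynomial whose exponents are symmetric about $-1$ and hence lies in $\CC((z))'$.
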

\begin {proof}~
\begin {itemize}
\item [(a)] It suffices to check that for any $\psi(z)\in C((z))'$, $\psi(z)(z^j+z^{-j})\in \CC((z))'$, and this is obvious.
\item [(b)] It suffices to check that, for any $s\in \ZZ_{\geq 0}$
\[
\pi(z^s\otimes(\sum_{0\leq k\leq i}z^{i-2k}))=\sum_{0\leq k\leq \frac{|i-s|}{2}}z^{s+i-2k},
\]
which is also obvious.
\end{itemize}
\end{proof}

Finally, by $\Gamma_\kk$ we denote the functor of $\kk$-finite vectors:
\[
\Gamma_\kk:\gg-\text{mod}\leadsto(\gg,\kk)-\text{mod},
\]
\[
M\mapsto\{m\in M|\dim(U(\kk)\cdot m)<\infty\}.
\]

\section{Classification and $\kk$-characters of simple $(\sl(2)\oplus\sl(2),\sl(2))$-modules}\label{secKchars}

The simplest possible case among the 5 cases of Example \ref{ex47} is when
$\gg=\sl(2)\oplus\sl(2)$ and $\kk\subset\gg$ is the diagonal
subalgebra. In this case all simple $(\gg,\kk)$-modules are bounded
and are moreover multiplicity free. This follows, for instance, from
the algebraic subquotient theorem, see \cite{Dix}, Ch.~9. These $(\gg,\kk)$-modules are historically among the first examples of $(\gg,\kk)$-modules studied. They have been classified already in 1947 by Gelfand and Naimark
\cite{GN} and by Bargmann \cite{B}, and have been constructed also by Harish-Chandra
around the same time, \cite{HC}.
A fundamental more modern and much more general
reference is the article \cite{BG}, where however this explicit
example is not written in detail.
In the present section we give a quick self-contained description of
all simple $(\gg,\kk)$-modules based on the approach of \cite{BG}.

\begin{lemma}\label{lmc}
Let $\Omega_1, \Omega_2\in U(\gg)$ be the Casimir elements of the two $\sl(2)$-direct summands of $\gg$, and $\Omega\subset U(\kk)\subset U(\gg)=U$ be the Casimir element of $\kk$. Then $\Omega_1, \Omega_2$ and $\Omega$ generate $U(\gg)^{\kk}$.
\end{lemma}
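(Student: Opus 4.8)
The plan is to compute the invariant subalgebra $U(\gg)^\kk$ directly by exploiting the explicit structure of $\gg = \sl(2)\oplus\sl(2)$ together with the diagonal action of $\kk$. Write $\gg = \ss_1 \oplus \ss_2$ with each $\ss_j \simeq \sl(2)$, and let $\kk$ be embedded diagonally. As a $\kk$-module, $\gg \simeq V_2 \oplus V_2$ (two copies of the adjoint), and the subspace $\ss_1 \oplus \ss_2$ decomposes under the diagonal as the diagonal copy $\kk \simeq V_2$ together with an anti-diagonal complement $\mathfrak{m} \simeq V_2$. By the PBW theorem applied to the $\kk$-module decomposition $\gg = \kk \oplus \mathfrak{m}$, we have $U(\gg) \simeq U(\kk) \otimes S^\cdot(\mathfrak{m})$ as $\kk$-modules (using the symmetrization map), so $U(\gg)^\kk \simeq \bigl(U(\kk)\otimes S^\cdot(\mathfrak{m})\bigr)^\kk$. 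Since $\mathfrak{m} \simeq V_2$ is the adjoint representation of the single algebra $\sl(2)$, the invariants $S^\cdot(\mathfrak{m})^\kk = \CC[\Omega_{\mathfrak m}]$ are a polynomial ring on one quadratic generator, and this suggests that $U(\gg)^\kk$ is generated over $U(\kk)^\kk = \CC[\Omega]$ by one further element.

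The key steps, in order, would be: (1) identify three manifestly invariant elements — $\Omega_1, \Omega_2 \in U(\gg)^\kk$ (central, hence invariant) and $\Omega \in U(\kk)^\kk$; (2) show these are "enough" by a graded/associated-graded argument. For (2), pass to the associated graded algebra $\operatorname{gr} U(\gg) = S^\cdot(\gg)$ with respect to the standard filtration; this is $\kk$-equivariant, and the images of $\Omega_1, \Omega_2, \Omega$ are the corresponding classical Casimirs in $S^\cdot(\gg)$. It suffices to prove that $S^\cdot(\gg)^\kk$ is generated by these three classical invariants, because then any filtered invariant can be reduced modulo lower order by subtracting a polynomial in $\Omega_1, \Omega_2, \Omega$, and one induces on the filtration degree. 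For the classical statement: decompose $\gg = \kk \oplus \mathfrak m$ as above, so $S^\cdot(\gg) = S^\cdot(\kk)\otimes S^\cdot(\mathfrak m)$; using the explicit bilinear pairings, one checks that $S^\cdot(\gg)^\kk$ is generated by the three quadratic invariants $q(\kk)$, $q(\mathfrak m)$, and the mixed pairing $\langle \kk, \mathfrak m\rangle$ (this is a small classical invariant-theory computation for the $SL(2)$-action on $V_2 \oplus V_2$, where the ring of invariants is well known to be a polynomial ring on exactly these three quadratics — indeed $(\Lambda^2 \text{ of }V_2\oplus V_2)$ contributes nothing new in low degree). Finally, re-express this triple $q(\kk), q(\mathfrak m), \langle\kk,\mathfrak m\rangle$ in terms of $\operatorname{gr}\Omega, \operatorname{gr}\Omega_1, \operatorname{gr}\Omega_2$ via an invertible linear change of coordinates (since $\ss_1, \ss_2$ are linear combinations of the diagonal and anti-diagonal, their Casimirs $q(\ss_1), q(\ss_2)$ span, together with $q(\kk)$, the same space of quadratic invariants).

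The main obstacle I expect is step (2): making the associated-graded reduction rigorous, i.e. verifying that $\operatorname{gr}\bigl(U(\gg)^\kk\bigr) = \bigl(\operatorname{gr} U(\gg)\bigr)^\kk = S^\cdot(\gg)^\kk$. This equality holds because $\kk$ is reductive, so taking $\kk$-invariants is exact and commutes with the (filtered, $\kk$-equivariant) associated-graded construction — one needs to note that $U(\gg) = \bigoplus_r V^r \otimes U(\gg)^r$ as $\kk$-modules and that the filtration is compatible with this decomposition. Once that is in place, the reduction argument is formal: given $0 \neq u \in U(\gg)^\kk$ of filtration degree $d$, its symbol lies in $S^d(\gg)^\kk$, hence is a polynomial $P(\operatorname{gr}\Omega, \operatorname{gr}\Omega_1, \operatorname{gr}\Omega_2)$; then $u - P(\Omega,\Omega_1,\Omega_2)$ has strictly smaller filtration degree and is still $\kk$-invariant, so induction finishes. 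The remaining piece — the classical invariant theory of $SL(2)$ acting on two copies of its adjoint representation — is standard and only needs to be cited or dispatched with the first fundamental theorem for $SL(2)$; no heavy computation is required.
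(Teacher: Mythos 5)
Your proof is correct, and it takes a genuinely different route from the paper. The paper dispatches \refle{lmc} in one line (``Straightforward computation''), citing Knop's general theorem on centralizers of Lie subalgebras in enveloping algebras (\cite{Kn1}). You instead give a self-contained, elementary argument: decompose $\gg=\kk\oplus\mathfrak m$ into diagonal and anti-diagonal copies of $V_2$, pass to the associated graded $S^\cdot(\gg)$ (licit because $\kk$ is reductive, so $\operatorname{gr}$ of the $\kk$-invariants equals the $\kk$-invariants of $\operatorname{gr}$), invoke the first fundamental theorem for $SO(3)\simeq SL(2)/\{\pm 1\}$ acting on $\CC^3\oplus\CC^3$ to see that $S^\cdot(\gg)^\kk$ is the polynomial ring on the three quadratic pairings $q(\kk)$, $q(\mathfrak m)$, $\langle\kk,\mathfrak m\rangle$, observe the invertible linear change of variables $q(\mathfrak m)=\tfrac12(q(\ss_1)+q(\ss_2))-q(\kk)$, $\langle\kk,\mathfrak m\rangle=\tfrac14(q(\ss_1)-q(\ss_2))$ identifying these with $\operatorname{gr}\Omega,\operatorname{gr}\Omega_1,\operatorname{gr}\Omega_2$, and then reduce by induction on filtration degree. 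What your version buys is transparency and independence from Knop's machinery; what the paper's version buys is brevity and the flexibility to generalize (Knop's result covers centralizers far beyond this special case).

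One small wrinkle worth flagging: in your opening paragraph, the PBW/symmetrization heuristic leads you to write that $U(\gg)^\kk$ ``is generated over $U(\kk)^\kk=\CC[\Omega]$ by one further element,'' which is off — there are \emph{two} further generators, not one, because $\bigl(U(\kk)\otimes S^\cdot(\mathfrak m)\bigr)^\kk$ is not simply $U(\kk)^\kk\otimes S^\cdot(\mathfrak m)^\kk$; the mixed invariant $\langle\kk,\mathfrak m\rangle$ contributes a third generator. You correct this implicitly in the associated-graded argument (where all three classical invariants appear), so the actual proof is sound; just don't let the initial heuristic mislead a reader into thinking the Casimir $\Omega$ is redundant, since the transcendence degree of the invariant ring is $3=\dim(V_2\oplus V_2)-\dim\sl(2)$ and $q(\ss_1),q(\ss_2)$ alone have transcendence degree only $2$.
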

\begin{proof}
Straightforward computation. A more general result is proved by F. Knop in \cite{Kn1}.
\end{proof}

\begin{corollary}
Every simple $(\gg,\kk)$-module is multiplicity free.
\end{corollary}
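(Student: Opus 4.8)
The plan is to combine \refle{lmc} with \refle{le2}. The key preliminary observation I would make is that, for this particular pair $(\gg,\kk)$, the algebra $U(\gg)^\kk$ is \emph{commutative}. Indeed, by \refle{lmc} it is generated by $\Omega_1$, $\Omega_2$ and $\Omega$; moreover $\Omega_1,\Omega_2$ belong to the center $Z_\univ$ of $U(\gg)$ --- each $\Omega_i$ is central already in the $i$-th tensor factor of $U(\gg)=U(\sl(2))\otimes U(\sl(2))$ --- so they commute with $\Omega$ and with each other, while $\Omega$ trivially commutes with itself. Hence $U(\gg)^\kk$ is a finitely generated commutative $\CC$-algebra.

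Next I would invoke \refle{le2}: for a simple $(\gg,\kk)$-module $M$ with isotypic decomposition $M=\bigoplus_{r\in R_\kk}V^r\otimes M^r$, each $M^r$ is a simple $U(\gg)^\kk$-module. Fixing $r$ with $M^r\neq 0$ and any $0\neq v\in M^r$, simplicity gives $U(\gg)^\kk\cdot v=M^r$, so evaluation at $v$ realizes $M^r$ as $U(\gg)^\kk/\mathfrak m$ for the maximal ideal $\mathfrak m\eqdef\Ann_{U(\gg)^\kk}(v)$. Since $U(\gg)^\kk$ is a finitely generated commutative $\CC$-algebra, Zariski's lemma (the Nullstellensatz) forces $U(\gg)^\kk/\mathfrak m\simeq\CC$, and therefore $\dim_\CC M^r=1$; note that this argument does not presuppose finite-dimensionality of $M^r$. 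As $r$ was arbitrary, $\dim M^r\leq 1$ for all $r\in R_\kk$, i.e. $M$ is multiplicity free.

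I do not expect a genuine obstacle here: the entire content is the commutativity of $U(\gg)^\kk$, which is exactly the extra information carried by \refle{lmc} beyond a bare finite-generation statement, and the remainder is the standard fact that a simple module over a finitely generated commutative $\CC$-algebra is one-dimensional. (If one wished to sidestep the Nullstellensatz, one could instead run a Schur-lemma argument based on the countable $\CC$-dimension of $U(\gg)^\kk$, but the route above is the shortest.)
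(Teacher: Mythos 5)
Your argument is correct and is exactly the reasoning the paper leaves implicit: Lemma~\ref{lmc} shows $U(\gg)^\kk$ is commutative (its three generators pairwise commute, since $\Omega_1,\Omega_2\in Z_\univ$), Lemma~\ref{le2} makes each $M^r$ a simple $U(\gg)^\kk$-module, and a simple module over a finitely generated commutative $\CC$-algebra is one-dimensional. You have simply supplied the standard Nullstellensatz step that the paper does not bother to spell out; no discrepancy.
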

\begin{lemma}\label{lmch}
If $V_n$ is the minimal $\kk$-type of a simple infinite dimensional $(\gg,\kk)$-module $M$, then
\begin{equation}\label{eqChar1}
c(M)=z^n+z^{n+2}+z^{n+4}+\dots\quad .
\end{equation}
\end{lemma}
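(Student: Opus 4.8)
The plan is to deduce \refeq{eqChar1} by combining the multiplicity-freeness of $M$ (the Corollary preceding \refle{lmch}) with a direct analysis of how $\gg$ moves the $\kk$-isotypic components of $M$, exploiting that $\gg/\kk\cong V_2$. Fix standard generators $e_i,f_i,h_i$ ($i=1,2$) of the two copies of $\sl(2)$ in $\gg$, so that $\kk$ is spanned by $e:=e_1+e_2$, $f:=f_1+f_2$, $h:=h_1+h_2$, and $\mathfrak m:=\span\{e',f',h'\}$, with $e':=e_1-e_2$, $f':=f_1-f_2$, $h':=h_1-h_2$, is a $\kk$-stable complement to $\kk$ in $\gg$ isomorphic to the adjoint module $V_2$. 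Writing the $\kk$-isotypic decomposition as $M=\bigoplus_k (V_k\otimes M^k)$ (normalized so that the $\bb_\kk$-highest weight vector of $V_k$ has $h$-eigenvalue $k$), we know $\dim M^k\le 1$ for all $k$, so it suffices to show that $M^k\neq 0$ holds exactly for $k\in\{n,n+2,n+4,\dots\}$.

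First I would show every $\kk$-type of $M$ is of the form $V_{n+2j}$ with $j\ge 0$. Pick $0\neq m_n\in M$ a $\bb_\kk$-highest weight vector of $h$-weight $n$; since $M$ is simple, $M=\univ(\gg)\cdot m_n$. For any $k$ one has $\kk\cdot(V_k\otimes M^k)\subseteq V_k\otimes M^k$, while $\kk$-equivariance of the action map $\mathfrak m\otimes(V_k\otimes M^k)\to M$ together with the Clebsch--Gordan decomposition $V_2\otimes V_k\cong V_{k-2}\oplus V_k\oplus V_{k+2}$ shows that $\mathfrak m\cdot(V_k\otimes M^k)$ is contained in the $V_{k-2}$-, $V_k$- and $V_{k+2}$-isotypic components. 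Because $M^t=0$ for $t<n$, an induction on the standard filtration degree then gives $\univ(\gg)_j\cdot m_n\subseteq\bigoplus_{0\le i\le j}(V_{n+2i}\otimes M^{n+2i})$; letting $j\to\infty$ we get $M=\bigoplus_{j\ge 0}(V_{n+2j}\otimes M^{n+2j})$, so $c(M)=\sum_{j\ge 0}a_j z^{n+2j}$ with each $a_j\in\{0,1\}$ and $a_0=1$.

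The main point is then that $a_j=1$ for all $j$. The key computations are $[e,e']=[e_1+e_2,e_1-e_2]=0$, $[h,e']=2e'$ and $[e,h']=-2e'$. The first two say that $e'$ carries $\bb_\kk$-highest weight vectors to $\bb_\kk$-highest weight vectors while raising the $h$-weight by $2$; hence $m_{n+2j}:=(e')^j\cdot m_n$ is a $\bb_\kk$-highest weight vector of $h$-weight $n+2j$, and $m_{n+2j}\neq 0$ already forces $a_j=1$. Suppose some $m_{n+2j}$ vanished, and let $j\ge 1$ be minimal with $m_{n+2j}=0$; set $m':=m_{n+2(j-1)}\neq 0$, so $e'\cdot m'=0$. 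Then $m'$ is killed by $e=e_1+e_2$ and by $e'=e_1-e_2$, hence by $e_1$ and $e_2$. Using $[e,h']=-2e'$ we get $e\cdot(h'\cdot m')=-2\,e'\cdot m'=0$, so $h'\cdot m'$ is again a $\bb_\kk$-highest weight vector of $h$-weight $n+2j-2$; the space of such vectors, being nonzero (it contains $m'$) and of dimension $\dim M^{n+2j-2}\le 1$ by multiplicity-freeness, is spanned by $m'$, so $h'\cdot m'\in\CC m'$. Together with $h\cdot m'=(n+2j-2)m'$ this makes $m'$ an eigenvector of $h_1=\half(h+h')$ and of $h_2=\half(h-h')$, i.e. a highest weight vector for $\gg$. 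By simplicity, $M$ is then the simple highest weight $\gg$-module with that weight, so $M\cong L_1\boxtimes L_2$ for simple highest weight $\sl(2)$-modules $L_1,L_2$, one for each summand of $\gg$. But an infinite-dimensional module of this form has $h$-weights bounded above yet unbounded below, while the set of $h$-weights of any $(\gg,\kk)$-module is symmetric under $\nu\mapsto-\nu$ (each $V_k$ contributing precisely the weights $-k,-k+2,\dots,k$); as $M$ is infinite-dimensional this is a contradiction. Hence $m_{n+2j}\neq 0$ for every $j$, so $a_j=1$ for all $j$ and \refeq{eqChar1} follows.

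I expect the only real obstacle to be this last step --- excluding the $\gg$-highest weight alternative; the rest is routine bookkeeping with isotypic components and Clebsch--Gordan. One could also dispose of that alternative by the remark that $L_1\boxtimes L_2$ is a sum of finite-dimensional $\kk$-modules only when both $L_1$ and $L_2$ are finite-dimensional, which would force $M$ itself to be finite-dimensional.
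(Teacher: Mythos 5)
Your proof is correct. The first half---showing that every $\kk$-type of $M$ lies among $V_n, V_{n+2}, V_{n+4},\dots$ via the Clebsch--Gordan bound on $(V_2\oplus V_2)\otimes V_k$---is the same as the paper's. However, your argument for the absence of gaps (that $M^{n+2j}\neq 0$ for all $j\geq 0$) takes a genuinely different route. The paper's proof is short and abstract: if $V_{n+2i}$ ($i>0$) were missing, then $\bigoplus_{0\leq t<i}V_{n+2t}\otimes M^{n+2t}$ would be a nonzero finite-dimensional $\gg$-submodule (the Clebsch--Gordan constraint forbids the $\gg$-action from jumping across the gap), which by simplicity would equal $M$, contradicting infinite-dimensionality. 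Your proof instead introduces explicit generators $e',f',h'$ of the $\kk$-complement, constructs the chain $(e')^j\cdot m_n$ of $\bb_\kk$-highest weight vectors, and shows via a bracket computation together with multiplicity-freeness that the vanishing of one of them would force $m'$ to be a $\gg$-highest weight vector, hence $M\cong L_1\boxtimes L_2$; you then rule this out by the asymmetry of its $h$-weight support (or by the failure of $\kk$-local finiteness). Both arguments are valid; the paper's submodule argument is shorter and uses only the $\kk$-module structure of $\gg$ (not the bracket on $\gg$), while your version has the modest advantage of exhibiting explicit nonzero generators of the highest-weight lines of each isotypic component, namely $(e')^j\cdot m_n$.
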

\begin{proof}
To prove \refeq{eqChar1} it suffices to show that $V_n$, $V_{n+2}$,
$V_{n+4}$, etc. are precisely all $\kk$-types of $M$. The absence of
other $\kk$-types follows from the fact that as a $\kk$-module $\gg$
is isomorphic to $V_2\oplus V_2$, hence when acting by $\gg$ on
$V_{n+2i}$ one can only obtain $\kk$-constituents of $(V_2\oplus
V_2)\otimes V_{n+2i}$, i.e. $V_{n+2(i-1)}$, $V_{n+2i}$ and
$V_{n+2(i+1)}$. To show that for each $i>0$ $V_{n+2i}$ is a
$\kk$-constituent of $M$, note that if $V_{n+2i}$ were not a
constituent of $M$, then  when acting by $\gg$ on $V_{n+2(i-t)}$ for $t\geq 1$ one would not be able to obtain a constituent of the from $V_{n+2(i+r)}$ for $r\geq 1$. Hence $M$ would turn being finite dimensional, a contradiction.
\end{proof}

\begin{lemma}
Let $M$ be a simple $(\gg,\kk)$-module with minimal $\kk$-type $V_0$. Then the central character of $M$ equals $\chi(a,a)$ for some $a\in \CC$.
\end{lemma}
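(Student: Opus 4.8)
The plan is to exploit the $\kk$-invariant vector furnished by the minimal $\kk$-type $V_0$ and to show that the two Casimir operators $\Omega_1,\Omega_2$ of the two $\sl(2)$-summands of $\gg$ act on it by one and the same scalar. Since $M$ is simple, they then act by the same scalar on all of $M$, and this is precisely the assertion that $\chi(M)=\chi(a,a)$ for an appropriate $a$.

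First I would record the general position: $M$ is simple, hence at most countable-dimensional, so by Schur's lemma (see \cite{Dix}) $\End_\gg(M)=\CC$ and $M$ has a central character $\chi(M)\colon Z(U(\gg))\to\CC$. As $Z(U(\gg))=\CC[\Omega_1,\Omega_2]$, this character is completely determined by the two scalars $s_1,s_2\in\CC$ by which $\Omega_1,\Omega_2$ act on $M$ (and $\Omega$ acts by $0$ since $M^\kk\neq 0$). Moreover, the $\Omega$-eigenvalue on a Verma module over a single copy of $\sl(2)$ is a non-constant, hence surjective, polynomial in the highest weight, so once we know $s_1=s_2=s$ we may choose $a\in\CC$ with this common eigenvalue equal to $s$ and conclude $\chi(M)=\chi(a,a)$. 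Thus everything reduces to the identity $s_1=s_2$.

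Next I would fix standard generators $e_i,h_i,f_i$ of the two copies $\gg_1,\gg_2\cong\sl(2)$ in $\gg$, so that $\kk$ is spanned by $e=e_1+e_2$, $h=h_1+h_2$, $f=f_1+f_2$, and write $\Omega_i=\half h_i^2+e_if_i+f_ie_i$. Pick $0\neq m\in M^\kk$; then $e\cdot m=h\cdot m=f\cdot m=0$, which gives $x_1\cdot m=-x_2\cdot m$ for each $x\in\{e,h,f\}$. Combining these relations with $[\gg_1,\gg_2]=0$, a one-line commutator manipulation in each case yields
\[
h_1^2\cdot m=h_2^2\cdot m,\qquad e_1f_1\cdot m=f_2e_2\cdot m,\qquad f_1e_1\cdot m=e_2f_2\cdot m
\]
(for instance $e_1f_1\cdot m=-e_1f_2\cdot m=-f_2e_1\cdot m=f_2e_2\cdot m$). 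Adding the three identities gives $\Omega_1\cdot m=\Omega_2\cdot m$, whence $s_1=s_2$.

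There is essentially no obstacle here: the only step requiring attention is the (completely routine) commutator bookkeeping of the previous paragraph, and the underlying point is simply that on $\kk$-invariant vectors the Casimir operators of the two factors coincide, so for a module generated by such a vector the corresponding central-character components must agree.
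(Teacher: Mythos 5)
Your proof is correct, and the computation checks out: with $e=e_1+e_2,\ h=h_1+h_2,\ f=f_1+f_2$ and $m\in M^\kk$, the relations $x_1\cdot m=-x_2\cdot m$ for $x\in\{e,h,f\}$ together with $[\gg_1,\gg_2]=0$ give $h_1^2\cdot m=h_2^2\cdot m$, $e_1f_1\cdot m=f_2e_2\cdot m$, $f_1e_1\cdot m=e_2f_2\cdot m$, and hence $\Omega_1\cdot m=\Omega_2\cdot m$; by the central character this forces $s_1=s_2$. However, your route is genuinely different from the paper's. The paper observes that $M$ is a quotient of the induced module $U(\gg)\otimes_{U(\kk)}V_0$, identifies this induced module with $U(\kk)$ carrying the $U(\kk)\otimes U(\kk)$-bimodule structure coming from the isomorphism $\gg\simeq\kk\oplus\kk$, and then notes that $\Omega_1$ and $\Omega_2$ become left and right multiplication by the Casimir of $\kk$, which coincide because the Casimir is central in $U(\kk)$; this equality is then inherited by the quotient $M$. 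Your proof replaces this structural bimodule observation by a direct commutator computation on a single $\kk$-invariant vector. The paper's argument is slicker and shows that $\Omega_1-\Omega_2$ annihilates the entire induced module without invoking Schur's lemma; yours is more elementary and self-contained, at the small cost of first invoking Dixmier's version of Schur's lemma to guarantee a central character and then pinning down the two scalars $s_1,s_2$ at the invariant vector. Both are valid and essentially equally short.
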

\begin{proof}
Since $\gg\simeq \kk\oplus\kk$, the $\gg$-module  $U\otimes_{U(\kk)}V_0$ is isomorphic to $U(\kk)$. The latter is endowed with a $U\simeq U(\kk)\otimes U(\kk)$-module structure via left multiplication by elements of $U(\kk)\otimes 1$ and right
 multiplication by elements of $1\otimes U(\kk)$.
Moreover, the action of $\Omega_1$ and $\Omega_2$ coincides on $U(\kk)$. Since $M$ is a quotient of the $\gg$-module $U(\kk)$, the action of $\Omega_1$ and $\Omega_2$ coincides on $M$, hence the Lemma.
\end{proof}
\begin{lemma}
Let $M$ be a simple $(\gg,\kk)$-module. Then the central character of $M$ equals $\chi (a,a+n)$ for some $a\in \CC$ and some $n\in\ZZ$. Moreover, the parity of $n$ equals the parity of $k$ where $V_k$ is the minimal $\kk$-type of $M$.
\end{lemma}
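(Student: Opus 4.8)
The plan is to deduce the general case from the already established case of minimal $\kk$-type $V_0$, by tensoring $M$ with a suitable finite dimensional $\gg$-module and tracking central characters through the translation principle. Write $\gg=\gg_1\oplus\gg_2$ with $\gg_1\simeq\gg_2\simeq\sl(2)$, so that $\kk$ is the diagonal subalgebra and the projection $\kk\to\gg_1$ is an isomorphism. I will use that the central characters of $\gg$ are the $\chi(c_1,c_2)$ with $(c_1,c_2)\in\CC^2$, that $\chi(c_1,c_2)=\chi(c_1',c_2')$ if and only if $c_1=\pm c_1'$ and $c_2=\pm c_2'$, and that $\chi(c_1,c_2)(\Omega_i)$ depends only on $c_i^2$; in particular $\chi(c_1,c_2)(\Omega_1)=\chi(c_1,c_2)(\Omega_2)$ if and only if $c_1=\pm c_2$.

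Let $V_k$ be the minimal $\kk$-type of $M$ and write $\chi(a_1,a_2)$ for the central character of $M$. Let $E$ be the $(k+1)$-dimensional simple $\gg$-module on which $\gg_1$ acts via its $(k+1)$-dimensional irreducible representation and $\gg_2$ acts trivially; via the isomorphism $\kk\to\gg_1$ we have $E\simeq V_k$ as a $\kk$-module. I then consider $M\otimes E$, a $(\gg,\kk)$-module of finite type over $\kk$. Since $V_k$ occurs in $M$ and $V_0$ is a constituent of $V_k\otimes V_k$, the space $(M\otimes E)^{\kk}=\Hom_\kk(V_0,M\otimes E)$ is nonzero and finite dimensional; as $Z_\univ$ commutes with $\kk$ it preserves this space, so I can choose a nonzero common eigenvector $u\in(M\otimes E)^{\kk}$, with $Z_\univ$-eigencharacter $\chi'$.

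Because $u$ is a genuine $Z_\univ$-eigenvector, the submodule $\langle u\rangle:=\univ(\gg)\cdot u$ has central character $\chi'$. On the other hand $\langle u\rangle$ is generated by the $\kk$-fixed vector $u$, hence is a quotient of $\univ(\gg)\otimes_{\univ(\kk)}\CC\simeq\univ(\kk)$, on which, exactly as shown in the proof of the preceding lemma, $\Omega_1$ and $\Omega_2$ act by one and the same operator; therefore $\chi'(\Omega_1)=\chi'(\Omega_2)$. By the standard theory of translation functors (see \cite{BG}), every central character occurring in $M\otimes E$ has the form $\chi\bigl((a_1,a_2)+\nu\bigr)$ for some weight $\nu$ of $E$; the weights of $E$ are $(m,0)$ with $m\in\{k,k-2,\dots,-k\}$, so $\chi'=\chi(a_1+m,a_2)$ for some such $m$, and in particular $m\equiv k\pmod 2$. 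Combining the two facts about $\chi'$: from $\chi'(\Omega_1)=\chi'(\Omega_2)$ and $\chi'=\chi(a_1+m,a_2)$ one gets $a_2=\pm(a_1+m)$, and in either case (using $\chi(c_1,c_2)=\chi(c_1,-c_2)$ in the minus case) this yields $\chi(a_1,a_2)=\chi(a_1,a_1+m)$. Setting $a:=a_1$ and $n:=m$ completes the proof, since $n\in\ZZ$ and $n\equiv k\pmod 2$.

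The argument is uniform in $\dim M$ (if $M$ is finite dimensional the same steps apply verbatim). The only non-elementary ingredient, and the step I expect to be the real content, is the localization of the central characters occurring in $M\otimes E$ at the set $\{\chi((a_1,a_2)+\nu)\}_{\nu\in\mathrm{wt}\,E}$; this is precisely the translation principle, and once it is invoked, everything else is bookkeeping with the Weyl group $(\ZZ/2)^2$ of $\gg$ together with the observation—borrowed from the proof of the preceding lemma—that a $\gg$-module generated by a $\kk$-fixed vector is a quotient of $\univ(\kk)$, on which $\Omega_1$ and $\Omega_2$ agree.
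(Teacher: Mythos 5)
Your proof is correct and takes essentially the same route as the paper's: tensor $M$ with a simple finite-dimensional $\gg$-module isomorphic to $V_k$ over $\kk$ (you use $V_k\boxtimes V_0$, the paper uses $V_0\boxtimes V_k$, which is just a cosmetic swap), locate a $\kk$-fixed vector, constrain its central character by the translation principle, and combine with the fact that $\Omega_1$ and $\Omega_2$ agree on a module generated by a $\kk$-invariant. The only minor deviation is that you unwind the argument of the preceding lemma (working with the submodule $\langle u\rangle$ generated by a $Z_\univ$-eigenvector $u\in(M\otimes E)^\kk$) rather than citing its statement applied to a simple subquotient containing $V_0$ as a $\kk$-type, but this is the same mechanism.
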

\begin{proof}
Let $M$ have central character $\chi(\alpha,\beta)$.
Consider the $\gg$-module $M\otimes (V_0\boxtimes V_k)$, where
the $\gg=\kk\oplus\kk$-module
$V_0\boxtimes V_k$ is endowed with a $\gg$-module structure via
the isomorphism $\gg\simeq \kk\oplus\kk$. Then
$\hom_\kk(V_0,M\otimes(V_0\boxtimes V_k))\neq 0$, hence a simple
subquotient of $M\otimes (V_0\boxtimes V_k)$ has central character
$\chi (a,a)$ for some $a$. On the other hand, the central characters
of all simple subquotients of $M\otimes(V_0\boxtimes V_k)$ are of the
form $\chi(\alpha,\beta-n)$ for $n$ running over the set of weights of
$V_k$.
Therefore $\alpha=a$, $\beta-n=a$, i.e. the Lemma follows.
\end{proof}

\begin{lemma}\label{le00}
For any central character $\chi$, up to isomorphism there is at most one infinite
dimensional simple $(\gg,\kk)$-module with this central character.
\end{lemma}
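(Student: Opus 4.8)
The plan is to reconstruct $M$ from $\chi$ one $\kk$-type at a time, in the spirit of \cite{BG}. By \refle{lmch}, $M=\bigoplus_{i\ge0}M_i$ as a $\kk$-module, with $M_i\simeq V_{m+2i}$ and $V_m$ the minimal $\kk$-type; in particular $M$ is multiplicity free. Write $\gg=\kk\oplus\mathfrak m$ for the decomposition into $\kk$-isotypic parts, so that $\mathfrak m\simeq V_2$ as a $\kk$-module and $[\mathfrak m,\mathfrak m]\subseteq\kk$. The action $\mathfrak m\otimes M\to M$ is $\kk$-equivariant, and restricted to the $i$-th layer it splits, via Clebsch--Gordan, into $\kk$-maps $V_2\otimes M_i\to M_{i+1}$, $V_2\otimes M_i\to M_i$, $V_2\otimes M_i\to M_{i-1}$, each spanning a one-dimensional space whenever the target $\kk$-type occurs. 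Fixing once and for all models for the $V_{m+2i}$ and normalized generators of these Hom-spaces, the entire $\gg$-module structure is encoded by three scalar sequences: a ``raising'' $a_i$, a ``diagonal'' $b_i$, and a ``lowering'' $c_{i+1}$ (set $c_0:=0$). First I would note that simplicity forces $a_i\ne0$ and $c_{i+1}\ne0$ for all $i$: if $a_i=0$ then $\bigoplus_{j\le i}M_j$ is a proper nonzero $\gg$-submodule (it is $\kk$-stable and $\mathfrak m\cdot M_j\subseteq M_{j-1}\oplus M_j\oplus M_{j+1}$ lies inside it for every $j\le i$), and symmetrically $c_{i+1}=0$ would make $\bigoplus_{j\ge i+1}M_j$ a proper nonzero submodule.

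The heart of the argument is that these scalars are pinned down by $\chi$ and $m$. By \refle{lmc} the algebra $\univ(\gg)^\kk$ is generated by the two Casimirs $\Omega_1,\Omega_2$ together with $\Omega$, so $\chi$ prescribes the scalars by which the central elements $\Omega_1,\Omega_2$ act on $M$, while $\Omega$ acts on $M_i$ by the (known) $\kk$-Casimir eigenvalue of $V_{m+2i}$. Expanding $\Omega_1,\Omega_2$ in the $\kk\oplus\mathfrak m$ splitting, the ``odd'' combination $\Omega_1-\Omega_2$ is linear in $\mathfrak m$; on $M_i$ its ``off-diagonal'' parts vanish (they factor through the $\kk$-invariants of $V_2\otimes V_2$, which cannot map to $V_{m+2i\pm2}$), so it acts on $M_i$ by a nonzero scalar multiple of $b_i$ — whence $b_i$ is determined by $\chi$ and $m$. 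The ``even'' combination is, up to an explicit scalar and a multiple of $\Omega$, the transverse Casimir $\Omega_{\mathfrak m}:=\sum_a\check e_a\check e^a$, which lies in $\univ(\gg)^\kk$ and therefore acts on $M_i$ by a scalar already computed from $\chi$ and $m$; computing the same operator directly gives $\Omega_{\mathfrak m}|_{M_i}=A_i\,a_ic_{i+1}+B_i\,b_i^2+C_i\,a_{i-1}c_i$ with explicit constants and $A_i\ne0$. Since the $i=0$ relation has no $a_{-1}c_0$ term, this is a recursion determining all products $a_ic_{i+1}$ from $\chi$ and $m$. Now, given two infinite-dimensional simple $(\gg,\kk)$-modules $M,M'$ with the same $\chi$ and the same minimal $\kk$-type, we have $b_i=b_i'$ and $a_ic_{i+1}=a_i'c_{i+1}'$ for all $i$, and I would build a $\gg$-isomorphism by rescaling the $i$-th layer of $M'$ by $t_i$ with $t_0=1$ and $t_{i+1}/t_i=a_i/a_i'$ (legitimate as $a_i'\ne0$): the raising scalars then match by construction, the diagonal ones agree already, and the lowering relations $t_i/t_{i-1}=a_{i-1}/a_{i-1}'=c_i'/c_i$ follow from the agreement of the products.

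It remains to see that the minimal $\kk$-type is itself determined by $\chi$. Here I would combine the earlier lemmas: the central character is $\chi(a,a+n)$ with $n\equiv m\pmod2$, and for non-integral $\chi$ there is only one admissible value of $|n|$, which the proof of that lemma already forces to be $\le m$; a matching reverse bound then gives $m$. For integral $\chi$ one first uses the translation functor of Section~\ref{secFirstResults}, which is an equivalence $\BB_\kk^{\chi}\to\BB_\kk^{\chi(\rho)}$ preserving simplicity and infinite-dimensionality, to reduce to the trivial central character; for $\chi(\rho)$ the bottom-layer relation reads $A_0\,a_0c_1=-\,\omega_m$ (with $\omega_m$ the $\kk$-Casimir eigenvalue of $V_m$), which forces $m\ne0$ since $a_0c_1\ne0$, and one checks likewise that the recursion breaks down (some $a_ic_{i+1}$ vanishes) for every $m$ other than the one value it admits, so that $m$ is again determined by $\chi$.

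The main obstacle is the Casimir/Clebsch--Gordan bookkeeping in the middle paragraph: one must verify that $\Omega_{\mathfrak m}$ really does expand with a \emph{nonzero} coefficient on the $a_ic_{i+1}$-term for every $i$ (otherwise that product would not be forced), and that the coefficient of $b_i$ in $\Omega_1-\Omega_2$ never vanishes — both reduce to showing that certain compositions of the two non-trivial $\kk$-projections $V_2\otimes V_\ell\to V_{\ell\pm2}$ act on $V_\ell$ by non-zero scalars, which one establishes by evaluating on a highest-weight vector. The subsidiary obstacle is the last paragraph: extracting from the recursion (and the translation functor) that the minimal $\kk$-type is determined by $\chi$, rather than merely ``$M$ is determined by $\chi$ and its minimal $\kk$-type.''
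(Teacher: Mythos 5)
Your approach is genuinely different from the paper's, and considerably more computational. The paper's proof is a short universal-property argument: by \refle{lmch} and the preceding lemma on parity, any two infinite-dimensional simple $(\gg,\kk)$-modules $M'$, $M''$ with the same central character $\chi$ share a common $\kk$-type $V_m$; both are therefore simple quotients of $U\otimes_{Z_U U(\kk)}V_m$; and since $U^\kk\subset Z_U U(\kk)$ by \refle{lmc}, that induced module satisfies $\hom_\kk(V_m,\;U\otimes_{Z_U U(\kk)}V_m)=\CC$, so it has a unique maximal proper submodule and hence a unique simple quotient. Crucially this argument never needs to pin down the \emph{minimal} $\kk$-type from $\chi$: it only needs a \emph{common} $\kk$-type, and that is guaranteed once the parity of the minimal type is known. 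So it sidesteps exactly the step you flag as your ``subsidiary obstacle.''

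In your structure-constant approach that obstacle is not subsidiary but central, and I do not think it is resolved. Your recursion for $a_ic_{i+1}$ is anchored at the boundary $c_0=0$, i.e. at the minimal $\kk$-type. If two modules have the same $\chi$ but different minimal types $V_m$ and $V_{m'}$, the per-$\kk$-type relations from $\Omega_{\mathfrak m}$ are indeed the same, but the boundary conditions sit at different $\kk$-types, so the two recursions produce different, individually consistent, nonvanishing sequences; nothing in your argument yet rules this out. The sketch via the translation functor does not close the gap either: translating to $\chi(\rho)$ reduces the question but does not answer it, and a complete analysis of which $m$ give a consistent nonvanishing recursion for a given $\chi$ is precisely the computation that remains to be done. (As a sanity check on how delicate this is: for $\chi(\rho)$ and $m=0$, your bottom-layer relation does force $a_0c_1=0$, correctly showing there is \emph{no} simple module there — this central character is excluded in \refth{thMainSCT} — but this case also shows that the recursion can legitimately ``break down,'' so its nonvanishing needs an argument, not an assertion.) The other gap you name, the nonvanishing of the relevant Clebsch--Gordan/Casimir coefficients ($A_i$, and the coefficient of $b_i$ coming from $\Omega_1-\Omega_2$), is a genuine but more routine verification. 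In short: your plan could probably be carried out, but it replaces the paper's one-paragraph universal-module argument with a multi-step computation whose two hardest pieces are left open.
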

\begin{proof}
Let $M',M''$ be two simple $(\gg,\kk)$-modules with central character
$\chi$. Then, by \refle{lmch}, for some $m$
$\hom_\kk(V_m,M')=\hom_\kk(V_m,M'')=\CC$. Therefore $M'$ and $M''$ are
isomorphic to simple quotients of the $\gg$-module $U\otimes_{Z_U
  U(\kk)}V_m$, where $Z_U$ acts on $V_m$ via the central character
$\chi$. The fact that $U^\kk\subset Z_U U(\kk)$ (\refle{lmc}) implies
that $\hom_\kk(V_m, U\otimes_{Z_U U(\kk)}V_m)=\CC$ for every $m\geq
0$. Hence $U\otimes_{Z_U U(\kk)}V_m$ has a unique proper maximal
submodule, and in this way also a unique simple quotient. Therefore $M'\simeq M''$.
\end{proof}

In the rest of this section we will normalize the central characters considered as $\chi(a,a-n)$ for $n\in \ZZ_{\geq 0}$, where the notation $a,b$ is shorthand for the weight $a\omega_{\mathrm{left}}+b\omega_{\mathrm{right}}$, $\omega_{\mathrm{left}}$ (respectively, $\omega_{\mathrm{right}}$) being the fundamental weight of the first (respectively, second) direct summand of $\gg$. If $a\in \ZZ$, we assume in addition that $a\geq 0$ and $a-n \leq 0$. By $M_c$ denote the Verma module over $\kk$ with highest weight $c-1$. Note that
for $a,a-n$ as above, $\hom_{\CC}(M_{a},M_{a-n})$ is a $\gg$-module with central character $\chi(a,a-n)$. Define
\[
W_{a,a-n}:=\Gamma_{\kk}(\hom_{\CC} (M_{a},M_{a-n})).
\]
\begin{theorem}\label{thMainSCT}~
\begin{itemize}
\item[(a)] Fix $a\in\CC\backslash \ZZ_{< 0}$ and $n\in \ZZ_{\geq 0}$ such that $a-n\leq 0$ for integer $a$. The $\gg$-module $W_{a,a-n}$ is the unique (up to isomorphism) simple infinite dimensional $(\gg,\kk)$-module with central character $\chi(a,a-n)$.
\item[(b)] $c(W_{a,a-n})=z^n+z^{n+2}+z^{n+4}+\dots\quad .$
\end{itemize}
\end{theorem}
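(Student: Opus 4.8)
The plan is to extract everything from the structure of the $\gg$-module $\hom_\CC(M_a, M_{a-n})$ and then apply the uniqueness already established in \refle{le00}. First I would observe that $\hom_\CC(M_a, M_{a-n}) \simeq M_a^* \otimes M_{a-n}$ carries commuting actions of the two copies of $\kk$ in $\gg$: the first copy acts through the dual of the left-multiplication action on $M_a$ (so with ``central character'' governed by $a$), the second through $M_{a-n}$. Since $\Omega_1$ acts on $M_a^*$ by the scalar corresponding to highest weight $a-1$ and $\Omega_2$ acts on $M_{a-n}$ by the scalar for highest weight $a-n-1$, the $\gg$-module $\hom_\CC(M_a,M_{a-n})$ has central character $\chi(a,a-n)$, hence so does its $\kk$-finite part $W_{a,a-n}$ and any subquotient thereof. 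Next I would compute the $\kk$-decomposition: restricting to the diagonal $\kk$, the module is $M_a^* \otimes M_{a-n}$ as a $\kk$-module. Using that $M_{a-n}$ has weights $a-n-1, a-n-3, a-n-5, \dots$ (all with multiplicity one) and $M_a^*$ has weights $-(a-1), -(a+1), -(a+3), \dots$, one checks that the weight multiplicities of the tensor product, after passing to $\kk$-finite vectors, are finite; more precisely $\Gamma_\kk(M_a^*\otimes M_{a-n})$ is a direct sum $\bigoplus_{i\geq 0} V_{n+2i}$ with each multiplicity exactly one, which simultaneously proves part (b) once we know $W_{a,a-n}$ is simple and infinite-dimensional.

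The heart of the argument is simplicity together with the identification of $W_{a,a-n}$ as \emph{the} simple module. For this I would argue as in the proof of \refle{le00}: $W_{a,a-n}$ is generated as a $\gg$-module by its lowest $\kk$-type $V_n$ (because $\hom_\kk(V_n,\,\cdot\,)$ is one-dimensional and any $\gg$-submodule is $\kk$-stable, so a proper submodule must miss $V_n$, and by the $\kk$-decomposition any submodule missing $V_n$ is forced by the $V_2\oplus V_2$ branching rule to be zero — the same propagation argument used in \refle{lmch}). Hence $W_{a,a-n}$ is a quotient of $U \otimes_{Z_U U(\kk)} V_n$ with $Z_U$ acting via $\chi(a,a-n)$, and by \refle{lmc} this induced module has a unique simple quotient; so it suffices to show $W_{a,a-n}$ is nonzero, infinite-dimensional, and simple. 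Nonzero and infinite-dimensional is immediate from the $\kk$-decomposition above (all the $V_{n+2i}$ really occur: the relevant matrix coefficients, i.e. the action of the raising/lowering operators of the two $\kk$-factors on $M_a^*\otimes M_{a-n}$, are nonzero precisely because of the hypotheses $a\notin\ZZ_{<0}$ and $a-n\leq 0$ when $a\in\ZZ$, which guarantee $M_a$ has no singular vectors destroying the relevant weight spaces and $M_{a-n}$ likewise). Simplicity of $W_{a,a-n}$ itself then follows: any nonzero submodule contains $V_n$ by the propagation argument, hence equals the cyclic module it generates, which is everything.

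The step I expect to be the main obstacle is verifying that \emph{no} $\kk$-type $V_{n+2i}$ degenerates or fails to appear — i.e. that the lowering operator from the $a$-side and the raising operator from the $(a-n)$-side never simultaneously annihilate a weight vector, which is exactly where the numerical conditions on $a$ and $n$ enter and where one must rule out the finite-dimensional or reducible degenerations of the Verma modules $M_a$, $M_{a-n}$. Concretely one writes down the action of the four raising/lowering generators on the basis of $M_a^*\otimes M_{a-n}$ indexed by pairs of weights and checks that the relevant products of structure constants are nonzero under the stated hypotheses; this is a finite but slightly delicate computation. Once that is in hand, parts (a) and (b) both drop out: (a) from the uniqueness in \refle{le00} applied to the central character $\chi(a,a-n)$, and (b) from the explicit $\kk$-decomposition, which matches \refeq{eqChar1} of \refle{lmch}.
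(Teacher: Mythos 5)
Your character computation and the appeal to \refle{le00} for uniqueness are both correct and match the paper's strategy in outline. The paper reaches the $\kk$-character more cleanly via Frobenius reciprocity, $\hom_\kk(V_m,\hom_\CC(M_a,M_{a-n}))=\hom_\kk(M_a,M_{a-n}\otimes V_m^*)$, but the result is the same and part (b) is fine.

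The genuine gap is in the simplicity argument. You assert that any nonzero submodule of $W_{a,a-n}$ contains $V_n$ ``by the propagation argument,'' but \refle{lmch}'s propagation only runs \emph{upward in a module already known to be simple}: one cannot lack $V_{m+2}$ when $V_m$ is present, since that would force finite dimensionality. It does not give downward propagation, so a submodule with minimal $\kk$-type $V_m$, $m>n$, has no a priori reason to reach $V_{m-2}$, let alone $V_n$. Whether it does is exactly the nonvanishing of the off-diagonal matrix coefficients of the $\gg$-action between adjacent $\kk$-types, which you flag as ``a finite but slightly delicate computation'' but never carry out. That computation \emph{is} the entire content of simplicity in your approach, so leaving it unverified leaves a real hole. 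The parenthetical (``a proper submodule must miss $V_n$, hence $W$ is generated by $V_n$'') is also circular: the first clause already presupposes the cyclicity from $V_n$ that you are trying to establish.

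The paper closes this gap by a different, cheaper route that avoids matrix coefficients entirely. Since $c(W_{a,a-n})=z^n+z^{n+2}+\cdots$ is multiplicity free and every simple infinite dimensional $(\gg,\kk)$-module has $\kk$-character $z^m+z^{m+2}+\cdots$ by \refle{lmch}, any two infinite dimensional composition factors of $W_{a,a-n}$ would overlap in some $\kk$-type, forcing multiplicity $\geq 2$; hence if $W_{a,a-n}$ is not simple it must have a finite dimensional composition factor. But that factor would have central character $\chi(a,a-n)$, and for $a\in\CC\setminus\ZZ$ or $a=0$ no finite dimensional $\gg$-module has this central character, while for $a\in\ZZ_{>0}$ the unique one is $V_{a-1}\boxtimes V_{n-a-1}$, whose $\kk$-types are $V_{n-2}, V_{n-4},\ldots$, all strictly below $V_n$ and hence absent from $W_{a,a-n}$. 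This is also precisely where the numerical hypotheses on $a$ and $n$ are actually used. If you want to preserve your route, either do the matrix-coefficient computation in full, or replace that step with this finite-dimensional-subquotient argument.
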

\begin{proof} Note that to compute the
$\kk$-character of
$\Gamma_{\kk}(\hom_{\CC}(M_a,M_{a-n}))$ it suffices to compute
  $\hom_\kk(V_m,\hom_{\CC}(M_a,M_{a-n}))$ for all $m\in \ZZ_{\geq 0}$. However,
\[
\hom_\kk(V_m,\hom_{\CC}(M_a,M_{a-n}))=\hom_\kk(M_a,M_{a-n}\otimes V_m^*),
\]
and
\[
\hom_\kk(M_a, M_{a-n}\otimes V_m^*)=\left\{\begin{array}{cc}\CC&\text{for~}m-n\in2\ZZ_{\geq 0}\\0&\text{otherwise}\end{array}\right. .
\]
Hence
\[
c(W_{a,a-n})=z^n+z^{n+2}+z^{n+4}+\dots\quad .
\]
The simplicity of $W_{a,a-n}$ follows from the observation that if simple, $W_{a,a-n}$ would have a finite dimensional subquotient, but
there is no finite dimensional $\gg$-module with central character
$\chi(a,a-n)$ for $a\in \CC\backslash \ZZ$ or $a=0$.  If $a\in
\ZZ $, the finite dimensional $\gg$-module with central
character $\chi(a,a-n)$ is isomorphic to $V_{a-1}\boxtimes V_{n-a-1}$
whose $\kk$-character is $z^{n-2}+z^{n-4}+...+z^{|n-2a-2|}$, and
hence it can not be a subquotient of $W_{a,a-n}$.
\end{proof}

\section{Classification and $\kk$-characters of simple bounded $(\sl(3),\sl(2))$-modules}\label{secKchars3-2}

Throughout this section $\gg=\sl(3)$ and $\kk\simeq\sl(2)\subset\gg$.

\subsection{The root case.}\label{subsecTheRootCase}
In this subsection we fix a Cartan subalgebra $\hh\subset\gg$ and simple roots $\alpha_1,\alpha_2\in\hh^*$ which define a Borel subalgebra $\bb^+\subset\gg$. We also fix $\kk$ to be the $\sl(2)$-subalgebra generated by the root spaces $\gg^{\pm\alpha_1}$. There are two parabolic subalgebras containing $\kk$ and $\hh$: $\pp^+:=(\hh+\kk)\oplus\gg^{\alpha_2}\oplus\gg^{\alpha_1+\alpha_2}$, $\pp^-:=(\hh+\kk)\oplus\gg^{-\alpha_2}\oplus\gg^{-\alpha_1-\alpha_2}$. Note that $\bb^+\subset\pp^+$ and define $\bb^-$ to be the Borel subalgebra with simple roots $\alpha_1,-\alpha_1-\alpha_2$. Then $\bb^-\subset\pp^-$. In addition, we fix generators $h_i\in[\gg^{\alpha_i},\gg^{-\alpha_i}]$ and denote by $\omega_i$, for $i=1,2$, the corresponding dual basis of $\hh^*$. Then $\rho_{\bb^+}=\omega_1+\omega_2$, $\rho_{\bb^-}=\omega_1-2\omega_2$.

\begin{lemma}\label{le81}
Let $M$ be a simple bounded infinite dimensional $(\gg,\kk)$-module. Then $\gg[M]=\pp^\pm$.
\end{lemma}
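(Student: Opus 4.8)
The goal is to show that for a simple bounded infinite-dimensional $(\gg,\kk)$-module $M$ with $\gg=\sl(3)$ and $\kk$ a root $\sl(2)$, the Fernando--Kac subalgebra $\gg[M]$ is one of the two parabolics $\pp^+$ or $\pp^-$ containing $\kk$ and $\hh$. The first observation is that $\gg[M]$ is a subalgebra of $\gg$ containing $\kk$, so we must enumerate the subalgebras of $\sl(3)$ that contain the root $\sl(2)=\span\{e_{\alpha_1},h_1,f_{\alpha_1}\}$. As a $\kk$-module, $\gg$ decomposes as $V_2\oplus V_1\oplus V_1\oplus V_0$, where $V_2$ is $\kk$ itself, $V_0=\CC h$ for a suitable $h\in\hh$ centralizing $\kk$, and the two copies of $V_1$ are $\span\{e_{\alpha_2},e_{\alpha_1+\alpha_2}\}$ and $\span\{f_{\alpha_2},f_{\alpha_1+\alpha_2}\}$. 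A subalgebra containing $\kk$ is a sum of $\kk$-submodules, so the candidates for $\gg[M]$ are: $\kk$, $\kk\oplus\CC h$, $\kk\oplus V_1^{\pm}$ (the two four-dimensional non-subalgebras — these are excluded), $\pp^{\pm}=\kk\oplus\CC h\oplus V_1^{\pm}$, and $\gg$ itself. So the whole content is to rule out $\gg[M]=\gg$, $\gg[M]=\kk$, and $\gg[M]=\kk\oplus\CC h$.

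\textbf{Ruling out $\gg[M]=\gg$.} If $\gg[M]=\gg$ then $M$ is a locally finite, hence (being simple) finite-dimensional $\gg$-module, contradicting the hypothesis that $M$ is infinite-dimensional. (This is essentially the argument of \refle{le3}(b).)

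\textbf{Ruling out the small cases.} Here I would use the unbounded-multiplicity argument already rehearsed in the second Example of \refsec{secFirstResults}. Suppose $\gg[M]$ does not contain either copy of $V_1$; I claim this forces unbounded $\kk$-multiplicities. Fix a $\bb_\kk$-singular vector $m\in M$ of some $\kk$-weight, where $\bb_\kk$ is the Borel of $\kk$ spanned by $e_{\alpha_1}$ and $h_1$; note $e_{\alpha_2}$ and $f_{\alpha_1+\alpha_2}$ are both $\bb_\kk$-singular (highest weight vectors) in the two copies of $V_1$, and they commute with each other since $[e_{\alpha_2},f_{\alpha_1+\alpha_2}]\in\gg^{-\alpha_1}$ is killed by the singular condition upon iterated application — more precisely one checks $[e_{\alpha_2},f_{\alpha_1+\alpha_2}]=f_{\alpha_1}$ up to scalar, so these do not commute, and I must instead pick genuinely commuting singular vectors. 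The cleaner approach: pick the singular vector $x$ of one copy of $V_1$ and the singular vector $z=h$ of $V_0$; these commute and are both $\bb_\kk$-singular. If neither lies in $\gg[M]$, then all $x^a z^b\cdot m$ are nonzero linearly independent $\bb_\kk$-singular vectors, and counting weights in a fixed $\kk$-weight space shows the multiplicities grow, contradicting boundedness. Running this for the other copy of $V_1$ too, and using that $\gg[M]$ is a subalgebra (so if it contains both singular vectors of the two $V_1$'s it contains all of $\gg$, already excluded), one concludes $\gg[M]$ contains exactly one copy of $V_1$, hence $\gg[M]\supseteq\kk\oplus V_1^{\pm}$; since this is not a subalgebra, $\gg[M]$ must also contain $\CC h$, i.e. $\gg[M]=\pp^{\pm}$.

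\textbf{Main obstacle.} The delicate point is bookkeeping with the specific singular vectors and weights — choosing commuting $\bb_\kk$-singular vectors so that the monomials $x^a z^b\cdot m$ are honestly $\bb_\kk$-singular and honestly linearly independent (nonvanishing requires $x,z\notin\gg[M]$, which is exactly the standing assumption), and then verifying that the resulting weight multiplicities are genuinely unbounded. This is the same mechanism as in the earlier $\so(n)\subset\sl(n+1)$ example, so I expect it to go through once the $\kk$-module structure of $\gg=\sl(3)$ is written out explicitly, but it is the step that needs actual care rather than a one-line citation. A small additional subtlety: one should confirm that $\pp^+$ and $\pp^-$ are genuinely distinct possibilities (not conjugate under anything fixing $\kk$) and that both in fact occur, though the latter is deferred to the explicit constructions later in the section.
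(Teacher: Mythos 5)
Your structural plan --- enumerate the $\kk$-stable subalgebras of $\gg$ containing $\kk$, rule out $\gg$, and reduce to showing $\gg[M]$ contains one copy of $V_1$ --- is sound, and your first instinct to run the unbounded-multiplicity argument on the $\bb_\kk$-singular vectors of the two copies of $V_1$ is exactly what the paper does. The gap is in the ``cleaner approach'' you substituted after noticing that those two singular vectors fail to commute. You set $z=h$, a generator of $V_0=\gg^\kk$; but $\kk+\gg^\kk\subset\gg[M]$ for \emph{every} $(\gg,\kk)$-module (stated at the beginning of \refsec{secFirstResults}), so $z\in\gg[M]$ automatically. The hypothesis ``if neither lies in $\gg[M]$'' is never satisfied, the contradiction is never reached, and you have not excluded $\gg[M]=\kk\oplus\CC h$. (It is also false that $x$ and $h$ commute: $\alpha_1(h)=0$ forces $\alpha_2(h)\neq 0$, so $h$ brackets nontrivially with every root space outside $\kk\oplus\hh$.) Repeating with the other $V_1$ and the same $z$ does not rescue this, since the two disjunctions ``$x$ or $z\in\gg[M]$'' and ``$x'$ or $z\in\gg[M]$'' yield nothing once $z$ is known to lie in $\gg[M]$; what you actually need is the single disjunction ``$x\in\gg[M]$ or $x'\in\gg[M]$''.

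The paper obtains exactly that, without the singular vectors commuting in $\gg$. Take $x\in\gg^{-\alpha_2}$ and $y\in\gg^{\alpha_1+\alpha_2}$, the two $\bb_\kk$-singular lines outside $\kk\oplus\hh$ (not $\gg^{\alpha_2}$ and $\gg^{-\alpha_1-\alpha_2}$, as you wrote: singularity means being annihilated by $\gg^{\alpha_1}$, i.e.\ being a highest rather than lowest $\kk$-weight vector). Both $x$ and $y$ commute with $\gg^{\alpha_1}$, so they preserve $M_0:=\{m\in M:\gg^{\alpha_1}\cdot m=0\}$; and $[x,y]\in\gg^{\alpha_1}$, which acts by zero on $M_0$. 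Therefore $x$ and $y$ \emph{do} commute as operators on $M_0$, the monomials $(x^iy^j)\cdot v$ for an $h_1$-eigenvector $v\in M_0$ remain in $M_0$, and since $x$ and $y$ both have $h_1$-weight $1$, the $n+1$ such vectors with $i+j=n$ are $\bb_\kk$-singular of a single $\kk$-weight. If $x,y\notin\gg[M]$ these are linearly independent, so the $\kk$-multiplicities are unbounded --- a contradiction. Hence $x\in\gg[M]$ or $y\in\gg[M]$, and combined with $\hh\subset\gg[M]$, the $\kk$-invariance of $\gg[M]$, and $\gg[M]\neq\gg$, this forces $\gg[M]=\pp^\pm$.
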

\begin{proof}
Since $\hh\subset\gg^\kk\oplus\kk$, we have $\hh\subset\gg[M]$. Put $M_0:=\{m\in M| \gg^{\alpha_1}\cdot m=0\}$ and choose generators $x$ and $y$ of the respective root spaces $\gg^{-\alpha_2}$ and $\gg^{\alpha_1+\alpha_2}$. A straightforward computation shows that for any $i,j\in \ZZ_{\geq 0}$, $(x^iy^j)\cdot v\in M_0$ if $v$ is any non-zero vector in $M_0$ such that $h_1\cdot v= \nu(h_1) v$ for some $\nu\in(\hh\cap\kk)^*$. Therefore the assumption that $x,y\notin \gg[M]$ implies that the multiplicity of $V_{\nu+i+j}$ is at least $i+j$, which contradicts the boundedness of $M$. Hence $\gg^{-\alpha_2}\in\gg[M]$ or $\gg^{\alpha_1+\alpha_2}\in\gg[M]$, and consequently $\gg[M]=\pp^\pm$.
\end{proof}

Let $F_{a,b}^\pm$ be the simple finite dimensional $\pp^\pm$-module with $\bb^\pm$-highest weight $a\omega_1+b\omega_2$. Define $L^\pm_{a,b}$ as the unique simple quotient of $U(\gg)\otimes_{U(\pp^\pm)}F^\pm_{a,b}$. Then $L^\pm_{a,b}$ are bounded $(\gg,\kk)$-modules, and the existence of an isomorphism $L^\pm_{a,b}\simeq L^\mp_{a',b'}$ implies $\dim L_{a,b}^\pm<\infty$.

\begin{theorem}\label{th82}
Let, as above, $\kk\simeq \sl(2)$ be a root subalgebra of $\gg=\sl(3)$.
\begin{itemize}
\item[(a)] Any infinite dimensional bounded $(\gg,\kk)$-module is isomorphic either to $L_{a,b}^+$ for $a\in\ZZ_{\geq 0}$, $b\in\CC\backslash\ZZ_{\geq 0}$ or to $L^-_{a,b}$ for $a\in\ZZ_{\geq 0}$, $-a-b\in\CC\backslash\ZZ_{\geq 0}$.
\item[(b)]
\begin{equation}\label{eq81}
c(L_{a,b}^\pm)=1+2z+\dots+az^{a-1}+(a+1)(z^a+z^{a+1}+\dots)
\end{equation}
for all $a\geq 0$ and for those $b$ which do not satisfy the conditions $-b\in\ZZ_{\geq 2}$, $a+b\in\ZZ_{\geq -1}$ for $L_{a,b}^+$, and respectively the conditions $a+b\in\ZZ_{\geq 2}$, $-b\in\ZZ_{\geq-1}$ for $L_{a,b}^-$.
\item[(c)] If $-b \in \ZZ_{\geq 2}$, $a+b\in\ZZ_{\geq -1}$, then
\begin{equation}\label{eq82}
c(L_{a,b}^+)=z^{-b-1}+2z^{-b}+\dots+(a+b+1)z^{a-1}+(a+b+2)(z^a+z^{a+1}+\dots),
\end{equation}
and if $a+b\in\ZZ_{\geq 2}$, $-b\in \ZZ_{\geq -1}$, then
\begin{equation}\label{eq83}
c(L_{a,b}^-)=z^{a+b-1}+2z^{a+b}+\dots+(1-b)z^{a-1}+(2-b)(z^a+z^{a+1}+\dots).
\end{equation}
\end{itemize}
\end{theorem}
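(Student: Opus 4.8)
The plan is to use \refle{le81} to realize every infinite-dimensional bounded $M$ as the simple quotient $L^\pm_{a,b}$ of a generalized Verma module, and then to compute $c(L^\pm_{a,b})$ by determining the submodule structure of these generalized Verma modules. For part (a), assume first $\gg[M]=\pp^+$ (the case $\gg[M]=\pp^-$ is identical after replacing $\bb^+$ by $\bb^-$). Write $\pp^+=\mathfrak{l}\oplus\mathfrak{n}^+$ with Levi part $\mathfrak{l}=\hh+\kk\cong\gl(2)$ and abelian nilradical $\mathfrak{n}^+=\gg^{\alpha_2}\oplus\gg^{\alpha_1+\alpha_2}$. Since $\mathfrak{n}^+\subset\gg[M]$ consists of $\ad$-nilpotent elements, it acts locally nilpotently on $M$ (cf. \cite{F}), so $N:=M^{\mathfrak{n}^+}\neq 0$; it is a $\kk$-semisimple $\mathfrak{l}$-module. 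For a $\bb_\kk$-highest weight vector $v\in N$, every $t^j v$ is again $\bb_\kk$-highest of the same $\kk$-weight (because $t\in\gg^\kk$ centralizes $\kk$), so $\univ(\mathfrak{l})\cdot v=\sum_{j\geq 0}\univ(\kk)\cdot(t^j v)$ is a finite sum of copies of one $\kk$-type — finite dimensional because $\span\{t^j v\}$ is finite dimensional ($t\in\gg[M]$). Hence $N$ contains a finite-dimensional simple $\mathfrak{l}$-submodule $N_0$, which extends to $\pp^+$ by $\mathfrak{n}^+\cdot N_0=0$ and is thereby identified with $F^+_{a,b}$ for some $a\in\ZZ_{\geq 0}$, $b\in\CC$. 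Simplicity of $M$ forces $\univ(\gg)\cdot N_0=M$, so $M$ is a simple quotient of $\univ(\gg)\otimes_{\univ(\pp^+)}F^+_{a,b}$, i.e. $M\simeq L^+_{a,b}$; since $L^+_{a,b}$ is finite dimensional exactly when $a\omega_1+b\omega_2$ is $\gg$-dominant integral, infinite dimensionality gives $b\notin\ZZ_{\geq 0}$, and the case $\gg[M]=\pp^-$ gives $L^-_{a,b}$ with $-a-b\notin\ZZ_{\geq 0}$. This proves (a).

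For (b) in the generic situation, put $M^{\pp^+}_{a,b}:=\univ(\gg)\otimes_{\univ(\pp^+)}F^+_{a,b}$. By PBW, $M^{\pp^+}_{a,b}\cong S^{\cdot}(\mathfrak{n}^-)\otimes F^+_{a,b}$ as $\kk$-modules with $\mathfrak{n}^-=\gg^{-\alpha_2}\oplus\gg^{-\alpha_1-\alpha_2}\cong V_1$ and $F^+_{a,b}|_\kk\cong V_a$, so $S^j(\mathfrak{n}^-)\cong V_j$ and the Clebsch--Gordan rule yields
\[
c(M^{\pp^+}_{a,b})=\sum_{k\geq 0}(\min(a,k)+1)z^k ,
\]
which is the right-hand side of \refeq{eq81}. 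Thus (b) holds whenever $M^{\pp^+}_{a,b}$ is simple, i.e. $L^+_{a,b}=M^{\pp^+}_{a,b}$.

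It remains to analyse reducibility. Writing the $\bb^+$-highest weight of $F^+_{a,b}$ as $\mu+\rho_{\bb^+}=(a+1)\omega_1+(b+1)\omega_2$ and setting $\beta:=\alpha_1+\alpha_2$, a proper submodule of $M^{\pp^+}_{a,b}$ can only arise from a $\pp^+$-singular vector along $\alpha_2$ or along $\beta$; the first needs $(\mu+\rho_{\bb^+},\check\alpha_2)=b+1\in\ZZ_{>0}$, i.e. $b\in\ZZ_{\geq 0}$, which is excluded, and the second needs $(\mu+\rho_{\bb^+},\check\beta)=a+b+2\in\ZZ_{>0}$ together with $\mathfrak{l}$-dominance of the reflected weight $s_\beta(\mu+\rho_{\bb^+})=(-b-1)\omega_1+(-a-1)\omega_2$, i.e. $-b-2\in\ZZ_{\geq 0}$. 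Hence, if the hypotheses of (c) fail, $M^{\pp^+}_{a,b}$ is simple and (b) follows; and if $-b\in\ZZ_{\geq 2}$ and $a+b\in\ZZ_{\geq -1}$, then the maximal submodule of $M^{\pp^+}_{a,b}$ is the image of the (injective) canonical map $M^{\pp^+}_{-b-2,-a-2}\to M^{\pp^+}_{a,b}$, and since under these inequalities $M^{\pp^+}_{-b-2,-a-2}$ has no further singular vectors it equals $L^+_{-b-2,-a-2}$. Subtracting $c(L^+_{-b-2,-a-2})=\sum_{k\geq 0}(\min(-b-2,k)+1)z^k$ from $c(M^{\pp^+}_{a,b})$ produces \refeq{eq82}. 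For the $\pp^-$ part of (c) I would invoke the involutive automorphism $\sigma$ of $\gg=\sl(3)$ which is the identity on $\kk$ and acts by $-1$ on $\gg^\kk$ (a composition of the diagram automorphism with an inner one): $\sigma$ interchanges $\pp^+$ and $\pp^-$ and carries $F^+_{a,-a-b}$ to $F^-_{a,b}$, hence $L^-_{a,b}\simeq(L^+_{a,-a-b})^\sigma$, and since $\sigma$ fixes $\kk$ pointwise, $c(L^-_{a,b})=c(L^+_{a,-a-b})$; substituting the parameters $a'=a$, $b'=-a-b$ into \refeq{eq82} then gives exactly \refeq{eq83} under the stated conditions $a+b\in\ZZ_{\geq 2}$, $-b\in\ZZ_{\geq -1}$.

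The main points requiring care are the finite-dimensionality of $N_0$ in the first step, where $t\in\gg[M]$ must be turned into an honest bound (the key being that $t$ centralizes $\kk$, so $\univ(\mathfrak{l})v$ is a finite sum of copies of a single $\kk$-type), and the precise reducibility criterion in the last step: it is the $\mathfrak{l}$-dominance of the reflected weight — equivalently, non-vanishing of the map between generalized Verma modules — that separates the locus $-b\in\ZZ_{\geq 2}$ from the boundary case $b=-1$, and makes the latter fall under \refeq{eq81} rather than \refeq{eq82}. The injectivity of $M^{\pp^+}_{-b-2,-a-2}\to M^{\pp^+}_{a,b}$ and the claim that its image exhausts the maximal submodule can be read off from the structure of parabolic category $\mathcal O$ for $\sl(3)$ (the relevant generalized Verma modules have length at most $2$), or verified by exhibiting the singular vector explicitly.
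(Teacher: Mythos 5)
Your proof is correct and follows essentially the same route as the paper: reduce via \refle{le81} to $\gg[M]=\pp^\pm$, realize $M$ as the simple quotient $L^\pm_{a,b}$ of a generalized Verma module, compute the induced module's $\kk$-character through PBW and Clebsch--Gordan (giving the right-hand side of \refeq{eq81}), and locate reducibility at the single wall given by $s_{\alpha_1+\alpha_2}$ together with the $\mathfrak{l}$-dominance condition $-b-2\in\ZZ_{\geq 0}$. The only minor divergences are that you spell out in detail the existence of a finite-dimensional simple $\pp^+$-submodule in part~(a), which the paper takes for granted by appeal to local finiteness of $\gg[M]$, and that in part~(c) you realize the $\pp^+\leftrightarrow\pp^-$ exchange via an explicit $\kk$-fixing automorphism of $\sl(3)$ rather than the paper's simple parameter substitution $b\mapsto -a-b$; both are equivalent ways of packaging the same computation.
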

\begin{proof}
Let $M$ be a simple infinite dimensional bounded
$(\gg,\kk)$-module. Then, by \refle{le81}, $\gg[M] =\pp^\pm$. If
$\gg[M]=\pp^+$, let $M^+$ be a simple finite dimensional
$\pp^+$-submodule of $M$. Then $M^+\simeq F^{+}_{a,b}$ for some
$a\in\ZZ_{\geq 0}$ and some $b\in \CC$, and there is an obvious
surjection of $\gg$-modules $U(\gg)\otimes_{U(\pp^+)}F^+_{a,b}\to
M$. Hence $M$ is isomorphic to the unique simple quotient $L_{a,b}^+$
of $U(\gg)\otimes_{U(\pp^+)} F_{a,b}^+$. However, $L_{a,b}^+$ is
finite dimensional iff $b\in\ZZ_{\geq 0}$, therefore (a) follows for
the case when $\gg[M]=\pp^+$. The case $\gg[M]=\pp^-$ is obtained by replacing $b$ with $-a-b$ which corresponds to the replacement of the simple root $\alpha_2$ of $\bb^+$ by the simple root $-\alpha_1-\alpha_2$ of $\bb^-$.

Statements (b) and (c) follow from a non-difficult reducibility
analysis for the induced module $U(\gg)\otimes_{U(\pp^\pm)} F_{a,b}^\pm$. Note first of all that $\ch_\kk(U(\gg)\otimes_{U(\pp^\pm)} F_{a,b}^\pm)$ is always given by the right-hand side of \refeq{eq81}. Indeed as $\kk$-modules $\gg/\pp^\pm$ and $F_{a,b}^\pm$ are isomorphic respectively to $V_1$ and $V_a$, therefore
\[
c(U(\gg)\otimes_{U(\pp^\pm)} F_{a,b}^\pm)=c(S^\cdot(V_1)\otimes V_a).
\]
A straightforward computation shows that $c(S^\cdot(V_1)\otimes V_a)$ is nothing but the right hand side of \refeq{eq81}.

We claim now that $U(\gg)\otimes_{U(\pp^\pm)} F_{a,b}^\pm$ is irreducible precisely when $b$ does not satisfy the respective conditions stated in (b). Consider first the case of $\pp^+$. Then $U(\gg)\otimes_{U(\pp^+)} F_{a,b}^+$ is irreducible if and only if there exists $w\in W\backslash W_\kk$ such that
\begin{equation}\label{eq84}
(w((a+1)\omega_1+(b+1)\omega_2)-(\omega_1+\omega_2))(h_1)\in \ZZ_{\geq 0}
\end{equation}
and
\begin{equation}\label{eq85}
(w((a+1)\omega_1+(b+1)\omega_2)-(\omega_1+\omega_2))=a\omega_1+b\omega_2-m_1\alpha_1-m_2\alpha_2
\end{equation}
for some $m_1,m_2\in \ZZ_{\geq 0}$. The only non $\bb^+$-dominant solution of \refeq{eq84} and \refeq{eq85} is $w=w_{\alpha_1+\alpha_2}$ and $-b\in \ZZ_{\geq 2}, a+b\in\ZZ_{\geq -1}$. Moreover, in the latter case $L_{a,b}^+\simeq(U(\gg)\otimes_{U(\pp^+)} F_{a,b}^+)/L_{-b-2,-a-2}^+$, where $c(L^+_{-b-2,-a-2})$ is given by the right hand side of \refeq{eq81} with $a$ replaced by $-b-2$. An immediate computation shows that $c( L_{a,b}^+)$ is given in this case by the right hand side of \refeq{eq82}, therefore (b) and (c) are proved for the case of $\pp^+$. The case of $\pp^-$ is obtained by interchanging the parameter $b$ in \refeq{eq82} with $-a-b$.
\end{proof}

\begin{corollary}
Let $\gg$ and $\kk$ be as above.
\begin{itemize}
\item[(a)] The minimal $\kk$-type of a simple bounded infinite dimensional $(\gg,\kk)$-module can be arbitrary. The multiplicity of the minimal $\kk$-type is always 1.
\item[(b)] The following is a complete list of multiplicity free simple infinite dimensional $(\gg,\kk)$-modules:
\begin{itemize}
\item $L_{0,b}^+$ for $b\in\CC\backslash \ZZ_{\geq 0}$,
\item $L_{0,b}^-$ for $-b\in\CC\backslash \ZZ_{\geq 0},$
\item $L_{a,b}^+$ for $a+b=-1$, $-b\in \ZZ_{\geq 2}$,
\item $L_{a,b}^-$ for $b=1$, $a+b\in \ZZ_{\geq 2}$.
\end{itemize}
\end{itemize}
\end{corollary}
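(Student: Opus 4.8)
The plan is to read off both claims directly from Theorem \ref{th82}, since that theorem already classifies all simple bounded infinite-dimensional $(\gg,\kk)$-modules as the $L^\pm_{a,b}$ and gives their $\kk$-characters in \refeq{eq81}, \refeq{eq82}, \refeq{eq83}. For part (a), I would first observe that the $\kk$-character formulas \refeq{eq81}--\refeq{eq83} all have the shape ``an initial finite segment $1+2z+\dots$ (or a shifted version) followed by a tail of constant multiplicity'', and in every case the coefficient of the lowest power of $z$ that occurs is $1$; hence the minimal $\kk$-type has multiplicity $1$ for every $L^\pm_{a,b}$. To see that the minimal $\kk$-type itself can be an arbitrary $V_t$, I would exhibit, for each $t\in\ZZ_{\geq 0}$, a module whose minimal $\kk$-type is $V_t$: taking $a=t$ and a generic $b$ (say $b\in\CC\setminus\ZZ_{\geq 0}$ avoiding the finitely many exceptional integral conditions in (b) of \refth{th82}), \refeq{eq81} shows that $c(L^+_{t,b})=1+2z+\dots+tz^{t-1}+(t+1)(z^t+z^{t+1}+\dots)$, whose lowest term is $z^0$ — wait, that gives minimal $\kk$-type $V_0$, not $V_t$. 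So instead I should use the shifted characters: for part (a) one wants \refeq{eq82} or \refeq{eq83}, where the minimal power is $z^{-b-1}$ (resp.\ $z^{a+b-1}$), which ranges over all of $\ZZ_{\geq 1}$ as the integer parameters vary, and the value $V_0$ is covered by the generic case; this realizes every $V_t$.

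For part (b), I would determine exactly which $L^\pm_{a,b}$ are multiplicity free by inspecting when the coefficients in \refeq{eq81}, \refeq{eq82}, \refeq{eq83} never exceed $1$. In \refeq{eq81} the stable multiplicity is $a+1$, so multiplicity freeness forces $a=0$; this yields $L^+_{0,b}$ for $b\in\CC\setminus\ZZ_{\geq 0}$ and $L^-_{0,b}$ for $-b\in\CC\setminus\ZZ_{\geq 0}$, the first two items on the list. In the exceptional case \refeq{eq82}, the stable multiplicity is $a+b+2$ and the character is $z^{-b-1}+2z^{-b}+\dots$; to have all coefficients $\leq 1$ we need the linear part $z^{-b-1}+2z^{-b}+\dots$ to degenerate, i.e.\ there must be no ``$2z^{-b}$'' term, which happens precisely when $a+b+1=1$, that is $a+b=-1$ (combined with the standing hypothesis $-b\in\ZZ_{\geq 2}$ of that case); this gives $L^+_{a,b}$ with $a+b=-1$, $-b\in\ZZ_{\geq 2}$. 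Symmetrically, \refeq{eq83} has stable multiplicity $2-b$ and linear part $z^{a+b-1}+2z^{a+b}+\dots$, which degenerates to multiplicities $\leq 1$ exactly when $1-b=1$, i.e.\ $b=1$ (with $a+b\in\ZZ_{\geq 2}$), yielding the fourth item. I would then note that these four families are mutually non-isomorphic and exhaust all possibilities, because every simple bounded infinite-dimensional $(\gg,\kk)$-module is one of the $L^\pm_{a,b}$ by \refth{th82}(a) and we have tested all of them.

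The only real subtlety — and the step I expect to require the most care — is bookkeeping the boundary cases: making sure that a given $L^\pm_{a,b}$ is counted under exactly one of the three character formulas \refeq{eq81}, \refeq{eq82}, \refeq{eq83} (the exceptional integral conditions in \refth{th82} are mutually exclusive with the generic case, but one should double-check the overlap at the edges, e.g.\ whether $a=0$ together with the integral conditions produces a module already on the list or a finite-dimensional one), and checking that the degenerate linear segments really do have all multiplicities $\leq 1$ rather than, say, a hidden repeated coefficient. A secondary point is to confirm in part (a) that the multiplicity-one claim for the minimal $\kk$-type holds in the exceptional cases too, which is immediate from the leading coefficient $1$ in \refeq{eq82} and \refeq{eq83}. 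Everything else is a direct reading of \refth{th82}.
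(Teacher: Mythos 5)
Your overall approach is exactly the right one and coincides with how the paper intends the corollary to be deduced: the corollary is stated without proof precisely because it is a direct reading of the three $\kk$-character formulas in Theorem~\ref{th82}. Your argument for part (a) is correct (including the self-correction: the generic formula \refeq{eq81} always has minimal $\kk$-type $V_0$, while the exceptional formulas \refeq{eq82} and \refeq{eq83} have leading term $z^{-b-1}$ resp.\ $z^{a+b-1}$, which ranges over $\ZZ_{\geq 1}$ as the parameters vary; the leading coefficient is $1$ in all cases). Your treatment of non-isomorphism and mutual exclusivity of the three cases is also sound, since the paper already records that $L^\pm_{a,b}\simeq L^\mp_{a',b'}$ would force finite dimensionality, and the exceptional conditions are incompatible with $a=0$.

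In part (b), however, there are two arithmetic slips in the intermediate steps (which do not affect the final list). For \refeq{eq82} you wrote that multiplicity freeness forces ``$a+b+1=1$, that is $a+b=-1$''; but $a+b+1=1$ gives $a+b=0$. The correct argument is: the stable multiplicity is $a+b+2$, so one needs $a+b+2\leq 1$, i.e.\ $a+b\leq -1$; combined with the standing constraint $a+b\in\ZZ_{\geq -1}$ of case \refeq{eq82}, this gives $a+b=-1$, and one should then observe that the ascending initial segment $z^{-b-1}+2z^{-b}+\dots+(a+b+1)z^{a-1}$ has length $a+b+1=0$ and therefore vanishes, so the character reduces to $z^{a}+z^{a+1}+\dots$, which is indeed multiplicity free. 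Similarly, for \refeq{eq83} you wrote ``$1-b=1$, i.e.\ $b=1$'', but $1-b=1$ gives $b=0$; the correct condition is $2-b\leq 1$ (stable multiplicity), hence $b\geq 1$, combined with $-b\in\ZZ_{\geq -1}$ giving $b\leq 1$, so $b=1$, and again the initial segment has length $1-b=0$ and drops out. With these two fixes the proof is complete and matches the paper's intent.
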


\subsection{The principal case.}
Let now $\kk$ be a principal $\sl(2)$-subalgebra of $\gg=\sl(3)$. 
The pair $(\gg,\kk)$ is well known to be symmetric and 
the simple $(\gg,\kk)$-modules have been studied extensively, see for
instance \cite{Fo} and \cite{Sp}. In principle one should be able to
identify all simple bounded modules in the known classification of
simple Harish-Chandra modules. However, we propose an alternative
approach which leads directly to all bounded simple
$(\gg,\kk)$-modules and their $\kk$-characters.  This is the first case in which the richness of the theory of bounded (generalized) Harish-Chandra modules becomes apparent.

We keep the notations $\hh,\bb^+, \alpha_1, \alpha_2$ from Subsection
\ref{subsecTheRootCase}. By $L_{a,b}$ we denote the simple
$\gg$-module with $\bb^+$-highest weight
$(a-1)\omega_1+(b-1)\omega_2$, by $V_{p,q}$ we denote the simple
finite dimensional $\gg=\sl(3)$-module with $\bb^+$-highest weight
$p\omega_1+q\omega_2$ ($p,q\in\ZZ_{\geq 0}$), and $\chi(a,b)$ stands
for the central character of $L_{a,b}$. By $A$ we denote the Weyl algebra in the indeterminates $t,x,y$.

We first describe the primitive ideals of all simple bounded $(\gg,\kk)$-modules.

\begin{lemma}\label{le100sl}
Let $M$ be an infinite dimensional bounded simple $(\gg,\kk)$-module.
Then $\Ann M=\Ann L_{a,b}$, where $\dim L_{a,b}=\infty$, $a\in\ZZ_{>0}$, $b\in \ZZ_{>0}$ or $a+b\in\ZZ_{>0}$.
\end{lemma}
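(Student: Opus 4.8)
The plan is to exploit the general machinery already assembled in the paper, applied now to $\gg=\sl(3)$ with $\kk$ a principal $\sl(2)$. First, observe that $b_\kk = \frac{\dim\sl(2)+\rk\sl(2)}{2}=2$, and for $\gg=\sl(3)$ we have $r_\gg=2$, so \refcor{cor1.5} gives $\frac{\dim X_M}{2}\leq 2$ for any simple bounded infinite-dimensional $(\gg,\kk)$-module $M$; since $M$ is infinite dimensional, $X_M$ is a nonzero closed $G$-invariant subvariety of the nilpotent cone, hence $\dim X_M = 4$ and $X_M$ is exactly the closure of the minimal nilpotent orbit $\OO_{\min}$ in $\sl(3)$. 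So $\Ann M$ is a primitive ideal whose associated variety is $\overline{\OO_{\min}}$. The first key step is therefore to recall (from Duflo's theorem, or Joseph's classification of completely prime primitive ideals in $\sl(3)$) that any primitive ideal with associated variety $\overline{\OO_{\min}}$ is of the form $\Ann L_{a,b}$ for a simple highest weight module $L_{a,b}$ of Gelfand--Kirillov dimension $2$ — equivalently, $\gkdim L_{a,b}=2$, which by Joseph's classification of $\gkdim$ for $\sl(3)$ happens precisely when one of $a,b,a+b$ is a positive integer (and $L_{a,b}$ is infinite dimensional, which rules out $a,b$ both positive integers simultaneously giving the finite-dimensional case). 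This pins down the list of annihilators to the stated family.

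The second step is to upgrade ``$\Ann M$ has associated variety $\overline{\OO_{\min}}$'' to ``$\Ann M = \Ann L_{a,b}$ on the nose.'' Here I would use the translation-functor discussion at the end of \refsec{secFirstResults}: the translation functors $T^\eta_\xi, T^\eta_\xi$ induce mutually inverse equivalences between the categories $\BB_\kk^{\chi(\xi)}$ and $\BB_\kk^{\chi(\eta)}$ of bounded $(\gg,\kk)$-modules, so it suffices to identify $\Ann M$ up to such a translation — i.e. to reduce to a fixed (say regular integral, or a convenient singular) central character where the primitive ideals with the right associated variety are explicitly finitely many and are exactly the annihilators of the relevant $L_{a,b}$'s. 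Combining with the first step, $\Ann M$ must coincide with $\Ann L_{a,b}$ for one of the modules in the asserted list. The central character $\chi(a,b)$ of $L_{a,b}$ is then simply read off from $\Ann M$.

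The third step is the boundedness/nontriviality bookkeeping: we must check that the $L_{a,b}$ appearing really are infinite dimensional (otherwise $\Ann L_{a,b}$ is the annihilator of a finite-dimensional module and $\gkdim$ drops, contradicting $\dim X_M=4$) and that the three index conditions $a\in\ZZ_{>0}$, $b\in\ZZ_{>0}$, $a+b\in\ZZ_{>0}$ are exactly the three $W$-translates of ``one coordinate is a positive integer'', matching the three simple root-reflection chambers; this is a direct check against the parametrization of $\gkdim$ for $\sl(3)$ and involves no serious computation. I expect the main obstacle to be the second step — making precise and clean the statement that \emph{every} completely prime primitive ideal of $U(\sl(3))$ with associated variety $\overline{\OO_{\min}}$ is an $\Ann L_{a,b}$ with $\gkdim = 2$, and organizing the translation-functor reduction so that no central character is left unaccounted for. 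The first and third steps are essentially lookups in Joseph's $\sl(3)$ computations and Weyl-group combinatorics, respectively.
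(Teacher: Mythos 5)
Your core approach coincides with the paper's: invoke Duflo's theorem to write $\Ann M = \Ann L_{a,b}$, use the bound $\gkdim M \leq b_\kk = 2$ from Theorem~\ref{th6} (equivalently, $\dim X_M \leq 4$ from Corollary~\ref{cor1.5}) to force $\gkdim L_{a,b}\leq 2$, and then read off the constraint on $(a,b)$ from Joseph's $\sl(3)$ GK-dimension computation. Your ``second step'' using translation functors is, however, a detour: once Duflo is cited, the equality $\Ann M = \Ann L_{a,b}$ for some simple highest weight module $L_{a,b}$ is already exact, so there is nothing to ``upgrade'' and no reduction to a fixed central character is needed -- the paper's proof goes directly from Duflo to the GK-dimension inequality to the arithmetic on $(a,b)$, with no intervening reduction.
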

\begin{proof}
By Duflo's Theorem $\Ann M=\Ann L_{a,b}$. By \refth{th6}, $\gkdim
L_{a,b}\leq 2$. A straightforward computation shows that this latter
condition is equivalent to the condition on $(a,b)$ in the statement
of the Lemma.
\end{proof}

\begin{corollary}\label{corPrimIdeal}
If $\BB^\chi_\kk$ is not empty, then $\chi=\chi(u+1-n,n+1)$ for some $n\in\ZZ_{\geq 0}$, where $u\in\CC\backslash \ZZ_{<n-1}$ or $u=-2$.
\end{corollary}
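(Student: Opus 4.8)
The plan is to convert the description of $\Ann M$ provided by \refle{le100sl} into a constraint on the central character $\chi$ by running through the (at most six-element) orbit of the Weyl group $W$ of $\gg$ to which $\chi$ corresponds.

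First I would pass to a simple module: if $\BB^\chi_\kk$ is not empty, a nonzero object of it has a simple subquotient, so $\BB^\chi_\kk$ contains a simple bounded $(\gg,\kk)$-module $M$, with central character $\chi$. If $M$ is finite dimensional, then $M\simeq V_{p,q}$ for some $p,q\in\ZZ_{\geq 0}$, hence $\chi=\chi(p+1,q+1)=\chi(u+1-n,n+1)$ with $n:=q\in\ZZ_{\geq 0}$ and $u:=p+q$; since $u=p+q\geq q-1=n-1$ we have $u\notin\ZZ_{<n-1}$, and the corollary holds. So I may assume $M$ is infinite dimensional, and then \refle{le100sl} gives $\Ann M=\Ann L_{a,b}$ with $\dim L_{a,b}=\infty$ and $a\in\ZZ_{>0}$; consequently $\chi=\chi(a,b)$, and since $a,b$ are not both positive integers we get $b\notin\ZZ_{>0}$.

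Next I would record the relevant $W$-orbit. As $L_{x,y}$ has $\bb^+$-highest weight $(x-1)\omega_1+(y-1)\omega_2$ and $\rho_{\bb^+}=\omega_1+\omega_2$, the central character $\chi(x,y)$ corresponds to the $W$-orbit of $x\omega_1+y\omega_2$; in particular $\chi(x,y)=\chi(x',y')$ whenever $(x',y')$ lies in that orbit. In the basis $\omega_1,\omega_2$ the simple reflections act by $s_{\alpha_1}\colon(x,y)\mapsto(-x,\,x+y)$ and $s_{\alpha_2}\colon(x,y)\mapsto(x+y,\,-y)$, so the $W$-orbit of $(a,b)$ is
\[
\{\,(a,b),\ (-a,a+b),\ (a+b,-b),\ (-a-b,a),\ (b,-a-b),\ (-b,-a)\,\}.
\]

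Finally I would select the right representative. Since $a\in\ZZ_{>0}$, the orbit element $(-a-b,a)$ has positive integer second coordinate, so $\chi=\chi(-a-b,a)=\chi(u+1-n,n+1)$ with $n:=a-1\in\ZZ_{\geq 0}$ and $u:=-b-2$; this already proves the corollary unless $u\in\ZZ_{<n-1}$ and $u\neq-2$, which, using $b\notin\ZZ_{>0}$, happens precisely when $b\in\ZZ_{<0}$ and $a+b\geq1$. In that remaining case I would use instead the orbit element $(a+b,-b)$, whose second coordinate $-b$ is a positive integer: then $\chi=\chi(a+b,-b)=\chi(u'+1-n',n'+1)$ with $n':=-b-1\in\ZZ_{\geq 0}$ and $u':=a-2$, and here $u'-(n'-1)=a+b\geq1$, so $u'\notin\ZZ_{<n'-1}$. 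This covers all cases. The only point requiring care is forcing the parameter $u$ out of $\ZZ_{<n-1}$ — which is exactly why one must be ready to switch representatives from $(-a-b,a)$ to $(a+b,-b)$, the borderline value $b=0$ being absorbed by the exceptional alternative $u=-2$; apart from \refle{le100sl} and the explicit $S_3$-action no further ingredient is needed, so there is no genuine obstacle.
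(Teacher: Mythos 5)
Your overall strategy is correct and is presumably the intended one: reduce to a simple module $M$, dispatch the finite-dimensional case by hand, apply \refle{le100sl} (via Duflo's theorem) in the infinite-dimensional case, and reparametrize $\chi$ by running through the $W$-orbit of $(a,b)$. The orbit computation, the choice of representatives $(-a-b,a)$ and $(a+b,-b)$, and the handling of the exceptional parameter $u=-2$ are all correct.

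The one gap is at the point where you invoke \refle{le100sl}: you read its conclusion as giving $a\in\ZZ_{>0}$ outright, and everything downstream assumes this. But the condition ``$a\in\ZZ_{>0}$, $b\in\ZZ_{>0}$ or $a+b\in\ZZ_{>0}$'' in that Lemma has to be a three-way disjunction: the Lemma's proof claims this condition is \emph{equivalent} to ``$\dim L_{a,b}=\infty$ and $\gkdim L_{a,b}\le 2$'', and the latter holds, for instance, when $b\in\ZZ_{>0}$ while $a,a+b\notin\ZZ$ (here $L_{a,b}$ is the quotient of the Verma $M_\lambda$ by its simple Verma submodule $M_{s_{\alpha_2}\lambda}$, of Gelfand--Kirillov dimension $2$) — yet then $a\notin\ZZ_{>0}$. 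Such central characters occur explicitly in \refth{th914}(a), which allows $u\notin\ZZ$; for those $\chi$ the only integral coordinate in the whole orbit is $n+1$. So Duflo's theorem may hand you a pair $(a,b)$ for which only the second or third alternative of the disjunction holds, and your argument treats only the first. The repair is easy and in exactly your spirit: when $b\in\ZZ_{>0}$ use the representative $(a,b)$ itself (so $n=b-1$, $u=a+b-2$); when $a+b\in\ZZ_{>0}$ use $(-a,a+b)$ (so $n=a+b-1$, $u=b-2$); in each subcase compare $u$ with $n-1$ as you did, falling back on the dominant integral representative — i.e.\ your finite-dimensional computation — whenever the chosen pair has both coordinates in $\ZZ_{>0}$. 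But some such case split needs to be stated; as written, the proof silently covers only one of the three alternatives that \refle{le100sl} actually allows.
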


Note that the natural embedding of $\gl(3)$ into $A$ maps the center of
$\gl(3)$ to the line $\CC \EE$ for
$\EE:=t\partial_t+x\partial_x+y\partial_y$, and that the adjoint
action of the central element $\EE$ on $A$ defines a $\ZZ$-grading
$A:=\bigoplus _{i\in \ZZ}A_i$. We define the (associative) algebra $D^u$ as the
quotient of $A_0$ by the ideal generated by $\EE-u$. The embedding of
$\gg\to A_0$ induces a surjective homomorphism $\gamma_u: U(\gg)\to
D^u$. It is not difficult to show that if $u\in\ZZ$, $D^u$ is
isomorphic to the algebra of globally defined differential
endomorphisms of the line bundle $\OO_{\PP^2}(u)$ ($\PP^2$ being the
projective space with homogeneous coordinates $(x,y,z)$).

\begin{lemma}\label{le1021sl} Consider $D^u$ with its adjoint $\gg$-module structure. Then
\[
D^u\simeq \bigoplus_{m\geq 0}V_{m\rho}.
\]
\end{lemma}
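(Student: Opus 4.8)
The plan is to compute $D^u$ through its associated graded ring with respect to the order filtration and to recognize that graded ring as the coordinate ring of a determinantal variety whose $\sl(3)$-decomposition is classical. The first observation is that $\EE$ is \emph{central} in $A_0$: the adjoint action of $\EE$ on $A$ is the grading operator, $[\EE,a]=i\,a$ for $a\in A_i$, so $A_0=\{a\in A\mid[\EE,a]=0\}$ is its centralizer. Hence $\EE-u$ is central in $A_0$, the ideal it generates is automatically two-sided and equals $(\EE-u)A_0$, and $D^u=A_0/(\EE-u)A_0$. I would then transport the order filtration of the Weyl algebra $A$ to $A_0$, and hence to $D^u$; write $x_1,x_2,x_3$ for the generators $t,x,y$ of $A$ and $\xi_1,\xi_2,\xi_3$ for their symbols.

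Next I would identify $\mathrm{gr}\,A_0$. One has $\mathrm{gr}\,A=\CC[x_1,x_2,x_3,\xi_1,\xi_2,\xi_3]$, and since $A_0$ is spanned by the normally ordered monomials $x^\alpha\partial^\beta$ with $|\alpha|=|\beta|$, its symbol algebra $R:=\mathrm{gr}\,A_0$ is the subalgebra of $\mathrm{gr}\,A$ generated by the nine products $x_i\xi_j$. Thus $R$ is the coordinate ring of the variety of $3\times3$ matrices of rank $\le1$, that is, of the affine cone over the Segre embedding $\PP^2\times\PP^2\hookrightarrow\PP^8$; in particular $R$ is an integral domain, and as a graded $\sl(3)$-module its degree-$p$ component is $R_p\cong S^p(V)\otimes S^p(V^*)$, where $V$ is the natural $\sl(3)$-module (the classical decomposition of the coordinate ring of the rank-$\le1$ determinantal variety). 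The principal symbol of $\EE$ is $\delta:=\sum_i x_i\xi_i\in R_1$, a nonzero element of the domain $R$.

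Now I would turn to the ideal itself. Because $\EE-u$ is central, every element of $(\EE-u)A_0$ is literally of the form $(\EE-u)a$, and since $R$ has no zero divisors, multiplying a nonzero order-$d$ operator $a\in A_0$ by $\EE-u$ produces an operator of order exactly $d+1$ with principal symbol $\delta\cdot\sigma_d(a)$. Consequently the symbol ideal of $(\EE-u)A_0$ is precisely $\delta R$, and therefore $\mathrm{gr}\,D^u\cong R/\delta R$. Since $\delta\in R_1$ is a nonzerodivisor, multiplication by $\delta$ is an injective $\sl(3)$-equivariant map $R_{p-1}\hookrightarrow R_p$ for every $p\ge1$. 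The Clebsch--Gordan rule for $\sl(3)$ gives
\[
S^p(V)\otimes S^p(V^*)\ \cong\ \bigoplus_{j=0}^{p}V_{j\rho},\qquad \rho=\omega_1+\omega_2,
\]
which is multiplicity free; hence by Schur's lemma this injection carries the $V_{j\rho}$-isotypic part of $R_{p-1}$ isomorphically onto that of $R_p$ for each $j\le p-1$, leaving cokernel $V_{p\rho}$. Summing over $p$ yields $R/\delta R\cong\bigoplus_{m\ge0}V_{m\rho}$. Finally $D^u$ is filtered by $\sl(3)$-submodules with finite-dimensional successive quotients and $\sl(3)$ is reductive, so $D^u$ is isomorphic to $\mathrm{gr}\,D^u$, and $D^u\cong\bigoplus_{m\ge0}V_{m\rho}$ as $\gg$-modules.

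The only step requiring real care is the identification of the symbol ideal of $(\EE-u)A_0$ with $\delta R$: this is exactly where one uses both that $\EE$ is central (so the ideal is the principal left ideal $(\EE-u)A_0$, each of whose elements is visibly $(\EE-u)a$) and that $R$ is a domain (so no drop in order occurs upon multiplication by $\delta$ and the top symbol is genuinely $\delta\,\sigma_d(a)$). Everything else is the standard fact that $\mathrm{gr}$ of a quotient is $\mathrm{gr}$ modulo the symbol ideal, together with the two classical representation-theoretic inputs. As a side remark, $R/\delta R$ is the coordinate ring of the closure of the minimal nilpotent orbit of $\sl(3)$ --- the variety of traceless $3\times3$ matrices of rank $\le1$ --- so the lemma also exhibits $D^u$ as an $\sl(3)$-module quantization of that orbit.
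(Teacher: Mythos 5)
Your proof is correct and follows essentially the same route as the paper's: pass to the associated graded of $A_0$ (whose degree-$m$ piece is $V_{m,0}\otimes V_{0,m}\cong\bigoplus_{k=0}^{m}V_{k\rho}$) and factor by the symbol of $\EE-u$. You supply the detail the paper compresses into ``after factorization, one obtains $(D^u)^m/(D^u)^{m-1}\simeq V_{m\rho}$,'' namely the nonzerodivisor argument in the determinantal ring $R$ that identifies the symbol ideal of $(\EE-u)A_0$ with $\delta R$, together with the multiplicity-free Clebsch--Gordan decomposition used to read off the cokernel.
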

\begin{proof} Let $\CC =A^0\subset A^1 \subset \dots \subset A$
  denote the standard filtration of $A$. A direct computation shows
  that as a $\gg$-module $A_0^m/A_0^{m-1}$ is
  isomorphic to
\[
V_{m,0}\otimes V_{0,m}=\oplus_{k=0}^m V_{k \rho}. 
\]
 After factorization by $\EE-u$, one obtains
\[
(D^u)^m/(D^u)^{m-1}\simeq V_{m \rho}. 
\]
\end{proof}

It is not difficult to see that the restriction of $\gamma_u$ to $U(\kk)$ is injective. Slightly abusing notation we identify $U(\kk)$ with its image in $D^u$. We will use the following expression for the standard basis $E,H,F$ of $\kk$:
\begin{equation}\label{princ}
E=t\partial_x+x\partial_y, H=2t\partial_t-2y\partial_y, F=2x\partial_t+2y\partial_x.
\end{equation}

\begin{lemma}\label {le102sl} The centralizer of $\kk$ in $D^u$ coincides with the center of $U(\kk)\subset D^u$.
\end{lemma}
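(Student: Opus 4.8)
Throughout write $\Omega_\kk$ for the image in $D^u$ of the Casimir $\Omega\in Z(U(\kk))$, and $(D^u)^\kk$ for the centralizer of $\kk$ under the adjoint action. The inclusion $\CC[\Omega_\kk]\subseteq(D^u)^\kk$ is obvious. Moreover $D^u\subset A$ is a domain and $\Omega_\kk$ is not a scalar — by \refle{le1021sl} it acts by pairwise distinct scalars on the infinitely many non‑isomorphic $\kk$‑types $V_2,V_4,\dots$ occurring in $D^u$ — so $\Omega_\kk$ is transcendental over $\CC$; hence $\CC[\Omega_\kk]$ is a polynomial ring and coincides with the image of $Z(U(\kk))$, i.e. with the center of $U(\kk)$ regarded inside $D^u$. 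Thus the content of the lemma is that $(D^u)^\kk$ is no larger than $\CC[\Omega_\kk]$. The plan is to first identify the centralizer $A^\kk$ of $\kk$ in the \emph{whole} Weyl algebra $A$, and then descend to $D^u=A_0/(\EE-u)$.

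For the first step note, from the explicit formulas \eqref{princ}, that $\kk$ acts on the space of linear forms $\langle t,x,y\rangle$ by a single Jordan block, hence irreducibly: this is the principal $\sl(2)$, realizing $\CC^3$ as $S^2$ of the standard $SL(2)$‑module. Since $S^2(\CC^2)$ carries a nondegenerate invariant symmetric form $q$, we get $\kk=\so(q)\cong\so(3)$ inside $\gg=\sl(3)$. Now $A=\mathscr D(\CC^3)$, and by the first fundamental theorem for the orthogonal group acting on the Weyl algebra $\mathscr D(\CC^n)$ (the classical algebra side of the $(\sl(2),O(n))$ Howe pair; for $n\ge 3$ the $SO(n)$‑ and $O(n)$‑invariants agree), the centralizer $A^\kk=A^{\so(3)}$ is generated by three operators: multiplication by the quadratic $q$, the dual Laplacian $\Delta$, and the Euler operator $\EE$. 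These satisfy $[\EE,q]=2q$, $[\EE,\Delta]=-2\Delta$, and $[q,\Delta]$ is a linear combination of $\EE$ and $1$; so they span a copy of $\sl(2)$ with $\EE$ the semisimple element, and $q,\Delta,\EE$ are homogeneous of $\EE$‑weight $2,-2,0$ respectively.

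For the descent, $A_0$ is the $\EE$‑weight‑zero subalgebra, so $A_0^\kk$ is the weight‑zero part of $A^\kk$; reordering monomials in $q,\Delta,\EE$ one finds $A_0^\kk=\CC[q\Delta,\EE]$, a polynomial ring in the two commuting, algebraically independent elements $q\Delta$ and $\EE$. Since $\kk$ is reductive and $\EE-u$ is $\kk$‑invariant and a non‑zero‑divisor, taking $\kk$‑invariants commutes with passing to the quotient $D^u=A_0/(\EE-u)A_0$, so $(D^u)^\kk=A_0^\kk/(\EE-u)A_0^\kk\cong\CC[\,\overline{q\Delta}\,]$, a polynomial ring in one variable. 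The element $\overline{q\Delta}$ has order exactly $2$ (its symbol is the nonzero product of the symbols of $q$ and $\Delta$; equivalently it is not in $(D^u)^1$, whose $\kk$‑invariants are only the scalars by \refle{le1021sl}), and $\Omega_\kk$ is likewise a $\kk$‑invariant of order $2$, hence lies in $\CC\oplus\CC\,\overline{q\Delta}$; being non‑scalar, $\Omega_\kk=\alpha+\beta\,\overline{q\Delta}$ with $\beta\neq0$. Therefore $\CC[\Omega_\kk]=\CC[\,\overline{q\Delta}\,]=(D^u)^\kk$, as claimed. The only genuinely nontrivial input is the description of $A^\kk$, i.e. the correct invocation of the orthogonal first fundamental theorem for the Weyl algebra (that $q,\Delta,\EE$ generate, with no further generators); the rest is grading bookkeeping and the elementary domain argument. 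A slightly longer alternative avoids $A$: using $\bigoplus_{m\ge0}V_{m\rho}\cong\CC[\overline{\mathcal O}]$ for the minimal nilpotent orbit closure $\overline{\mathcal O}\subset\sl(3)$, one has $(V_{m\rho})^\kk=(\CC[\overline{\mathcal O}]^K)_m$, and a short invariant‑theory computation ($\overline{\mathcal O}$ is a normal $4$‑dimensional cone, the principal $SL(2)$ has a dense $3$‑dimensional orbit on it, and $\gg^\kk=0$ rules out a degree‑$1$ invariant) shows $\CC[\overline{\mathcal O}]^K=\CC[\bar q]$ with $\bar q$ of degree $2$, yielding the same conclusion via \refle{le1021sl}.
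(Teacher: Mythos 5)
Your proof is correct, but it follows a genuinely different route from the paper's. The paper's own argument is a one‑liner relying on \refle{le1021sl}: since $V_{m\rho}^{\kk}=\CC$ for $m$ even and $0$ for $m$ odd, the Poincar\'e series of $(D^u)^{\kk}$ with respect to the filtration of \refle{le1021sl} is $\frac{1}{1-z^2}$, which is the Poincar\'e series of $\CC[\Omega]$; since $\Omega^m$ has a nonzero leading term in $V_{2m\rho}$ (here one implicitly uses that $D^u$ is a domain, so degrees add), $(D^u)^{\kk}=\CC[\Omega]$. Your argument instead ascends to the full Weyl algebra: you recognize from \refeq{princ} that $\kk$ sits inside $\gl(3)$ as $\so(3)$ (the principal $\sl(2)$ acting on $\langle t,x,y\rangle\cong S^2\CC^2$), invoke the first fundamental theorem for the Howe pair $(\sl(2),O(3))$ on $\mathscr D(\CC^3)$ to get $A^{\kk}=\CC\langle q,\Delta,\EE\rangle$, pass to the $\EE$‑weight‑zero part $A_0^{\kk}=\CC[q\Delta,\EE]$, and then quotient by $(\EE-u)$ (which is harmless by reductivity of $\kk$) to land on $(D^u)^{\kk}=\CC[\overline{q\Delta}]$. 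This is longer, but it is self‑contained given the FFT and produces the centralizer with an explicit generator rather than by a dimension count; it also handles the ``$\Omega$ really has degree $2$'' point more cleanly, via the observation that $((D^u)^1)^{\kk}=\CC$. The alternative you sketch at the end — via $\bigoplus_m V_{m\rho}\cong\CC[\overline{\mathcal O}]$ and a direct computation of $\CC[\overline{\mathcal O}]^K$ — is in substance the paper's argument. One small remark: when asserting $A^{\kk}$ is generated by $q,\Delta,\EE$, you should note that one passes from the commutative FFT to the Weyl algebra by the usual $\mathrm{gr}(A^{\kk})=(\mathrm{gr}\,A)^{\kk}$ argument (valid since $SO(3)$ is reductive); you gesture at this with ``the algebra side of the Howe pair,'' which is fine.
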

\begin{proof}
As $V_{m\rho}^{\kk}=0$ for odd $m$ and  $V_{m\rho}^{\kk}=\CC$ for even
$m$ it is clear that the centralizer of $\kk$ in $D^u$ is generated by
the quadratic Casimir element $\Omega \in V_{2\rho}^{\kk}$.
\end{proof}

\begin{corollary}\label{cor98}
Every $(D^u,\kk)$-module is multiplicity free. For any non-negative
 $m$, there exists at most one (up to isomorphism) 
simple $(D^u,\kk)$-module $M$ with $\hom_{\kk}(V_m,M)\neq 0$.
\end{corollary}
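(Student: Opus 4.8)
The first assertion is formal. By \refle{le102sl} the centralizer $(D^u)^\kk$ of $\kk$ in $D^u$ is the center of $U(\kk)$, i.e. the polynomial ring $\CC[\Omega]$ in the Casimir $\Omega$; since $\CC$ is algebraically closed, every simple $\CC[\Omega]$-module is one-dimensional. Now I would run the proof of \refle{le2} verbatim with $D^u$ in place of $\univ(\gg)$: for a simple $(D^u,\kk)$-module $M$ the density theorem shows that each $M^r=\hom_\kk(V^r,M)$ is a simple $(D^u)^\kk$-module, hence $\dim M^r\le 1$, i.e. $M$ is multiplicity free. Moreover $\Omega$ acts on $V_m$ by the scalar $c_m$ given by Weyl's dimension formula, so the simple $(D^u)^\kk$-module $M^m$ is forced to be $\CC_{c_m}$; in particular its $(D^u)^\kk$-module structure depends only on $m$.

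For the second assertion I would follow the pattern of \refle{le00}. Fix $m\ge 0$ and suppose there is a simple $(D^u,\kk)$-module $M$ with $M^m\ne 0$ (otherwise there is nothing to prove). Any nonzero $\phi\in\hom_\kk(V_m,M)$ extends to a surjection of $D^u$-modules $\tilde M\eqdef D^u\otimes_{U(\kk)}V_m\twoheadrightarrow M$, $d\otimes v\mapsto d\cdot\phi(v)$, which carries the generating $\kk$-submodule $1\otimes V_m$ isomorphically onto the unique $V_m$-isotypic line of $M$. Hence it suffices to show that $\tilde M$ has a unique simple quotient, and for this it is enough to prove that $\dim\hom_\kk(V_m,\tilde M)=1$: then every proper $D^u$-submodule of $\tilde M$ misses the line $1\otimes V_m$ (a submodule meeting the $V_m$-isotypic component would contain $1\otimes V_m$, hence all of $\tilde M$), so the sum of all proper submodules is the unique maximal one, giving the unique simple quotient, which every such $M$ must equal. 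By adjunction $\hom_\kk(V_m,\tilde M)=\hom_{U(\kk)}(V_m,\tilde M)=\End_{D^u}(\tilde M)$, so the point is precisely that the induced module $D^u\otimes_{U(\kk)}V_m$ contains a single copy of $V_m$.

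This last statement is the heart of the matter and the main obstacle, and a crude associated-graded estimate is not sufficient. Equip $\tilde M$ with the good filtration $M_n\eqdef (D^u)_n\cdot(1\otimes V_m)$; since $\mathrm{gr}\,D^u$ is commutative this filtration is $\kk$-stable, so $\mathrm{gr}\,\tilde M\cong\tilde M$ as $\kk$-modules, and $\mathrm{gr}\,\tilde M$ is a $\kk$-equivariant graded module over $\mathrm{gr}\,D^u\cong\bigoplus_{k\ge 0}V_{k\rho}$ (\refle{le1021sl}), the coordinate ring of the closure $\overline{\OO}$ of the minimal nilpotent orbit of $\sl(3)$, generated in degree $0$ by $V_m$ with $S(\kk)_{\ge 1}\subset\mathrm{gr}\,D^u$ acting trivially; hence it is a $\kk$-equivariant quotient of $\CC[\mu^{-1}(0)]\otimes_\CC V_m$, where $\mu\colon\overline{\OO}\to\kk^*$ is the moment map. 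But $\mu^{-1}(0)=\overline{\OO}\cap\kk^{\perp}$ is two-dimensional — it is the affine cone over the closed $SL(2)$-orbit in $\PP(\CC^3)$, $\CC^3$ being the natural $\sl(3)$-module viewed as the $SL(2)$-module $V_2$, with coordinate ring $\bigoplus_{n\ge 0}V_{2n}$ — and this only yields $\dim\hom_\kk(V_m,\tilde M)\le m+1$. To cut this down to multiplicity one one must use the finer structure of $D^u$: either, for $u\in\ZZ$, the identification of $D^u$ with the ring of global twisted differential operators on $\OO_{\PP^2}(u)$ together with the geometry of the principal $SL(2)\subset SL(3)$ acting on $\PP^2=\PP(V_2)$ — whose closed orbit is a smooth conic $C\cong\PP^1$ with one-dimensional normal bundle, so that Kashiwara's equivalence and the Bott-Borel-Weil theorem (as in the proof of \refth{thGroups2}) describe the relevant simple $(D^u,\kk)$-modules and their $\kk$-characters explicitly — or a direct computation in the Weyl algebra $A$ via the formulas \refeq{princ} for $E,H,F$, counting $\kk$-singular vectors of weight $m$ in $D^u\otimes_{U(\kk)}V_m$. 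Either way, once $\End_{D^u}(D^u\otimes_{U(\kk)}V_m)=\CC$ is known, the argument of the previous paragraph finishes the proof.
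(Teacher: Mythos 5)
Your proof of the first assertion is correct and follows the paper's route. For the second assertion the reduction to $\dim\hom_\kk(V_m,\tilde M)=1$, where $\tilde M=D^u\otimes_{U(\kk)}V_m$, is sound; but you then stall precisely at that claim, observing that your associated-graded estimate only yields a bound of $m+1$ and proposing (but not executing) two substitutes. This is a genuine gap, and the geometric detour is the wrong turn: the multiplicity-one statement follows purely algebraically from the containment $(D^u)^\kk\subset U(\kk)$ of Lemma \ref{le102sl}, exactly as the analogous containment $U^\kk\subset Z_U U(\kk)$ is used in the proof of Lemma \ref{le00}, which is the proof the paper is pointing at.

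Concretely, any $\kk$-map $V_m\to\tilde M$ lifts to a $\kk$-invariant tensor $\Phi\in D^u\otimes\End(V_m)$, and since $U(\kk)\to\End(V_m)$ is surjective one may write $\Phi=\sum_i b_i\otimes\rho(a_i)$ with $a_i\in U(\kk)$, so the resulting map is $v\mapsto c\otimes v$ with $c=\sum_i b_ia_i\in D^u$. The left annihilator $J$ of $1\otimes V_m$ in $D^u$ is a left ideal and a right $U(\kk)$-submodule, hence stable under the adjoint action of $\kk$; and $\kk$-equivariance of $v\mapsto c\otimes v$ forces $[\kk,c]\subset J$. Write $c=c_0+c'$ with $c_0\in(D^u)^\kk$ and $c'$ in the sum of the non-trivial $\kk$-isotypic components of $D^u$; then $c'$ lies in the $\kk$-submodule generated by $[\kk,c']$, which is contained in $J$, so $c'\otimes V_m=0$ in $\tilde M$ and the map equals $v\mapsto c_0\otimes v$. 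Finally $c_0\in(D^u)^\kk\subset U(\kk)$ acts on $V_m$ by a scalar, so the map is a multiple of the inclusion $1\otimes V_m\hookrightarrow\tilde M$. No geometry of $\PP^2$ or explicit differential-operator computation is needed; the graded estimate is simply the wrong tool here.
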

\begin{proof} The first statement follows from \refle{le102sl} via \refle{le2}. The proof of the second statement is very similar to the proof of \refle{le00}.
\end{proof}

We now introduce the functors
\begin{eqnarray*}
\Ind: D^u-\mod &\hookrightarrow &A-\mod \\
M&\mapsto&A\otimes_{A_0}M,\\
\res_u:A-\mod&\hookrightarrow & D^u-\mod\\
M&\mapsto& D^u\otimes_{A_0}M.
\end{eqnarray*}
Obviously, $\res_u\circ\Ind=\id_{D^u-\mod}$.

\begin{lemma}\label{le95}
\[
\ker\gamma_u=\triplebrace{\Ann L_{u+1,1}=\Ann L_{-u-1,u+2}=\Ann L_{1,-u-2}}{\mathrm{~for~}u\notin \ZZ}{\Ann L_{-u-1,u+2}=\Ann L_{1,-u-2}}{\mathrm{~for~} u\in \ZZ_{\geq-1}}{\Ann L_{u+1,1}=\Ann L_{-u-1,u+2}}{\mathrm{~for~}u\in \ZZ_{\leq-2}}.
\]
\end{lemma}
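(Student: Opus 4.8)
The plan is to prove the single equality $\ker\gamma_u=\Ann L_{u+1,1}$ (whenever $L_{u+1,1}$ is infinite dimensional) and then to deduce that the other two annihilators in the statement coincide with it, the case distinction in the statement being exactly the record of when one of the three modules is finite dimensional or coincides with another. First note that $\ker\gamma_u$ is a two-sided ideal, since $\gamma_u$ is a surjective algebra homomorphism. For the inclusion $\ker\gamma_u\subseteq\Ann L_{u+1,1}$ I would use the obvious $D^u$-module: the subspace $t^u\CC[x/t,y/t]$ of the localized Weyl-algebra module is stable under $A_0$ and $\EE$ acts on it by the scalar $u$, so it descends to a $D^u$-module $N$, hence to a $\gg$-module via $\gamma_u$. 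Using \refeq{princ} and the analogous formulas for the remaining generators of $\gg\subset A_0$, one checks that $t^u$ is a $\bb^+$-singular vector of weight $u\omega_1$ which generates $N$; thus $N$ is a highest weight module with unique simple quotient $L_{u+1,1}$, and $\ker\gamma_u\subseteq\Ann N\subseteq\Ann L_{u+1,1}$.

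For the reverse inclusion I would pass to associated graded objects. By \refle{le1021sl} the filtered ring $D^u=U(\gg)/\ker\gamma_u$ has $\mathrm{gr}(D^u)\cong\bigoplus_{m\geq0}V_{m\rho}$ as a graded $\gg$-module, which is precisely the coordinate ring of the closure $\overline{\mathcal O}$ of the (four-dimensional) minimal nilpotent orbit of $\sl(3)$; hence $\mathrm{gr}(\ker\gamma_u)$ is the prime ideal $I(\overline{\mathcal O})\subset S^\cdot(\gg)$. On the other hand $L_{u+1,1}$ is infinite dimensional with $\gkdim L_{u+1,1}\leq2$ (as in the proof of \refle{le100sl}), so its associated variety is a nonzero $G$-stable closed subvariety of the nilpotent cone of dimension $\leq4$ and, being nonzero, of dimension $\geq2r_\gg=4$; therefore it equals $\overline{\mathcal O}$ and $\sqrt{\mathrm{gr}(\Ann L_{u+1,1})}=I(\overline{\mathcal O})$. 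Since the inclusion $\ker\gamma_u\subseteq\Ann L_{u+1,1}$ forces $\mathrm{gr}(\ker\gamma_u)\subseteq\mathrm{gr}(\Ann L_{u+1,1})$, we get $I(\overline{\mathcal O})\subseteq\mathrm{gr}(\Ann L_{u+1,1})\subseteq\sqrt{\mathrm{gr}(\Ann L_{u+1,1})}=I(\overline{\mathcal O})$, so $\mathrm{gr}(\ker\gamma_u)=\mathrm{gr}(\Ann L_{u+1,1})$, and an ideal contained in another with the same associated graded ideal is equal to it; hence $\ker\gamma_u=\Ann L_{u+1,1}$.

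For the remaining equalities, a direct computation with the dot-action of the Weyl group shows that $L_{u+1,1}$, $L_{-u-1,u+2}$ and $L_{1,-u-2}$ all have central character $\chi(u+1,1)$, and each of the infinite dimensional ones among them has associated variety $\overline{\mathcal O}$ by the same Gelfand--Kirillov-dimension argument. It then suffices to know that $U(\gg)$ has a \emph{unique} primitive ideal with central character $\chi(u+1,1)$ and associated variety $\overline{\mathcal O}$: for $u\notin\ZZ$ this follows from the reduction of primitive ideals to the integral Levi subalgebra, which here is of type $A_1$, so that the relevant block is governed by $\sl(2)$, where the non-minimal primitive ideal of a given central character is unique; in the integral and singular cases of the statement it follows from the known structure of the corresponding block of category $\mathcal O$ for $\sl(3)$, equivalently from the Kazhdan--Lusztig cell structure of $S_3$ (the two infinite dimensional modules that remain on the list lie in one cell). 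Combining, all the annihilators in question coincide with $\ker\gamma_u$.

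The step I expect to be the main obstacle is precisely pinning down the coincidence of the three \emph{a priori different} modules: the chart-module construction places $\ker\gamma_u$ inside $\Ann L_{u+1,1}$ only, and while the Fourier automorphism $t\leftrightarrow\partial_t,\ x\leftrightarrow\partial_x,\ y\leftrightarrow\partial_y$ of the Weyl algebra $A$ (which restricts to the Dynkin-diagram automorphism of $\sl(3)$ and intertwines $\gamma_u$ with $\gamma_{-u-3}$) yields one more equality among the three, obtaining the third one and treating the integral/singular boundary values of $u$ cleanly seems to need either extra explicit $D^u$-module constructions (localizations of the chart module) or input from the classification of primitive ideals of $U(\sl(3))$; the dependence on $\gkdim L_{a,b}\leq2$ from \refle{le100sl} is essential throughout.
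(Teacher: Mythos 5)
Your proposal and the paper's proof share the first move (the chart module $\res_u(t^u\CC[t^{\pm1},x,y])$, which is what your $t^u\CC[x/t,y/t]$ amounts to, giving $\ker\gamma_u\subseteq\Ann L_{u+1,1}$), but after that they genuinely diverge, and your version has a real gap.

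For the equality $\ker\gamma_u=\Ann L_{u+1,1}$, you invoke associated varieties and GK dimension; the paper does something much more elementary. From \refle{le1021sl} one reads off that any nonzero two-sided ideal of $D^u$, being an $\ad(\gg)$-submodule of $\bigoplus_m V_{m\rho}$ (multiplicity one), contains all $V_{m\rho}$ for $m$ large and hence has finite codimension. Since $L_{u+1,1}$ is infinite dimensional, $\gamma_u(\Ann L_{u+1,1})$ cannot have finite codimension in $D^u$, so it is zero. Your associated-graded argument can be made to work (the inclusion $\mathrm{gr}(\ker\gamma_u)\subseteq I(\overline{\mathcal O})$ needs a sentence, not just the Hilbert-series comparison), but it is heavier machinery than is needed.

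The real gap is in the second half. You deduce $\Ann L_{-u-1,u+2}=\Ann L_{1,-u-2}=\ker\gamma_u$ by appealing to uniqueness of the primitive ideal with the given central character and associated variety $\overline{\mathcal O}$. That statement is not proved in your proposal; it is a nontrivial piece of Joseph's primitive-ideal theory and requires care precisely because, for generic $u\notin\ZZ$, the three modules $L_{u+1,1}$, $L_{-u-1,u+2}$, $L_{1,-u-2}$ lie in \emph{three distinct} linkage classes of category $\mathcal O$ (their integral root systems are generated by $\alpha_2$, $\alpha_1+\alpha_2$, $\alpha_1$ respectively), so the ``block governed by $\sl(2)$'' argument as stated does not compare them. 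The singular integral cases $u\in\ZZ$ require a separate discussion which the proposal only gestures at. The paper sidesteps all of this: it exhibits, in addition to the $t$-chart module, the two further $D^u$-modules $\res_u(x^u\CC[t^{\pm1},x^{\pm1},y])/\res_u(x^u\CC[t,x^{\pm1},y])$ and $\res_u(y^u\CC[t^{\pm1},x^{\pm1},y^{\pm1}])/(\cdots)$, with explicit singular vectors $t^{-1}x^{u+1}$ and $t^{-1}x^{-1}y^{u+2}$ generating copies of $L_{-u-1,u+2}$ and $L_{1,-u-2}$, giving directly $\ker\gamma_u\subseteq\Ann L_{-u-1,u+2}$ and $\ker\gamma_u\subseteq\Ann L_{1,-u-2}$; the finite-codimension argument then closes all three simultaneously. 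You actually named this route (``extra explicit $D^u$-module constructions (localizations of the chart module)'') at the end, but it is the paper's actual proof, not an alternative, and it is what is needed to avoid citing primitive-ideal classification. Your Fourier-transform remark is correct and gives one additional relation among the three annihilators (after the substitution $u\mapsto -u-3$), but it does not produce the third.
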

\begin{proof}
First we prove that $\ker\gamma_u\subset\Ann L_{a,b}$ with $a,b$ as
in the statement.  Note that
$\res_u(t^u\CC[t^{\pm 1},x,y])$ contains a submodule generated by
$t^u$ isomorphic to $L_{u+1,1}$, $\res_u(x^u\CC[t^{\pm 1},x^{\pm
  1},y])/\res_u(x^u\CC[t,x^{\pm 1},y])$ contains a submodule with
highest vector $t^{-1}x^{u+1}$ isomorphic to $L_{-u-1,u+2}$ and
$\res_u(y^u\CC[t^{\pm 1},x^{\pm 1},y^{\pm 1}])/(\res_u(y^u\CC[t^{\pm
  1},x,y^{\pm 1}])+\res_u(y^u\CC[t,x^{\pm 1},y^{\pm 1}]))$ contains a
submodule with highest vector $t^{-1}x^{-1}y^{u+2}$ isomorphic to
$L_{1,-u-2}$. Hence $\ker\gamma_u \subset \ann L_{a,b}$. Next we see
from \refle{le1021sl} that all proper two-sided ideals of $D^u$ have
finite codimension. Thus, $\gamma_u (\Ann L_{a,b})$ is either $0$ or
has finite codimension in $D^u$. The latter is impossible because
$L_{a,b}$ is infinite-dimensional. Hence $\ker\gamma_u=\Ann L_{a,b}$.
\end{proof}

Since the eigenvalues of $\ad_H$ in $U(\gg)$ are all even, every
simple $(\gg,\kk)$-module is either odd or even.

As follows from \refle{le95}, all simple bounded $(\gg,\kk)$-modules with
central character $\chi(u+1,u)$ are $(D^u,\kk)$-modules. This allows us to first classify the simple $(D^u,\kk)$-modules and then use translation
functors to classify the bounded simple modules with arbitrary possible central character, see \refcor{corPrimIdeal}. 

Note that the functor $\ind$ maps $(D^u,\kk)$-mod into $(A,\tilde \kk)$-mod, the latter being the full subcategory of $A$-modules with semisimple action of $\tilde \kk:=\kk\oplus\CC \EE$.
\begin{lemma}\label{le89}
For any simple $(D^u,\kk)$-module $M$ there exists a simple $(A,\tilde \kk)$-module $\hat M$ with $\res_u(\hat M)\simeq M$.
\end{lemma}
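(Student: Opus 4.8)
The plan is to take for $\hat M$ the unique simple quotient of the induced module $\Ind(M)=A\otimes_{A_0}M$ that is not annihilated by $\res_u$, and the main preliminary step is to make the functor $\res_u$ explicit on $(A,\tilde \kk)$-mod. If $N$ is an object of $(A,\tilde \kk)$-mod, then $\EE$ acts semisimply, so $N=\bigoplus_\lambda N[\lambda]$ with $N[\lambda]$ the $\EE$-eigenspace of eigenvalue $\lambda$; since $A_0$ commutes with $\EE$, each $N[\lambda]$ is an $A_0$-submodule, and $\EE-u$ acts on $N[\lambda]$ by the scalar $\lambda-u$. Hence $D^u\otimes_{A_0}N[\lambda]=N[\lambda]/(\lambda-u)N[\lambda]$ equals $N[u]$ for $\lambda=u$ and vanishes otherwise, so $\res_u(N)=N[u]$ as a $D^u$-module. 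Moreover $N\mapsto N[u]$ is an exact functor on $(A,\tilde \kk)$-mod, being the selection of one summand in the $\EE$-weight decomposition. (Applied to $\Ind(M)$ this also re-proves $\res_u\circ\Ind=\id$.)

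Next I would analyze $\hat N:=\Ind(M)=A\otimes_{A_0}M$, which lies in $(A,\tilde \kk)$-mod by the remark preceding the lemma. Using the grading $A=\bigoplus_{i\in\ZZ}A_i$ and the fact that $\EE$ acts on $M$ by the scalar $u$, one checks $\EE\cdot(a_i\otimes m)=(u+i)(a_i\otimes m)$ for $a_i\in A_i$ and $m\in M$; hence $\hat N[u+i]=A_i\otimes_{A_0}M$, and in particular $\hat N[u]=1\otimes M\cong M$, while $A\cdot\hat N[u]=\hat N$. The crucial observation is that for every $A$-submodule $N\subset\hat N$ the subspace $N[u]$ is a $D^u$-submodule of $\hat N[u]\cong M$, since $N=\bigoplus_\lambda(N\cap\hat N[\lambda])$ and $A_0$ preserves $\hat N[u]$. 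As $M$ is simple as a $D^u$-module (equivalently, as an $A_0$-module, $D^u$ being a quotient of $A_0$), we get that either $N[u]=0$, or $N[u]=M$ and then $N\supseteq A\cdot\hat N[u]=\hat N$.

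I would then let $N_0$ be the sum of all $A$-submodules $N\subset\hat N$ with $N[u]=0$; by the $\EE$-grading $N_0[u]=0$ as well, so $N_0$ is a proper submodule, namely the largest one whose $\EE$-eigenspace of weight $u$ vanishes. Put $\hat M:=\hat N/N_0$. Then $\hat M$ is simple: any nonzero submodule is of the form $N/N_0$ with $N\supsetneq N_0$, so $N[u]\neq 0$ by maximality of $N_0$, whence $N[u]=M$ and $N=\hat N$. Finally, applying the exact functor $N\mapsto N[u]$ to the sequence $0\to N_0\to\hat N\to\hat M\to 0$ and using $N_0[u]=0$ gives $\res_u(\hat M)=\hat M[u]\cong\hat N[u]=M$ as $(D^u,\kk)$-modules, which is the assertion. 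The only step needing some care is the identification $\res_u(N)=N[u]$ and the exactness of extracting $\EE$-eigenspaces; granting that, the rest is the formal \emph{induce and pass to the unique simple quotient visible to} $\res_u$ argument, and the only property of $M$ that enters is its simplicity as a $D^u$-module.
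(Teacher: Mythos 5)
Your argument is correct and in essence coincides with the paper's, which simply takes a maximal proper $A$-submodule $N$ of $\ind(M)$, notes that $\res_u(N)\neq M$ because $M$ generates $\ind(M)$ and hence $\res_u(N)=0$, and sets $\hat M:=\ind(M)/N$. Your version makes explicit the identification of $\res_u$ on $(A,\tilde\kk)$-modules with extraction of the $\EE$-eigenspace of eigenvalue $u$ (hence exactness), and as a small bonus your $N_0$ is shown to be the \emph{unique} maximal proper submodule, so $\hat M$ is canonical; the underlying idea is the same.
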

\begin{proof}
Let $N$ be a maximal proper $A$-submodule of $\ind(M)$. Then $\res_u(N)\ncong M$ as $M$ generates $\ind(M)$. Therefore $\res_u(N)=0$ and one defines $\hat M$ as $\ind(M)/N$.
\end{proof}

Set $f:=x^2-2ty$, $\Delta:=\partial_x^2-2\partial_y\partial_t$ and note that $f,\Delta\in A^\kk$. For every fixed $p\in\CC$,we put $R^p:=f^p\CC[t,x,y,f^{-1}]$. Then clearly $R^p$ is an $(A,\tilde\kk)$-module and $\res_u (R^p)=0$ if $u-2p\notin\ZZ$. Otherwise,
\begin{equation}\label{eqle89}
\res_u(R^p)=\doublebrace{\CC f^{\frac{u}{2}}\oplus f^{\frac{u-2}{2}}\HH_2\oplus f^{\frac{u-4}{2}}\HH_4\oplus\dots}{\text{~for~} u-2p\in2\ZZ}{\CC f^{\frac{u-1}{2}}\HH_1\oplus f^{\frac{u-3}{2}}\HH_3\oplus f^{\frac{u-5}{2}}\HH_4\oplus\dots}{\text{~for~} u-2p\in2\ZZ+1,}
\end{equation}
where $\HH_n$ denotes the space of homogeneous polynomials of degree $n$ in $\CC[t,x,y]$ annihilated by $\Delta$ (as a $\kk$-module $\HH_n$ is isomorphic to $V_{2n}$).
\begin{lemma}\label{le910}
\item[(a)] For $u\notin\ZZ$ and for $u=-1,-2$, $\res_u(R^{\frac{u}{2}})$ and $\res_u(R^{\frac{u+1}{2}})$ are simple $D^u$-modules.
\item[(b)] For $u\in 2\ZZ_{\geq 0}$, $\res_u(R^{\frac{u+1}{2}})$ is a simple $D^u$-module and there is an exact sequence
\begin{equation}\label{eqle910+even}
0\to V_{u,0}\to\res_u(R^{\frac{u}{2}})\to I^{+}_{u,0}\to 0
\end{equation}
for some simple $D^u$-module $I^+_{u,0}$.
\item[(c)] For $u\in 1+2\ZZ_{\geq 0}$, $\res_u(R^{\frac{u}{2}})$ is a simple $D^u$-module and there is an exact sequence
\begin{equation*}\label{eqle910+odd}
0\to V_{u,0}\to\res_u(R^{\frac{u+1}{2}})\to I^{-}_{u,0}\to 0
\end{equation*}
for some simple $D^u$-module $I^-_{u,0}$.
\item[(d)] For $u\in 2\ZZ_{\leq -2}$, $\res_u(R^{\frac{u}{2}})$ is a simple $D^u$-module and there is an exact sequence
\begin{equation*}\label{eqle910-even}
0\to I^-_{u,0}\to\res_u(R^{\frac{u+1}{2}})\to V_{0,-3-u}\to 0
\end{equation*}
for some simple $D^u$-module $I^-_{u,0}$.
\item[(e)] For $u\in 1+2\ZZ_{\leq -1}$, $\res_u(R^{\frac{u+1}{2}})$ is a simple $D^u$-module and there is an exact sequence
\begin{equation*}\label{eqle910-odd}
0\to I^+_{u,0}\to\res_u(R^{\frac{u}{2}})\to V_{0,-3-u}\to 0
\end{equation*}
for some simple $D^u$-module $I^+_{u,0}$.
\end{lemma}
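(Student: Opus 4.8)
The plan is to read the $D^u$-module structure of $\res_u(R^p)$ off the $A$-module structure of $R^p$. For any $A$-module $M$ one has $\res_u(M)=D^u\otimes_{A_0}M=M/(\EE-u)M$, which for $\EE$-semisimple $M$ is the $\EE$-eigenspace $M_u:=\{v\in M:\EE v=uv\}$ with the residual $D^u$-action, and $M\mapsto M_u$ is exact on $\EE$-semisimple $A$-modules. The key elementary point is: \emph{if $M$ is a simple $A$-module with semisimple $\EE$-action, then $M_u$ is $0$ or a simple $D^u$-module} --- for if $N\subseteq M_u$ is a $D^u$-submodule then $AN$ is an $A$-submodule with $(AN)_u=A_0N=N$, so $AN\in\{0,M\}$ forces $N\in\{0,M_u\}$; and more generally an $A$-submodule $M'\subseteq R^p$ yields an exact sequence $0\to(M')_u\to(R^p)_u\to(R^p/M')_u\to0$ of $D^u$-modules. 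Since $R^p$ depends only on $p\bmod\ZZ$, only three cases arise: $p\notin\frac12\ZZ$; $p\in\ZZ$, where $R^p=\OO_{\CC^3}(\ast\{f=0\})$; and $p\in\frac12+\ZZ$, where $R^p\cong\OO_{\CC^3}(\ast\{f=0\})f^{1/2}$.

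The arithmetic input is the $b$-function of $f=x^2-2ty$: a short computation gives $\Delta f^{s+1}=2(s+1)(2s+3)f^s$, and more generally $\Delta(f^sh_n)=2s(2s+2n+1)f^{s-1}h_n$ for $h_n\in\HH_n$, so $b_f$ has roots $-1$ and $-\frac32$ and $\{f,\Delta,\EE+\frac32\}$ spans an $\sl(2)$-triple commuting with $\kk$. For $p\notin\frac12\ZZ$ the set $p+\ZZ$ avoids $\{-1,-\frac32\}$, which is the standard criterion for the localization $R^p$ to be a simple $A$-module; hence $\res_u(R^p)$ is simple, which gives (a) for $u\notin\ZZ$. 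For $p\in\ZZ$ there is the composition series $0\subset\OO_{\CC^3}\subset\OO_{\CC^3}(\ast\{f=0\})$, with $\OO_{\CC^3}$ simple and $\mathcal H^1_{[f=0]}(\OO_{\CC^3})=\OO_{\CC^3}(\ast\{f=0\})/\OO_{\CC^3}$ simple (a standard fact for the quadric cone; the only integral root of $b_f$ is $-1$, which accounts for the one subquotient $\OO_{\CC^3}$, and there is no module supported at the cone point since $-\frac32\notin\ZZ$). Taking the $\EE$-degree $u$ piece and using $(\OO_{\CC^3})_u\cong V_{u,0}$ for $u\ge0$ and $0$ for $u<0$: for $u\ge0$ one gets $0\to V_{u,0}\to\res_u(R^p)\to(\mathcal H^1_{[f=0]}(\OO_{\CC^3}))_u\to0$ with the right term a nonzero simple $D^u$-module, and for $u<0$ one gets $\res_u(R^p)=(\mathcal H^1_{[f=0]}(\OO_{\CC^3}))_u$ simple; since $\HH_n\cong V_{2n}$ and the harmonic degrees occurring have the parity of $u$, this simple module is $I^+_{u,0}$ or $I^-_{u,0}$ according to that parity. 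This proves (b), (c), the $p\in\ZZ$ halves of (d), (e), and --- together with the previous case applied to whichever of $u/2,(u+1)/2$ is integral --- the simplicity claims for $u=-1,-2$ in (a).

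The substance is the half-integral case $p\in\frac12+\ZZ$, where $R^p\cong R^{1/2}:=\OO_{\CC^3}(\ast\{f=0\})f^{1/2}$. With $j\colon\CC^3\setminus\{f=0\}\hookrightarrow\CC^3$ and $\mathcal L:=\OO_{\CC^3}f^{1/2}$ (an irreducible rank-one local system), $R^{1/2}=j_+\mathcal L$ has simple socle $j_{!*}\mathcal L$, which I would identify --- via the harmonic decomposition $\CC[t,x,y]=\bigoplus_{k,n}f^k\HH_n$ and $b_f$ --- with the cyclic submodule $N:=A\cdot f^{1/2}=\bigoplus_{n\ge0}\CC[f]\,f^{-n-\frac12}\HH_n$, i.e.\ the span of the $f^sh_n$ ($h_n\in\HH_n$) with $s\ge-n-\frac12$. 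That $N$ is an $A$-submodule is the one real computation: applying $\partial_t,\partial_x,\partial_y$ to the bottom vector $f^{-n-\frac12}h_n$ gives, after harmonic decomposition of $th_n,xh_n,yh_n$, exactly the bottom vectors of the $\HH_{n\pm1}$-strata, the would-be lower-order terms cancelling because of $b_f$-driven identities such as $\tfrac{2n+1}{4n^2-1}=\tfrac{1}{2n-1}$. As $N$ has full support, $W:=R^{1/2}/N$ is supported on $\{f=0\}$, and counting $\EE$-graded pieces gives $W_d=0$ for $d\ge-2$, while for $d\le-3$ the space $W_d$ is spanned by the $f^{(d-n)/2}h_n$ with $0\le n<-d-1$ of the parity of $d+1$ --- a finite-dimensional $D^d$-module. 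Hence $\res_u(R^{1/2})=N_u$ is simple for $u\ge-2$ (yielding the $p\in\frac12+\ZZ$ halves of (a), (b), (c)), and for $u\le-3$ one has $0\to N_u\to\res_u(R^{1/2})\to W_u\to0$ with $N_u$ simple and $W_u$ the finite-dimensional $D^u$-module of central character $\chi(1,-u-2)$ (see \refle{le95}) with multiplicity-free $\kk$-spectrum, hence $W_u\cong V_{0,-3-u}$; putting $I^{\pm}_{u,0}:=N_u$ gives (d), (e). The main obstacle is this last paragraph: making the identification of the socle $N$ of $R^{1/2}$ precise and doing the bookkeeping that pins $W_u$ to $V_{0,-3-u}$ --- everything else is formal once $b_f$ is in hand.
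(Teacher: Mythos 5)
Your proof is correct in outline and reaches the right conclusions, but it takes a genuinely different route from the paper's. The paper's proof leans on the representation theory of $(D^u,\kk)$-modules developed earlier in the section: it reads the $\kk$-characters of $\res_u(R^{u/2})$ and $\res_u(R^{(u+1)/2})$ off \refeq{eqle89}, invokes \refcor{cor98} (at most one simple $(D^u,\kk)$-module with a given $\kk$-type) to conclude that a non-simple $\res_u(R^p)$ must have a unique finite-dimensional sub or quotient, and then uses \refle{le95} to determine for which $u$ that can happen. The finite-dimensional pieces are then produced directly: $\res_u(\CC[t,x,y])\simeq V_{u,0}$ for (b),(c), and the fact that $f^{-1/2}$ generates a proper $A$-submodule of $R^{1/2}$ for (d),(e). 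Your proof instead works entirely at the level of $\DD$-module theory, organizing by $p\bmod\ZZ$ and computing the $b$-function $b_f(s)=(s+1)(s+\tfrac32)$; the three cases (generic $p$, $p\in\ZZ$ giving $\OO\subset\OO(*f)$, and $p\in\frac12+\ZZ$ giving the socle $j_{!*}\mathcal{L}=A\cdot f^{1/2}$) then produce the exact sequences by passing to $\EE$-eigenspaces. Both arguments hinge on the same cyclic module $A\cdot f^{\pm 1/2}$ (your $N$ is literally the paper's $M$, since $\Delta f^{1/2}=2f^{-1/2}$ and $f\cdot f^{-1/2}=f^{1/2}$), but you make its structure fully explicit through the harmonic decomposition $\bigoplus_{s\geq -n-\frac12}f^s\HH_n$, whereas the paper only asserts that a ``direct computation'' shows $\res_u(M)$ is infinite-dimensional. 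What your approach buys is a uniform and more conceptual treatment — the $b$-function gives (a) for free, and the explicit socle gives a clean count of $W_u$ — at the cost of importing more $\DD$-module machinery (simplicity criteria for $f^p\OO[\tfrac1f]$, identification of $j_{!*}\mathcal{L}$ as the socle). Two small caveats: the identification $N=j_{!*}\mathcal{L}$, and hence the simplicity of $N_u$, is asserted rather than proved (though, as you note, it reduces to showing $N$ is simple, which your harmonic computation makes tractable; the paper likewise only deduces simplicity of $I^\pm_{u,0}$ indirectly via \refcor{cor98}); and the ``$b_f$-driven identity'' you quote, $\tfrac{2n+1}{4n^2-1}=\tfrac{1}{2n-1}$, is a tautology — the actual cancellation is $h''_{n-1}=-\tfrac{1}{2n+1}\partial_y h_n$, obtained by applying $\Delta$ to $th_n=h'_{n+1}+fh''_{n-1}$ and using $\Delta(fh'')=(4n+2)h''$. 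Neither affects the validity of the argument.
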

\begin{proof}
The isomorphism \refeq{eqle89} yields
\begin{equation}\label{eqle910res}
c(\res_u (R^{\frac{u}{2}}))=1+z^4+z^8+\dots,\quad c(\res_u (R^{\frac{u+1}{2}}))=z^2+z^6+z^{10}+\dots\quad.
\end{equation}
Thus, if $\res_u(R^{\frac{u}{2}})$ (respectively $\res_u(R^{\frac{u+1}{2}})$) is not simple it has a unique simple finite dimensional submodule or a unique simple finite dimensional quotient. By \refle{le95} the latter can happen only if $u\in \ZZ_{\geq 0}$ or $u\in \ZZ_{\leq-3}$. Hence (a).

Let $u\in 2\ZZ_{\geq 0}$. Then $\res_u(R^{\frac{u}{2}})$ contains $\res_u(\CC[t,x,y])\simeq V_{u,0}$ as a finite dimensional simple submodule, hence \refeq{eqle910+even}. The $\gg$-module $\res_u(R^{\frac{u+1}{2}})$ has the same central character as $\res_u(R^{\frac{u}{2}})$ and, since $V_{n,0}$ is not a subquotient of $\res_u(R^{\frac{u+1}{2}})$ by \refeq{eqle910res}, $\res_u(R^{\frac{u+1}{2}})$ is a simple $D^u$-module. Hence (b).

As $\Delta(f^{-\half})=0$, $f^{-\half}$ generates a proper $A$-submodule $M\subset f^{\half}\CC[t,x,y,f^{-1}]$. A direct computation shows that $\dim\res_u(M)=\infty$ for any $u\in1+2\ZZ_{\geq -2}$. Furthermore, the only finite dimensional module, whose central character coincides with that of $D^u$ is $V_{0,-3-u}$. Therefore one necessarily has
\[
0\to I_{u,0}^+\to\res_u(R^{\frac{u}{2}})\to V_{0,-3-u}\to 0
\]
where $I^+_{u,0}:=\res_u(M)$. $\res_u(R^{\frac{u+1}{2}})$ is simple by the same reason as in (b). Hence (e).

(c) and (d) are similar to (b) and (e).
\end{proof}

For any $u\in\CC$ we define now $I^+_{u,0}$ (respectively, $I^-_{u,0}$) as the unique simple infinite dimensional constituent of $\res_u(R^{\frac{u}{2}})$ (resp., $\res_u(R^{\frac{u+1}{2}})$).

\begin{corollary}\label{cor103sl}
Every simple even infinite dimensional $(D^u,\kk)$-module is isomorphic to $I^\pm_{u,0}$.
\end{corollary}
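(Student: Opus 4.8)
The plan is to derive the corollary directly from the uniqueness assertion in \refcor{cor98} together with the explicit $\kk$-characters of $I^+_{u,0}$ and $I^-_{u,0}$ extracted from \refle{le910}. First I would record that a simple even infinite-dimensional $(D^u,\kk)$-module $M$ is multiplicity free by the first part of \refcor{cor98}; being infinite-dimensional it then has infinitely many $\kk$-isotypic components, and being even each of them is of the form $V_{2j}$ with $j\in\ZZ_{\geq 0}$. In particular $M$ possesses $\kk$-types $V_m$ with $m$ arbitrarily large.

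Next I would read off from \refeq{eqle910res} and the short exact sequences of \refle{le910} that the $\kk$-character of $I^+_{u,0}$ equals $1+z^4+z^8+\dots$ up to subtracting a single polynomial --- namely the $\kk$-character of whichever finite-dimensional constituent ($V_{u,0}$ or $V_{0,-3-u}$) sits next to $I^+_{u,0}$ in the relevant sequence, and nothing at all in case (a) of \refle{le910} --- and, symmetrically, that the $\kk$-character of $I^-_{u,0}$ equals $z^2+z^6+z^{10}+\dots$ up to subtracting a single polynomial. Consequently there is an integer $N$ such that $V_m$ is a $\kk$-type of $I^+_{u,0}$ whenever $m\geq N$ and $m\equiv 0\pmod 4$, and $V_m$ is a $\kk$-type of $I^-_{u,0}$ whenever $m\geq N$ and $m\equiv 2\pmod 4$.

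Finally, I would pick a $\kk$-type $V_m$ of $M$ with $m\geq N$; then $m$ is even, so $m\equiv 0$ or $2\pmod 4$, and in the respective case $V_m$ is also a $\kk$-type of $I^+_{u,0}$ or of $I^-_{u,0}$. The second assertion of \refcor{cor98}, stating that at most one simple $(D^u,\kk)$-module contains a given $V_m$, then forces $M\simeq I^+_{u,0}$ or $M\simeq I^-_{u,0}$, which is the claim.

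I do not expect a serious obstacle here: the argument is essentially a counting argument once \refcor{cor98} and \refle{le910} are in place. The only point requiring (mild) care is the middle step --- confirming that the $\kk$-character of each $I^\pm_{u,0}$ agrees with the corresponding ``full'' power series in all sufficiently high degrees, so that every large enough $\kk$-type of the correct residue class modulo $4$ really does occur in it --- and this is immediate once one invokes \refle{le95} to see that at most one finite-dimensional constituent can intervene, and only for finitely many exceptional values of $u$.
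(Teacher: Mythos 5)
Your proposal is correct and follows essentially the same route as the paper: read off from Lemma \ref{le910} and \eqref{eqle910res} that $I^+_{u,0}$ (resp.\ $I^-_{u,0}$) contains every $\kk$-type $V_m$ with $m$ sufficiently large and $m\equiv 0$ (resp.\ $2$) $\pmod 4$, note that a simple even infinite-dimensional $(D^u,\kk)$-module must contain such a $V_m$, and conclude by the uniqueness statement of Corollary \ref{cor98}. The only difference is that you spell out the preliminary observation (multiplicity-freeness plus infinite dimension implies arbitrarily large even $\kk$-types) which the paper leaves implicit.
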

\begin{proof}
For every fixed $u$ and any sufficiently large $m\in 2\ZZ_{\geq 0}$ (such that $V_m$ is not a $\kk$-type of $V_{u,0}$ or $V_{0,-3-u}$ for $u\in\ZZ$), \refle{le910} implies $\hom_\kk(V_m,I^\pm_u)\neq 0$. The statement follows now from \refcor{cor98}.
\end{proof}

\begin{lemma}\label{le104sl}
If $u\notin \frac {1}{2}+\ZZ$, then every $(D^u,\kk)$-module is even.
\end{lemma}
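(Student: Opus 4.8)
It suffices to show that for $u\notin\tfrac12+\ZZ$ no $(D^u,\kk)$-module $M$ has a $\kk$-type $V_t$ with $t$ odd. Suppose one does; fix a nonzero $\bb_\kk$-highest weight vector $v\in M$ of weight $t$, and put $\hat M:=A\otimes_{A_0}M$, an $(A,\tilde\kk)$-module on which $\EE$ acts semisimply with spectrum in $u+\ZZ$, each $\EE$-eigenspace $\hat M_{[u']}$ is a $(D^{u'},\kk)$-module, and $M=\hat M_{[u]}$ sits inside $\hat M$; in particular $v\in\hat M$ with $\EE v=uv$. A direct computation from \refeq{princ} yields the two identities in $A$
\[
\Omega=2\EE(\EE+1)-2f\Delta,\qquad \Delta f-f\Delta=4\EE+6,
\]
where $\Omega\in U(\kk)$ is the Casimir and $f=x^2-2ty$, $\Delta=\partial_x^2-2\partial_t\partial_y$ are the $\kk$-invariants introduced above. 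Since $f\Delta$ and $\Delta f$ lie in $A^\kk$, they act by scalars on the (at most one-dimensional) $V_t$-isotypic component of any $(D^{u'},\kk)$-module, and as $\Omega$ acts there by $\tfrac{t(t+2)}2$ those scalars are $u'(u'+1)-\tfrac{t(t+2)}4$ for $f\Delta$ and $(u'+2)(u'+3)-\tfrac{t(t+2)}4$ for $\Delta f$.

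Now $f$ and $\Delta$ commute with $\kk$ and shift the $\EE$-grading by $+2$ and $-2$, so each $f^jv$ lies in the $V_t$-isotypic component of $\hat M_{[u+2j]}$ and each $\Delta^kv$ in that of $\hat M_{[u-2k]}$, and by the previous paragraph
\[
f(\Delta^kv)=\Bigl(u'(u'+1)-\tfrac{t(t+2)}4\Bigr)\Delta^{k-1}v,\quad u'=u-2(k-1),
\]
\[
\Delta(f^jv)=\Bigl((u''+2)(u''+3)-\tfrac{t(t+2)}4\Bigr)f^{j-1}v,\quad u''=u+2(j-1).
\]
Each of these scalars vanishes only if $u'\in\{\tfrac t2,-\tfrac t2-1\}$, respectively $u''\in\{\tfrac t2-2,-\tfrac t2-3\}$; since $t$ is odd all four of these numbers lie in $\tfrac12+\ZZ$, whereas $u',u''\in u+2\ZZ$, which is disjoint from $\tfrac12+\ZZ$ because $u\notin\tfrac12+\ZZ$. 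Hence none of the scalars is zero, and since $v\neq0$ an immediate induction gives $\Delta^kv\neq0$ and $f^jv\neq0$ for all $j,k\geq0$. Equivalently, the $\sl(2)$-triple $f$, $\EE+\tfrac32$, $-\tfrac14\Delta$ (which commutes with $\kk$) generates out of $v$ a submodule of $\hat M$ that is unbounded in both directions of the $\EE$-grading.

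The remaining step, which is the heart of the matter, is to derive a contradiction from this unboundedness. The natural route is geometric: $\hat M$ is monodromic for the action on $\CC^3$ of $SO(3)\times\GL(1)$ in which $\kk$ acts through the covering $SL(2)\twoheadrightarrow SO(3)$ with $f$ as the invariant quadratic form, $\GL(1)$ is the homothety with twist $u$, and by the previous step the module is not supported on the origin alone; passing to a simple subquotient that still carries an odd $\kk$-type, one classifies the simple monodromic $\DD_{\CC^3}$-modules (support $\{0\}$, support in $\overline{\{f=0\}}$, or full support $\CC^3$) and checks that the only ones admitting an odd $\kk$-type are intermediate extensions of the nontrivial — that is, $Spin(3)$- rather than $SO(3)$-equivariant — rank-one local system on $\{f=0\}\setminus\{0\}$; the compatibility of this local system with the $\GL(1)$-twist $u$ along the one-dimensional torus of the isotropy group (which maps to $\GL(1)$ by $s\mapsto s^{-2}$ and to $Spin(3)$ by the weight-one character) then forces $2u$ to be an odd integer, contradicting $u\notin\tfrac12+\ZZ$. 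I expect this classification-and-compatibility argument to be the only nontrivial part: everything preceding it rests on the single identity $\Omega=2\EE(\EE+1)-2f\Delta$ together with a one-line induction.
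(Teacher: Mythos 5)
Your algebraic groundwork is correct and in fact matches the identity underlying the first half of the paper's argument: you have $\Omega = 2\EE(\EE+1) - 2f\Delta$, which combined with $[\Delta,f]=4\EE+6$ gives $\Omega = 2(\EE+2)(\EE+3)-2\Delta f$; the paper writes the latter identity on $\Ker f$ to conclude that $f$ cannot act locally nilpotently when an odd $\kk$-type is present and $u\notin\tfrac12+\ZZ$. Your injectivity of $f$ and $\Delta$ on $V_t$-isotypic highest weight vectors is a mild repackaging of the same observation, and it is correct. One small inaccuracy: you appeal to the $V_t$-isotypic component of $\hat M_{[u']}$ being at most one-dimensional, but Corollary \ref{cor98} gives multiplicity-freeness only for \emph{simple} $(D^{u'},\kk)$-modules, and $\hat M_{[u']}$ need not be simple. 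This is harmless, because the scalar by which $f\Delta$ (resp.\ $\Delta f$) acts on a $\bb_\kk$-highest vector of weight $t$ and $\EE$-eigenvalue $u'$ follows directly from the identity $f\Delta=\EE(\EE+1)-\tfrac12\Omega$ applied to that vector; no multiplicity statement is needed.

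The genuine gap is the second half, which you yourself flag as ``the heart of the matter'' and leave at the level of a program. Establishing that $f^j v\neq 0$ and $\Delta^k v\neq 0$ for all $j,k$ does not by itself produce a contradiction; one still has to show that no such module exists. The paper does this by a concrete two-step localization. Having shown $\hat M_f\neq 0$, it observes that $t$ cannot act locally nilpotently on $\hat M_f$ (else, by $\kk$-invariance, $x$ and $y$ would too, and then $f$ would), so $\hat M_f\hookrightarrow\hat M_{f,t}$. It then picks a $\bb_\kk$-highest weight vector $v\in\hat M_{f,t}$ with $Hv=mv$, $Ev=0$, $\EE v=uv$, uses the explicit formulas \refeq{princ} to solve for $\partial_t v,\partial_x v,\partial_y v$, and concludes $\hat M_{f,t}=\CC[t^{\pm1},x,y,f^{-1}]\cdot v$ with $v=t^{m/2}f^{(2u-m)/4}$; this module visibly has no nonzero $\kk$-finite vectors, forcing $\hat M_f=0$, a contradiction. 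Your proposed alternative — classify simple $SO(3)\times\GL(1)$-monodromic $\DD$-modules on $\CC^3$, show the odd $\kk$-types can only live on intermediate extensions from the punctured cone, and read off a constraint on $u$ from the isotropy — is a genuinely different, more geometric route, and its conclusion is consistent with what the paper proves (the odd modules $J_{u,0}$ are indeed cone-supported and exist precisely for $u\in\tfrac12+\ZZ$). But as written it is a sketch, not a proof: the classification of simple equivariant/monodromic objects, the exclusion of the open-orbit contributions, and the isotropy-compatibility computation are all asserted rather than carried out. To make it complete you would need to actually perform the analysis that the paper instead settles with the explicit localization formula for $\hat M_{f,t}$.
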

\begin{proof}
Assume that $M$ is an odd simple $(D^u,\kk)$-module and $u\notin \frac {1}{2}+\ZZ $. Let $\hat M$ be as in \refle{le89}, $A_f$ denote the localization of $A$ in $f$, $\hat{M}_f=A_f\otimes_A \hat{M}$. First, we claim that if  $u\notin \frac
{1}{2}+\ZZ$, then $\hat{M}_f\neq 0$. Indeed, $\hat{M}_f=0$ implies
that $f$ acts
locally nilpotently on $\hat{M}$.
Then $M^0:=\Ker f$ is a $\kk$-submodule of $\hat{M}$ and a
straightforward calculation using \refeq{princ} shows
$\Omega_{| M^0}=2(\EE+3)(\EE+2)_{| M^0}$. Thus $\hom_{\kk}(V_m,M^0)\neq 0$ only if $2(d+3)(d+2)=\frac{m^2}{2}+m$ or equivalently $(d+\frac{5}{2})^2=(\frac{m+1}{2})^2$, where $d$ is the eigenvalue of $\EE$ on $M^0$. Since $d\in u+\ZZ$, $u\notin \frac{1}{2}+\ZZ $ implies $M^0=0$.

Our next observation is that $\hat{M}_f$ is an odd $(A,\kk)$-module
and that $t$ does not act locally nilpotently on $\hat{M}_f$. Indeed, if $t$ acts locally nilpotently, by $\kk$-invariance $x$ and $y$ act locally nilpotenly, and therefore $f$ acts locally
nilpotently. Contradiction. Therefore  $\hat{M}_f$ is a submodule of its localization in $t$, $\hat{M}_{f,t}$. Furthermore, for some odd $m$ there
exists a non-zero vector $v\in \hat{M}_{f,t}$ such that $H\cdot v=m
v$, $E\cdot v=0$
and  $\EE\cdot v=uv$. The expressions for $E,H$ and $\EE$ imply
\[
\partial_t v=\frac{-(u+m/2)ty+mx^2/2}{tf} v, \partial_x v=\frac{(u-m/2)x}{f} v, \partial_y v=\frac{(m/2-u)t}{f} v.
\]
Thus, every vector in $\hat{M}_{f,t}$ can be obtained from $v$ by 
applying elements of $\CC[t^{\pm 1},x,y,f^{-1}]$, i.e.
$\hat{M}_{f,t}=\CC[t^{\pm 1},x,y,f^{-1}]v$. It is not difficult to see
that $v=t^{\frac{m}{2}}f^{\frac{2u-m}{4}}$ satisfies the above
relations. The $A_{f,t}$-module 
$\CC[t^{\pm 1},x,y,f^{-1}]v$ is simple and free over
$\CC[t^{\pm 1},x,y,f^{-1}]$. Hence
$\hat{M}_{f,t}\simeq \CC[t^{\pm 1},x,y,f^{-1}]v$ and it is
obvious that $\hat{M}_{f,t}$ has no non-zero $\kk$-finite
vectors. As we pointed out above, $\hat{M}_f\subset
\hat{M}_{f,t}$. Therefore $\hat{M}_f=0$.
\end{proof}

We now turn to odd simple $(D^u,\kk)$-modules.

\begin{lemma}\label{le105odd} Let $u\in \frac {1}{2}+\ZZ$. Up to isomorphism, there exists exactly one odd simple $(D^u,\kk)$-module $J_{u,0}$. Moreover,
\begin{equation}\label{eqcJu0}
c(J_{u,0})=\doublebrace{z^{2-2u}+z^{6-2u}+z^{10-2u}+\dots}{\text{~for~}u<0}{z^{4+2u}+z^{8+2u}+z^{12+2u}+\dots}{\text{~for~}u>0}.
\end{equation}

\end{lemma}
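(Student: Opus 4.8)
The plan is to classify all odd simple $(D^u,\kk)$-modules for $u\in\tfrac12+\ZZ$ at once, which produces existence, uniqueness and the $\kk$-character together. By \refle{le89} such a module is of the form $M=\res_u(\hat M)$ for a simple $(A,\tilde\kk)$-module $\hat M$, and since $D^u=A_0/(\EE-u)A_0$ one has $M\simeq\hat M_u$ as $\kk$-modules, $\hat M_u\subset\hat M$ being the eigenspace of $\EE$ for the eigenvalue $u$. The key auxiliary structure is the second copy of $\sl(2)$ inside $A^\kk$: a direct computation (as in the evaluation of $[\Delta,f]=4\EE+6$) shows that $f$, $\Delta$ and $\EE+\tfrac32$ span an $\sl(2)$-triple $\tilde\kk'$ commuting with $\kk$, so that $\hat M$ is a module over $\kk\oplus\tilde\kk'$.

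First I would prove that $f$ acts locally nilpotently on $\hat M$, i.e. $\hat M$ is supported on the quadric $\{f=0\}$. This is the point at which $u\in\tfrac12+\ZZ$ is used: every $\kk$-constituent of any localization of $\CC[t,x,y]$ is one of the harmonic spaces $\HH_n\simeq V_{2n}$ and hence even, so an odd $\hat M$ cannot have $f$ acting invertibly, while the argument in the proof of \refle{le104sl} (localizing at $f$ and then at $t$) shows that if $f$ is not locally nilpotent then $\hat M$ has no nonzero $\kk$-finite vectors, a contradiction. Consequently $M^0:=\Ker(f|_{\hat M})$ is a nonzero $\kk$-submodule, and by the computation already carried out in the proof of \refle{le104sl} one has $\Omega_{|M^0}=2(\EE+3)(\EE+2)_{|M^0}$; comparing with the scalar $\tfrac12 m(m+2)$ by which the Casimir $\Omega$ of $\kk$ acts on $V_m$ shows that the $\kk$-isotypic component of $M^0$ in $\EE$-degree $d$ can only be $V_{2d+4}$ (for $d\geq-2$) or $V_{-2d-6}$ (for $d\leq-3$), and that it vanishes for $-3<d<-2$. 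Since $f$ is the raising operator of $\tilde\kk'$ and acts locally nilpotently, each $V_m\subset M^0_d$ generates a highest-weight $\tilde\kk'$-module of highest weight $d+\tfrac32\in\ZZ$, which in the relevant range is the finite-dimensional module $L(d+\tfrac32)$, reaching $\EE$-degrees $d,d-2,\dots,-d-3$; tracking how these place copies of $V_m$ inside $\hat M_u=M$ shows that the $\kk$-types of $M$ form an arithmetic progression of step $4$, with a bottom $\kk$-type depending only on $u$. By \refcor{cor98}, two such modules sharing a $\kk$-type are isomorphic, so there is at most one odd simple $(D^u,\kk)$-module.

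For existence I would realize $\hat M$ explicitly as the Kashiwara pushforward to $\PP^2$ of an odd-degree $K$-equivariant line bundle $\mathcal L$ on the closed $\kk$-orbit $C=\{f=0\}\simeq\PP^1$ (the conic) --- the line-bundle version of \refth{thGroups2} announced in the remark after that theorem, with $K=SL(2)$ acting on $\PP^2$ through its quotient $PGL(2)=SO(3)$. Odd-degree line bundles on $C$ are $SL(2)$- but not $PGL(2)$-equivariant, which is exactly why the resulting $D^u$ has $u\in\tfrac12+\ZZ$ and why $J_{u,0}$ is not obtained from the even modules $I^{\pm}_{u,0}$ by a translation functor. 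The $\gg$-module of global sections is then an infinite-dimensional multiplicity-free odd $(\gg,\kk)$-module, which is a $(D^u,\kk)$-module by \refle{le95}; this is $J_{u,0}$. Its $\kk$-character is read off from the order filtration on the pushforward, whose successive quotients on $C$ are $\mathcal L\otimes\Lambda^{max}\mathcal N_C\otimes S^i\mathcal N_C\simeq\OO_{\PP^1}(\deg\mathcal L+4+4i)$ because $\mathcal N_C\simeq\OO_{\PP^1}(4)$, with Bott--Borel--Weil giving the $\kk$-modules $V_{\deg\mathcal L+4+4i}$ as global sections; summing over $i\geq0$ reproduces the series of \refeq{eqcJu0} for the appropriate choice of $\deg\mathcal L$, and one checks, as in the proof of \refth{thGroups2}, that the simple constituent $J_{u,0}$ already realizes the full series.

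I expect the main obstacle to be this last construction step: pinning down the precise dictionary between the twist (the degree of $\mathcal L$, together with the ambient twisted-differential-operator sheaf $\DD^\xi$ on $\PP^2$) and the parameter $u$ --- in particular which standard or costandard pushforward realizes a prescribed $u<0$ --- as well as the normalization of the shift in the graded pieces of the Kashiwara pushforward, so that the resulting series comes out exactly as \refeq{eqcJu0} and not as one of its Weyl reflections; the paper itself treats this twisted construction only briefly. By contrast, once \refle{le104sl}, \refle{le95} and \refcor{cor98} are available, the structural analysis of $\hat M$ via $\tilde\kk'$ --- hence uniqueness and the step-$4$ shape of the character --- is routine, and the two cases of \refeq{eqcJu0} are matched up by the symmetry $u\leftrightarrow-u-1$ of $D^u$.
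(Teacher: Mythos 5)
Your proof follows the paper's almost step for step: uniqueness by passing to $\hat M$ via Lemma \ref{le89}, showing $\hat M_f=0$ by the localization argument of Lemma \ref{le104sl}, extracting the constraint $m=2u+4k+4$ on $\kk$-types from the Casimir identity $\Omega_{|M^0}=2(\EE+3)(\EE+2)_{|M^0}$, and concluding with Corollary \ref{cor98}; existence and the $\kk$-character via the twisted Kashiwara pushforward of $\OO_Z(2u)$ along the conic $Z\simeq\PP^1\subset\PP^2$, with the filtration and $\mathcal N\simeq\OO_Z(4)$. The one genuine addition is making explicit the $\sl(2)$-triple $\tilde\kk'=(f,\;\EE+\tfrac32,\;-\tfrac14\Delta)\subset A^\kk$, which cleanly repackages the paper's single Casimir computation. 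However, your further claim that the $\tilde\kk'$-submodule generated by each vector of $M^0$ is the finite-dimensional $L(d+\tfrac32)$ ``reaching degrees $d,\dots,-d-3$'' is not justified --- there is no a priori reason for $\Delta$ to act locally nilpotently on $\hat M$, so these could be infinite-dimensional Verma modules --- and it is also not needed: uniqueness follows already from the constraint $m=2u+4k+4$ plus Corollary \ref{cor98}, exactly as in the paper, and the actual character is read off from the geometric construction, not from $\tilde\kk'$. Also fix one phrase: the obstruction to $f$ acting invertibly when $u\in\tfrac12+\ZZ$ is not that ``every $\kk$-constituent of any localization of $\CC[t,x,y]$ is one of the $\HH_n$ and hence even'' (the module $\hat M_{f,t}$ the paper produces involves $v=t^{m/2}f^{(2u-m)/4}$ with non-integral exponents and is \emph{not} such a localization); the real point, which you also correctly invoke, is that $\hat M_{f,t}\simeq\CC[t^{\pm1},x,y,f^{-1}]v$ has no nonzero $\kk$-finite vectors at all. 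Finally, your caution about the twist dictionary is well placed: the paper's deduction of \refeq{eqcJu0} from the Kashiwara filtration is stated without normalization details, and matching $\deg\mathcal L=2u$ against the bottom $\kk$-type $V_{2-2u}$ for $u\le-\tfrac32$ is indeed the least transparent step of the published proof.
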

\begin{proof} Let $P\subset G=SL(3)$ be the maximal parabolic subgroup whose Lie algebra $\pp$ equals $\bb\oplus\gg^{-\alpha_1}$, $K\subset G$ be the algebraic subgroup with Lie algebra $\kk$, and $Z$ be the closed $K$-orbit on $G/P\simeq\PP^2$. Then $Z\simeq \PP^1$ and the embedding $i:Z \to \PP^2$ is a Veronese embedding of degree 2. It is not difficult to verify that the relative tangent bundle $\mathcal{T}_P$ of the projection $p:G/B\to G/P$ is a $\OO_{G/B}$-submodule of the twisted sheaf of differential operators $\DD_{G/B}^{(u+1)\omega_1+\omega_2}$. Furthermore, the direct image $p_*(\DD_{G/B}^{(u+1)\omega_1+\omega_2}/\mathcal{I}_P)$, where $\mathcal{I}_P$ is the left ideal in $\DD_{G/B}^{(u+1)\omega_1+\omega_2}$ generated by $\mathcal{T}_P$, is a well defined twisted sheaf of differential operators on $G/P$. We denote this sheaf by $\DD_{G/P}^{(u+1)\omega_1+\omega_2}$.

Our next observation is that, similarly to the equivalence of categories $i_\bigstar$ discussed in \refsec{secAconstruction}, Kashiwara's theorem yields an equivalence of categories
\[
i_\bigstar^u:\OO_Z(2u)\otimes_{\OO_{G/P}}\DD_{G/P}\otimes_{\OO_{G/P}} \OO_Z(-2u)~-~\mod\to (\DD_{G/P}^{(u+1)\omega_1+\omega_2}~-~\mod)^Z,
\]
where $(\DD_{G/P}^{(u+1)\omega_1+\omega_2}~-~\mod)^Z$ denotes the full subcategory of $\DD_{G/P}^{(u+1)\omega_1+\omega_2}$- mod supported on $Z$, and $\OO_Z(2u)$ is the line bundle on $Z$ with Chern class $2u$. Therefore we can put
\[
J_{u,0}:=\Gamma(\PP^2,i_\bigstar^u\OO_Z(2u)).
\]
It is clear that $J_{u,0}$ is a $(\gg,\kk)$-module, and furthermore, using the fact that $\mathcal N\simeq\OO_Z(4)$ and the filtration on $i_\bigstar^u\OO_Z(2u)$ with successive functors analogous to \refeq{eqGroups1}, one easily verifies that $c({J_{u,0}})$ is given by the right-hand side of \refeq{eqcJu0}. Since there are no finite dimensional modules with central character $\chi(u+1,1)$ for $u\in\half+\ZZ$, $J_{u,0}$ is a simple $\gg$-module.

It remains to prove that every simple odd $(D^u,\kk)$-module is
isomorphic to $J_{u,0}$ for some $u\in\half+\ZZ$. Let $M$ be a simple
odd $(D^u,\kk)$-module and $\hat M$ be a simple $(A,\tilde\kk)$-module
such that $\res_u (\hat M)=M$. Then by the proof of \refle{le105odd}
$\hat{M}_f=0$. For every $\bb_\kk$-highest vector $v\in \res_u(\hat
M)$ there exists $k$ such that $f^k\cdot v=0$. Let $v$ have weight
$m$. Then by the relation $(d+\frac{5}{2})^2=(\frac{m+1}{2})^2$ from
the proof of \refle{le105odd}, $\frac{m+1}{2}=\pm(u+2k+\frac{5}{2})$,  
as $\EE f^k\cdot v=(2k+u)f^k\cdot v$. Without loss of generality we
may assume that $m$ is very large and then 
$\frac{m+1}{2}=(u+2k+\frac{5}{2})$. 
Therefore $\Hom_\kk(V_m, M)\neq 0$ implies $m=2u+4k+4$. Hence if $M$ and $M'$ are two odd $(D^u,\kk)$-modules one can find $m$ such that $\Hom_\kk(V_m,M)\neq 0$, $\Hom_\kk(V_m, M')\neq 0$. But then $M\simeq M'$ by \refcor{cor98}.
\end{proof}

Let $M$ be some $A$-module with semisimple $\EE$-action. Consider the 
$U(\gg)$-modules $M^{(n)}:=M\otimes S^n(\span\{x,y,t\})$ for
$n\in\mathbb Z_{\geq 0}$, together with the linear operators
\begin{eqnarray*}
\bar d: M^{(n)}&\to &M^{(n-1)}\\
\bar d&=&t\otimes\partial_t+x\otimes\partial_x+y\otimes\partial_y\\
\bar \delta: M^{(n)}&\to &M^{(n+1)}\\
\bar\delta&=&\partial_t\otimes t+\partial_x\otimes x+\partial_y\otimes y.
\end{eqnarray*}
It is straightforward to check that $\bar d$, $\EE\otimes 1-1\otimes \EE$
and $\bar\delta$ form a standard $\sl(2)$-triple. Let $\res_s(M^{(k)})$ be
the eigenspace of the operator $\EE\otimes 1+1\otimes \EE$ in
$M^{(k)}$. Then obviously $\bar d$ and $\bar \delta$ induce operators
\begin{eqnarray*}
d: \res_s (M^{(n)})&\to &\res_s( M^{(n-1)})\\
\delta: \res_s (M^{(n-1)})&\to &\res_s( M^{(n)}),
\end{eqnarray*}
and elementary $\sl(2)$ representation theory implies 
that if $s\notin\ZZ$, $s<n-1$ or $s\geq 2n$, then $d$ is surjective, $\delta$ is injective, and
\begin{equation}\label{eq87}
\res_s( M^{(n)})=\Ker d\oplus \im \delta.
\end{equation}
For any $(D^u,\kk)$-module $M$ choose a simple $(A,\tilde\kk)$-module $\hat M$ such that $\res_u(\hat M)=M$ (in fact $\hat M$ is unique).

Let $T^n(M):=\res_{u+n}({\hat M}^{(n)})\cap \Ker d$. If $u\neq
-1,0,\dots,n-1$, \refeq{eq87} implies
\begin{equation}\label{eqcTnM}
c(T^n(M))=c(\res_{u+n}(\hat M^{(n)}))-c(\res_{u+n}(\hat M^{(n-1)})).
\end{equation}

\begin{lemma}\label{le811}
Let $M$ be a bounded simple $(D^u,\kk)$-module. Assume that $u\neq -1,0,\dots,n-1$. Then $T^n(M)$ is a simple $(\gg,\kk)$-module with central character $\chi(u+1-n,n+1)$.
\end{lemma}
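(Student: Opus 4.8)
The plan is to realize $T^n(M)$ as the value of a translation functor applied to $M$, and to deduce its simplicity from the fact that this translation functor is an equivalence.

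First, $T^n(M)$ is a $(\gg,\kk)$-module. Writing $\bar d$ (resp.\ $\bar\delta$) as the composition of the canonical $\gg$-equivariant map $S^n(\span\{t,x,y\})\to\span\{t,x,y\}\otimes S^{n-1}(\span\{t,x,y\})$ (resp.\ $S^n(\span\{t,x,y\})\otimes\span\{t,x,y\}\to S^{n+1}(\span\{t,x,y\})$) with the $A$-module action of $\span\{t,x,y\}\subset A_1$ (resp.\ of $\span\{\del_t,\del_x,\del_y\}\subset A_{-1}$) on $\hat M$, one checks that $\bar d$ and $\bar\delta$, hence $d$ and $\delta$, are $\gg$-equivariant. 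Since $\res_{u+n}(\hat M^{(n)})=\res_u(\hat M)\otimes S^n(\span\{t,x,y\})=M\otimes V_{n,0}$ is a semisimple $\kk$-module of finite type, $T^n(M)=\Ker d$ is a $(\gg,\kk)$-submodule of $M\otimes V_{n,0}$. The hypothesis $u\neq -1,0,\dots,n-1$ is exactly what makes the conditions of \refeq{eq87} hold for $s=u+n$; thus $M\otimes V_{n,0}=T^n(M)\oplus\im\delta$ as $\gg$-modules, and injectivity of $\delta$ gives $\im\delta\cong\res_{u+n}(\hat M^{(n-1)})=\hat M[u+1]\otimes V_{n-1,0}$, where $\hat M[u+1]:=\res_{u+1}(\hat M)$ is a non-zero $D^{u+1}$-module and therefore has central character $\chi(u+2,1)$ by \refle{le95}.

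Next I would compute the central character of $T^n(M)$. Expanding the quadratic Casimir of $\gl(3)$ acting diagonally on $\hat M\otimes\CC[t,x,y]$ (the two tensor factors being $\gl(3)$-modules via $\gl(3)\hookrightarrow A$) yields an identity of the shape $\Omega^{\mathrm{diag}}=\Omega\otimes 1+1\otimes\Omega+2(\bar d\,\bar\delta-\EE\otimes 1)$, and on $\Ker\bar d$ one has $\bar d\,\bar\delta=[\bar d,\bar\delta]=\EE\otimes 1-1\otimes\EE$. As $\Omega\otimes 1$ is the scalar attached to $\chi(u+1,1)$, $1\otimes\Omega$ the scalar attached to $V_{n,0}$, and $\EE\otimes 1,\ 1\otimes\EE$ act on $T^n(M)\subset\res_{u+n}(\hat M^{(n)})$ by $u$ and $n$ respectively, the quadratic Casimir of $\gg$ acts on $T^n(M)$ by the scalar it assumes on $L_{u+1-n,n+1}$; the analogous (longer) computation for the cubic Casimir then shows that $Z_\univ$ acts on $T^n(M)$ by the character $\chi(u+1-n,n+1)$, so in particular $T^n(M)$ has this central character.

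It remains to prove simplicity. For $u\neq-1,0,\dots,n-1$ a Weyl-group computation shows that no element of the $W_\gg$-orbit of the infinitesimal character of $L_{u+1-n,n+1}$ differs from the infinitesimal character of $L_{u+2,1}$ by a weight of $V_{n-1,0}$; hence $\chi(u+1-n,n+1)$ does not occur as a central character of any subquotient of $\hat M[u+1]\otimes V_{n-1,0}\cong\im\delta$. Writing $P_\chi$ for the projection onto the $\chi$-block and applying $P_{\chi(u+1-n,n+1)}$ to $M\otimes V_{n,0}=T^n(M)\oplus\im\delta$, the previous paragraph gives $P_{\chi(u+1-n,n+1)}(M\otimes V_{n,0})=T^n(M)$, a module on which $Z_\univ$ acts semisimply. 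Now let $\xi$ (resp.\ $\eta$) be the $\rho$-shift of the $\bb^+$-highest weight of $L_{u+1,1}$ (resp.\ of $L_{u+1-n,n+1}$); one checks that for $u\neq-1,0,\dots,n-1$ the weights $\xi$ and $\eta$ have the same stabilizer in $W_\gg$, that $\eta-\xi$ is an extremal weight of $V_{n,0}$ (so $V_{n,0}$ is the module $E$ of the translation functor), and that the sign conditions of \refse{secFirstResults} are met, so that $T^\eta_\xi\colon\univ^{\chi(\xi)}-\mod\to\univ^{\chi(\eta)}-\mod$ is an equivalence of categories. Since $M$ has central character $\chi(u+1,1)=\chi(\xi)$ and $T^\eta_\xi(M)=P_{\chi(\eta)}(M\otimes V_{n,0})=T^n(M)$, the simplicity of $M$ forces $T^n(M)$ to be simple and non-zero, with central character $\chi(\eta)=\chi(u+1-n,n+1)$.

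The parts I expect to require real work are the polarized-Casimir identity of the second paragraph together with its cubic-Casimir analogue, and the combinatorial check in the third paragraph that the $\chi(u+1-n,n+1)$-component of $\hat M[u+1]\otimes V_{n-1,0}$ vanishes precisely when $u\neq-1,0,\dots,n-1$; the rest is formal once \refeq{eq87} and the standard properties of translation functors are invoked.
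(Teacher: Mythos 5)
Your argument is correct and takes essentially the same route as the paper: both recognize $T^n$ as the translation functor $T_{(u+1)\omega_1+\omega_2}^{(u-n+1)\omega_1+(n+1)\omega_2}$ (checking that the two $\rho$-shifted parameters share a stabilizer and satisfy the sign conditions exactly when $u\neq -1,0,\dots,n-1$) and deduce simplicity of $T^n(M)$ from the fact that this translation functor is an equivalence of categories. The paper is terser, reading off the central characters of the blocks of $M\otimes S^n(\mathrm{span}\{t,x,y\})$ and of $\mathrm{im}\,\delta$ from the standard shift-by-a-weight description of constituents of a tensor product with a finite-dimensional module, rather than via the explicit polarized-Casimir identity you propose; but the skeleton of the proof is the same.
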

\begin{proof}
\refle{le95} implies that $M$ is a $(\gg,\kk)$-module with central character $\chi(u+1,1)$. Therefore $M\otimes S^n(\span\{x,y,t\})$ has constituents with central character $\chi(u+1+n-2k,1+k)$, $k=0,\dots,n$, and $\im \delta$ has constituents with central character $\chi(u+1+n-2k,1+k)$, $k=0,\dots,n-1$. Thus, $T^n(M)$ is a direct summand of $M\otimes S^n(\span\{x,y,t\})$ with central character $\chi(u+1-n,n+1)$.

Our restrictions on $u$ imply that the weights $(u+1)\omega_1+\omega_2$ and $(u-n+1)\omega_1+(n+1)\omega_2$ belong to the same Weyl chamber and have the same stabilizer in the Weyl group. Hence, $T^n$ is nothing but the translation functor
\[
T_{(u+1)\omega_1+\omega_2}^{(u-n+1)\omega_1+(n+1)\omega_2}:\BB_\kk^{\chi(u+1,1)}\to\BB_\kk^{\chi(u-n+1,n+1)}.
\]
Therefore $T^n$ is an equivalence of categories, in particular $T^n(M)$ is simple.
\end{proof}

We put for $u\neq -1,0,\dots,n-1$
\[
I^\pm_{u,n}:=T^n(I^\pm_{u,0}),
\]
\[
J_{u,n}:=T^n(J_{u,0}).
\]

\begin{theorem}\label{th914}
Let $M$ be a simple bounded infinite dimensional $(\gg,\kk)$-module with central character $\chi$. Then
\item[(a)] if $\chi=\chi(u+1-n,n+1)$ for $u\notin\ZZ$,
\[
M\simeq\doublebrace{I^\pm_{u,n}}{\text{~for~}u\notin\half+\ZZ}{I^\pm_{u,n},J_{u,n}}{\text{~for~}u\in\half+\ZZ};
\]
\item[(b)] if $\chi=\chi(u+1-n,n+1)$ for $u\in\ZZ_{\geq n}$,
\[
M\simeq I^\pm_{-n-3,u-n},I^\pm_{u,n};
\]
\item[(c)] if $\chi=\chi(-1-n,n+1)$,
\[
M\simeq I^\pm_{-2,n};
\]
\item[(d)] if $\chi=\chi(0,n+1)$,
\[
M\simeq(I^\pm_{-2,n})^\tau,
\]
where $\tau$ stands for the outer automorphism $\tau(X)=-X^t$ for any 
$X\in\gg$.
\end{theorem}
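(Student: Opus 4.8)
The plan is to combine the classification of simple bounded $(D^u,\kk)$-modules (Corollary \ref{cor103sl} and Lemma \ref{le105odd}) with the translation-functor machinery of Lemma \ref{le811}, using Corollary \ref{corPrimIdeal} to enumerate the possible central characters. First I would recall from Corollary \ref{corPrimIdeal} that if $\BB_\kk^\chi$ is nonempty then $\chi=\chi(u+1-n,n+1)$ with $n\in\ZZ_{\geq 0}$ and either $u\in\CC\backslash\ZZ_{<n-1}$ or $u=-2$; this is exactly the list of central characters appearing in cases (a)--(d), once one sorts by whether $u\notin\ZZ$, $u\in\ZZ_{\geq n}$, $u=-2$ (giving $\chi(-1-n,n+1)$), or $u\in\{n-1,\dots\}$ in the shifted form $\chi(0,n+1)$. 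So the structure of the proof is: fix such a $\chi$, pick the representative $u$, and determine all simple bounded $(\gg,\kk)$-modules with central character $\chi$.

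Next, for a fixed $u$ with $u\neq -1,0,\dots,n-1$, I would invoke Lemma \ref{le811}: the functor $T^n$ is (a restriction of) the translation functor $T_{(u+1)\omega_1+\omega_2}^{(u-n+1)\omega_1+(n+1)\omega_2}$, which by the discussion at the end of Section \ref{secFirstResults} is an equivalence of categories $\BB_\kk^{\chi(u+1,1)}\to\BB_\kk^{\chi(u-n+1,n+1)}$ preserving boundedness. Hence the simple bounded $(\gg,\kk)$-modules with central character $\chi(u+1-n,n+1)$ are exactly the images under $T^n$ of the simple bounded $(\gg,\kk)$-modules with central character $\chi(u+1,1)$. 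By Lemma \ref{le95} the latter are precisely the simple bounded $(D^u,\kk)$-modules, and since every simple $(\gg,\kk)$-module is odd or even (the eigenvalues of $\ad_H$ being even), these split into the even ones — classified by Corollary \ref{cor103sl} as $I^\pm_{u,0}$ — and the odd ones — classified by Lemma \ref{le105odd} as $J_{u,0}$ (only when $u\in\half+\ZZ$, by Lemma \ref{le104sl}). Applying $T^n$ and using the definitions $I^\pm_{u,n}:=T^n(I^\pm_{u,0})$, $J_{u,n}:=T^n(J_{u,0})$ yields case (a) immediately, and also case (c) by taking $u=-2$ (note $-2\notin\half+\ZZ$, so no $J$ appears, and $-2\ne -1,0,\dots,n-1$ so $T^n$ is legitimate).

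For cases (b) and (d) the subtlety is that the naive representative $u$ is an \emph{integer}, so one must be careful about which representative to use and about the finite-dimensional constituents. In case (b), $\chi=\chi(u+1-n,n+1)$ with $u\in\ZZ_{\geq n}$; here $u\geq n>n-1$, so $T^n$ is still an equivalence and the even simple bounded $(D^u,\kk)$-modules are $I^\pm_{u,0}$ by Lemma \ref{le910}(b),(c), giving $I^\pm_{u,n}$; but by Lemma \ref{le95} the \emph{same} primitive ideal $\ker\gamma_u$ for $u\in\ZZ_{\geq -1}$ equals $\Ann L_{-u-1,u+2}$, i.e. corresponds also to the parameter $u'=-u-3\in\ZZ_{\leq -2}$ via $\chi(u'+1,1)=\chi(-u-2,1)$ translated appropriately — so I would match up $\chi(u+1-n,n+1)$ with $\chi(u'+1-(u-n),\,(u-n)+1)$ for $u'=-n-3$, which produces the extra family $I^\pm_{-n-3,u-n}$. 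The point is that two different $D$-algebra presentations (two different $u$'s with the same kernel, by Lemma \ref{le95}) give two genuinely non-isomorphic lists of simple bounded modules at the integral central characters, because finite-dimensional subquotients get inserted differently; and both lists must appear. Case (d), $\chi=\chi(0,n+1)$, is then obtained by applying the outer automorphism $\tau(X)=-X^t$ — which preserves $\kk$ up to conjugation and hence preserves boundedness — to case (c): $\tau$ sends the central character $\chi(-1-n,n+1)$ to $\chi(0,n+1)$ (swapping the two fundamental-weight coordinates appropriately with the $\rho$-shift), so the simple bounded modules are exactly $(I^\pm_{-2,n})^\tau$.

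The main obstacle will be the bookkeeping at the integral central characters in case (b): one has to verify (i) that no odd modules occur there (since $u\in\ZZ$ is never in $\half+\ZZ$), (ii) that the two parameters $u$ and $-n-3$ really do exhaust all simple bounded modules with that central character and yield non-isomorphic families, using Lemma \ref{le95} to identify the annihilators and Lemma \ref{le910} to pin down the infinite-dimensional constituents on each side, and (iii) that the translation functor $T^{u-n}$ is legitimate for the representative $u'=-n-3$, i.e. $-n-3\neq -1,0,\dots,u-n-1$, which holds since $-n-3<0$. I would also double-check that in case (d) the automorphism $\tau$ indeed sends $\kk$ to a conjugate of $\kk$ (true for the principal $\sl(2)$, which is $\tau$-stable up to inner automorphism) so that $(I^\pm_{-2,n})^\tau$ is again a $(\gg,\kk)$-module, and that it is not already isomorphic to something on the list of case (c) — equivalently that $\chi(0,n+1)\neq\chi(-1-n,n+1)$, which is clear. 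Everything else is a direct assembly of the quoted lemmas.
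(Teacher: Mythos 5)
Your proposal follows the paper's own proof essentially verbatim: reduce to $\chi(u+1,1)$ via the translation functor $T^n$, classify simple $(D^u,\kk)$-modules using Corollary~\ref{cor103sl}, Lemma~\ref{le104sl}, Lemma~\ref{le105odd}, handle the doubled count in case (b) by rewriting $\chi$ with the second parameter $(-n-3,u-n)$, and deduce (d) from (c) via $\tau$. The structure, the lemmas invoked, and the conclusions match.

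One small inaccuracy in your explanation of case (b): you write that ``the \emph{same} primitive ideal $\ker\gamma_u$ \dots corresponds also to the parameter $u'=-u-3$'' and later speak of ``two different $u$'s with the same kernel, by Lemma~\ref{le95}.'' That is not what Lemma~\ref{le95} says, and it is not true: $\ker\gamma_u$ and $\ker\gamma_{-n-3}$ are \emph{different} primitive ideals with \emph{different} central characters ($\chi(u+1,1)$ versus the Weyl-conjugate of $\chi(-n-2,1)$). The multiple $L_{a,b}$'s appearing in Lemma~\ref{le95} for a single $u$ all share one central character; they do not link two different values of $u$. The actual mechanism, which you do state correctly in the next sentence, is that the \emph{target} central character $\chi(u+1-n,n+1)$ admits a second parametrization $(u',n')=(-n-3,u-n)$, giving a second translation-functor equivalence $T^{n'}\colon\BB_\kk^{\chi(u'+1,1)}\to\BB_\kk^\chi$ from a \emph{different} source category; the images of $I^\pm_{u,0}$ under $T^n$ and of $I^\pm_{u',0}$ under $T^{n'}$ land on the two distinct GK-dimension-$2$ primitive ideals at $\chi$, giving four simples in total. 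With this correction, your argument matches the paper's.
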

\begin{proof}
By \refcor{corPrimIdeal} every simple bounded $(\gg,\kk)$-module has
central character $\chi$ of the form $\chi(u+1-n,n+1)$ for some
$n\in\ZZ_{\geq 0}$ and some $u\in\{\CC\backslash
\ZZ_{<n-1}\}\cup\{-2\}$. Moreover,
$T^n=T_{(u+1)\omega_1+\omega_2}^{(u-n+1)\omega_1+(n+1)\omega_2}$ is an
equivalence of the categories $\BB_\kk^{\chi(u+1,1)}$ and
$\BB_\kk^{\chi(u+1-n,n+1)}$. If $u\notin\ZZ$, $\half+\ZZ$ then
$\BB_\kk^{\chi(u+1,1)}$ has two non-isomorphic simple objects, and, if $u\in \half+\ZZ$, $\BB_\kk^{\chi(u+1,1)}$ has three non-isomorphic simple objects. This implies (a).

If $u\in\ZZ_{\geq 0}$, $u\geq n$, we have $\chi=\chi(u+1-n,n+1)=\chi((-n-3)+1-(u-n),(u-n)+1)$, hence in this case $\BB_\kk^\chi$ has 4 non-isomorphic simple objects: $I^\pm_{u,n}$ and $I^\pm_{-n-3,u-n}$. This proves (b). If $n=-2$, $\BB_\kk^\chi$ is equivalent to $\BB_\kk^{\chi(1,1)}$ and has two simple objects, $I^\pm_{-2,n}$, which proves (c). Finally if $u=n-1$, the automorphism $\tau$ establishes an equivalence between $\BB_\kk^{\chi(0,n+1)}$ and $\BB_\kk^{\chi(-1-n,n+1)}$, hence (d).
\end{proof}

\begin{lemma}\label{le915}
For $a\in\ZZ_{\geq 2}$, define
\[
\mu_n(a,z):=\frac{z^a}{1-z^4}\otimes c(V_{n,0})-\frac{z^{a-2}}{1-z^4}\otimes c(V_{n-1,0}).
\]
For $a\in\ZZ_{\geq 0}$, define
\[
\kappa_n(a,z):=\frac{z^a}{1-z^4}\otimes c(V_{n,0})-\frac{z^{a+2}}{1-z^4}\otimes c(V_{n-1,0}).
\]
Then
\begin{equation}\label{eq916x}
\mu_{2p}(a,z)=\frac{z^a}{1-z^4}+\frac{z^{a-2}(z^4+z^8+\dots+z^{4p})}{1-z^2},
\end{equation}
\begin{equation}\label{eq916x+1}
\mu_{2p+1}(a,z)=\frac{z^{a}(1+z^4+\dots+z^{4p})}{1-z^2},
\end{equation}
\begin{equation}\label{eq916x+2}
\kappa_{2p}(a,z)=\frac{z^a}{1-z^4}+\frac{z^{|a-4|}+\dots+z^{|a-4p|}}{1-z^2},
\end{equation}
\begin{equation}\label{eq916x+3}
\kappa_{2p+1}(a,z)=\frac{z^{|a-2|}+\dots+z^{|a-4p-2|}}{1-z^2}.
\end{equation}
\end{lemma}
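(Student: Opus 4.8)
This lemma is a purely formal identity among power series in $z$, and I would prove it by reducing everything to two ``telescoping'' relations for the expressions $\frac{z^a}{1-z^4}\otimes z^{2m}$.

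\textbf{Step 1 (the $\kk$-character of $V_{n,0}$).} As $\gg$-modules $V_{n,0}\cong S^n(\CC^3)$, and under the principal $\sl(2)$ the natural module $\CC^3$ restricts to $V_2$; hence from the harmonic decomposition $S^n(V_2)\cong\bigoplus_{k\ge0}\HH_{n-2k}$ and the isomorphism $\HH_j\cong V_{2j}$ (both already used in the proof of \refle{le910}) one obtains
\[
c(V_{2p,0})=1+z^4+z^8+\dots+z^{4p},\qquad c(V_{2p+1,0})=z^2+z^6+z^{10}+\dots+z^{4p+2}.
\]

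\textbf{Step 2 (two telescoping relations).} I would establish, for every $m\ge1$,
\[
\frac{z^a}{1-z^4}\otimes z^{2m}-\frac{z^{a-2}}{1-z^4}\otimes z^{2m-2}=\frac{z^{a+2m-2}}{1-z^2}\qquad(a\in\ZZ_{\ge2}),
\]
\[
\frac{z^a}{1-z^4}\otimes z^{2m}-\frac{z^{a+2}}{1-z^4}\otimes z^{2m-2}=\frac{z^{|a-2m|}}{1-z^2}\qquad(a\in\ZZ_{\ge0}).
\]
When $a\ge2m$, none of the summands $z^{a+4i}\otimes z^{2m}$ of the left-hand sides triggers the reflection built into the definition of $\otimes$, so $\frac{z^a}{1-z^4}\otimes z^{2m}=\frac{z^{a-2m}(1+z^2+\dots+z^{4m})}{1-z^4}$, and both identities follow from an elementary manipulation of finite geometric series. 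For $0\le a<2m$ I would instead compare coefficients of $z^N$ on the two sides, isolating the finitely many summands with $a+4i<2m$, for which $z^{a+4i}\otimes z^{2m}=z^{2m}\otimes z^{a+4i}$ contributes the reflected terms $z^{2m-a-4i},\,z^{2m-a-4i+2},\dots$; in the first relation these reflected contributions of the two terms cancel, while in the second they persist and are precisely what turns $z^{a-2m}$ into $z^{|a-2m|}$.

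\textbf{Step 3 (assembling the four formulas).} Substituting Step 1, I would write $\mu_{2p}(a,z)$ as the leftover term $\frac{z^a}{1-z^4}\otimes z^0=\frac{z^a}{1-z^4}$ plus $\sum_{l=1}^{p}\bigl(\frac{z^a}{1-z^4}\otimes z^{4l}-\frac{z^{a-2}}{1-z^4}\otimes z^{4l-2}\bigr)$, apply the first relation of Step 2 with $2m=4l$ to each bracket, and sum the geometric series $\sum_{l=1}^{p}\frac{z^{a+4l-2}}{1-z^2}$; this gives \refeq{eq916x}. The same pairing applied to $\mu_{2p+1}(a,z)$ --- now $\otimes z^{4l+2}$ against $\otimes z^{4l}$ for $l=0,\dots,p$, with no leftover --- yields \refeq{eq916x+1}. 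For $\kappa_{2p}(a,z)$ and $\kappa_{2p+1}(a,z)$ I would run the identical bookkeeping but invoke the second relation of Step 2, whose right-hand side $\frac{z^{|a-4l|}}{1-z^2}$ (resp.\ $\frac{z^{|a-4l-2|}}{1-z^2}$) produces exactly the absolute-value exponents appearing in \refeq{eq916x+2} and \refeq{eq916x+3}. The one genuinely delicate point of the whole argument is the range $0\le a<2m$ in Step 2: one has to check that the reflection corrections of the two $\otimes$-products cancel in the $\mu$-type relation but coalesce into a single term in the $\kappa$-type relation --- this finite but slightly fiddly coefficient comparison is what forces the absolute values into \refeq{eq916x+2}, \refeq{eq916x+3} and keeps them out of \refeq{eq916x}, \refeq{eq916x+1}. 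Everything else is routine.
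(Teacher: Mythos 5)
Your proposal is correct and mirrors the paper's own proof: same formula for $c(V_{n,0})$, same two telescoping identities, same assembly into \refeq{eq916x}--\refeq{eq916x+3}. The paper handles your Step 2 uniformly via the projector $\pi$ of Lemma \ref{le202} rather than by splitting into the cases $a\geq 2m$ and $0\leq a<2m$, so the absolute value in the $\kappa$-relation drops out of a single evaluation $\pi\bigl(\frac{z^{a-2k}}{1-z^2}\bigr)=\frac{z^{|a-2k|}}{1-z^2}$ instead of a coefficient comparison, but the underlying computation is identical.
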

\begin{proof}
Since $V_{n,0}=S^n(V_{1,0})$, and since $S^n(V_{1,0})$ is isomorphic as a $\kk$-module to $S^n(V_2)$, we have
\[
c(V_{2p,0})=1+z^4+\dots+z^{2p},
\]
\[
c(V_{2p+1,0})=z^2+z^6+\dots+z^{2p+2}.
\]
Recall that $z^a\otimes z^b=\pi(z^a\sum_{i=0}^{i=b}z^{b-2i})$ (\refle{le202},(b)). Therefore
\begin{eqnarray*}
\frac{z^a}{1-z^4}\otimes z^{2k}-\frac{z^{a-2}}{1-z^4}\otimes z^{2k-2}&=&\pi\left(\frac{z^{a-2}(z^2\sum\limits_{i=0}^{i=2k}z^{2k-2i}-z^{-2}\sum\limits_{i=0}^{i=2k-2}z^{2k-2i})}{1-z^4}\right)=\\
&=&\pi\left(\frac{z^{a-2}(z^{2k+2}+z^{2k})}{1-z^4}\right)=\frac{z^{a-2+2k}}{1-z^2}.
\end{eqnarray*}
\begin{eqnarray*}
\frac{z^a}{1-z^4}\otimes z^{2k}-\frac{z^{a+2}}{1-z^4}\otimes z^{2k-2}&=&\pi\left(\frac{z^{a}(\sum\limits_{i=0}^{i=2k}z^{2k-2i}-z^2\sum\limits_{i=0}^{i=2k-2}z^{2k-2-2i})}{1-z^4}\right)=\\
&=&\pi\left(\frac{z^{a}(z^{-2k}+z^{2-2k})}{1-z^4}\right)=\pi\left(\frac{z^{a-2k}}{1-z^2}\right)=\frac{z^{|a-2k|}}{1-z^2}.
\end{eqnarray*}
The above identities imply \refeq{eq916x}-\refeq{eq916x+3}.
\end{proof}

\begin{theorem}\label{th916}
\item[(a)]Let $u\notin \ZZ$, $\half+\ZZ$. Then
\[\begin{array}{l}
c(I^+_{u,n})=\kappa_n(0,z), \quad c(I^-_{u,n})=\mu_{n}(2,z).
\end{array}\]
\item[(b)]Let $u\in \half+\ZZ$. Then
\[\begin{array}{ll}
c(J_{u,n})=\kappa_n(4+2u,z)&\text{~for~}u\geq -\half,\\
c(J_{u,n})=\mu_n(2-2u,z)&\text{~for~}u\leq -\threehalfs.
\end{array}\]
\item[(c)]Let $u\in 2\ZZ_{\geq 0}$. Then
\[\begin{array}{ll}
c(I^+_{u,0})=\frac{z^{2u+4}}{1-z^4},&c(I^-_{u,0})=\frac{z^{2}}{1-z^4},\\
c(I^+_{u,n})=\kappa_n(2u+4,z),&c(I^-_{u,n})=\mu_n(2,z).
\end{array}\]
\item[(d)]Let $u\in 1+2\ZZ_{\geq 0}$. Then
\[\begin{array}{ll}
c(I^+_{u,0})=\frac{1}{1-z^4},&c(I^-_{u,0})=\frac{z^{2u+4}}{1-z^4},\\
c(I^+_{u,n})=\kappa_n(0,z),&c(I^-_{u,n})=\kappa_n(2u+4,z).
\end{array}\]
\item[(e)]Let $u\in 2\ZZ_{\leq-2}$. Then
\[\begin{array}{ll}
c(I^+_{u,0})=\frac{1}{1-z^4},&c(I^-_{u,0})=\frac{z^{-2-2u}}{1-z^4},\\
c(I^+_{u,n})=\kappa_n(0,z),&c(I^-_{u,n})=\mu_n(-2-2u,z).
\end{array}\]
\item[(f)]Let $u\in -1+2\ZZ_{\leq-1}$. Then
\[\begin{array}{ll}
c(I^+_{c,0})=\frac{z^{-2-2u}}{1-z^4},&c(I^-_{u,0})=\frac{z^{2}}{1-z^4},\\
c(I^+_{u,n})=\mu_n(-2-2u,z),&c(I^-_{u,n})=\mu_n(2,z).
\end{array}\]
\item[(g)]
\[\begin{array}{l}
c(I^+_{-2,n})=c((I^+_{-2,n})^\tau)=\kappa_n(0,z),\\
c(I^-_{-2,n})=c((I^-_{-2,n})^\tau)=\mu_n(2,z).
\end{array}\]
\end{theorem}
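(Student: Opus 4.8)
**The plan is to compute the $\kk$-characters of all the simple modules $I^{\pm}_{u,0}$, $J_{u,0}$ first, and then propagate these to $I^{\pm}_{u,n}$, $J_{u,n}$ via the translation formula \refeq{eqcTnM}.** The starting point is \refle{le910} together with \refeq{eqle910res}, which already give $c(\res_u(R^{u/2}))=\frac{1}{1-z^4}$ and $c(\res_u(R^{(u+1)/2}))=\frac{z^2}{1-z^4}$. For generic $u$ (part (a)) these modules \emph{are} the simple modules $I^{\pm}_{u,0}$, so the base case is immediate. In the integral cases (parts (c)--(f)) one must subtract (or recognize as a sub/quotient) the relevant finite-dimensional module: from the exact sequences in \refle{le910} and the known $\kk$-characters of $V_{u,0}\simeq S^u(V_2)$ and $V_{0,-3-u}$ — namely $c(V_{2p,0})=1+z^4+\dots+z^{2p}$, $c(V_{2p+1,0})=z^2+z^6+\dots+z^{2p+2}$ — one reads off $c(I^{\pm}_{u,0})$ as a geometric-type series $\frac{z^{a}}{1-z^4}$ with the appropriate shift $a$; this yields the first rows of (c)--(f). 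For $J_{u,0}$ (part (b) at $n=0$) the formula \refeq{eqcJu0} from \refle{le105odd} is exactly what is needed, after rewriting $z^{2-2u}+z^{6-2u}+\dots$ as $\frac{z^{2-2u}}{1-z^4}$ and similarly for $u>0$.

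\textbf{Next I would run the translation functor.} By \refle{le811} (and its use in the proof of \refth{th914}), $I^{\pm}_{u,n}=T^n(I^{\pm}_{u,0})$ and $J_{u,n}=T^n(J_{u,0})$ whenever $u\neq-1,0,\dots,n-1$, and \refeq{eqcTnM} gives
\[
c(T^n(M))=c(\res_{u+n}(\hat M^{(n)}))-c(\res_{u+n}(\hat M^{(n-1)})).
\]
Since tensoring with $S^n(\span\{x,y,t\})$ at the level of $\kk$-modules is tensoring with $V_{n,0}$, and $\res_{u+n}$ picks out the appropriate $\EE$-eigenspace, the right-hand side becomes, in the notation of \refle{le915}, precisely $\frac{z^a}{1-z^4}\otimes c(V_{n,0})-\frac{z^{a\mp2}}{1-z^4}\otimes c(V_{n-1,0})$ with $a$ the shift determined above and the sign dictated by whether the series $\frac{z^a}{1-z^4}$ came from $R^{u/2}$ ("$+$'' type, giving $\kappa_n$) or $R^{(u+1)/2}$ ("$-$'' type, giving $\mu_n$). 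Thus \refle{le915} converts each base character into the claimed $\kappa_n(a,z)$ or $\mu_n(a,z)$: e.g. the generic $I^+_{u,n}$ comes from $c(I^+_{u,0})=\frac1{1-z^4}=\frac{z^0}{1-z^4}$ hence equals $\kappa_n(0,z)$, while $I^-_{u,n}$ comes from $\frac{z^2}{1-z^4}$ hence equals $\mu_n(2,z)$; parts (c)--(f) substitute the shifted exponents $2u+4$ or $-2-2u$; part (b) uses $a=4+2u$ (for $u\geq-\half$, a $\kappa$-type series) or $a=2-2u$ (for $u\leq-\threehalfs$, a $\mu$-type series).

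\textbf{The small remaining points} are (g) and the self-consistency of the exponents. For (g), $I^{\pm}_{-2,n}$ falls under \refth{th914}(c), and since $u=-2$ is an even integer $\leq -2$ the $n=0$ characters are $c(I^+_{-2,0})=\frac1{1-z^4}$, $c(I^-_{-2,0})=\frac{z^{-2-2(-2)}}{1-z^4}=\frac{z^2}{1-z^4}$ (consistently, $-2-2u=2$ here), so translation gives $\kappa_n(0,z)$ and $\mu_n(2,z)$; the twist by $\tau$ preserves $\kk$-characters since $\tau$ restricted to $\kk$ is inner, giving the equalities with $(I^{\pm}_{-2,n})^\tau$. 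One must also check that the exceptional values $u\in\{-1,0,\dots,n-1\}$ excluded in \refle{le811} do not actually arise: in (a) $u\notin\ZZ$; in (c)--(f) the relevant $u$ together with the constraint $u\geq n$ (cf. \refth{th914}(b)) or $u\le -2$ keeps us away from the bad set, modulo reindexing; this bookkeeping is the one genuinely fiddly part. \textbf{The main obstacle} I anticipate is precisely this matching of index conventions — ensuring that the shift $a$ extracted from each exact sequence in \refle{le910}, the sign ($\kappa$ vs.\ $\mu$) inherited from whether the module sits inside $R^{u/2}$ or $R^{(u+1)/2}$, and the parity/range restrictions on $u$ and $n$ all line up with the statement — rather than any single hard computation, each of which is dispatched by \refle{le915}.
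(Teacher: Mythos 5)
Your overall plan—compute the base characters $c(I^{\pm}_{u,0})$ and $c(J_{u,0})$ from \refle{le910} and \refeq{eqcJu0}, translate via \refeq{eqcTnM}, and read off $\kappa_n$ or $\mu_n$ via \refle{le915}—is exactly the paper's strategy. However, there is a genuine error in the rule you state for the translated term, and it is precisely what the paper makes explicit and you leave implicit.

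When \refeq{eqcTnM} is unwound, the subtracted piece $c(\res_{u+n}(\hat M^{(n-1)}))$ equals $c\bigl(\res_{u+1}(\hat M)\bigr)\otimes c(V_{n-1,0})$, and the crucial point is that $\res_{u+1}(\hat M)$ is the module of the \emph{opposite} type at parameter $u+1$: the identity the paper derives is
\[
c(I^{\pm}_{u,n}) \;=\; c\bigl(I^{\pm}_{u,0}\otimes V_{n,0}\bigr) \;-\; c\bigl(I^{\mp}_{u+1,0}\otimes V_{n-1,0}\bigr),
\]
with the $\pm$ flipped in the second factor (and similarly for $J$). Your proposal replaces $c(I^{\mp}_{u+1,0})$ by the heuristic ``$z^{a\mp 2}/(1-z^4)$, with $\kappa$ if $I^{\pm}_{u,0}$ sits in $R^{u/2}$ and $\mu$ if it sits in $R^{(u+1)/2}$.'' That rule is correct only generically; it fails as soon as $u$ is an integer, because $c(I^{\mp}_{u+1,0})$ then depends on the residue class of $u+1$ and is not simply $z^{a\mp2}/(1-z^4)$. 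A concrete counterexample is part (d): for $u\in 1+2\ZZ_{\geq 0}$ one has $c(I^{-}_{u,0})=z^{2u+4}/(1-z^4)$ coming from $R^{(u+1)/2}$, but $c(I^{+}_{u+1,0})=z^{2u+6}/(1-z^4)$ by part (c), so the exponent shifts by $+2$ and the answer is $\kappa_n(2u+4,z)$—not $\mu_n(\cdot,z)$ as your ``$R^{(u+1)/2}\Rightarrow\mu$'' rule would predict. You correctly identify the index bookkeeping as the delicate part, but the stated bookkeeping is wrong; the missing ingredient is the $\pm$-flip at $u+1$, which must be combined with the correct base characters from parts (c)--(f) applied at $u+1$ rather than with a fixed $\pm2$ shift.
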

\begin{proof}
Using \refeq{eqcTnM} one obtains the identities
\begin{equation}\label{eqth916}
\begin{array}{l}
c(I^\pm_{u,n})=c(I^\pm_{u,0}\otimes V_{n,0})-c(I^\mp_{u+1,0}\otimes V_{n-1,0}),\\
c(J_{u,n})=c(J_{u,0}\otimes V_{n,0})-c(J_{u+1,0}\otimes V_{n-1,0}).
\end{array}
\end{equation}
The theorem is a straightforward corollary of \refeq{eqth916}. Indeed, let us prove (f). In this case
\[
c(I_{u,0}^+)=\frac{z^{-2u-2}}{1-z^4},\quad c(I_{u-1,0}^+)=\frac{z^{-2u-4}}{1-z^4},
\]
\[
c(I_{u,n}^+)=\frac{z^{-2u-2}}{1-z^4}\otimes c(V_{n,0})-\frac{z^{-2u-4}}{1-z^4}\otimes c(V_{n-1,0})=\mu_n(-2-2u,z);
\]
\[
c(I_{u-1,0}^-)=\frac{z^{-2u-4}}{1-z^4},\quad c(I_{u-1,0}^+)=\frac{1}{1-z^4},
\]
\[
c(I_{u,n}^-)=\frac{z^{2}}{1-z^4}\otimes c(V_{n,0})-\frac{1}{1-z^4}\otimes c(V_{n-1,0})=\mu_n(2,z).
\]
In all other cases the arguments are similar.
\end{proof}

\begin{corollary}
\item[(a)] The minimal $\kk$-type can be any $V_k$ but its multiplicity is always 1.
\item[(b)] For sufficiently large $i$ $c_i(M)=c_{i+4}(M)$ for any
  simple bounded $(\gg,\kk)$-module, and for sufficiently large $j$
  there are the following $\kk$-multiplicities:
\[
c_{4j}(I^\pm_{u,2p+1})=c_{4j+2}(I^\pm_{u,2p+1})=p+1,
\]
\[
c_{4j}(I^+_{u,2p})=p+1, c_{4j+2}(I^+_{u,2p})=p,
\]
\[
c_{4j+2}(I^-_{u,2p})=p+1, c_{4j}(I^-_{u,2p})=p,
\]
\[
c_{4j+1}(J_{u,2p+1})=c_{4j+3}(J_{u,2p+1})=p+1,
\]
\[
c_{4j+2u}(J_{u,2p})=p, c_{4j+2u+2}(J_{u,2p})=p+1.
\]
\item[(c)] The only multiplicity free simple infinite dimensional 
 $(\gg,\kk)$-modules are $I^\pm_{u,0}$, $J_{u,0}$, $I^\pm_{u,1}$,
 $J_{u,1}$, $(I^\pm_{-2,1})^\tau$.
\end{corollary}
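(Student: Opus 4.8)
The plan is to derive everything from the classification in \refth{th914} together with the explicit $\kk$-character formulas of \refth{th916}, whose closed forms are provided by \refle{le915}. By \refth{th914} every simple bounded infinite dimensional $(\gg,\kk)$-module is isomorphic to one of $I^\pm_{u,n}$, $J_{u,n}$ (with $u\in\half+\ZZ$ in the latter case), or $(I^\pm_{-2,n})^\tau$, and \refth{th916} writes each corresponding $\kk$-character as $\mu_n(a,z)$ or $\kappa_n(a,z)$ for an explicit $a$ (or, for $n=0$, as $\tfrac{z^a}{1-z^4}$, resp.\ as the series \refeq{eqcJu0}). By \refeq{eq916x}--\refeq{eq916x+3} each of $\mu_n(a,z)$, $\kappa_n(a,z)$ is a rational function of the form $\tfrac{z^a}{1-z^4}$ or $\tfrac{z^a}{1-z^4}+\tfrac{R(z)}{1-z^2}$, where $R(z)$ is a sum of $p$ monomials. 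Once this dictionary is in place, (a), (b), (c) become inspection.

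For (a): in each of \refeq{eq916x}--\refeq{eq916x+3} and in \refeq{eqcJu0} the monomial of least degree appears with coefficient $1$, whence the minimal $\kk$-type of every simple bounded infinite dimensional $(\gg,\kk)$-module has multiplicity $1$. To realize an arbitrary $V_k$ as such a minimal type I would write down explicit members of the families: for even $k$, use $c(I^+_{u,0})=\tfrac1{1-z^4}$ and $c(I^-_{u,0})=\tfrac{z^2}{1-z^4}$ for generic $u$ and $c(I^\pm_{u,0})=\tfrac{z^{2u+4}}{1-z^4}$ for $u\in\ZZ_{\geq 0}$ of the appropriate parity; for odd $k\geq 3$, use \refeq{eqcJu0}, whose least degree runs through $\{3,5,7,\dots\}$ as $u$ varies in $\half+\ZZ$, and for $k=1$ use $c(J_{-1/2,1})=\kappa_1(3,z)=\tfrac{z}{1-z^2}$.

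For (b): eventual periodicity $c_i(M)=c_{i+4}(M)$ is immediate from the shape $\tfrac{P(z)}{1-z^4}+\tfrac{Q(z)}{1-z^2}$ of the characters, since $\tfrac{1}{1-z^2}$ already has period-$2$ coefficients (for finite dimensional $M$ the $c_i(M)$ vanish for large $i$, so the statement is trivial). The precise stable multiplicities I would extract directly from \refeq{eq916x}--\refeq{eq916x+3}: the summand $\tfrac{z^a}{1-z^4}$ contributes $1$ to one residue class modulo $4$, whereas each of the $p$ monomials $z^{b}$ in the numerator over $1-z^2$ eventually contributes $1$ to every degree $\equiv b\pmod 2$; adding up yields, in each residue class modulo $4$, the value $p$ or $p+1$ for all large degrees, which matches the tabulated formulas after running through the cases \refth{th916}(a)--(g) (and, within (b), the two cases $u>0$ and $u<0$). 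I expect this last step---keeping the residues modulo $4$ and the offsets $a$ straight simultaneously across all families---to be the main obstacle, but it is purely mechanical.

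For (c): a module is multiplicity free precisely when $c_i(M)\le 1$ for all $i$; by (b) the stable value of $c_i(M)$ equals $p$ or $p+1$ where the index $n$ of $M$ is $2p$ or $2p+1$, so multiplicity freeness forces $p=0$, i.e.\ $n\in\{0,1\}$, and conversely every module of index $0$ or $1$ is multiplicity free since $\kappa_0$, $\mu_0$, $\kappa_1$, $\mu_1$ and the series \refeq{eqcJu0} all have coefficients in $\{0,1\}$. By \refth{th914} the simple bounded infinite dimensional modules of index $0$ or $1$ are $I^\pm_{u,0}$, $J_{u,0}$, $(I^\pm_{-2,0})^\tau$, $I^\pm_{u,1}$, $J_{u,1}$, $(I^\pm_{-2,1})^\tau$; but $(I^\pm_{-2,0})^\tau$ has central character $\chi(0,1)$ and $\kk$-character $\tfrac1{1-z^4}$, resp.\ $\tfrac{z^2}{1-z^4}$, so by the uniqueness part of \refth{th914} it is isomorphic to $I^\pm_{-1,0}$, already present on the list as $I^\pm_{u,0}$, while for $n=1$ the module $(I^\pm_{-2,1})^\tau$ has central character $\chi(0,2)$, which is the value $u=0$ excluded in the definition of $I^\pm_{u,1}$, and so must be kept. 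Deciding which $\tau$-twists are genuinely new is the only point in (c) that goes beyond routine verification.
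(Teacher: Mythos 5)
Your plan---read off (a), (b), (c) directly from the classification in \refth{th914}, the character formulas of \refth{th916}, and the closed forms \refeq{eq916x}--\refeq{eq916x+3} of \refle{le915}---is exactly what is expected here; the paper states this corollary without proof, so the intended argument is precisely the mechanical extraction you outline. Two remarks, one cosmetic and one substantive.

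The cosmetic point: your summary ``$\frac{z^a}{1-z^4}+\frac{R(z)}{1-z^2}$ where $R(z)$ is a sum of $p$ monomials'' describes only the even-index case $n=2p$. When $n=2p+1$ the term $\frac{z^a}{1-z^4}$ is absent and the numerator over $1-z^2$ has $p+1$ monomials, as one sees from \refeq{eq916x+1} and \refeq{eq916x+3}. This does not affect the outcome---it is exactly what produces the stable value $p+1$ in the odd-index cases---but as written your template would give $p$ there.

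The substantive point: had you actually carried out the ``purely mechanical'' tabulation in (b) you would have found a discrepancy with the last displayed line of the statement. Take $u=\tfrac12$, $p=0$: by \refeq{eqcJu0} (or, equivalently, by $c(J_{1/2,0})=\kappa_0(5,z)=\frac{z^5}{1-z^4}$ from \refth{th916}(b) and \refle{le915}) one has $c(J_{1/2,0})=z^5+z^9+z^{13}+\dots$, so $c_{4j+1}(J_{1/2,0})=1$ and $c_{4j+3}(J_{1/2,0})=0$ for large $j$. Since $2u=1$, this reads $c_{4j+2u}=1=p+1$ and $c_{4j+2u+2}=0=p$, the opposite of what the corollary displays. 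Indeed in both branches of \refth{th916}(b) the exponent $a$ of the $\frac{z^a}{1-z^4}$-summand of $\kappa_{2p}(a,z)$ (resp.\ $\mu_{2p}(a,z)$) is $\equiv 2u\pmod 4$, so that summand contributes the extra $1$ to the residue class $2u$, giving $c_{4j+2u}(J_{u,2p})=p+1$ and $c_{4j+2u+2}(J_{u,2p})=p$; the values $p$ and $p+1$ in the paper's last display appear to be transposed. Your method is sound and would catch this if followed through, but as a proof attempt it stops short of the one place where something interesting happens. Your treatment of (a) and (c) is correct, including the observation that $(I^\pm_{-2,0})^\tau\simeq I^\pm_{-1,0}$ by the uniqueness clause of \refth{th914}(a)--(d) applied to the central character $\chi(0,1)$, whereas $(I^\pm_{-2,1})^\tau$ cannot be absorbed into the family $I^\pm_{u,1}$ because $u=0$ is excluded from the definition $I^\pm_{u,n}:=T^n(I^\pm_{u,0})$; that is the only non-routine point in (c) and you handle it properly.
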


The complete list of multiplicity free simple $(\gg,\kk)$-modules has
been first found by Dj. Sijacki, see \cite{S} and the references
therein for a historic perspective on this problem. 

\section{Classification of simple bounded $(\sp(4),\sl(2))$-modules}\label{se6}
In this section we classify all simple bounded $(\gg,\kk)$-modules,
where $\gg=\sp(4)$ and $\kk$ is a principal $\sl(2)$-subalgebra or a
$\sl(2)$-subalgebra corresponding to a short root. We fix a Cartan
subalgebra $\hh\subset\gg$ and write the roots of $\gg$ as $\{\pm
2\eps_1,\pm 2\eps_2, \pm\eps_1\pm\eps_2\}$. Our fixed simple roots are
$\eps_1-\eps_2, 2\eps_2$, and $\rho=2\eps_1+\eps_2$. By $e_1$, $e_2$,
$h_1$, $h_2$, $f_1$, $f_2$ we denote the Serre generators of $\gg$
associated to our choice of simple roots, \cite{OV}. We define two
$\sl(2)$-subalgebras of $\gg$: one with basis $e_1$, $h_1$, $f_1$ and
one with basis $e_1+2e_2$, $3h_1+4h_2,3f_1+2f_2$. The first one is the
root subalgebra corresponding to the simple root $\eps_1-\eps_2$, and
the second one is a principal $\sl(2)$-subalgebra. In Sections
\ref{se6} and \ref{secKchars4-2}, we denote by $\kk$ any one of these
two subalgebras, referring respectively to the \emph{root case} and to
the \emph{principal case} when we want to be specific. We set
$\bb_\kk:=\bb\cap \kk$, where $\bb$ is the Borel subalgebra 
generated by $e_1$, $e_2$, $h_1$, $h_2$. By
$L_{a,b}$ we denote the simple $\bb$-highest weight $\gg$-module with
highest weight $a\eps_1+b\eps_2-\rho=(a-2)\eps_1+(b-1)\eps_2$, by
$V_{a,b}$ we denote the simple finite-dimensional $\gg$-module with
highest weight $a\eps_1+b\eps_2$, and $\chi(a,b)$ is the central character of $L_{a,b}$.

\begin{lemma}\label {le100}
Let dim $L_{a,b}=\infty$ and $\gkdim L_{a,b}\leq 2$. Then $a>| b|$ and $a,b\in \half+\ZZ$.
\end{lemma}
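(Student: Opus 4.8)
The plan is to work in the Weyl algebra $A$ in $t,x,y$ together with its natural extension relevant to $\sp(4)$, exactly as in Section~\ref{secKchars3-2}. For the case $\gg=\sp(4)$ the analogue of the line $\CC\EE$ comes from the realization of $\gg$ as the Lie algebra of vector fields (plus the symplectic scaling) on $\CC^4$, whose Gelfand--Kirillov dimension is $2$; the condition $\gkdim L_{a,b}\le 2$ says precisely that $\Ann L_{a,b}$ equals the kernel of the quotient map $U(\gg)\to D^u$ onto the appropriate algebra of twisted differential operators on the relevant flag variety (here the Lagrangian Grassmannian $\mathrm{Sp}(4)/P$, a two-dimensional quadric). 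So first I would identify which weights $(a-2)\eps_1+(b-1)\eps_2$ give associated variety of dimension $\le 4$: by the known description of associated varieties of highest weight modules for $\sp(4)$ (Joseph's work, or a direct Weyl-group computation using $r_\gg=2$), $X_{L_{a,b}}$ is the minimal nilpotent orbit closure exactly when the highest weight lies on one wall of the dominant chamber, and is a single point (finite-dimensional module) when it is regular dominant integral; $\gkdim \le 2$ therefore forces $L_{a,b}$ to have $\ad$-nilpotent associated variety of dimension exactly $4$ with $L_{a,b}$ infinite-dimensional.

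The key step is then to translate "minimal associated variety, infinite-dimensional" into the stated inequalities. I would use the linkage/Jantzen picture: $L_{a,b}$ is infinite-dimensional iff $a\eps_1+b\eps_2$ is not regular dominant integral; and $\gkdim L_{a,b}\le 2$ iff, for the dot-action of $W_\gg$ on $\lambda=a\eps_1+b\eps_2-\rho$, the weight $\lambda+\rho=a\eps_1+b\eps_2$ is singular for a \emph{single} reflection in an appropriate sense — concretely, iff $L_{a,b}$ is the unique simple quotient of a generalized Verma module induced from a \emph{maximal} parabolic. Running through the two maximal parabolics of $\sp(4)$: for the one with Levi of type $A_1$ (short root) one gets the constraints $a+b\in\ZZ$, $a-b\in\ZZ_{>0}$ together with a half-integrality condition; for the one with Levi of type $A_1$ (long root / $\gg'\cong \sl(2)$ for the Siegel parabolic) one gets the condition $2b\in\ZZ$, $a-b>0$. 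Intersecting the surviving cases with "not finite-dimensional" and simplifying leaves exactly $a>|b|$ and $a,b\in\half+\ZZ$. The half-integrality is the telltale sign that only \emph{one} of the two maximal parabolics actually produces a $\gkdim\le 2$ module with infinite dimension; integral-weight generalized Verma modules over the other parabolic already have $\gkdim=3$ generically and drop to $\gkdim<2$ only in the finite-dimensional case.

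The main obstacle I expect is the bookkeeping in the previous paragraph: carefully verifying, for each of the finitely many "boundary" linkage classes of $\sp(4)$, which simple highest weight module genuinely has $\gkdim=2$ and which has $\gkdim=3$ (a sub-maximal case can masquerade as small), and confirming that the $a=|b|$ locus and the integral-$b$ locus are excluded because there $L_{a,b}$ is either finite-dimensional or has associated variety the \emph{whole} nilpotent cone of $\sp(4)$ (dimension $8$, so $\gkdim=4$) or the subregular/minimal of the wrong dimension. Concretely I would organize this as: (i) reduce to integral infinitesimal character after a translation, since $\gkdim$ and finiteness are translation-invariant inside a fixed facet, citing the translation-functor discussion in Section~\ref{secFirstResults}; (ii) for integral infinitesimal character, list the $|W_\gg|=8$ simple modules and their Gelfand--Kirillov dimensions ($0$ for the finite-dimensional one, $3$ for the "large" ones attached to the regular nilpotent, $2$ for the two attached to the minimal nilpotent); (iii) read off that $\gkdim=2$ with infinite dimension happens only for non-integral $b$, forcing $b\in\half+\ZZ$, and then the single remaining wall condition is $a-b\in\ZZ_{>0}$ together with $a+b\in\ZZ_{>0}$, i.e. $a>|b|$, with $a\in\half+\ZZ$ following from $a-b\in\ZZ$. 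Everything else is the Weyl dimension formula check that $a=|b|$ or $a,b\in\ZZ$ gives a module that is finite-dimensional or has too-large Gelfand--Kirillov dimension, which is the routine computation I would not grind through here.
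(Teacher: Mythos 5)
Your proposal has a genuine conceptual error at its center, and the bookkeeping you defer would not actually close the gap. The claim that $\gkdim L_{a,b}\leq 2$ should be equivalent to $L_{a,b}$ being the \emph{unique simple quotient of a generalized Verma module from a maximal parabolic} is false: for $\sp(4)$ both maximal parabolics have $\dim(\gg/\pp)=3$, so a generalized Verma module over a maximal parabolic has Gelfand--Kirillov dimension $3$, and so does its unique simple quotient whenever that module is irreducible. Having $\gkdim\leq 2$ requires a \emph{further} reducibility beyond a single parabolic condition, and that extra reducibility is precisely what the paper's proof tracks (it shows that two positive coroots must pair to a positive integer, and then rules out the regular integral possibility). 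Relatedly, your step~(ii) asserts that among the eight simple highest-weight modules with regular integral central character two have $\gkdim=2$ ``attached to the minimal nilpotent.'' This is also false for $\sp(4)$: at integral central character the simple modules $L_{w\cdot 0}$ have $\gkdim$ equal to $0$ (finite dimensional), $3$ (the six intermediate ones), or $4$ (the Verma module); there are none with $\gkdim=2$. The minimal nilpotent orbit appears as an associated variety only at half-integral weights (the metaplectic situation), which is exactly why the lemma's conclusion forces $a,b\in\half+\ZZ$.

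Your suggested reduction ``to integral infinitesimal character after a translation'' is incompatible with that conclusion: translation functors only move within a fixed facet (same Weyl-group stabilizer and same integral root system), and the half-integral facet relevant here does not meet the integral one. So your reduction would carry you entirely away from the $\gkdim=2$ locus. The paper instead uses translation in the opposite direction and only to handle the \emph{regular integral} subcase: after showing that two positive coroots must have positive integral pairing, it is left with either the desired half-integral family or a regular integral weight, and it translates the latter to the $W$-orbit of $\rho$, embeds the four candidate simple modules into generalized Verma modules over the two maximal parabolics, and reads off $\gkdim=3$. If you want to repair your argument, the essential missing step is the quantitative dichotomy: zero positive pairings gives a Verma module ($\gkdim=4$), exactly one gives $\gkdim=3$ via the short exact sequence $0\to L_{w_\alpha(\lambda)}\to M_\lambda\to L_\lambda\to 0$, and $\gkdim\leq 2$ requires (after passing through the quotient $N_\lambda$ of $M_\lambda$ by the simple-root wall) a positive pairing with the orthogonal positive root as well, which pins down $a>|b|$ with $a,b\in\half+\ZZ$ once the regular integral case is excluded as above.
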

\begin {proof} Let $\lambda=a\eps_1+b\eps_2$. If $(\lambda,\alpha)\notin
  \ZZ_{>0}$ for all positive roots $\alpha$, then $L_{a,b}$ is
  a Verma module and therefore its Gelfand-Kirillov
  dimension equals 4. If $(\lambda,\check\alpha)\in \ZZ_{>0}$ for exactly one positive
  root, then one has the following exact sequence
\[
0\to L_{w_\alpha(\lambda)} \to M_{\lambda} \to L_{\lambda}\to 0,
\]
where $w_{\alpha}$ denotes the reflection in $\alpha$. A straightforward computation shows that in this case $\gkdim
L_{\lambda}=3$. Therefore $\gkdim L_{\lambda}\leq 2$ implies the existence
of two positive roots $\alpha$ and $\beta$ such that
$(\lambda,\check\alpha), (\lambda,\check\beta) \in \ZZ_{>0}$. One can see
immediately that at least one of these roots, say $\alpha$, is
simple. If $N_{\lambda}$ denotes the quotient of $M_{\lambda}$ by the 
submodule generated 
by a highest vector with weight $w_\alpha(\lambda)-\rho$, then $\gkdim
N_{\lambda}=3$. The condition $\gkdim L_{\lambda}\leq 2$ implies the 
reducibility of
$N_{\lambda}$ which in turn implies $(\lambda,\check\gamma) \in \ZZ_{>0}$
for the positive root $\gamma$ orthogonal to $\alpha$. That leaves only
two possibilities for $\lambda$: $\lambda$ is either regular integral
or $\lambda$ satisfies the conditions of the Lemma.

It remains to eliminate the case of a regular integral non-dominant $\lambda$. By using the translation functor we may assume without loss of generality that $\lambda$ belongs to the Weyl group orbit of $\rho$. That leaves four possibilities for $\lambda$: $2\eps_1-\eps_2$, $\eps_1-2\eps_2$, $\eps_1+2\eps_2$, $-\eps_1+2\eps_2$.  Let $\pp_1$ and $\pp_2$ be the parabolic subalgebras obtained from $\bb$ by joining $\eps_2-\eps_1$ and $-2\eps_2$ respectively. It is  not difficult to verify the existence of embeddings
\[
L_{2,-1}\to U(\gg)\otimes_{U(\pp_1)}F^1_{2,1},\quad
L_{1,-2}\to U(\gg)\otimes_{U(\pp_1)}F^1_{2,-1},
\]
\[
L_{1,2}\to U(\gg)\otimes_{U(\pp_2)}F^2_{2,1},\quad
L_{-1,2}\to U(\gg)\otimes_{U(\pp_2)}F^2_{1,2},
\]
where $F_{a,b}^1$ (respectively, $F_{a,b}^2$) is the finite
 dimensional $\pp_1$-module (resp., $\pp_2$-module) with $\bb$-highest
 weight $a\eps_1+b\eps_2-\rho$. Therefore 
 the Gelfand-Kirillov dimension of any of the above four simple
 modules equals the Gelfand-Kirillov dimension of the corresponding
 parabolically induced module, i.e. 3. The proof is now complete.
\end{proof}

\begin {corollary}\label{cor101}
Let $M$ be a simple bounded infinite dimensional \gkm. Then $\ann M= \ann L_{a,b}$ for some $a,b$ with $a>|b|$, $a,b\in\half+\ZZ$. In particular, $\chi(a,b)$ is the central character of $M$.
\end{corollary}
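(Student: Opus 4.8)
The plan is to reduce \refcor{cor101} to \refle{le100} via Duflo's theorem together with the Gelfand--Kirillov dimension bound of \refth{th6}, exactly as was done in the $\sl(3)$ case (\refle{le100sl}).

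First I would observe that, $M$ being a simple $\gg$-module, $\ann M$ is a primitive ideal of $\univ(\gg)$, so by Duflo's theorem $\ann M=\ann L$ for some simple highest weight $\gg$-module $L$; writing the highest weight of $L$ as $a\eps_1+b\eps_2-\rho$ we have $L=L_{a,b}$ for suitable $a,b\in\CC$. Since $M$ is infinite dimensional, $\ann M$ has infinite codimension, hence $\dim L_{a,b}=\infty$ as well. As $M$ is simple, $Z_{\univ}$ acts on it by a character $\chi$ with $\ker\chi=\ann M\cap Z_{\univ}=\ann L_{a,b}\cap Z_{\univ}$, that is $\chi=\chi(a,b)$; this yields the last assertion of the corollary once $(a,b)$ has been constrained.

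Next I would bound the Gelfand--Kirillov dimension. Since $\kk\simeq\sl(2)$ we have $b_\kk=\tfrac{\dim\kk+\rk\kk}{2}=2$, so \refth{th6} gives $\gkdim M\le 2$. Combining this with the inequality $\gkdim M\ge\tfrac12\dim X_M$ (the one already used for \refcor{cor1.5}) and the equality $X_M=X_{L_{a,b}}$, valid because $\ann M=\ann L_{a,b}$, we obtain $\dim X_{L_{a,b}}\le 4$; since for a highest weight module $\gkdim L_{a,b}=\tfrac12\dim X_{L_{a,b}}$, it follows that $\gkdim L_{a,b}\le 2$. (Equivalently, one may simply invoke \refth{th6} to get $\gkdim L_{a,b}\le 2$ directly, as in the proof of \refle{le100sl}.) Note that this conclusion is independent of which representative $(a,b)$ of the relevant linkage class one picks for $\ann M$, since all such choices produce the same associated variety and hence, for highest weight modules, the same Gelfand--Kirillov dimension.

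Finally, the pair of conditions ``$\dim L_{a,b}=\infty$'' and ``$\gkdim L_{a,b}\le 2$'' is precisely the hypothesis of \refle{le100}, whose conclusion reads $a>|b|$ and $a,b\in\half+\ZZ$; together with the computation of the central character in the first step this completes the proof. I expect no genuine obstacle here: the only substantive work --- the reducibility analysis of Verma modules over $\sp(4)$ that turns $\gkdim L_\lambda\le 2$ into the numerical constraint on $\lambda=a\eps_1+b\eps_2$ --- has already been carried out in \refle{le100}, so the proof of the corollary is just the assembly of Duflo's theorem, \refth{th6}, and \refle{le100}.
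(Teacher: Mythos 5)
Your proposal is correct and follows essentially the same route as the paper: Duflo's theorem to identify $\ann M=\ann L_{a,b}$, the chain $\gkdim M\geq\tfrac12\dim X_M=\tfrac12\dim X_{L_{a,b}}=\gkdim L_{a,b}$ combined with $\gkdim M\leq b_\kk=2$ from Theorem~\ref{th6}, and then Lemma~\ref{le100}. The only (harmless) extra detail you supply is the explicit observation that $\dim L_{a,b}=\infty$, which the paper leaves implicit before invoking Lemma~\ref{le100}.
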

\begin {proof}
By Duflo's theorem, $\ann M = \ann L_{a,b}$ for some $a,b$. It is known that $\half\dim X_{L_{a,b}}=\gkdim L_{a,b}$, thus $\gkdim M\geq \gkdim L_{a,b}$. On the other hand,\newline \mbox{$\displaystyle\gkdim M\leq 2=b_\kk$}. Hence $\gkdim L_{a,b}\leq 2$, and \refle{le100} applies to $L_{a,b}$.
\end{proof}

\begin {corollary}\label{cor102}
Let $a,b\in \half+\ZZ$, $a>|b|$. Then $\BB_\kk^{\chi(a,b)}$ is equivalent to $\BB_\kk^{\chi(\threehalfs,\half)}$.
\end {corollary}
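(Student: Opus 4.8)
The plan is to deduce this directly from the translation-functor equivalence recorded at the end of Section~\ref{secFirstResults}: if $\xi,\eta\in\hh^*$ have the same stabilizer in $W_\gg$, if $\eta-\xi$ is $\gg$-integral, and if $(\xi,\check\alpha)\in\ZZ_{\geq 0}\iff(\eta,\check\alpha)\in\ZZ_{\geq 0}$ and $(\xi,\check\alpha)\in\ZZ_{\leq 0}\iff(\eta,\check\alpha)\in\ZZ_{\leq 0}$ for every root $\alpha$ of $\bb$, then $T_\xi^\eta$ and $T_\eta^\xi$ induce mutually inverse equivalences of the bounded categories $\BB_\kk^{\chi(\xi)}$ and $\BB_\kk^{\chi(\eta)}$. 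By the conventions fixed at the start of Section~\ref{se6} and in the translation-functor paragraph of Section~\ref{secFirstResults}, one has $\chi(a,b)=\chi(\xi)$ for $\xi=a\eps_1+b\eps_2$. So I would set $\xi:=a\eps_1+b\eps_2$ and $\eta:=\threehalfs\eps_1+\half\eps_2$ (note $\threehalfs,\half\in\half+\ZZ$ with $\threehalfs>|\half|$, so $\chi(\threehalfs,\half)$ is indeed one of the admissible central characters), and verify the three hypotheses.

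For $\gg$-integrality: $\eta-\xi=(\threehalfs-a)\eps_1+(\half-b)\eps_2$, and since $a,b\in\half+\ZZ$ both coefficients lie in $\ZZ$; as the weight lattice of $\sp(4)$ is $\ZZ\eps_1\oplus\ZZ\eps_2$, the weight $\eta-\xi$ is $\gg$-integral. For the remaining conditions I would evaluate on the coroots of the positive roots $2\eps_1,2\eps_2,\eps_1-\eps_2,\eps_1+\eps_2$, which are $\eps_1,\eps_2,\eps_1-\eps_2,\eps_1+\eps_2$ respectively; this produces the values $a,b,a-b,a+b$ for $\xi$ and $\threehalfs,\half,1,2$ for $\eta$. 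Since $a$ and $b$ are nonzero half-integers and the strict inequality $a>|b|$ forces $a-b>0$ and $a+b>0$, all four numbers attached to $\xi$ are nonzero, so $\xi$ is regular; likewise $\eta$ is regular, hence both have trivial stabilizer in $W_\gg$. Finally, $a$ and $b$ are non-integral, matching the non-integral $\threehalfs$ and $\half$, so for $\alpha=2\eps_1,2\eps_2$ both sides of each of the two equivalences are false; while $a-b$ and $a+b$ are strictly positive integers, matching $1$ and $2$, so for $\alpha=\eps_1-\eps_2,\eps_1+\eps_2$ the ``$\in\ZZ_{\geq 0}$'' condition holds for both $\xi$ and $\eta$ and the ``$\in\ZZ_{\leq 0}$'' condition fails for both.

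With all hypotheses checked, the cited equivalence yields that $\BB_\kk^{\chi(a,b)}=\BB_\kk^{\chi(\xi)}$ and $\BB_\kk^{\chi(\threehalfs,\half)}=\BB_\kk^{\chi(\eta)}$ are equivalent categories, which is the assertion. I do not expect any genuine obstacle here: the mathematical content is entirely contained in the translation-functor principle of Section~\ref{secFirstResults} (and, behind it, in \cite{BG} and \cite{Z}), and what is left is the elementary bookkeeping with the $C_2$ root data sketched above. The one point that requires a little care is the normalization relating $\chi(a,b)$ to the $\rho$-shifted weight $a\eps_1+b\eps_2-\rho$, i.e. the fact that $\chi(a,b)=\chi(\xi)$ with $\xi=a\eps_1+b\eps_2$ and not with $\xi$ the highest weight itself; this is already pinned down by the definitions in place, so it only needs to be invoked, not reproved.
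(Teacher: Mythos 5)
Your proposal follows exactly the paper's own argument: the paper simply asserts that $\xi=a\eps_1+b\eps_2$ and $\eta=\threehalfs\eps_1+\half\eps_2$ satisfy the hypotheses for the translation-functor equivalence set up at the end of Section~\ref{secFirstResults}, and you supply the routine root-data verification that the paper leaves to the reader. The proof is correct.
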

\begin{proof}
The weights $\xi=a \eps_1+b\eps_2$ and $\eta=\threehalfs\eps_1+\half\eps_2$ satisfy all assumptions of \refsec{secFirstResults}, hence $T_\xi^\eta$ and $T_\eta^\xi$ are mutually inverse equivalences of $\BB_\kk^{\chi(a,b)}$ and $\BB_\kk^{\chi(\threehalfs,\half)}$.
\end{proof}

Our next step is to describe the quotient algebra $\univ(\gg) / \ann L_{\threehalfs,\half}$. In this section we denote by $A$ the Weyl algebra in two variables, i.e. the algebra of differential operators acting in $\CC[x,y]$. We introduce a $\ZZ_2$-grading, $A\eqdef A_0\oplus A_1$, by putting $\deg x=\deg y=\deg\del_x=\deg\del_y\eqdef \overline{1}\in\ZZ_2$. It is well known that there exists a surjective algebra homomorphism
\[
\kappa:\univ(\gg)\to A_0
\]
such that
\[
\kappa(e_1)=x\del_y,\quad \kappa (e_2)=\frac{y^2}{2},\quad \kappa (f_1)=y\del_x, \quad \kappa (f_2)=-\frac{\del_y^2}{2},
\]
\[
\kappa(h_1)=x\del_x-y\del_y,\quad \kappa(h_2)=y\del_y+\half.
\]
The kernel of $\kappa$ equals $\ann L_{\threehalfs,\half}$. Furthermore, $\kappa (\kk)$ is spanned by $E\eqdef x\del_y$, $F\eqdef y\del_x$, $H\eqdef x\del_x-y\del_y$ in the root case, and respectively by $E\eqdef x\del_y+y^2$, $F\eqdef 3x\del_x+y\del_y+2$, $H\eqdef 3y\del_x-\del_y^2$ in the principal case.

The problem of describing all simple modules in $\BB_\kk^{\chi(\threehalfs,\half)}$ is equivalent to the problem of describing all simple $(A_0,\kk)$-modules, i.e. all simple locally $\kappa(\kk)$-finite $A_0$-modules. The following lemma reduces this problem to a classification of all simple $(A,\kk)$-modules.

\begin{lemma}\label{le103}
Every simple $(A,\kk)$-module $M$ is a $\ZZ_2$-graded $A$-module, i. e. $M=M_0\oplus M_1$ where $M_0$ and $M_1$ are simple $(A_0,\kk)$-modules. Furthermore, $M=A\otimes_{A_0}M_0$, and the $\ZZ_2$-grading on $M$ is unique up to interchanging $M_0$ with $M_1$.
\end{lemma}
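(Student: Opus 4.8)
The plan is to exploit the $\ZZ_2$-grading $A = A_0 \oplus A_1$ together with simplicity of $M$ to force $M$ to be graded. First I would note that $M_0 := A_0 \cdot v$ for a suitable vector is not quite the right move; instead, the natural thing is to consider the two "isotypic" pieces with respect to the grading. Concretely, since $A_1 A_1 \subset A_0$ and $A_1 A_0 \subset A_1$, the subspace $A_1 M$ is an $A_0$-submodule of $M$, and $M$ itself as an $A_0$-module decomposes (using that $A$ is generated over $A_0$ by $A_1$ and $A_1 A_1 \subset A_0$) — but the cleanest route is: pick any nonzero $v \in M$ that is a $\bb_\kk$-highest weight vector (these exist since $M$ is a locally $\kk$-finite, hence semisimple, $\kk$-module). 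Set $M_0 := A_0 \cdot v$ and $M_1 := A_1 \cdot v$. Then $M_0 + M_1$ is an $A$-submodule of $M$ (since $A_0 M_0 \subset M_0$, $A_1 M_0 \subset M_1$, $A_0 M_1 \subset M_1$, $A_1 M_1 \subset A_0 M_0 \subset M_0$, using $A_1 A_1 \subset A_0$ and that $A_1 A_1 \cdot v \subset A_0 \cdot v$ because... this needs care, see below), so by simplicity $M = M_0 + M_1$.

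The key step, which I expect to be the main obstacle, is showing the sum $M_0 + M_1$ is \emph{direct} — equivalently, that the two pieces are genuinely the even/odd parts of a grading and don't overlap. For this I would use the $\kk$-action: in both the root and principal cases, inspection of $\kappa(\kk)$ shows that $\kk$ preserves the grading on $\CC[x,y]$ only up to the fact that $E, F, H$ are \emph{odd} operators (each of $x\del_y$, $y\del_x$, $x\del_x - y\del_y$ shifts $\ZZ_2$-degree by... actually $x\del_y$ has degree $\overline{1}+\overline{1} = \overline{0}$, so $\kappa(\kk) \subset A_0$). So $\kk \subset A_0$, hence $M_0$ and $M_1$ are each $\kk$-stable. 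Now I would argue that the "degree" operator — something like the image under $\kappa$ of an element detecting parity, or more robustly the construction $M \cong A \otimes_{A_0} M_0$ — pins down the decomposition. The cleanest argument: $A \cong A_0 \oplus A_1$ as an $(A_0, A_0)$-bimodule, and $A_1$ is invertible in the sense that $A_1 \otimes_{A_0} A_1 \to A_0$ is an isomorphism of bimodules (e.g. because $A_1$ contains units of the localized Weyl algebra, or by a direct PBW-style computation). Therefore the functor $N \mapsto A_1 \otimes_{A_0} N$ is an involutive self-equivalence of $A_0\text{-mod}$, and $A \otimes_{A_0} N \cong N \oplus (A_1 \otimes_{A_0} N)$ as $A_0$-modules. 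Applying this to $N = M_0$: the restriction $\mathrm{Res}^A_{A_0} M$ contains $M_0$, and $\mathrm{Res}^A_{A_0}(A \otimes_{A_0} M_0) \cong M_0 \oplus (A_1 \otimes_{A_0} M_0)$; since $M$ is a quotient of $A \otimes_{A_0} M_0$ and $M_0 \subset M$ injects, the complementary summand maps onto $M_1 = A_1 \cdot v$, and a simplicity argument on $M_0$ (is $M_0$ simple over $A_0$? — yes, this is the content of the next sub-claim) forces the map $A \otimes_{A_0} M_0 \to M$ to be an isomorphism, giving $M = M_0 \oplus M_1$ with $M_1 \cong A_1 \otimes_{A_0} M_0$.

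It remains to check that $M_0$ and $M_1$ are \emph{simple} over $A_0$ and that the grading is unique up to swap. Simplicity of $M_0$: any nonzero $A_0$-submodule $M_0' \subset M_0$ generates, together with $A_1 M_0'$, an $A$-submodule of $M$, which by simplicity of $M$ is all of $M$; intersecting back with $M_0$ and using directness of $M = M_0 \oplus M_1$ gives $M_0' = M_0$. The same argument gives $M_1$ simple. For uniqueness: any $\ZZ_2$-grading $M = M'_0 \oplus M'_1$ compatible with $A = A_0 \oplus A_1$ must have $M'_0$ an $A_0$-submodule, hence by simplicity of the two pieces $M'_0 \in \{M_0, M_1\}$, and then $M'_1$ is determined as the complement. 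The only genuine computation needed is the bimodule isomorphism $A_1 \otimes_{A_0} A_1 \xrightarrow{\sim} A_0$, which follows from the explicit structure of the Weyl algebra $A = \CC\langle x, y, \del_x, \del_y\rangle$ with the grading by total degree in the generators mod $2$: indeed $A_0$ contains $x\del_x, y\del_y$ etc. and $x\del_y \cdot y\del_x$ etc. recover everything, and more simply $A$ localized at $x$ has $x^{-1} \in A_1$ with $x^{-1} \cdot x = 1$, so after a harmless localization $A_1$ is a free rank-one $A_0$-module on a unit; the un-localized statement is then checked directly. I expect the directness of $M_0 \oplus M_1$ to be where most of the care goes, since a priori $v$ could lie in a "mixed" position — but the highest-weight choice of $v$ plus the fact that $\kk \subset A_0$ (so $M_0, M_1$ are $\kk$-submodules carrying the same or disjoint $\kk$-types) is what rules this out.
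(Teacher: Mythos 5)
Your plan circles around the right idea but places it in the wrong spot: the parity of $H$-eigenvalues, which you mention only as a parenthetical afterthought at the very end, is actually the \emph{entire} content of the paper's proof, while the bulk of your text is spent on machinery ($A_1\otimes_{A_0}A_1\cong A_0$, invertible bimodules, $A\otimes_{A_0}M_0\to M$) that the paper does not need at all. The paper simply observes that $H\in\kappa(\kk)\subset A_0$ acts semisimply on $M$ with integer eigenvalues (since $M$ is $\kk$-locally finite), and that the generators $x,y,\del_x,\del_y$ of $A_1$ each shift the $H$-eigenvalue by an odd integer; hence defining $M_0$ (resp.\ $M_1$) as the sum of even (resp.\ odd) $H$-eigenspaces gives a $\ZZ_2$-grading of $M$ compatible with that of $A$ from the start, and directness is automatic. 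By contrast, you define $M_0:=A_0\cdot v$, $M_1:=A_1\cdot v$ for a highest-weight vector $v$ and then have to work hard to show the sum is direct; but the only way you actually close that gap is by invoking (implicitly) the same $H$-parity fact, so the localization/Morita apparatus buys you nothing.

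There is also a genuine gap in your uniqueness argument. You write that for any other grading $M=M_0'\oplus M_1'$, ``by simplicity of the two pieces $M_0'\in\{M_0,M_1\}$.'' This step is false without knowing $M_0\not\cong M_1$ as $A_0$-modules: if they were isomorphic, a diagonal copy inside $M_0\oplus M_1$ would be a simple $A_0$-submodule distinct from both. The paper explicitly records that $M_0$ and $M_1$ are non-isomorphic as $A_0$-modules (again because $H\in A_0$ has even eigenvalues on $M_0$ and odd on $M_1$) and derives uniqueness from the fact that a decomposition into two non-isomorphic simple $A_0$-modules is unique. You need to add this non-isomorphism observation to make your uniqueness step valid. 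Similarly, the injectivity-of-$\pi$ argument you sketch (ruling out a kernel in $A\otimes_{A_0}M_0$) also tacitly requires $M_0\not\cong A_1\otimes_{A_0}M_0$ to exclude a diagonal kernel, which again is supplied only by $H$-parity. So: the proposal can be made correct, but only by promoting the $H$-eigenvalue parity from a footnote to the central argument, at which point the bimodule scaffolding becomes superfluous and you recover the paper's short proof.
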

\begin{proof}
The element $H$ (as defined above separately for the root case and for the principal case) acts semisimply on $M$ with integer eigenvalues. We define $M_0$ (respectively, $M_1$) as the direct sum of $H$-eigenspaces with even (resp., odd) eigenvalues. It is obvious that $M=M_0\oplus M_1$, that $M_0$ and $M_1$ are simple $A_0$ modules, and that $M=A\otimes_{A_0}M_0$. Since $M_0$ and $M_1$ are non-isomorphic as $A_0$-modules, the uniqueness follows from the fact that a decomposition of $M$ as an $A_0$-module into a direct sum of two non-isomorphic $A_0$-modules is unique.
\end{proof}

\textbf{Remark.} More generally, if $\kk'$ is a subalgebra of $\gg'=\sp(2m)$ such that the centralizer of $\kk'$ in the Weyl $A'$ algebra of $m$ indeterminates is abelian, every $(A',\kk')$-module is a multiplicity free $(\gg',\kk')$-module whose primitive ideal is a Joseph ideal. F. Knop has classified all such subalgebras $\kk'$, \cite{Kn2}, which makes us optimistic that this idea can eventually lead to a classification of simple bounded $(\gg',\kk')$-modules.

\medskip

Let $\Fou:A\to A$ be the automorphism defined by
\[
\Fou(x)\eqdef \del_x,\quad \Fou(y)\eqdef\del_y,\quad\Fou(\del_x)\eqdef -x,\quad \Fou(\del_y)\eqdef -y
\]
If $M$ is an $A$-module, we denote by $M^\Fou$ the twist of $M$ by $\Fou$.
\begin{theorem} \label{th500}\myLabel{th500}\relax
In the root case, any simple $(A,\kk)$-module is isomorphic to $\CC[x,y]$ or $\CC[x,y]^\Fou$.
\end{theorem}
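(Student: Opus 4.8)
The plan is to exploit the fact that the centralizer of $\kk$ in $A$ is as small as it can be. First I would decompose $A$ under the adjoint action of $\kk$: writing $A$ by PBW as $\CC[x,y]\otimes\CC[\del_x,\del_y]$, each bigraded piece $\CC[x,y]_m\otimes\CC[\del_x,\del_y]_n$ is $\kk$-isomorphic to $V_m\otimes V_n$ (both $\span\{x,y\}$ and $\span\{\del_x,\del_y\}$ carry the standard representation $V_1$), so the space of invariants $A^\kk$ is one-dimensional for every $m=n\ge 0$ and zero otherwise. Since $\EE\eqdef x\del_x+y\del_y$ lies in $A^\kk$ and $\EE^m$ has a nonzero component of bidegree $(m,m)$, it follows that $A^\kk=\CC[\EE]$. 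Together with Jacobson density this gives, by the argument of \refle{le2} applied to $A$ in place of $\univ(\gg)$, that for a simple $(A,\kk)$-module $M$ each isotypic multiplicity space $\hom_\kk(V_j,M)$ is a simple module over the commutative algebra $\CC[\EE]$, hence at most one-dimensional. Thus $M$ is multiplicity free, and $\EE$ acts by a scalar $c_j$ on the $V_j$-isotypic component $M[j]$.

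Next I would pin down $c_j$ and glue the $\kk$-types into a ladder. A direct computation shows that the Casimir $\Omega_\kk$ of $\kk$ equals $\tfrac12\EE(\EE+2)$ in $A$; since $\Omega_\kk$ acts on $V_j$ by $\tfrac12 j(j+2)$, comparing scalars on $M[j]$ gives $(c_j+1)^2=(j+1)^2$, i.e.\ $c_j=j$ or $c_j=-j-2$. Now $\span\{x,y\}$ and $\span\{\del_x,\del_y\}$ generate $A$, each is a copy of $V_1$ under $\ad\kk$, and $[\EE,x]=x$, $[\EE,\del_x]=-\del_x$ (and likewise for $y$), so $\span\{x,y\}\cdot M[j]\subseteq M[j-1]\oplus M[j+1]$ with $\EE$-eigenvalue $c_j+1$, and $\span\{\del_x,\del_y\}\cdot M[j]\subseteq M[j-1]\oplus M[j+1]$ with $\EE$-eigenvalue $c_j-1$. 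A short bookkeeping with the two possible values $c_{j\pm1}\in\{j\pm1,\,-(j\pm1)-2\}$, using $j\ge 0$, shows that when $c_j=j$ one has $\span\{x,y\}\cdot M[j]\subseteq M[j+1]$ (with $c_{j+1}=j+1$) and $\span\{\del_x,\del_y\}\cdot M[j]\subseteq M[j-1]$ (with $c_{j-1}=j-1$), and symmetrically when $c_j=-j-2$. Hence the sum of those $M[j]$ with $c_j=j$ and the sum of those with $c_j=-j-2$ are each $A$-submodules of $M$; since $j=-j-2$ is impossible these sums meet trivially, so by simplicity of $M$ exactly one of them is all of $M$.

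If $c_j=j$ for every $j$ with $M[j]\ne 0$, then $\EE$ has only non-negative integer eigenvalues on $M$, so there is a smallest one $j_0$; any $0\ne v\in M[j_0]$ is annihilated by $\del_x$ and $\del_y$ because $\del_x v,\del_y v$ lie in $M[j_0-1]=0$. Then the $A$-linear map $A\to M$, $a\mapsto a\cdot v$, kills $A\del_x+A\del_y$ and therefore factors through a surjection $\CC[x,y]\simeq A/(A\del_x+A\del_y)\to M$, which is injective since $\CC[x,y]$ is a simple $A$-module; hence $M\simeq\CC[x,y]$. If instead $c_j=-j-2$ for every such $j$, I would pass to the twist $M^\Fou$: since $\Fou$ maps $\kk$ into $\kk$ (indeed $\Fou(E)=-F$, $\Fou(F)=-E$, $\Fou(H)=-H$) and $\Fou(\EE)=-\EE-2$, the module $M^\Fou$ is again a simple $(A,\kk)$-module, now with $\EE$ acting by $j$ on its $V_j$-isotypic component, so by the previous case $M^\Fou\simeq\CC[x,y]$ and therefore $M\simeq\CC[x,y]^{\Fou^{-1}}\simeq\CC[x,y]^\Fou$, the last isomorphism because both sides are simple $A$-modules on which $x$ and $y$ act locally nilpotently, and any two such modules are isomorphic. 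The only genuinely delicate point is the ladder bookkeeping in the second paragraph: one must verify that the "wrong" isotypic components really vanish, and it is precisely here that the exact values $c_j\in\{j,-j-2\}$ together with $j\ge 0$ are used; the rest is routine.
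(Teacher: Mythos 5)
Your proof is correct, but it takes a genuinely different route from the one in the paper. The paper's argument is shorter and more direct: starting from a $\bb_\kk$-singular vector $v$ with $E\cdot v = x\partial_y\cdot v = 0$, one observes that on a simple $A$-module either $x$ or $\partial_y$ has nonzero kernel, that a nonzero kernel of a generator forces that generator to be locally nilpotent, and then one uses $\kk$-invariance to propagate local nilpotency from $\partial_y$ to $\partial_x$ and finds a joint vacuum vector $w$ annihilated by $\partial_x,\partial_y$; the module is then $A\otimes_{\CC[\partial_x,\partial_y]}\CC\simeq\CC[x,y]$, and the other case is handled exactly as you do, by the Fourier twist. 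Your argument instead first nails down the centralizer $A^\kk=\CC[\EE]$ via the bigraded PBW decomposition, then uses the Casimir identity $\Omega_\kk=\tfrac12\EE(\EE+2)$ to deduce $c_j\in\{j,-j-2\}$ on the $V_j$-isotypic pieces, and finally runs a ladder/eigenvalue argument to split $M$ into a ``polynomial'' or a ``Fourier-twisted polynomial'' ladder. Both proofs are complete (I checked the Casimir computation, the centralizer claim, and the bookkeeping $c_{j\pm1}\in\{j\pm1,-(j\pm1)-2\}$ against $j\ge0$); the paper's is leaner, while yours re-derives along the way that these modules are multiplicity free with $\kk$-types $V_{j_0}\oplus V_{j_0+1}\oplus\cdots$ and makes the role of $\Fou$ (as the involution swapping the two branches of $(c_j+1)^2=(j+1)^2$) conceptually transparent. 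One small stylistic remark: rather than invoking the general fact that any two simple $A$-modules with $x,y$ locally nilpotent are isomorphic to conclude $\CC[x,y]^{\Fou^{-1}}\simeq\CC[x,y]^\Fou$, you could simply note $\Fou^2$ is the parity involution $x\mapsto-x$, $y\mapsto-y$, $\partial_x\mapsto-\partial_x$, $\partial_y\mapsto-\partial_y$, and $\CC[x,y]^{\Fou^2}\simeq\CC[x,y]$ via $p(x,y)\mapsto p(-x,-y)$.
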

\begin{proof}
Let $M$ be a simple $(A,\kk)$-module. Then there exists $ 0\neq v\in M $ such that $E\cdot v=0$, i.e. $ x\partial_{y}\cdot v=0 $. Hence either $ x $ or $ \partial_{y} $
act locally nilpotently on $ M $.

Assume first that $ \partial_{y} $ acts locally nilpotently on $ M $. Then $ \partial_{x}\in\left[{\mathfrak k},\partial_{y}\right] $
also acts locally nilpotenly on $ M $. Let $ A^{+} $ be the abelian subalgebra in $A$
generated by $ \partial_{x},\partial_{y} $. One can find $ 0\neq w\in M $ such that $ A^{+}\cdot w=0 $, and hence
\begin{equation}
M\cong A\otimes_{ A^{+}}\CC\cong\CC\left[x,y\right].
\notag\end{equation}

If $ x $ acts locally nilpotently on $ M $, one considers $ M^{\Fou} $ and reduces to the previous case.\end{proof}

\begin{corollary} \label{cor35}\myLabel{cor35}\relax
In the root case, up to isomorphism, there are exactly four simple $(\gg,\kk)$-modules with central character $\chi(\threehalfs,\half)$. As $\kk$-modules two of these modules are isomorphic to
\begin{equation}
V_{0}\oplus V_{2}\oplus V_{4}\oplus\dots\quad,
\notag\end{equation}
and the other two are isomorphic to
\begin{equation}
V_{1}\oplus V_{3}\oplus V_{5}\oplus\dots\quad .
\notag\end{equation}
\end{corollary}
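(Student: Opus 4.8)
The plan is to derive the Corollary from Theorem \refth{th500} together with Lemma \refle{le103}, by first enumerating the simple $(A_0,\kk)$-modules and then transporting the count back along $\kappa$. Recall from the discussion preceding Theorem \refth{th500} that the simple objects of $\BB_\kk^{\chi(\threehalfs,\half)}$ coincide with the simple locally $\kappa(\kk)$-finite $A_0$-modules: inflation along the surjection $\kappa\colon\univ(\gg)\to A_0$ (whose kernel is $\ann L_{\threehalfs,\half}$, and hence contains $\ker\chi(\threehalfs,\half)$) turns a simple such $A_0$-module into a simple $(\gg,\kk)$-module of central character $\chi(\threehalfs,\half)$, while conversely, by \refcor{cor101}, a simple bounded $(\gg,\kk)$-module with this central character is annihilated by $\ann L_{\threehalfs,\half}=\ker\kappa$ and so comes from an $A_0$-module. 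It therefore suffices to show that there are exactly four simple $(A_0,\kk)$-modules and to compute their $\kk$-characters.

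By Theorem \refth{th500}, in the root case the only simple $(A,\kk)$-modules are $\CC[x,y]$ and $\CC[x,y]^{\Fou}$. First I would apply Lemma \refle{le103} to each of them: it splits into its $H$-even and $H$-odd parts, two non-isomorphic simple $(A_0,\kk)$-modules, giving four simple $(A_0,\kk)$-modules in all. Conversely, every simple $(A_0,\kk)$-module $M_0$ arises this way: since $A=A_0\oplus A_1$ is finitely generated as a left $A_0$-module, $A\otimes_{A_0}M_0$ is again locally $\kk$-finite; moreover $M_0$ has $H$-eigenvalues all of one parity, because $A_0$ shifts $H$-eigenvalues by even integers, so $M_0$ embeds into the simple quotient $\tilde M$ of $A\otimes_{A_0}M_0$ and, by Lemma \refle{le103}, fills out the corresponding homogeneous part of $\tilde M$. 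Hence there are at most four simple $(A_0,\kk)$-modules.

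Next I would identify these as $\kk$-modules and check they are pairwise non-isomorphic. Under $\kappa$ the subalgebra $\kk\simeq\sl(2)$ acts on $\CC[x,y]$ by $E=x\del_y$, $F=y\del_x$, $H=x\del_x-y\del_y$, i.e.\ by the standard action on polynomials, so $\CC[x,y]\simeq\bigoplus_{d\ge0}V_d$ as a $\kk$-module, with $H$-even part $V_0\oplus V_2\oplus V_4\oplus\cdots$ and $H$-odd part $V_1\oplus V_3\oplus V_5\oplus\cdots$. Since $\Fou$ acts on $\kappa(\kk)$ by $E\mapsto-F$, $F\mapsto-E$, $H\mapsto-H$, which is an inner automorphism of $\kk$, the module $\CC[x,y]^{\Fou}$ has the same $\kk$-decomposition, and its even, respectively odd, parts are again $V_0\oplus V_2\oplus\cdots$, respectively $V_1\oplus V_3\oplus\cdots$. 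Thus two of the four modules are $\simeq V_0\oplus V_2\oplus\cdots$ and two are $\simeq V_1\oplus V_3\oplus\cdots$. An even and an odd module are non-isomorphic already as $\kk$-modules; the two even modules (and likewise the two odd ones) are non-isomorphic because an isomorphism between them would, after applying $A\otimes_{A_0}(-)$, give an isomorphism $\CC[x,y]\simeq\CC[x,y]^{\Fou}$ of $A$-modules, which is impossible since $\del_x,\del_y$ act locally nilpotently on $\CC[x,y]$ whereas $x,y$ act locally nilpotently on $\CC[x,y]^{\Fou}$.

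The only real input is Theorem \refth{th500}; the rest is bookkeeping. I expect the two slightly delicate points to be the verification that passing to $A\otimes_{A_0}(-)$ loses no simple $(A_0,\kk)$-module (so that the bound of four is exact from below) and the separation of the two modules coming from $\CC[x,y]$ from those coming from $\CC[x,y]^{\Fou}$, which rests on the asymmetry between $\CC[x,y]$ and its Fourier transform.
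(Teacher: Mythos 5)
Your argument is correct and is exactly the intended unwinding of Theorem \ref{th500} and Lemma \ref{le103}, together with the reduction to simple $(A_0,\kk)$-modules stated just before Lemma \ref{le103}; you also make explicit the completeness direction (every simple $(A_0,\kk)$-module occurs as a homogeneous part of a simple $(A,\kk)$-module via $A\otimes_{A_0}(-)$ and its unique simple quotient), which the paper leaves implicit but which is genuinely needed to get \emph{exactly} four. The one place you are slightly imprecise is the justification that $A\otimes_{A_0}M_0$ is locally $\kk$-finite: finite generation of $A$ over $A_0$ is not by itself the right reason — what one actually uses is that $\ad\kappa(\kk)$ acts locally finitely on $A$, so that for $a\in A$, $m\in M_0$ one has $U(\kk)\cdot(a\otimes m)\subset(\ad U(\kk)\cdot a)\otimes U(\kk)\cdot m$, a finite-dimensional space — but this is a gap in exposition rather than in substance, and the rest of the argument (the parity bookkeeping, the computation $\Fou(E)=-F$, $\Fou(F)=-E$, $\Fou(H)=-H$ giving an inner automorphism of $\kk$, and the separation of $\CC[x,y]$ from $\CC[x,y]^{\Fou}$ via local nilpotence of $\partial_x,\partial_y$ versus $x,y$) is sound.
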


\begin{theorem} \label{th600}\myLabel{th600}\relax
In the principal case, up to isomorphism, there exist exactly two simple $(A,\kk)$-modules and they have the following $\kk$-module decompositions:
\begin{equation}
V_{0}\oplus V_{3}\oplus V_{6}\oplus V_{9}\oplus\text{\dots , }\quad V_{1}\oplus V_{4}\oplus V_{7}\oplus V_{10}\oplus\dots\quad .
\notag\end{equation}
\end{theorem}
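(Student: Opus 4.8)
The plan is to follow the proof of \refth{th500}. By \refle{le103} it is enough to classify the simple $(A,\kk)$-modules, so let $M$ be one of them. Since $M$ is a direct sum of finite dimensional $\kk$-modules, the raising operator $E=x\partial_y+y^2$ of $\kk$ and its lowering operator $3y\partial_x-\partial_y^2$ act locally nilpotently on $M$ (the semisimple element of $\kk$ being $3x\partial_x+y\partial_y+2$), so there is a nonzero $v\in M$ with $Ev=0$ and $M=Av$. Moreover, by the Remark following \refle{le103} the centralizer of $\kk$ in $A$ is abelian, so, arguing as in \refcor{cor98}, every simple $(A,\kk)$-module is multiplicity free and for every $m\in\ZZ_{\geq 0}$ at most one simple $(A,\kk)$-module has $V_m$ as a $\kk$-type. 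Thus the classification reduces to determining the possible minimal $\kk$-types and realizing one module for each.

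The new feature compared with \refth{th500} is that $E$ is a quadratic element, not a monomial; in fact on the modules below none of $x,y,\partial_x,\partial_y$ acts locally nilpotently, so the dichotomy used in \refth{th500} is not available. The device I would use is localization combined with an exponential conjugation. If $x$ acts injectively on $M$, pass to the localization $M_x$ over $A_x=\CC[x^{\pm 1},y]\langle\partial_x,\partial_y\rangle$. The element $y^3/(3x)\in A_x$ has locally nilpotent adjoint action, so $g=\exp(\pm y^3/(3x))$ defines an automorphism of $A_x$; one checks directly that it fixes $3x\partial_x+y\partial_y+2$ and carries $E=x\partial_y+y^2$ to the monomial $x\partial_y$. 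Twisting $M_x$ by this automorphism puts us in a situation formally identical to the root case of \refth{th500}: a $\kk$-highest vector is killed by $x\partial_y$, hence by $\partial_y$ since $x$ is invertible, and $M_x$ is recovered from a $\CC[x^{\pm 1},y]$-type module. The remaining possibility, that $x$ acts locally nilpotently on $M$, I would treat by the parallel device after localizing at $y$, or after applying $\Fou$ and then the corresponding conjugation; I expect this branch to reduce to the previous one or to be vacuous.

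Concretely, the resulting candidates are the simple $\kk$-finite subquotients of the $A$-module $\CC[x^{\pm 1},y]\exp(-y^3/(3x))$ and of its $\Fou$-twist, inside which the vectors $x^a\exp(-y^3/(3x))$ lie in $\ker E$. For each I would show, exactly as in the proof of \refle{le910}, that there is a unique simple infinite dimensional $\kk$-finite subquotient, and I would compute its $\kk$-character from the $\ZZ_2$-grading $\deg x=\deg y=\overline{1}$ together with a symmetric-power (Bott--Borel--Weil type) computation in the spirit of \refeq{eqGroups1}. The outcome should be the characters $1+z^3+z^6+\cdots$ and $z+z^4+z^7+\cdots$, that is, the $\kk$-decompositions $V_0\oplus V_3\oplus V_6\oplus\cdots$ and $V_1\oplus V_4\oplus V_7\oplus\cdots$. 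Since by the first two steps every simple $(A,\kk)$-module arises in this way, and since no $\kk$-type appears in two distinct simple $(A,\kk)$-modules, these two modules exhaust the list.

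The hardest part will be organizing the case analysis of the second step so that the localize--conjugate--descend procedure provably captures every simple $(A,\kk)$-module and produces no spurious one (in particular, cleanly disposing of the branch where $x$ acts locally nilpotently); once that is in place, the two $\kk$-decompositions follow from the finite symmetric-power computation of the third step.
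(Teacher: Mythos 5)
Your geometric device---localizing at $x$ and conjugating by $\exp(\pm y^3/(3x))$ to straighten $E = x\partial_y+y^2$ into the monomial $x\partial_y$---is essentially what the paper does, packaged in the explicit highest weight solutions $u_\lambda = x^{(\lambda-2)/3}\exp(-y^3/(3x))$. However, your organization of the argument has several genuine gaps beyond the ones you acknowledge.

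The most serious is the count of candidates. After conjugation, a highest weight vector of $H$-weight $\lambda$ becomes $x^{(\lambda-2)/3}$, so the localized module is $\mathcal F_\lambda := \CC[x^{\pm1},y]\,u_\lambda$, which depends only on $\lambda \bmod 3$; there are three possibilities $\mathcal F_0,\mathcal F_1,\mathcal F_2$, not two. Your proposed candidate $\CC[x^{\pm1},y]\exp(-y^3/(3x))$ is exactly $\mathcal F_2$, and the paper shows $\Gamma_\kk(\mathcal F_2)=0$ by solving the lowest-weight system $F\cdot v=0$, $H\cdot v=(-3k-2)v$ and verifying that it has no solutions in $\mathcal F_2$. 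The two modules of the theorem come from $\mathcal F_0$ and $\mathcal F_1$, which involve the fractional powers $x^{-2/3}$ and $x^{-1/3}$, not integer powers of $x$. Ruling out the $\mathcal F_2$ branch is essential to the ``exactly two'' statement, and your proposal never meets it.

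Second, the $\Fou$-twist is not a $(A,\kk)$-module in the principal case: $\Fou$ sends $E=x\partial_y+y^2$ to $-y\partial_x+\partial_y^2$, which is not a scalar multiple of the lowering operator $3y\partial_x-\partial_y^2$, so $\Fou$ does not preserve $\kappa(\kk)$. Consequently the dichotomy you borrow from \refth{th500} is unavailable, and the branch ``$x$ acts locally nilpotently'' cannot be handled by a Fourier transform. In the paper this branch is disposed of by the maximality of $\kk$ in $\gg=\sp(4)$: if some $g\in\gg\setminus\kk$ acted locally finitely on $M$, then $\gg[M]=\gg$ and $M$ would be finite dimensional, contradicting $\ker\kappa=\Ann L_{\threehalfs,\half}$; in particular $x^2\in\kappa(\gg)\setminus\kappa(\kk)$ acts injectively, so $x$ does too. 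This maximality argument is the key missing ingredient in your case analysis.

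Finally, the claim that conjugation puts you ``in a situation formally identical to the root case'' is inaccurate: only $E$ is straightened, while $H=3x\partial_x+y\partial_y+2$ retains the weighted grading with weight $3$ on $x$ and weight $1$ on $y$, which is precisely why the $\kk$-types jump by $3$ rather than by $2$ and why the $\mathcal F_2$ branch degenerates. This also means the character computation is not a direct transcription of \refth{th500}; the paper obtains it by identifying the $\bb_\kk$-singular vectors $u_{\lambda+3k}$, $k\ge0$, and checking $F^{\lambda+1}\cdot u_\lambda=0$ for $\lambda\equiv 0,1\ (\mod 3)$.
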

\begin{proof}
Note that $ \kk $ is a maximal subalgebra of $ \gg $. Hence, every element
$ g\in{\mathfrak g}\backslash\kk $ acts freely on a simple $ \left( A,\kk\right) $-module $ M $. In particular, $ x^{2} $ acts
freely on $ M $, and therefore $ x $ acts freely on $ M $. Let $ A_{x} $ be the localization
of $A$ in $ x $, and $ M_{x}:={A}_{x}\otimes_{{A}}M $. Then $ M\subset M_{x} $. Fix $0\neq m\in M $ with $ E\cdot m=0 $ and
$ H\cdot m=\lambda m $ for a minimal $ \lambda\in{\mathbb Z}_{\geq0} $. Since $E=x\del_y+y^2$ and $H=3\del_x+y\del_y+2$, we have
\[
\partial_{y}\cdot m=-\frac{y^{2}}{x}\cdot m\text{, }\partial_{x}\cdot m=\left(-\frac{y^{3}}{3x^{2}}+\frac{\lambda-2}{3x}\right)\cdot m.
\notag
\]
Therefore, $ M_{x}={\mathbb C}\left[x,x^{-1},y\right]\cdot m $. Set
\begin{equation}
u_{\lambda}\eqdef x^{\frac{\lambda-2}{3}}\exp{}\left({\frac{-y^{3}}{3x}}\right).
\notag
\end{equation}
Then it is easy to see that $ M_{x} $ is isomorphic to $ {\mathcal F}_{\lambda}\eqdef{\mathbb C}\left[x,x^{-1},y\right]u_{\lambda} $ and that
$ {\mathcal F}_{\lambda}={\mathcal F}_{\lambda+3} $. Hence, $ M_{x} $ is isomorphic $ {\mathcal F}_{0},{\mathcal F}_{1} $ or $ {\mathcal F}_{2} $.

Next we calculate $ \Gamma_\kk\left({\mathcal F}_{\lambda}\right)
$. Note that the space of $ {\mathfrak b}_{{\mathfrak k}} $-singular
vectors in $ {\mathcal F}_{\lambda} $ is spanned by the family $
u_{\lambda+3k} $, $ k\in{\mathbb Z} $ of solutions to the
differential equation
\begin{equation}
E\cdot u=x\partial_{y}(u)+y^{2}u=0.
\notag
\end{equation}
If $ \lambda\in{\mathbb Z}_{\geq0} $, then $ F^{\lambda+1}\cdot u_{\lambda} $ is again a $ {\mathfrak b}_{{\mathfrak k}} $-highest vector of
weight $ -\lambda-2 $. Therefore $ F^{\lambda+1}\cdot u_{\lambda}=c u_{-\lambda-2} $ for some constant $ c $. On the other hand,
$ u_{-\lambda-2}\in{\mathcal F}_{\lambda} $ iff \mbox{$ \lambda-\left(-\lambda-2\right)=2\lambda+2\in3{\mathbb Z} $} or $ \lambda=3k+2 $. Hence $ F^{\lambda+1} \cdot u_{\lambda}=0 $ for $ \lambda=3k $ or $\lambda=3k+1 $.
Thus, $ \Gamma_\kk\left({\mathcal F}_{0}\right) $ is generated by $ u_{3k} $ for $ k\geq0 $,  $ \Gamma_\kk\left({\mathcal F}_{1}\right) $ is generated by $ u_{3k+1} $ for
$ k\geq0 $, and we have the $\kk$-module decompositions
\begin{equation}
\Gamma_\kk\left({\mathcal F}_{0}\right)\simeq V_{0}\oplus V_{3}\oplus V_{6}\oplus V_{9}\oplus\text{\dots , }\Gamma_\kk\left({\mathcal F}_{1}\right)\simeq V_{1}\oplus V_{4}\oplus V_{7}\oplus V_{10}\oplus\dots.
\notag
\end{equation}

Let us prove that $ \Gamma_\kk\left({\mathcal F}_{0}\right) $ and $ \Gamma_\kk\left({\mathcal F}_{1}\right) $ are simple $ {A} $-modules. Indeed, let $ N $
be a proper submodule of $\Gamma_\kk(\mathcal{F}_0)$. If $ u_{\lambda}\in N $, then $ u_{\lambda+3k}=x^{k} u_{\lambda}\in N $ for all positive $ k $.
Choose the minimal $ \lambda $ such that $ u_{\lambda}\in N $. Then the quotient module has a
decomposition $ V_{\lambda-3}\oplus\dots \oplus V_{0} $, hence it is finite dimensional. Since $A$ has no non-zero finite dimensional modules, this is a contradiction. The case of  $\Gamma_\kk(\mathcal{F}_1)$ is very similar. In this way we obtain that, if $ M_{x}={\mathcal F}_{0} $ or $ {\mathcal F}_{1} $, then $ M $ is respectively isomorphic to $ \Gamma_\kk\left({\mathcal F}_{0}\right) $ or $ \Gamma_\kk\left({\mathcal F}_{1}\right) $.

Finally, we show that $ \Gamma_\kk\left({\mathcal F}_{2}\right)=0 $. It is sufficient to check that there is no
non-zero $v\in{\mathcal F}_{2} $ with $F\cdot v=0 $ and
\begin{equation}
H \cdot v=\left(-3k-2\right)v\text{ for }k\in{\mathbb Z}_{\geq0}.
\label{equ23}\end{equation}\myLabel{equ23,}\relax
Indeed, then $v$ would be a solution of the differential equation
\begin{equation}
3y v_{x}=v_{yy}.
\notag\end{equation}
Since $ v\in{\mathcal F}_{2} $,
\[
v = g \left( x,y \right) \exp{} \left( -\frac{y^{3}}{3x} \right)
\]
for some $g\left( x,y \right) \in {\mathbb C} \left[x,x^{-1},y\right]$ such that
\begin{equation}
3y g_{x}=g_{yy}-2\frac{y^{2}}{x}g_{y}-2\frac{y}{x}g.
\notag\end{equation}
As $ g\left(x,y\right) $ is homogeneous with respect to $ H $, we may assume without loss of
generality that
\begin{equation}
g\left(x,y\right)=\sum_{i=0}^{l} b_{i} x^{p-i}y^{3i+s},
\notag\end{equation}
where $ s\in{\mathbb Z}_{\geq0} $, $ p\in{\mathbb Z} $, $ b_{i}\in{\mathbb C} $, $ b_{0}=1 $. The equation on the highest term with respect to $ x $ gives the condition
\begin{equation}
\partial_{y}^{2}\left(y^{s}\right)=0,
\notag\end{equation}
or, equivalently, $ s=0,1$. But $ H\cdot g=\left(3p+s+2\right)g $, hence $ H \cdot v=\left(3p+s+2\right)\cdot v $. Therefore
\begin{equation}
H \cdot v=\left(3p+2\right) v\text{ or }H\cdot v=\left(3p+3\right) v,
\notag\end{equation}
and~\eqref{equ23} does not hold.\end{proof}

\refth{th600} together with \refle{le103} yield the following.

\begin{corollary} \label{cor36}\myLabel{cor36}\relax
In the principal case, up to isomorphism, there are exactly four simple $(\gg,\kk)$-modules with central character $\chi(\threehalfs,\half)$. They have the following $\kk$-module decompositions:
\begin{equation}\label{eq72}
V_{0}\oplus V_{6}\oplus V_{12}\oplus\text{\dots , }V_{1}\oplus V_{7}\oplus V_{13}\oplus\text{\dots , }V_{3}\oplus V_{9}\oplus V_{15}\oplus\text{\dots , }V_{4}\oplus V_{10}\oplus V_{16}\oplus\dots .
\end{equation}
\end{corollary}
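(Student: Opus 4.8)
The plan is to obtain the statement by combining \refth{th600} with \refle{le103}, via the isomorphism $\univ(\gg)/\Ann L_{\threehalfs,\half}\cong A_0$ induced by $\kappa$ (recorded just before \refle{le103}). First I would set up the dictionary between the modules we want to classify and the simple $(A_0,\kk)$-modules. In one direction, by \refcor{cor101} and Duflo's theorem any simple bounded $(\gg,\kk)$-module with central character $\chi(\threehalfs,\half)$ has annihilator $\Ann L_{\threehalfs,\half}=\ker\kappa$, hence is canonically a simple $(A_0,\kk)$-module; in the other direction a simple $(A_0,\kk)$-module is a $\gg$-module annihilated by $\ker\kappa$, hence has the right central character, and it will turn out to be multiplicity free, so it lies in $\BB_\kk^{\chi(\threehalfs,\half)}$. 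Thus the problem is reduced to counting and describing the simple $(A_0,\kk)$-modules.

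Next I would use \refle{le103} to pass between $A_0$ and the full Weyl algebra $A$. It tells us that each simple $(A,\kk)$-module $M$ splits as $M=M_0\oplus M_1$ into its $H$-even and $H$-odd parts, two non-isomorphic simple $(A_0,\kk)$-modules, with $M=A\otimes_{A_0}M_0$. For the converse, given a simple $(A_0,\kk)$-module $N$ I would form $\tilde N:=A\otimes_{A_0}N$; since $A=A_0\oplus A_1$ with $A_1$ an invertible $A_0$-bimodule, and since the twist $N\mapsto A_1\otimes_{A_0}N$ reverses the parity of the $H$-eigenvalues, the two $A_0$-summands of $\tilde N$ are non-isomorphic simple $A_0$-modules, so $\tilde N$ is simple over $A$. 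To see $\tilde N$ is $\kk$-finite I would use that $A$ is generated as a left $A_0$-module by the finite-dimensional $\kk$-submodule $\CC\cdot 1\oplus W$, $W:=\CC x\oplus\CC y\oplus\CC\del_x\oplus\CC\del_y$, so that $\tilde N$ is a $\kk$-equivariant quotient of the $\kk$-finite space $(\CC\cdot 1\oplus W)\otimes_\CC N$. Hence $N$ occurs as one of the two graded components of the simple $(A,\kk)$-module $\tilde N$, and therefore every simple $(A_0,\kk)$-module arises in this way.

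Finally I would feed in \refth{th600}: in the principal case there are exactly two simple $(A,\kk)$-modules, with $\kk$-decompositions $\bigoplus_{j\geq 0}V_{3j}$ and $\bigoplus_{j\geq 0}V_{3j+1}$, both multiplicity free. Splitting each by the parity of the $H$-weights --- the weights of $V_{3j}$ are all even exactly when $j$ is even, and those of $V_{3j+1}$ exactly when $j$ is odd --- yields four pairwise non-isomorphic simple $(A_0,\kk)$-modules with $\kk$-decompositions $\bigoplus_{i\geq 0}V_{6i}$, $\bigoplus_{i\geq 0}V_{6i+3}$, $\bigoplus_{i\geq 0}V_{6i+1}$, $\bigoplus_{i\geq 0}V_{6i+4}$, which are exactly the four modules listed in \refeq{eq72}. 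The step that really needs care is the converse direction in the previous paragraph --- establishing that $A\otimes_{A_0}N$ is both simple and $\kk$-finite --- which rests on the standard theory of $\ZZ$-graded extensions together with the explicit form of $\kappa(\kk)$ and $A_0$; the remainder is a routine count of weights.
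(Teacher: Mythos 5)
Your proposal is correct and takes the same route as the paper, which proves \refcor{cor36} simply by citing \refth{th600} and \refle{le103}: reduce via $\kappa$ to simple $(A_0,\kk)$-modules, pass to $(A,\kk)$-modules, and split by $H$-parity. Your explicit verification of the converse of \refle{le103} (that $A\otimes_{A_0}N$ is a simple $\kk$-finite $A$-module, using that $A$ is strongly $\ZZ_2$-graded) is a useful detail that the paper's one-line proof leaves implicit, but it is not a different argument.
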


\section{$\kk$-characters of simple  bounded ($\sp(4)$, $\sl(2))$-modules}\label{secKchars4-2}
\subsection{The root case.}
In this case, the four simple modules of \refcor{cor35} are nothing but the simple highest weight modules $L_{\threehalfs,\half}$, $L_{\threehalfs,-\half}$, and their respective restricted duals $L'_{\threehalfs,\half}$, $L'_{\threehalfs,-\half}$, i.e. the simple $\bb$-lowest weight modules with lowest weights $(-\threehalfs,-\half)$ and $(-\threehalfs, \half)$. Therefore, by Corollaries \ref{cor101},\ref{cor102} we conclude that all simple bounded $(\gg,\kk)$-modules are precisely $L_{a,b}$ and the lowest weight modules $L_{-a,-b}'$, where $a>|b|\in\half+\ZZ$. Since $c(L_{a,b})=c(L_{-a,-b}')$, it suffices to compute $c(L_{a,b})$, for $a,b$ as above.

The $\hh$-character of $L_{a,b}$ is given by the formula
\begin{equation}\label{eq61}
\ch_\hh L_{a,b}=\frac{(x^{a-b}-x^{b-a})(y^{a+b}-y^{-a-b})}{(x-x^{-1})(y-y^{-1})(xy-x^{-1}y^{-1})(x^{-1}y-xy^{-1})},
\end{equation}
where $x=e^{\frac{\eps_1-\eps_2}{2}}$, $y=e^{\frac{\eps_1+\eps_2}{2}}$. We rewrite \refeq{eq61} as
\begin{equation}\label{eq92}
\frac{(x^{a-b}-x^{b-a})(y^{a-b}-y^{b-a})}{(x-x^{-1})(y-y^{-1})}y^{-2}(1-x^2y^{-2})^{-1}(1-x^{-2}y^{-2})^{-1}.
\end{equation}

Next we note that
\begin{equation}\label{eq93}
(1-x^2y^{-2})^{-1}(1-x^{-2}y^{-2})^{-1}=\sum_{k=0}^{\infty}y^{-2k}(x^{2k}+x^{2k-4}+\dots+x^{-2k}),
\end{equation}
and use the expression
\[
z^k=x^k+x^{k-2}+\dots+x^{-k}=\frac{x^{k+1}-x^{-(k+1)}}{x-x^{-1}}
\]
to rewrite the right-hand side of \refeq{eq93} in the form
\[
\sum_{k=0}^{\infty}y^{-2k}(z^{2k}-z^{2k-2}+\dots+(-1)^k)=\frac{1}{1+y^{2}}\sum_{k=0}^\infty z^{2k}y^{-2k}.
\]
Now \refeq{eq92} becomes
\[
\ch_\hh L_{a,b}=z^{a-b-1}\frac{y^{a+b}-y^{-a-b}}{y-y^{-1}}\frac{1}{1+y^{2}}\sum_{k=0}^\infty z^{2k}y^{-2k}.
\]
To find the $\kk$-character of $L_{a,b}$, we set $y=1$:
\begin{equation}\label{eq94}
c(L_{a,b})=\frac{a+b}{2}z^{a-b-1}\otimes\sum_{k=0}^\infty z^{2k}.
\end{equation}
Thus, equation \refeq{eq94} implies the following result.
\begin{theorem}~
\begin{itemize}
\item[(a)] If $a-b$ is even and $a+b$ is odd, then
\[
c(L_{a,b})=\frac{a+b}{2}(2z+4z^3+\dots+(a-b)z^{a-b-1}+(a-b)z^{a-b+1}+\dots).
\]
\item[(b)] If $a-b$ is odd and $a+b$ is even, then
\[
c(L_{a,b})=\frac{a+b}{2}(1+3z^2+5z^4+\dots+(a-b)z^{a-b-1}+(a-b)z^{a-b+1}+\dots).
\]
\item[(c)] In the case (a) the minimal $\kk$-type is $V_1$ and its multiplicity is $a+b$. In the case (b) the minimal $\kk$-type is $V_0$ and its multiplicity is $\frac{a+b}{2}$.
\item[(d)] For sufficiently large $i$,
\[
c_i(L_{a,b})=c_{i+2}(L_{a+b})=\frac{(a^2+b^2)(1+(-1)^{a+b-i})}{4}.
\]
\item[(e)] $L_{a,b}$ is $\kk$-multiplicity free if and only if $a=\threehalfs$, hence the only simple multiplicity free $(\gg,\kk)$-modules are those with central character $\chi(\threehalfs, \half)$, i.e. the four $\gg$-modules from \refcor{cor36}.
\end{itemize}
\end{theorem}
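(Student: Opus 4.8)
The plan is to read off all five assertions directly from formula \refeq{eq94}, which already expresses $c(L_{a,b})$ as $\frac{a+b}{2}\,z^{a-b-1}\otimes\sum_{k\ge 0}z^{2k}$; nothing beyond elementary $\sl(2)$ combinatorics is then needed. The single preliminary computation I would carry out is this: for a fixed integer $N\ge 0$, expand $z^N\otimes\sum_{k\ge 0}z^{2k}$ using the Clebsch--Gordan rule $z^p\otimes z^q=z^{p+q}+z^{p+q-2}+\dots+z^{|p-q|}$ (which is immediate from the definition of $\otimes$). The coefficient of $z^m$ is then the number of $k\ge 0$ with $|N-2k|\le m\le N+2k$; a short count shows this coefficient vanishes unless $m\equiv N\pmod 2$, and for such $m$ it equals $m+1$ when $0\le m<N$ and stabilizes to $N+1$ for every $m\ge N$.

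Since $a>|b|$ forces $a-b\in\ZZ_{>0}$, I may apply this with $N=a-b-1\ge 0$. The parity bookkeeping rests on the remark that for $a,b\in\frac12+\ZZ$ one has $a-b-1\equiv a+b\pmod 2$, so exactly one of $a-b$ and $a+b$ is even; this is what makes cases (a) and (b) complementary. Multiplying the expansion above by $\frac{a+b}{2}$ and inserting $N=a-b-1$ gives (a) when $a-b$ is even ($N$ odd, lowest surviving term $z^1$, coefficients $2,4,\dots$ up to the stable value) and (b) when $a-b$ is odd ($N$ even, lowest surviving term $z^0$, coefficients $1,3,\dots$), the stable coefficient being $a-b$ in both cases. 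Reading off the bottom term yields (c): the minimal $\kk$-type is $V_1$ with multiplicity $\frac{a+b}{2}\cdot 2=a+b$ in case (a) and $V_0$ with multiplicity $\frac{a+b}{2}\cdot 1=\frac{a+b}{2}$ in case (b). For (d) I would invoke the stabilization: for $m\ge a-b-1$ the coefficient of $z^m$ in $\sum_k(z^{a-b-1}\otimes z^{2k})$ is $a-b$, so for all sufficiently large $m$ one has $c_m(L_{a,b})=c_{m+2}(L_{a,b})$, with value $\frac{a+b}{2}(a-b)$ on the parity class $m\equiv a+b\pmod 2$ and $0$ on the other, which is the periodic formula of part (d).

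For (e), observe that by the previous paragraph the largest multiplicity occurring in $c(L_{a,b})$ is exactly the stable value $\frac{(a+b)(a-b)}{2}$, so $L_{a,b}$ is $\kk$-multiplicity free if and only if $\frac{(a+b)(a-b)}{2}\le 1$. Using $a>|b|$ and $a,b\in\frac12+\ZZ$ one checks that $(a+b)(a-b)\ge 2$ always, with equality only when $\{a-b,a+b\}=\{1,2\}$, i.e. for $a=\frac32$ (hence $b=\pm\frac12$); so $L_{a,b}$ is multiplicity free precisely when $a=\frac32$. Finally I would note that $\frac32\eps_1+\frac12\eps_2$ and $\frac32\eps_1-\frac12\eps_2$ lie in one $W$-orbit, so $\chi(\frac32,\frac12)=\chi(\frac32,-\frac12)$, and appeal to \refcor{cor35} to identify $L_{\frac32,\pm\frac12}$ together with their restricted duals $L'_{-\frac32,\mp\frac12}$ as exactly the four simple $(\gg,\kk)$-modules with that central character.

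I do not expect a genuine obstacle: once \refeq{eq94} is in hand the whole statement is bookkeeping. The only points that need care are getting the parity conditions right --- so that (a) and (b) are genuinely complementary and the minimal exponents $z^1$ and $z^0$ fall in the correct case --- and pinning down the precise threshold $m=a-b-1$ beyond which the $\kk$-multiplicities become constant.
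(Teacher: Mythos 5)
Your argument is correct and is exactly what the paper intends: the paper's proof of this theorem consists solely of the remark ``equation \refeq{eq94} implies the following result,'' so your explicit expansion of $z^{N}\otimes\sum_{k\ge 0}z^{2k}$ (with $N=a-b-1$) and the parity observation $a-b-1\equiv a+b\pmod 2$ are precisely the bookkeeping the authors left to the reader; your treatment of (a)--(e) is sound.

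One caveat worth flagging, since you claim agreement with the printed statement without noting the discrepancy: the stable multiplicity you (correctly) derive is $\tfrac{a+b}{2}\cdot(a-b)=\tfrac{a^{2}-b^{2}}{2}$, whereas part (d) as printed has $a^{2}+b^{2}$ in the numerator. That is a misprint in the paper (as is $c_{i+2}(L_{a+b})$ for $c_{i+2}(L_{a,b})$, and the reference to \refcor{cor36} in part (e), which should be \refcor{cor35} --- the corollary you in fact invoke). A quick check with $(a,b)=(\tfrac32,\tfrac12)$ gives stable multiplicity $1$, which matches $\tfrac{a^{2}-b^{2}}{2}=1$ but not $\tfrac{a^{2}+b^{2}}{2}=\tfrac54$. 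You should say explicitly that your computation corrects these typos rather than assert that your value ``is the periodic formula of part (d).''
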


\subsection{The principal case.}
We now proceed to calculating the $\kk$-characters of all simple bounded $(\gg,\kk)$-modules where $\gg=\sp(4)$ and $\kk$ is the principal subalgebra of $\gg$ fixed in \refse{se6}. In this case, let $M_{\threehalfs,\half}^0$ and $M_{\threehalfs,\half}^1$ denote the simple bounded $(\gg,\kk)$-modules with central character $\chi (\threehalfs,\half)$ and respective $\kk$-module decompositions $V_0\oplus V_6\oplus V_{12}\oplus\dots$ and  $V_1\oplus V_7\oplus V_{13}\oplus\dots$. We set $M_{a,b}^s \eqdef T_{a\eps_1+b\eps_2}^{\threehalfs\eps_1+\half\eps_2}(M_{\threehalfs,\half}^s)$ for $a,b\in\half+\ZZ, a>|b|$, $s\in\{0,1\}$, and $M_{a,b}^s:=0$ for $a,b\in\half+\ZZ, a\leq|b|$, $s\in\{0,1\}$. By $V_{p,q}$ we denote the simple finite dimensional $\gg=\sp(4)$-module with $\bb$-highest weight $p\eps_1+q\eps_2$ ($p,q\in \ZZ_{\geq 0}$, $p\geq q$).
\begin {lemma}\label{le200}
We have
\begin{equation}\label{eq1155}
V_{1,0}\otimes M_{a,b}^s \simeq M_{a+1,b}^s\oplus M_{a,b+1}^s\oplus M_{a-1,b}^s\oplus M_{a,b-1}^s,
\end{equation}
and, for $a\neq |b|+1$,
\begin{equation}\label{starstar}
V_{1,1}\otimes M_{a,b}^s \simeq M_{a+1,b+1}^s\oplus M_{a,b}^s\oplus M_{a-1,b+1}^s\oplus M_{a+1,b-1}^s\oplus M_{a-1,b-1}^s.
\end{equation}
If $a=b+1,b>0$, then
\begin{equation}\label{star1star}
V_{1,1}\otimes M_{a,b}^s\simeq M_{a+1,b+1}^s\oplus M_{a+1,b-1}^s
\oplus M_{a-1,b-1}^s,
\end{equation}
and if $a=-b+1,b<0$, then
\begin{equation}\label{star2star}
 V_{1,1}\otimes M_{a,b}^s\simeq M_{a+1,b+1}^s\oplus M_{a+1,b-1}^s
\oplus M_{a-1,b+1}^s.
\end{equation}
\end{lemma}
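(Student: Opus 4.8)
The plan is to exploit that, by \refcor{cor102}, for $a>|b|$ with $a,b\in\half+\ZZ$ the module $M_{a,b}^s$ is the image of one of the four simple objects of the minimal block $\BB_\kk^{\chi(\threehalfs,\half)}$ under a translation equivalence onto $\BB_\kk^{\chi(a,b)}$, and that these equivalences compose transitively (the composite of the equivalences $\BB_\kk^{\chi(\threehalfs,\half)}\to\BB_\kk^{\chi(a,b)}\to\BB_\kk^{\chi(\mu)}$ is the equivalence $\BB_\kk^{\chi(\threehalfs,\half)}\to\BB_\kk^{\chi(\mu)}$ whenever $\mu=\mu_1\eps_1+\mu_2\eps_2$ also satisfies $\mu_1>|\mu_2|$, $\mu_1,\mu_2\in\half+\ZZ$). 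First I would note that for any finite dimensional $\gg$-module $V$, $V\otimes M_{a,b}^s$ is again a bounded $(\gg,\kk)$-module (the $\kk$-multiplicities are multiplied by at most $\dim V$, and $\kk\subset\gg[V\otimes M_{a,b}^s]$ since $V$ is $\kk$-finite), and that all its simple subquotients are infinite dimensional, because no finite dimensional $\gg$-module has central character $\chi(a',b')$ with $a',b'\in\half+\ZZ$.

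Next, I would decompose by central character, $V\otimes M_{a,b}^s=\bigoplus_\theta(V\otimes M_{a,b}^s)_\theta$, where $\theta$ ranges over the $\chi((a,b)+\nu)$ for $\nu$ a weight of $V$; for $V=V_{1,0}$ the shifts $(a,b)+\nu$ are $(a\pm1,b),(a,b\pm1)$, and for $V=V_{1,1}$ they are $(a\pm1,b\pm1)$ together with $(a,b)$ itself. For a shift $\mu$ lying in the region $\mu_1>|\mu_2|$, $\mu_1,\mu_2\in\half+\ZZ$, which occurs for a unique weight $\nu$ of $V$ (up to $W_\gg$-conjugacy of $\mu$), the corresponding component is, directly from the definition of the translation functor (the relevant dominant extremal weight of $V$ is $\eps_1$ for $V_{1,0}$ and $\eps_1+\eps_2$ for $V_{1,1}$, each of multiplicity one), isomorphic to the translate of $M_{a,b}^s$, hence equal to $M_\mu^s$ by transitivity. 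For a shift $\mu$ with $\mu_1=|\mu_2|$ the central character $\chi(\mu)$ carries no bounded $(\gg,\kk)$-module at all — by \refcor{cor101} the annihilator of a simple bounded module is $\ann L_{a',b'}$ with $a'>|b'|$ strictly — so that component is $0$, in agreement with the convention $M_\mu^s=0$. Since for $V=V_{1,0}$ every shift is of one of these two types and the four central characters are distinct, this gives \refeq{eq1155}. For $V=V_{1,1}$ the same reasoning yields \refeq{starstar} whenever $a\ne|b|+1$: the interior weight $\nu=0$ then has central character distinct from all four corner ones and contributes the summand $M_{a,b}^s$, the corners contributing $M_{a\pm1,b\pm1}^s$ with the convention absorbing the walls.

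The remaining, genuinely more delicate, cases are $a=b+1$ with $b>0$ and $a=-b+1$ with $b<0$. Here the interior weight $\nu=0$ and exactly one corner weight of $V_{1,1}$ produce the \emph{same} central character $\chi(a,b)$ — namely because $(a-1,b+1)=(b,b+1)\in W_\gg\cdot(a,b)$ in the first case and $(a-1,b-1)\in W_\gg\cdot(a,b)$ in the second — so the component $(V_{1,1}\otimes M_{a,b}^s)_{\chi(a,b)}$ receives two contributions and is no longer controlled by a single equivalence; what \refeq{star1star} and \refeq{star2star} assert is precisely that this component vanishes, so that the naive summand $M_{a,b}^s$ is absent. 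I expect this vanishing to be the main obstacle. I would settle it by a multiplicity computation: using \refle{le202}(b), and that over the principal $\kk$ one has $V_{1,1}\simeq V_4$, one writes out $c(V_{1,1}\otimes M_{a,b}^s)$, compares it near the minimal $\kk$-type with the sum of the $\kk$-characters of the three modules $M_{a+1,b+1}^s$, $M_{a+1,b-1}^s$, $M_{a-1,b-1}^s$ already identified above, and checks that the two agree, which forces the extra $\chi(a,b)$-component to be zero. Alternatively the vanishing can be read off from the explicit Weyl-algebra (equivalently, twisted $\DD$-module) realizations of the $M_{a,b}^s$: tensoring with $V_{1,1}$ amounts to a line-bundle twist, the decomposition reduces to a Bott--Borel--Weil computation on the closed $K$-orbit, and the boundary case $a=\pm b+1$ corresponds to a degenerate configuration in which one cohomology group vanishes.
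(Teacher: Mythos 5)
Your treatment of the generic cases \refeq{eq1155} and \refeq{starstar} is essentially the paper's argument recast at the level of modules rather than sheaves: the paper filters the localization $\MM_{a,b}^s$ by line-bundle twists indexed by the weights of $V$ and observes that the filtration splits because $Z_\univ$ acts by pairwise distinct characters on the graded pieces; you project $V\otimes M_{a,b}^s$ onto infinitesimal blocks and identify each block via the translation equivalence. These are two descriptions of the same mechanism and there is no real gap here, provided one notes (as the paper does implicitly) that the translation functor applied to the simple module $M_{a,b}^s$ produces the full $\theta$-component, so one needs the central characters of the graded pieces to be distinct, which is exactly the condition $a\neq|b|+1$ for $V_{1,1}$.

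The genuine gap is in your treatment of the boundary cases $a=b+1$, $b>0$ and $a=-b+1$, $b<0$. You correctly isolate the problem: the interior weight $\nu=0$ and one corner weight of $V_{1,1}$ yield the same central character $\chi(a,b)$, so the $\chi(a,b)$-component of $V_{1,1}\otimes M_{a,b}^s$ is not controlled by a single translation equivalence, and \refeq{star1star}--\refeq{star2star} are exactly the assertion that this component vanishes. But the resolution you propose is circular within the logic of this section. You suggest comparing $c(V_{1,1}\otimes M_{a,b}^s)$ with $c(M_{a+1,b+1}^s)+c(M_{a+1,b-1}^s)+c(M_{a-1,b-1}^s)$; however, at the point where \refle{le200} is stated, the only $\kk$-characters available are those of the four modules with central character $\chi(\threehalfs,\half)$ from \refcor{cor36}. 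The $\kk$-characters of all other $M_{a,b}^s$ --- including $M_{\threehalfs,-\half}^s$ in \refle{le201}, and all the modules appearing on the right-hand side in the boundary cases --- are derived \emph{from} \refle{le200} via the recursion (c1)--(c4) and \refth{th206}. So a character comparison cannot be used to establish the lemma. Your alternative sketch (``tensoring with $V_{1,1}$ amounts to a line-bundle twist, ... reduces to a Bott--Borel--Weil computation'') is not an argument: the $M_{a,b}^s$ in the principal case are realized as Weyl-algebra modules, not as $\DD$-module pushforwards from a closed $K$-orbit, and tensoring with $V_{1,1}$ corresponds to a filtration by line-bundle twists on $G/B$, not a single twist. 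The paper avoids circularity by a different device: it uses \refeq{eq1155} (already proved) to compute the constituents of $V_{1,0}\otimes V_{1,0}\otimes M_{a,b}^s$ with central character $\chi(a,b)$ (there are exactly two, both $\simeq M_{a,b}^s$), decomposes $V_{1,0}\otimes V_{1,0}\simeq V_{2,0}\oplus V_{1,1}\oplus V_{0,0}$, notes that $V_{0,0}\otimes M_{a,b}^s\simeq M_{a,b}^s$ trivially and that $V_{2,0}\otimes M_{a,b}^s$ has a constituent $\simeq M_{a,b}^s$ because $V_{2,0}$ is the adjoint representation and the $\gg$-module structure on $M_{a,b}^s$ furnishes a nonzero intertwiner $V_{2,0}\otimes M_{a,b}^s\to M_{a,b}^s$, and concludes that $V_{1,1}\otimes M_{a,b}^s$ has no constituent with central character $\chi(a,b)$. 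You would need this (or an equally self-contained) argument to close the boundary case.
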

\newcommand{\mabs}{\MM_{a,b}^s}
\begin{proof}
Let us first prove \refeq{eq1155}. Let
$\MM_{a,b}^s\eqdef\DD_{G/B}^{a,|b|}\otimes_{\univ^{\chi(a,b)}}M_{a,b}^s$
be the localization of $M_{a,b}$ on $G/B$.
Then as a sheaf of $\univ$-modules $V_{1,0}\otimes\MM_{a,b}^s$ has a filtration of length 4 with the following associated factors given in increasing order:
\[
\OO(-\eps_1)\otimes_\OO \mabs, \quad \OO(-\eps_2)\otimes_\OO\mabs,\quad
\OO(\eps_2)\otimes_\OO\mabs,\quad \OO(\eps_1)\otimes_\OO\mabs.
\]
Note that $Z_\univ$ acts via a character on any of the four associated
 factors, and that these characters are pairwise distinct.
 Therefore, as a sheaf of $\univ$-modules, $V_{1,0}\otimes\mabs$ is isomorphic to the direct sum
\[
\left(\OO(-\eps_1)\otimes_\OO \mabs\right) \oplus \left(\OO(-\eps_2)
\otimes_\OO\mabs\right)\oplus\left( \OO(\eps_2)\otimes_\OO\mabs\right)
\oplus(\OO(\eps_1)\otimes\mabs).
\]
Now we calculate $\Gamma(G/B,V_{1,0}\otimes\mabs)$. If $a=b+1,b>0$, then
$$\Gamma(G/B,\OO(-\eps_1)\otimes_\OO \mabs)=
\Gamma(G/B,\OO(\eps_2)\otimes_\OO \mabs)=0$$
 as there are no bounded modules with these central characters. Similarly, if
$a=-b+1$, $b<0$, then
$$\Gamma(G/B,\OO(-\eps_1)\otimes_\OO \mabs)=
\Gamma(G/B,\OO(-\eps_2)\otimes_\OO \mabs)=0.$$
In all other cases
$$\Gamma(G/B,\OO(\pm\eps_1)\otimes_\OO \mabs)\simeq M^s_{a\pm 1,b},$$
$$\Gamma(G/B,\OO(\pm\eps_2)\otimes_\OO \mabs)\simeq M^s_{a,b \pm 1}.$$
Thus, \refeq{eq1155} is established.

Consider \refeq{starstar}. Then as a sheaf of $\univ$-modules $V_{1,1}\otimes\MM_{a,b}^s$ has a filtration of length 5 with the following associated factors given in increasing order:
\[
\OO(-\eps_1-\eps_2)\otimes_\OO \mabs, \quad \OO(\eps_1-\eps_2)\otimes_\OO\mabs, \quad \mabs,
\]
\[
\OO(-\eps_1+\eps_2)\otimes_\OO\mabs,\quad \OO(\eps_1+\eps_2)\otimes_\OO\mabs.
\]
Note that $Z_\univ$ acts via a character on any of the five associated
factors, and that these characters are pairwise distinct if $a\neq
|b|+1$. Therefore the proof of \refeq{starstar} is very similar to that of \refeq{eq1155}.

Let now $a=b+1$. Then $\mabs$ and $\OO(-\eps_1+\eps_2)\otimes_\OO\mabs$
both afford the central character $\chi (a,b)$. Thus, as a sheaf of
$\univ$-modules,
$V_{1,1}\otimes\mabs$ is isomorphic to the direct sum
\begin{equation}\label{eqle112}
\left(\OO(-\eps_1-\eps_2)\otimes_\OO \mabs\right) \oplus \left(\OO(\eps_1-\eps_2)\otimes_\OO\mabs\right)\oplus \left(\mabs\right)' \oplus
\end{equation}
\[
\oplus\left( \OO(\eps_1+\eps_2)\otimes_\OO\mabs\right),
\]
where for $\left(\mabs\right)'$ we have an exact sequence
\[
0\to\mabs\to\left(\mabs\right)'\to\OO(-\eps_1+\eps_2)\otimes_\OO\mabs\to 0.
\]
We will show that $\Gamma(G/B,(\mabs)')=0$. It suffices to show that
the tensor product $V_{1,1}\otimes M^s_{a,b}$ has no simple constituent with central character $\chi (a,b)$. Indeed, from \refeq{eq1155}, we see that $V_{1,0}\otimes V_{1,0}\otimes M^s_{a,b}$ has exactly two simple constituents affording the central character $\chi (a,b)$ and that both these constituents are isomorphic to $M^s_{a,b}$. Recall that
\begin{equation*}
V_{1,0}\otimes V_{1,0}\cong V_{2,0}\oplus V_{1,1}\oplus V_{0,0}.
\end{equation*}
Clearly, $V_{0,0}\otimes M^s_{a,b}=M^s_{a,b}$. Furthermore, $V_{2,0}$ is the adjoint representation and therefore the very $\gg$-module structure on $M^s_{a,b}$ defines a non-trivial intertwining operator $V_{2,0}\otimes M^s_{a,b}\to M^s_{a,b}$. Thus, $V_{2,0}\otimes M^s_{a,b}$ must have a constituent isomorphic to $M^s_{a,b}$ and consequently $V_{1,1}\otimes M^s_{a,b}$ has no simple constituent affording the central character $\chi(a,b)$. By taking the global sections of the direct sum \refeq{eqle112} we obtain \refeq{star1star}. The case $a=-b+1$, which leads to \refeq{star2star}, is similar.
\end{proof}

\begin{lemma}\label{le201}
There is the following $\kk$-module decomposition
\begin{equation} \label {e1}
M_{\threehalfs,-\half}^{s}\simeq V_{3+s}\oplus V_{9+s}\oplus V_{15+s}\oplus\dots\text{ .}
\end{equation}
\end{lemma}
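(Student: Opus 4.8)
The plan is to identify $M_{\threehalfs,-\half}^{s}$ with one of the four simple $(\gg,\kk)$-modules of central character $\chi(\threehalfs,\half)$ classified in \refcor{cor36}, and then to single out the correct one by a short parity computation with $\kk$-types.

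First I would note that the weights $\threehalfs\eps_1-\half\eps_2$ and $\threehalfs\eps_1+\half\eps_2$ lie in the same $W_\gg$-orbit (the reflection $\eps_2\mapsto-\eps_2$ interchanges them), so that $\chi(\threehalfs,-\half)=\chi(\threehalfs,\half)$. Hence $M_{\threehalfs,-\half}^{s}$ has central character $\chi(\threehalfs,\half)$; it is simple, being the image of $M_{\threehalfs,\half}^{s}$ under the equivalence of \refcor{cor102}, and infinite-dimensional since no finite-dimensional $\gg$-module has this central character. By \refcor{cor36}, its $\kk$-character is therefore one of
\[
1+z^6+z^{12}+\dots,\qquad z+z^7+z^{13}+\dots,\qquad z^3+z^9+z^{15}+\dots,\qquad z^4+z^{10}+z^{16}+\dots\,.
\]

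Next I would specialize \refeq{eq1155} of \refle{le200} to $a=\threehalfs$, $b=\half$. Since $M_{\threehalfs,\threehalfs}^{s}=M_{\half,\half}^{s}=0$ (by the convention $M_{a,b}^{s}:=0$ for $a\le|b|$), it reduces to $V_{1,0}\otimes M_{\threehalfs,\half}^{s}\simeq M_{\frac{5}{2},\half}^{s}\oplus M_{\threehalfs,-\half}^{s}$. In particular $M_{\threehalfs,-\half}^{s}$ is a direct summand of $V_{1,0}\otimes M_{\threehalfs,\half}^{s}$, so its $\kk$-character is dominated coefficientwise by that of $V_{1,0}\otimes M_{\threehalfs,\half}^{s}$. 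Now $V_{1,0}$ restricts to $\kk$ as the irreducible module $V_3$: the principal $\sl(2)$ acts irreducibly on the standard $4$-dimensional $\sp(4)$-module, and concretely the $H$-eigenvalues on the weights $\pm\eps_1,\pm\eps_2$ are $\pm3,\pm1$, which forces $V_{1,0}|_{\kk}\simeq V_3$. Since the $\kk$-types of $M_{\threehalfs,\half}^{s}$ all have highest weight $\equiv s\pmod{2}$ while $V_3$ has odd highest weight, every $\kk$-type of $V_{1,0}\otimes M_{\threehalfs,\half}^{s}$ has highest weight $\equiv s+1\pmod{2}$; moreover $V_1$ is not a $\kk$-type of $V_{1,0}\otimes M_{\threehalfs,\half}^{0}$ (this would force one of $V_2,V_3,V_4$ to occur in $M_{\threehalfs,\half}^{0}=V_0\oplus V_6\oplus\dots$, which is not the case), and $V_0$ is not a $\kk$-type of $V_{1,0}\otimes M_{\threehalfs,\half}^{1}$ (this would force $V_3$ to occur in $M_{\threehalfs,\half}^{1}=V_1\oplus V_7\oplus\dots$, which is not the case).

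Comparing these constraints with the four candidate characters, for $s=0$ only $z^3+z^9+z^{15}+\dots$ consists solely of odd powers and has vanishing $z$-coefficient, so $M_{\threehalfs,-\half}^{0}\simeq V_3\oplus V_9\oplus V_{15}\oplus\dots$; for $s=1$ only $z^4+z^{10}+z^{16}+\dots$ consists solely of even powers and has vanishing constant term, so $M_{\threehalfs,-\half}^{1}\simeq V_4\oplus V_{10}\oplus V_{16}\oplus\dots$. Taken together these are precisely \refeq{e1}. The argument is essentially bookkeeping and I do not expect a real obstacle; the one point that requires attention is correlating the index $s$ through the passage from $M_{\threehalfs,\half}^{s}$ to $M_{\threehalfs,-\half}^{s}$, and this is exactly what the parity comparison above settles — incidentally, it is also why we need not compute $c(M_{\frac{5}{2},\half}^{s})$ separately.
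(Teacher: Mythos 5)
Your proposal is correct and takes essentially the same route as the paper: both specialize \refeq{eq1155} to $(a,b)=(\threehalfs,\half)$ so that $M_{\threehalfs,-\half}^{s}$ is a direct summand of $V_{1,0}\otimes M_{\threehalfs,\half}^{s}\simeq V_3\otimes M_{\threehalfs,\half}^{s}$, identify $M_{\threehalfs,-\half}^{s}$ as one of the four modules of \refcor{cor36}, and then match $\kk$-types to single out the right one. The paper simply writes out the first few $\kk$-types of the tensor product ($2V_3\oplus V_5\oplus\dots$), whereas you organize the same comparison via the parity and lowest-type observations; the content is the same.
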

\begin {proof}
By \refeq{eq1155},
\[
M_{\threehalfs,\half}^{0}\otimes V_{1,0}\simeq M_{\frac{5}{2},\half}^0\oplus M_{\threehalfs,-\half}^0.
\]
As a $\kk$-module, $V_{1,0}$ is isomorphic to $V_3$. Hence $M_{\threehalfs,\half}^{0}\otimes V_{1,0}$ has a $\kk$-module decomposition
\[
\label{e3}
2V_3\oplus V_5\oplus \dots\quad .
\]
Since $\chi(\threehalfs, -\half)=\chi(\threehalfs,\half)$, $M_{\threehalfs,-\half}^0$ must have one of the four $\kk$-module decompositions \refeq{eq72}, and hence \refeq{eq1155} implies \refeq{e1} for $s=0$. Similarly, $M_{\threehalfs,\half}^1\otimes V_{1,0}$ has the $\kk$-module decomposition $V_2\oplus 2V_4\oplus \dots$, which implies \refeq{e1} for $s=1$.
\end{proof}

We set now $\phi_{a,b}^s(z)\eqdef c(M_{a,b}^s)$ for $a,b\in\half+\ZZ, a\geq|b|$, $s\in\{0,1\}$ and extend the definition of $\phi _{a,b}^s(z)$ to arbitrary pairs $a,b\in \half +\ZZ$ by putting
\begin{equation}\label{eqDelta}
\phi_{a,b}^s(z)=-\phi_{b,a}^s(z)=-\phi_{-b,-a}^s(z)=\phi_{-a,-b}^s(z).
\end{equation}

\begin {lemma}\label{le204}
For all $a,b\in \half+\ZZ$ and $s\in\{0,1\}$,
\[
\pi(\phi_{a,b}^s(z^3+z+z^{-1}+z^{-3}))=\phi_{a-1,b}^s+\phi_{a+1,b}^s+\phi_{a,b+1}^s+\phi_{a,b-1}^s
\]
\[
\pi(\phi_{a,b}^s(z^4+z^2+1+z^{-2}+z^{-4}))=\phi_{a+1,b+1}^s+\phi_{a-1,b+1}^s+\phi_{a+1,b-1}^s+\phi_{a-1,b-1}^s+\phi_{a,b}^s.
\]
(the projection $\pi$ is introduced in \refsec{secRank2Case}).
\end{lemma}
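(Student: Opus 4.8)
The plan is to read off both identities from the $\gg$-module branching rules of \refle{le200} by passing to $\kk$-characters, and then to extend from the region $a\ge|b|$ to all $a,b\in\half+\ZZ$ using the antisymmetry relations \refeq{eqDelta}. First I would record that, as a module over the principal $\sl(2)$-subalgebra $\kk$, one has $V_{1,0}\cong V_3$ (used already in the proof of \refle{le201}) and $V_{1,1}\cong V_4$ (for instance because $V_{1,0}\otimes V_{1,0}\cong V_{2,0}\oplus V_{1,1}\oplus V_{0,0}$, while over $\kk$ one has $V_{1,0}\otimes V_{1,0}\cong V_0\oplus V_2\oplus V_4\oplus V_6$, $V_{0,0}\cong V_0$ and $V_{2,0}\cong V_2\oplus V_6$). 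Since $\sum_{0\le k\le 3}z^{3-2k}=z^3+z+z^{-1}+z^{-3}$ and $\sum_{0\le k\le 4}z^{4-2k}=z^4+z^2+1+z^{-2}+z^{-4}$, \refle{le202}(b) applied to the finite-type $(\kk,\kk)$-module $M_{a,b}^s$ (for $a>|b|$, so that $\phi_{a,b}^s=c(M_{a,b}^s)$) gives
\[
c(V_{1,0}\otimes M_{a,b}^s)=\pi\bigl(\phi_{a,b}^s(z)(z^3+z+z^{-1}+z^{-3})\bigr),
\]
\[
c(V_{1,1}\otimes M_{a,b}^s)=\pi\bigl(\phi_{a,b}^s(z)(z^4+z^2+1+z^{-2}+z^{-4})\bigr).
\]

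Next I would take $\kk$-characters of the isomorphisms of \refle{le200} for $a>|b|$. Equation \refeq{eq1155} gives at once $c(V_{1,0}\otimes M_{a,b}^s)=\phi_{a+1,b}^s+\phi_{a-1,b}^s+\phi_{a,b+1}^s+\phi_{a,b-1}^s$, which is the first identity in this range. For the second identity, \refeq{starstar} gives it directly whenever $a\ge|b|+2$; on the walls $a=|b|+1$ I would use \refeq{star1star} (for $a=b+1$, $b>0$) or \refeq{star2star} (for $a=-b+1$, $b<0$) and then verify, via \refeq{eqDelta}, that the summand absent from these three-term decompositions is precisely cancelled in the claimed right-hand side: e.g. for $a=b+1$ one has $\phi_{a-1,b+1}^s=\phi_{b,b+1}^s=-\phi_{b+1,b}^s=-\phi_{a,b}^s$, so $\phi_{a-1,b+1}^s+\phi_{a,b}^s=0$ and the right-hand side collapses to $\phi_{a+1,b+1}^s+\phi_{a+1,b-1}^s+\phi_{a-1,b-1}^s=c(V_{1,1}\otimes M_{a,b}^s)$; the case $a=-b+1$ is symmetric (there $\phi_{a-1,b-1}^s=-\phi_{a,b}^s$). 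On the boundary $a=|b|$ both sides of both identities vanish: the left-hand sides because $\phi_{a,a}^s=0$ and $\phi_{a,-a}^s=0$ by \refeq{eqDelta}, and the right-hand sides because the four (resp. five) $\phi$-terms pair up with opposite signs under \refeq{eqDelta}.

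Finally, to reach arbitrary $a,b\in\half+\ZZ$, I would note that in each identity both sides are $\pi$-linear in the $\phi$'s and that the index sets $\{(a\pm1,b),(a,b\pm1)\}$ and $\{(a\pm1,b\pm1),(a,b)\}$ are stable under $(a,b)\mapsto(b,a)$ and $(a,b)\mapsto(-a,-b)$; hence by \refeq{eqDelta} both sides of each identity are multiplied by $-1$ under $(a,b)\mapsto(b,a)$ and left unchanged under $(a,b)\mapsto(-a,-b)$. Since the group generated by these two involutions moves every pair $(a,b)$ into the region $a\ge|b|$, the identities, being valid there, hold for all $a,b$. The step I expect to require most care is the wall case $a=|b|+1$ of the second identity, where \refeq{starstar} genuinely fails and one loses a $\gg$-module summand: matching \refeq{star1star}/\refeq{star2star} against the surviving $\phi$-terms and seeing that the antisymmetric extension \refeq{eqDelta} exactly repairs the identity is the delicate point; everything else is routine bookkeeping.
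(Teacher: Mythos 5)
Your argument is correct and is essentially the paper's proof written out in detail: take $\kk$-characters of the isomorphisms of Lemma~\ref{le200}, use Lemma~\ref{le202}(b) together with $V_{1,0}\cong V_3$ and $V_{1,1}\cong V_4$, and extend beyond $a\ge|b|$ via the antisymmetry~\eqref{eqDelta}. The paper's one-line proof glosses over precisely the two points you flag as delicate --- the wall cases $a=|b|+1$, where $\phi_{a-1,b+1}^s+\phi_{a,b}^s=0$ (resp.\ $\phi_{a-1,b-1}^s+\phi_{a,b}^s=0$) reconciles \eqref{star1star}/\eqref{star2star} with the five-term right-hand side, and the Klein-group extension off the cone $a\ge|b|$ --- and you handle both correctly.
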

\begin{proof}
Both equalities are straightforward corollaries of \refle{le200} and \refle{le202} (b) if one takes into account the isomorphisms of $\kk$-modules $V_{1,0}\simeq V_3$ and $V_{1,1}\simeq V_4$.
\end{proof}

We define now $\psi_{a,b}^s(z)\in \CC((z))$ via the conditions:
\begin {itemize}
\item[(c1)]$\displaystyle\psi_{a,b}^s(z)(z^3+z+z^{-1}+z^{-3})=\psi_{a+1,b}^s(z)+\psi_{a-1,b}^s(z)+\psi_{a,b+1}^s(z)+\psi_{a,b-1}^s(z)$,
\item[(c2)]$\displaystyle\psi_{a,b}^s(z)(z^4+z^2+1+z^{-2}+z^{-4})=\psi_{a+1,b+1}^s(z)+\psi_{a+1,b-1}^s(z)+\psi_{a-1,b+1}^s(z)+\psi_{a-1,b-1}^s(z)+\psi_{a,b}^s(z)$,
\item[(c3)]$\displaystyle\psi_{a,b}^s(z)=-\psi_{b,a}^s(z)=-\psi_{-b,-a}^s(z)=\psi_{-a,-b}^s(z)$,
\item[(c4)]$\displaystyle\psi_{\threehalfs,\half}^s(z)=\frac{z^s}{1-z^6}, \quad \psi_{\threehalfs,-\half}^{s}(z)=\frac{z^{3+s}}{1-z^6}$.
\end{itemize}

\begin {theorem}\label{th206}
The Laurent series $\psi_{a,b}^s(z)$ exists and is unique. Moreover,
\begin{equation}\label{eq4}
\psi_{a,b}^{s}(z)=\frac{ z^{5+s}(z^{3a+b}-z^{a+3b}-z^{-a-3b}+z^{-3a-b})-z^{6+s}(z^{3a-b}-z^{-a+3b}-z^{a-3b}+z^{-3a+b})}{(1-z^2)^2(1-z^4)(1-z^6)}.
\end{equation}
\end{theorem}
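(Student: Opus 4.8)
The plan is to prove the theorem in two independent steps. First, take the right-hand side of \refeq{eq4} as the \emph{definition} of $\psi_{a,b}^s(z)$ for all $a,b\in\half+\ZZ$ and check that it satisfies (c1)--(c4); this gives existence (note that it is a well-defined element of $\CC((z))$, since its denominator has nonzero constant term and is therefore invertible in $\CC[[z]]$). Second, show that (c1)--(c4) determine the whole family $\{\psi_{a,b}^s(z)\}$ uniquely; this proves uniqueness and at the same time identifies the unique family with \refeq{eq4}.

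For existence I would write the numerator of \refeq{eq4} as a $\ZZ$-linear combination, with coefficients $\pm z^{5+s}$ and $\mp z^{6+s}$, of the eight Laurent monomials $z^{\lambda a+\mu b}$ with $\{|\lambda|,|\mu|\}=\{1,3\}$; the denominator does not depend on $(a,b)$. Condition (c3) is then immediate, because this numerator is antisymmetric under $(a,b)\mapsto(b,a)$ and under $(a,b)\mapsto(-b,-a)$ and symmetric under $(a,b)\mapsto(-a,-b)$, while the denominator is invariant under all three substitutions. Conditions (c1) and (c2) are checked monomial by monomial: replacing $(a,b)$ by the four neighbours $(a\pm1,b),(a,b\pm1)$ multiplies $z^{\lambda a+\mu b}$ by $z^{\lambda}+z^{-\lambda}+z^{\mu}+z^{-\mu}$, which equals $z^3+z+z^{-1}+z^{-3}$ for every $(\lambda,\mu)$ with $\{|\lambda|,|\mu|\}=\{1,3\}$, and replacing $(a,b)$ by $(a\pm1,b\pm1)$ together with $(a,b)$ itself multiplies it by $1+z^{\lambda+\mu}+z^{\lambda-\mu}+z^{-\lambda+\mu}+z^{-\lambda-\mu}=z^4+z^2+1+z^{-2}+z^{-4}$; hence (c1) and (c2) hold identically in $(a,b)$. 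Finally (c4) follows by substituting $a=\threehalfs$, $b=\pm\half$ into \refeq{eq4} and simplifying with the identity $(1-z^2)^2(1-z^4)=1-2z^2+2z^6-z^8$.

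The main work, and the place where care is needed, is uniqueness. By (c3), $\psi_{a,b}^s$ vanishes whenever $b=\pm a$, and the values with $(a,b)$ outside the closed wedge $a\ge|b|$ are determined by those inside it; so I would argue by induction on $\max(|a|,|b|)$, showing that the whole \emph{column} $\{\psi_{a,b}^s:|b|\le a\}$ is determined as soon as all columns with strictly smaller first coordinate are. The base cases $a=\half$ (all relevant values lie on a wall, hence are $0$) and $a=\threehalfs$ (the wall values $0$, plus the two seeds from (c4)) are immediate. In the inductive step, (c1) evaluated at the points $(a,b)$ with $|b|<a$ expresses $\psi_{a+1,b}^s$ through $\psi_{a,b}^s$, $\psi_{a-1,b}^s$, $\psi_{a,b\pm1}^s$, all of which are already known, and this yields every entry of column $a+1$ except the two with $b=\pm a$. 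The obstacle is precisely that (c1) gives no further information about those two entries: at a wall point $(a,\pm a)$ the relation (c1) collapses to $0=0$ once (c3) is used. It is exactly here that (c2) is indispensable: evaluating (c2) at $(a,a-1)$ (and symmetrically at $(a,-(a-1))$) involves, besides the sought value $\psi_{a+1,a}^s$, only $\psi_{a,a-1}^s$ and $\psi_{a-1,a-2}^s$ (entries of columns $\le a$), the off-wedge term $\psi_{a-1,a}^s=-\psi_{a,a-1}^s$ (rewritten via (c3)), and $\psi_{a+1,a-2}^s$ (already computed earlier in the same step), and therefore determines $\psi_{a+1,\pm a}^s$ as well. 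The remaining bookkeeping is just to verify these wedge/off-wedge incidences in the few small cases and to confirm that the explicit family \refeq{eq4} is consistent with the whole system, which we already know from the existence step.
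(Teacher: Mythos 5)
Your proposal is correct and follows essentially the same strategy as the paper: existence by direct verification that the right-hand side of \eqref{eq4} satisfies (c1)--(c4), and uniqueness by an induction on the first coordinate in which (c1) fills in the interior of each new column and (c2) is needed for the wall entries $b=\pm a$. Your treatment is in fact somewhat more explicit than the paper's, in particular the monomial-by-monomial verification of (c1) and (c2) and the observation that (c1) collapses to $0=0$ on the wall, which makes clear \emph{why} (c2) is indispensable.
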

\begin{proof}
We show first that $\psi_{a,b}^s(z)$ is unique if it exists. By \refeq{eqDelta} $\psi_{a,b}^s(z)$ is determined by $\psi_{a,b}^s(z)$ for $a>|b|$. Assume, by induction on $a$, that $\psi_{a,b}^s(z)$  is unique for all $a\leq a_0$, $|b|<a$. Then equation (c1) determines $\psi_{a_0+1,b}^s(z)$, and equation (c2) determines $\psi_{a_0+1,a_0}^s(z)$ and $\psi_{a_0+1,a_0+1}^s(z)$.

To prove the existence of $\psi_{a,b}^s(z)$, it suffices to verify that the right-hand side of \refeq{eq4} satisfies all conditions (c1)-(c4). This is a direct calculation, which is simplified by the observation that both Laurent polynomials
\[
z^{3a+b}-z^{a+3b}-z^{-a-3b}+z^{-3a-b},
\]
\[
z^{3a-b}-z^{-a+3b}-z^{a-3b}+z^{-3a+b}
\]
satisfy (c1),(c2) and (c3). The condition (c4) is satisfied only by the entire expression.
\end{proof}

\begin {corollary}\label{cor207}
\[
\phi_{a,b}^s=\pi(\psi_{a,b}^s).
\]
\end{corollary}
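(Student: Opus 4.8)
The plan is to exhibit $\{\phi_{a,b}^s\}$ and $\{\pi(\psi_{a,b}^s)\}$ as two solutions, both lying in $\CC[[z]]$, of one and the same recursive system, and then to observe that such a solution is unique; the asserted equality $\phi_{a,b}^s=\pi(\psi_{a,b}^s)$ follows immediately.

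First I would note that both families lie in $\CC[[z]]$ — the $\phi_{a,b}^s$ are $\kk$-characters, hence formal power series, and $\pi(\psi_{a,b}^s)$ lies in $\CC[[z]]$ by the definition of $\pi$ — and then verify that each family $\{f_{a,b}^s\}$ satisfies: (i) the symmetry $f_{a,b}^s=-f_{b,a}^s=-f_{-b,-a}^s=f_{-a,-b}^s$ of \refeq{eqDelta}; (ii) the two recursions of \refle{le204}; and (iii) the normalization $f_{\threehalfs,\half}^s=z^s(1-z^6)^{-1}$, $f_{\threehalfs,-\half}^s=z^{3+s}(1-z^6)^{-1}$. For $\{\phi_{a,b}^s\}$: (i) is \refeq{eqDelta}, (ii) is precisely \refle{le204}, and (iii) follows from $\phi_{\threehalfs,\half}^s=c(M_{\threehalfs,\half}^s)$ together with \refcor{cor36} and from $\phi_{\threehalfs,-\half}^s=c(M_{\threehalfs,-\half}^s)$ together with \refle{le201}. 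For $\{\pi(\psi_{a,b}^s)\}$: applying the linear operator $\pi$ to (c3) gives (i); condition (iii) is immediate from (c4) since $z^s(1-z^6)^{-1}$ and $z^{3+s}(1-z^6)^{-1}$ already lie in $\CC[[z]]$; and for (ii) I would apply $\pi$ to (c1) and (c2), writing $z^3+z+z^{-1}+z^{-3}=(z^3+z^{-3})+(z+z^{-1})$ and $z^4+z^2+1+z^{-2}+z^{-4}=(z^4+z^{-4})+(z^2+z^{-2})+1$ and invoking \refle{le202}(a) summand by summand in order to replace $\psi_{a,b}^s$ by $\pi(\psi_{a,b}^s)$ inside the left-hand sides — this turns (c1) and (c2) into exactly the recursions of \refle{le204} for the family $\{\pi(\psi_{a,b}^s)\}$.

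It then remains to prove uniqueness: a family $\{f_{a,b}^s\}\subset\CC[[z]]$ obeying (i)--(iii) is uniquely determined. This is the only substantive step, and I would settle it by reproducing the induction from the proof of \refth{th206}: the symmetry (i) forces $f_{a,a}^s=f_{a,-a}^s=0$ and reduces everything to the range $a>|b|$; then, granting that $f_{a,b}^s$ is known whenever $\max(|a|,|b|)\leq a_0$, the first recursion in (ii) evaluated at $(a_0,b)$ determines $f_{a_0+1,b}^s$ for $|b|\leq a_0-1$ (its left-hand side being $\pi$ of an element already computed), while the second recursion evaluated at $(a_0,a_0-1)$ and at $(a_0,-a_0+1)$, together with (i), determines the two boundary values $f_{a_0+1,\pm a_0}^s$; the base case $a_0=\threehalfs$ is supplied by (iii). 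The presence of $\pi$ on the left-hand sides is the only deviation from \refth{th206}, and it is harmless because at each step one solves for a single previously undetermined value in terms of quantities already computed. With uniqueness established, $\phi_{a,b}^s=\pi(\psi_{a,b}^s)$ for all $a,b\in\half+\ZZ$ and $s\in\{0,1\}$.
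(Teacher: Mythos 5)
Your proof is correct and supplies exactly the argument the paper leaves implicit: both $\{\phi_{a,b}^s\}$ and $\{\pi(\psi_{a,b}^s)\}$ satisfy the $\pi$-twisted version of (c1)--(c4), and uniqueness for that system follows by the same induction as in Theorem \ref{th206}, with the projection $\pi$ on the left-hand sides causing no loss of determinacy since each step still isolates a single new value. One small remark: when you invoke Lemma \ref{le202}(a) summand by summand, you are relying on the reading $\pi(f(z)(z^j+z^{-j}))=\pi\bigl(\pi(f(z))(z^j+z^{-j})\bigr)$, which is what the paper's proof of that lemma actually establishes (the printed statement has a misplaced parenthesis and is trivially true as written), and this corrected form is exactly what your argument needs.
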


\begin {corollary}\label{cor207'}
Any simple bounded \gkm~ is either even or odd. More precisely, $M_{a,b}^s$ is even if $a+b+s$ is even, and $M_{a,b}^s$ is odd if $a+b+s$ is odd.
\end{corollary}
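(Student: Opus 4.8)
The plan is to reduce the statement to the explicit $\kk$-character formula \refeq{eq4} together with a parity bookkeeping. First I would record the even/odd dichotomy, which in fact holds for every simple \gkm: in the principal case $\gg=\sp(4)$ one has $\gg\simeq V_2\oplus V_6$ as a $\kk$-module, so $\ad H$ acts on $\gg$, and therefore on all of $U(\gg)$, with only even eigenvalues; since a simple \gkm~ is generated over $U(\gg)$ by a single $\kk$-isotypic component, all of its $\kk$-types $V_t$ must then have $t$ of one fixed parity. (In the root case the same argument applies, as $\ad H$ again has only even eigenvalues on $\sp(4)$.) So it remains only to determine which parity occurs for a simple bounded module.

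Next I would recall that every simple bounded \gkm~ is isomorphic to some $M_{a,b}^s$ with $a,b\in\half+\ZZ$, $a>|b|$, $s\in\{0,1\}$: by \refcor{cor101} the central character of such a module is $\chi(a,b)$ with such $a,b$; by \refcor{cor102} the category $\BB_\kk^{\chi(a,b)}$ is equivalent, via a translation functor, to $\BB_\kk^{\chi(\threehalfs,\half)}$; and by \refcor{cor36} together with \refle{le201} the category $\BB_\kk^{\chi(\threehalfs,\half)}$ has exactly the four simple objects $M_{\threehalfs,\half}^0$, $M_{\threehalfs,\half}^1$, $M_{\threehalfs,-\half}^0$, $M_{\threehalfs,-\half}^1$, so the four simple objects of $\BB_\kk^{\chi(a,b)}$ are $M_{a,b}^0$, $M_{a,b}^1$, $M_{a,-b}^0$, $M_{a,-b}^1$. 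Hence it suffices to prove that $\phi_{a,b}^s=c(M_{a,b}^s)$ has all its exponents congruent to $a+b+s$ modulo $2$. By \refcor{cor207}, $\phi_{a,b}^s=\pi(\psi_{a,b}^s)$, so I would analyze \refeq{eq4}.

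Writing $a=p+\half$, $b=q+\half$ with $p,q\in\ZZ$, the integers $3a+b$, $a+3b$, $-a-3b$, $-3a-b$ are all congruent to $p+q$ modulo $2$, whereas $3a-b$, $-a+3b$, $a-3b$, $-3a+b$ are all congruent to $p+q+1$ modulo $2$. Multiplying the first quadruple by $z^{5+s}$ and the second by $z^{6+s}$, one sees that the numerator of \refeq{eq4} is a Laurent polynomial all of whose exponents are congruent to $5+s+p+q\equiv s+(p+q+1)=s+a+b$ modulo $2$ (using $p+q+1=a+b$). Since $(1-z^2)^2(1-z^4)(1-z^6)$ expands as a power series in $z^2$, the quotient $\psi_{a,b}^s(z)$ is a Laurent series all of whose exponents are congruent to $s+a+b$ modulo $2$. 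Finally, each spanning vector $z^j+z^{-j-2}$ of $\CC((z))'$ has its two exponents of equal parity, so the decomposition $\CC((z))=\CC((z))'\oplus\CC[[z]]$ is compatible with the splitting of $\CC((z))$ into even and odd parts; hence $\pi$ respects this parity grading, and $\phi_{a,b}^s=\pi(\psi_{a,b}^s)$ again has all exponents congruent to $s+a+b$ modulo $2$. Therefore $M_{a,b}^s$ has no $\kk$-type $V_t$ with $t\not\equiv a+b+s$ modulo $2$, i.e. it is even if $a+b+s$ is even and odd if $a+b+s$ is odd, which also reproves the first assertion.

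I do not anticipate a genuine obstacle: the result is a direct consequence of \refeq{eq4} and \refcor{cor207}, the reduction to the modules $M_{a,b}^s$ being merely an invocation of the classification already obtained in \refcor{cor101}, \refcor{cor102} and \refcor{cor36}. The one point needing a moment's care is the compatibility of the projection $\pi$ with the even/odd decomposition of $\CC((z))$, which follows from the observation that every vector $z^j+z^{-j-2}$ lies entirely in the even or entirely in the odd part.
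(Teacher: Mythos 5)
Your proof is correct and is exactly the argument the paper leaves implicit: the corollary is stated without proof immediately after \refcor{cor207} because it is a direct parity bookkeeping from \refeq{eq4}, which you carry out carefully, including the observation that $\pi$ respects the even/odd grading of $\CC((z))$ because each spanning vector $z^j+z^{-j-2}$ of $\CC((z))'$ is homogeneous with respect to parity. Your opening remark about $\ad H$ having only even eigenvalues on $U(\gg)$ proves the first sentence of the corollary independently, mirroring the paper's own remark in the $\sl(3)$-principal case, and is a welcome complement rather than a departure.
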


In the calculations below we use binomial coefficients $\binom{s}{k}$,
for which we always assume $\binom{s}{k}=0$ if $s$ or $k$ are not integers.

\begin {lemma}\label{le208}
\[
\frac{1}{(1-z^2)^2(1-z^4)(1-z^6)}=\sum_{n=0}^{\infty}\gamma(n)z^{2n},
\]
where
\[
\gamma(n)\eqdef \frac{1}{144}\left[ 119\binom{n+3}{3}-179\binom{n+2}{3}+109\binom{n+1}{3}-25\binom{n}{3}\right]+\frac{(-1)^n}{16}+\frac{\beta(n)}{9}
\]
and
\[
\beta(n)\eqdef\triplebrace{0}{n\equiv 1~(\mod 3)}{1}{n\equiv 0~(\mod 3)}{-1}{n\equiv -1~(\mod 3)}.
\]
\end{lemma}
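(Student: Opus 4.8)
The plan is to set $w=z^2$ and reduce the asserted identity to a partial fraction decomposition in the single variable $w$, then expand term by term. First I would factor
\[
(1-z^2)^2(1-z^4)(1-z^6)=(1-w)^2(1-w^2)(1-w^3)=(1-w)^4(1+w)(1+w+w^2),
\]
using $1-w^2=(1-w)(1+w)$ and $1-w^3=(1-w)(1+w+w^2)$. It then suffices to prove the rational identity
\begin{equation}\label{eqPFdecomp}
\frac{1}{(1-w)^4(1+w)(1+w+w^2)}=\frac{119-179w+109w^2-25w^3}{144(1-w)^4}+\frac{1}{16(1+w)}+\frac{1+w}{9(1+w+w^2)},
\end{equation}
since expanding its right-hand side in powers of $w$ will reproduce $\sum_{n\geq0}\gamma(n)w^n$.

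To verify \refeq{eqPFdecomp} one may simply clear denominators and compare the two polynomials of degree $6$, but it is cleaner to check it pole by pole. The coefficient of $\tfrac1{1+w}$ is $\tfrac1{(1-w)^4(1+w+w^2)}$ evaluated at $w=-1$, namely $\tfrac1{2^4\cdot1}=\tfrac1{16}$. For the factor $1+w+w^2$, let $\zeta$ be a primitive cube root of unity; from $1+\zeta=-\zeta^2$ and $(1-\zeta)^2=-3\zeta$ one computes $(1-\zeta)^4(1+\zeta)^2=9\zeta^2\cdot\zeta=9$, so $\tfrac1{(1-\zeta)^4(1+\zeta)}=\tfrac{1+\zeta}{9}$, which (together with the analogous value at $\bar\zeta$) forces the last term of \refeq{eqPFdecomp}. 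The principal part at the order-$4$ pole $w=1$ is then the Taylor polynomial of $\tfrac1{(1+w)(1+w+w^2)}$ about $w=1$ to order three, and a short computation identifies it with $\tfrac1{144}(119-179w+109w^2-25w^3)$; as a sanity check both sides of this numerator identity equal $24=144/6$ at $w=1$.

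Finally I would expand \refeq{eqPFdecomp}. From $\tfrac1{(1-w)^4}=\sum_{n\geq0}\binom{n+3}{3}w^n$ and the shifts $\tfrac{w^j}{(1-w)^4}=\sum_{n\geq0}\binom{n+3-j}{3}w^n$ for $j=1,2,3$ (with $\binom{m}{3}=0$ for $0\le m<3$), the first term of \refeq{eqPFdecomp} contributes $\tfrac1{144}\bigl[119\binom{n+3}{3}-179\binom{n+2}{3}+109\binom{n+1}{3}-25\binom{n}{3}\bigr]$ to the coefficient of $w^n$. Since $\tfrac1{1+w}=\sum_{n\geq0}(-1)^nw^n$ and $\tfrac{1+w}{1+w+w^2}=\tfrac{1-w^2}{1-w^3}=\sum_{n\geq0}\beta(n)w^n$ (its $w^n$-coefficient being $1,0,-1$ according as $n\equiv 0,1,2\ (\mathrm{mod}\ 3)$, i.e. exactly $\beta(n)$), the remaining two terms contribute $\tfrac{(-1)^n}{16}$ and $\tfrac{\beta(n)}{9}$ respectively. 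Summing these expressions yields precisely $\gamma(n)$, completing the proof. The argument is entirely computational; the only slightly delicate point is pinning down the four coefficients of the principal part at $w=1$, i.e. the numerator in the first term of \refeq{eqPFdecomp}.
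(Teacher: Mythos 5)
Your proof is correct and uses exactly the same key identity as the paper, which simply asserts the partial fraction decomposition (in the variable $z^2$ rather than $w$) without further justification. You supply the pole-by-pole verification of that decomposition and the term-by-term expansion, which is precisely what the paper's one-line proof leaves implicit.
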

\begin{proof}
The statement follows from the identity
\[
\frac{1}{(1-z^2)^2(1-z^4)(1-z^6)}=\frac{119-179z^2+109z^4-25z^6}{144(1-z^2)^4}+\frac{1}{16(1+z^2)}+\frac{1+z^2}{9(1+z^2+z^4)}.
\]
\end{proof}

\begin {corollary}\label{cor208}
Let
\begin{eqnarray*}
\delta_{a,b}^s(n)&=&\gamma\left(\frac{n-(3a+b+5)-s}{2}\right)-\gamma\left(\frac{n-(a+3b+5)-s}{2}\right)-\\
&&-\gamma\left(\frac{n-(-a-3b+5)-s}{2}\right)+\gamma\left(\frac{n-(-3a-b+5)-s}{2}\right)-\\
&&-\gamma\left(\frac{n-(3a-b+6)-s}{2}\right)+\gamma\left(\frac{n-(-a+3b+6)-s}{2}\right)+\\
&&+\gamma\left(\frac{n-(a-3b+6)-s}{2}\right)-\gamma\left(\frac{n-(-3a+b+6)-s}{2}\right).
\end{eqnarray*}
Then
\[
c_i(M_{a,b}^s)=\delta_{a,b}^s(i)-\delta_{a,b}^s(-i-2).
\]
\end{corollary}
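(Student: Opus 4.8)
The plan is to deduce the statement mechanically from three ingredients already at our disposal: the closed formula \refeq{eq4} for $\psi_{a,b}^s(z)$ (\refth{th206}), the power-series expansion $\frac{1}{(1-z^2)^2(1-z^4)(1-z^6)}=\sum_{n\geq 0}\gamma(n)z^{2n}$ of \refle{le208}, and the identity $\phi_{a,b}^s=\pi(\psi_{a,b}^s)$ of \refcor{cor207}. The whole argument is coefficient extraction; the only points that need care are the effect of the projection $\pi$ on a Laurent series and the convention by which $\gamma$ is evaluated at non-admissible arguments.

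First I would record how $\pi$ acts on coefficients. By definition $\CC((z))'$ is spanned by the vectors $z^j+z^{-j-2}$, $j\in\ZZ$; taking $j=-1$ shows $z^{-1}\in\CC((z))'$, while for $k\geq 0$ the vector $z^k+z^{-k-2}$ lies in $\CC((z))'$. Since $\pi$ is the projection of $\CC((z))$ onto $\CC[[z]]$ along $\CC((z))'$, it annihilates $z^{-1}$ and sends $z^{-k-2}$ to $-z^k$ for $k\geq 0$. Hence for any Laurent series $g(z)=\sum_k g_k z^k$ one gets $\pi(g)=\sum_{k\geq 0}(g_k-g_{-k-2})z^k$, that is, $[z^i]\pi(g)=g_i-g_{-i-2}$ for every $i\in\ZZ_{\geq 0}$ (this can also be read off from \refle{le202}(a)). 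Note that $\psi_{a,b}^s(z)$ is an honest Laurent series, being the product of the Laurent polynomial in the numerator of \refeq{eq4} with the power-series expansion of the denominator, so the above applies to it.

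Combining this with the definitions $c(M)=\sum_k(\dim M^k)z^k$ and $\phi_{a,b}^s(z)=c(M_{a,b}^s)$, and with \refcor{cor207}, I obtain
\[
c_i(M_{a,b}^s)=[z^i]\phi_{a,b}^s=[z^i]\pi(\psi_{a,b}^s)=[z^i]\psi_{a,b}^s-[z^{-i-2}]\psi_{a,b}^s
\]
for all $i\in\ZZ_{\geq 0}$. It therefore suffices to verify that $[z^n]\psi_{a,b}^s(z)=\delta_{a,b}^s(n)$ for $n\in\ZZ$. For this I would expand \refeq{eq4}: its numerator is a sum of eight monomials $\epsilon z^m$ whose exponents $m$ are $5+s$ or $6+s$ shifted by $\pm(3a+b)$, $\pm(a+3b)$, $\pm(3a-b)$, $\pm(a-3b)$, with signs $\epsilon\in\{\pm 1\}$ read directly from \refeq{eq4}; the coefficient of $z^n$ in $\epsilon z^m\cdot\sum_{n'\geq 0}\gamma(n')z^{2n'}$ equals $\epsilon\,\gamma\!\left(\frac{n-m}{2}\right)$, with $\gamma$ understood to vanish whenever $\frac{n-m}{2}$ is negative or non-integral (this is automatic, since the series has only non-negative even powers of $z$, and all of $3a+b$, $a+3b$, etc.\ are integers because $a,b\in\half+\ZZ$). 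Summing the eight contributions reproduces exactly the expression defining $\delta_{a,b}^s(n)$, and substituting $n=i$ and $n=-i-2$ then gives the asserted formula.

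There is no substantive obstacle here: the argument is bookkeeping. The two places that deserve attention are matching the eight signs in the definition of $\delta_{a,b}^s$ to the eight monomials arising from the two bracketed differences in \refeq{eq4}, and fixing once and for all that $\gamma$ is extended by $0$ outside $\ZZ_{\geq 0}$, which is precisely what makes the even/odd alternation recorded in \refcor{cor207'} come out correctly.
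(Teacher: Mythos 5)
Your proof is correct and takes exactly the approach the paper intends: the paper's own proof simply states that the corollary follows directly from Theorem \ref{th206}, Corollary \ref{cor207}, and Lemma \ref{le208}, and your argument is the careful spelled-out version of that deduction, including the coefficient-level description of $\pi$ and the sign/exponent bookkeeping. The only detail you add beyond the paper is making explicit the (correct) convention that $\gamma$ is extended by $0$ to negative and non-integral arguments, which is indeed needed but left implicit in the paper.
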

\begin{proof}
The statement follows directly from \refth{th206}, \refcor{cor207}, and \refle{le208}.
\end{proof}

\begin{corollary}\label{cor209}
For any simple bounded \gkm~ $M$, $c_i(M)=c_{i+6}(M)$ for sufficiently large $i\in\NN$.
\end{corollary}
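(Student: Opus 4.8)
The plan is to read off the eventual $6$-periodicity from the explicit rational-function formula for the $\kk$-character established in \refth{th206}. By \refcor{cor101}, \refcor{cor102} and \refcor{cor36}, every simple bounded $(\gg,\kk)$-module is isomorphic to $M_{a,b}^s$ for some $a,b\in\half+\ZZ$ with $a>|b|$ and some $s\in\{0,1\}$, so it is enough to prove $c_i(M_{a,b}^s)=c_{i+6}(M_{a,b}^s)$ for all large $i$. By \refcor{cor207} we have $c(M_{a,b}^s)=\phi_{a,b}^s=\pi(\psi_{a,b}^s)$, where $\psi_{a,b}^s$ is the rational function \refeq{eq4}, regarded as a Laurent series at $0$ (it has only finitely many negative powers). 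First I would record the elementary fact that, since $\CC((z))'$ is spanned by the vectors $z^j+z^{-j-2}$, $j\in\ZZ$, the coefficient of $z^i$ in $\pi(f)$ equals (coefficient of $z^i$ in $f$) $-$ (coefficient of $z^{-i-2}$ in $f$) for every $f\in\CC((z))$ and every $i\geq0$; hence $c_i(M_{a,b}^s)$ is that difference for $f=\psi_{a,b}^s$.

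The core of the argument is the claim that $(1-z^6)\,\psi_{a,b}^s(z)$ is a Laurent polynomial $P_{a,b}^s(z)$, equivalently that the numerator $N(z)$ of \refeq{eq4} is divisible by $(1-z^2)^2(1-z^4)$. To prove this I would use the factorizations
\[
z^{3a+b}-z^{a+3b}-z^{-a-3b}+z^{-3a-b}=z^{-3a-b}(z^{4(a+b)}-1)(z^{2(a-b)}-1),
\]
\[
z^{3a-b}-z^{-a+3b}-z^{a-3b}+z^{-3a+b}=z^{-3a+b}(z^{2(a+b)}-1)(z^{4(a-b)}-1).
\]
Since $a+b$ and $a-b$ are positive integers, $z^2-1$ divides $z^{2(a\pm b)}-1$ and $z^4-1=(z^2-1)(z^2+1)$ divides $z^{4(a\pm b)}-1$; therefore $N(z)=(z^2-1)^2(z^2+1)\,R(z)$ for the Laurent polynomial $R$ obtained by stripping these common factors. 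A one-line evaluation then shows $R(1)=(a^2-b^2)-(a^2-b^2)=0$, because the relevant geometric sums take the values $a+b$ and $a-b$ at $z=1$, and likewise $R(-1)=0$ (here one uses that $1+2b$ is even, as $b\in\half+\ZZ$). Hence $z^2-1$ divides $R$, so $(z^2-1)^3(z^2+1)=(1-z^2)^2(1-z^4)$ divides $N(z)$, and $\psi_{a,b}^s(z)=P_{a,b}^s(z)/(1-z^6)$ as claimed.

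Finally, expanding $\frac{1}{1-z^6}=\sum_{j\geq0}z^{6j}$, for $i$ exceeding the top degree of $P_{a,b}^s$ the coefficient of $z^i$ in $\psi_{a,b}^s$ is the sum of those coefficients of $P_{a,b}^s$ whose exponent lies in the residue class $i$ modulo $6$; in particular it depends only on $i\bmod 6$. Moreover, for $i$ sufficiently large the coefficient of $z^{-i-2}$ in $\psi_{a,b}^s$ vanishes, being below the bottom degree of $P_{a,b}^s$. Combining with the formula for $\pi$ from the first paragraph gives $c_i(M_{a,b}^s)=\bigl(\text{coefficient of }z^i\text{ in }\psi_{a,b}^s\bigr)$ for $i$ large, which is $6$-periodic in $i$.

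The step I expect to be the main obstacle is the divisibility $(1-z^2)^2(1-z^4)\mid N(z)$: it is exactly what improves the a priori period $\mathrm{lcm}(2,4,6)=12$ — which is all one reads off naively from the denominator of \refeq{eq4} — down to $6$, and without it the conclusion would be false. Once the two factorizations above are spotted it reduces to the trivial identities $Q(1)=Q(-1)=a\pm b$ for the geometric sums $Q$ together with the parity remark on $b$, so it is not hard, but it is the place where the precise shape of the numerator is used. (As an alternative to invoking \refeq{eq4} one can prove the same divisibility by induction on $a$ directly from the recursions (c1)--(c4): the base modules satisfy $(1-z^6)\,\psi_{\threehalfs,\pm\half}^s=z^s$ or $z^{3+s}$, and multiplication by $z^3+z+z^{-1}+z^{-3}$ and by $z^4+z^2+1+z^{-2}+z^{-4}$ preserves the property of being a Laurent polynomial.)
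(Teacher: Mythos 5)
Your proof is correct, and it takes a genuinely different route from the paper's. The paper (via \refcor{cor208} and \refle{le208}) writes $c_i(M_{a,b}^s)=\delta_{a,b}^s(i)-\delta_{a,b}^s(-i-2)$, notes that for large $i$ the second term vanishes and that $\delta_{a,b}^s(i+6n)$ is a polynomial in $n$ (because $\gamma$ is a quasi-polynomial of period $6$), and then concludes the polynomial is constant \emph{because $M$ is bounded}. You instead prove the stronger, unconditional statement that $(1-z^6)\psi_{a,b}^s$ is a Laurent polynomial, by factoring the numerator as $z^{-(2u+v)}(z^{4u}-1)(z^{2v}-1)$ and $z^{-(u+2v)}(z^{2u}-1)(z^{4v}-1)$ with $u=a+b$, $v=a-b\in\ZZ_{>0}$, extracting the factor $(z^2-1)^2(z^2+1)$ common to both, and then checking the two residual evaluations $R(\pm 1)=0$ (the $z=-1$ evaluation using that $u$ and $v$ have opposite parity since $u-v=2b$ is odd). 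This buys you a cleaner structural fact — the generating function is a Laurent polynomial over $1-z^6$ — and avoids any appeal to boundedness of $M$, at the cost of a more hands-on computation with the numerator of \refeq{eq4}. The paper's version is shorter given \refle{le208} is already in place, but it is logically weaker in that it needs the a priori boundedness. Your reduction of $c_i$ to $a_i-a_{-i-2}$ via the description of $\CC((z))'$ is correct, and the initial reductions via \refcor{cor101}, \refcor{cor102}, \refcor{cor207} are the right ones. One small caveat: the parenthetical alternative by induction on $a$ from (c1)--(c4) is plausible in spirit but, as stated, glosses over the fact that the recursions are sums rather than products, so the induction needs to isolate the one new $\psi$ on the right-hand side and argue that its $(1-z^6)$-multiple is a Laurent polynomial because the others already are; as written, ``multiplication ... preserves the property'' does not literally apply. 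Since this is only offered as an aside, it does not affect the main argument.
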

\begin{proof}
The given \gkm~ $M$ is isomorphic to $M_{a,b}^s$ for some $a,b\in \half+\ZZ$, $s\in\{0,1\}$. For sufficiently large $i$, $\delta_{a,b}^s(-i-2)=0$, hence $c_i(M)=\delta_{a,b}^s(i)$. The explicit formula for $\gamma(i)$ from \refle{le208} implies that $\delta_{a,b}^s(i+6n)$ is a polynomial in $n$. Since this polynomial is a bounded function, it is necessarily a constant.
\end{proof}

For large enough values of $i$, \refcor{cor209} enables us to write $c_{\overline{i}}(M_{a,b}^s)$, $\overline{i}\in \ZZ_6$. Here are simple explicit expressions for $c_{\overline{i}}(M_{a,b}^s)$.

\begin {theorem}\label{thX}
Let $\sigma_{a,b}\eqdef\triplebrace{1}{\text{ if }3|2a,3\nmid 2b}{-1}{\text{ if }3|2b,3\nmid 2a}{0}{\text{ in all other cases}}$.

Then
\flushleft{
$\displaystyle
c_{\overline{0+s}}(M_{a,b}^s)=\frac{1}{6}(1+(-1)^{a+b})\left(\frac{a^2-b^2}{2} +2\sigma_{a,b}\right),
$
}
\flushleft{
$\displaystyle
c_{\overline{1+s}}(M_{a,b}^s)=c_{\overline{5+s}}(M_{a,b}^s)=\frac{1}{6}(1-(-1)^{a+b})\left(\frac{a^2-b^2}{2} -\sigma_{a,b}\right),
$
}
\flushleft{
$\displaystyle
c_{\overline{2+s}}(M_{a,b}^s)=c_{\overline{4+s}}(M_{a,b}^0)=\frac{1}{6}(1+(-1)^{a+b})\left(\frac{a^2-b^2}{2} -\sigma_{a,b}\right),
$
}
\flushleft{
$\displaystyle
c_{\overline{3+s}}(M_{a,b}^s)=\frac{1}{6}(1-(-1)^{a+b})\left(\frac{a^2-b^2}{2} +2\sigma_{a,b}\right).
$
}

\end{theorem}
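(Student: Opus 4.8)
The plan is to read off $c_i(M^s_{a,b})$ from the closed form already in hand. By \refcor{cor209}, for all sufficiently large $i$ one has $c_i(M^s_{a,b})=\delta^s_{a,b}(i)$ (the eight arguments of $\gamma$ occurring in $\delta^s_{a,b}(-i-2)$ are then negative, where $\gamma$ vanishes), and $c_i$ is $6$-periodic in $i$; so it is enough to evaluate $\delta^s_{a,b}(i)$ once in each residue class of $i$ modulo $6$, with $i$ large. Into the eight-term alternating sum of \refcor{cor208} I substitute the splitting $\gamma(n)=P(n)+\frac{(-1)^n}{16}+\frac{\beta(n)}{9}$ of \refle{le208}, where $P(x)=c_3x^3+c_2x^2+c_1x+c_0$ is the cubic polynomial with leading coefficient $c_3=\frac1{144}\cdot\frac16(119-179+109-25)=\frac1{36}$, and I treat the three resulting partial sums of $\delta^s_{a,b}(i)$ separately.

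I begin with the polynomial part. Grouping the eight arguments of $\gamma$ according to whether the shift is $5$ or $6$, they are $u\pm\frac{3a+b}{2}$, $u\pm\frac{a+3b}{2}$ with centre $u:=\frac{i-5-s}{2}$ and signs $+,-,-,+$, and $w\pm\frac{3a-b}{2}$, $w\pm\frac{a-3b}{2}$ with centre $w:=\frac{i-6-s}{2}$ and signs $-,+,+,-$; note $u-w=\frac12$. For a cubic $P$, $P(c+t)+P(c-t)$ is an even polynomial in $t$ plus a term linear in $c$, so $[P(c+t_1)+P(c-t_1)]-[P(c+t_2)+P(c-t_2)]=(t_1^2-t_2^2)(6c_3c+\kappa)$ for a constant $\kappa$ depending only on $P$. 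Since $\big(\tfrac{3a+b}{2}\big)^2-\big(\tfrac{a+3b}{2}\big)^2=\big(\tfrac{3a-b}{2}\big)^2-\big(\tfrac{a-3b}{2}\big)^2=2(a^2-b^2)$, the first group contributes $2(a^2-b^2)(6c_3u+\kappa)$ and the second $-2(a^2-b^2)(6c_3w+\kappa)$; the $\kappa$-terms cancel, so the sum is $12c_3(a^2-b^2)(u-w)=\frac{a^2-b^2}{6}$, independent of $i$.

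Next I show the $\frac{(-1)^n}{16}$-part vanishes and identify the $\frac{\beta(n)}{9}$-part. Using $a,b\in\frac12+\ZZ$ one checks that the pairwise differences of the eight arguments are integers whose parities are determined by $a$ and $b$, with the upshot that inside each group of four the four values $(-1)^{n_j}$ cancel against the signs $+,-,-,+$ (respectively $-,+,+,-$); so the $(-1)^n$-part is $0$. For the $\beta$-part, reducing the eight arguments modulo $3$ kills the $3a$- and $3b$-terms and turns them into $K\mp B$, $K\pm A$ (first group) and $L\pm B$, $L\pm A$ (second group), where $K\equiv 2(i-5-s)$, $L\equiv K+1$, $A\equiv 2a$, $B\equiv 2b\pmod{3}$. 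With $g(m,x):=\beta(m-x)+\beta(m+x)$ a direct check gives $g(m,0)=2\beta(m)$ and $g(m,1)=g(m,2)=-\beta(m)$, so the $\beta$-part collapses to $\frac19\big(\beta(K)-\beta(L)\big)\big(\mathrm{coef}(B)-\mathrm{coef}(A)\big)$ with $\mathrm{coef}(0)=2$, $\mathrm{coef}(1)=\mathrm{coef}(2)=-1$; since $\mathrm{coef}(B)-\mathrm{coef}(A)=-3\sigma_{a,b}$ and $\beta(K)-\beta(K+1)$ equals $-2$ for $K\equiv 2$ and $1$ for $K\equiv 0,1\pmod{3}$, this part is $-\frac13\sigma_{a,b}\big(\beta(K)-\beta(K+1)\big)$.

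Adding the three pieces gives $c_i(M^s_{a,b})=\frac{a^2-b^2}{6}-\frac13\sigma_{a,b}\big(\beta(K)-\beta(K+1)\big)$ for all large $i$ of the same parity as $a+b+s$, while $c_i(M^s_{a,b})=0$ for the other parity (then the eight arguments are half-integers), in agreement with \refcor{cor207'}. It remains to express $K\equiv 2(i-5-s)\pmod{3}$ through the residue of $i$ modulo $6$: for $i\equiv s,\ 1+s,\ 2+s,\ 3+s,\ 4+s,\ 5+s\pmod{6}$ one obtains $K\equiv 2,1,0,2,1,0$, so $\beta(K)-\beta(K+1)$ equals $-2,1,1,-2,1,1$; inserting these values and packaging the parity restriction into the prefactor $1\pm(-1)^{a+b}$ yields exactly the four displayed formulas. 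The only genuinely non-mechanical inputs are the vanishing of the $(-1)^n$-part and the identity $g(m,1)=g(m,2)=-\beta(m)$; I expect the main obstacle to be the careful bookkeeping of parities modulo $2$ and residues modulo $3$ of the eight shifted arguments, together with the translation from $K$ to $i\bmod 6$, which is elementary but error-prone.
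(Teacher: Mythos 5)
Your proof is correct, and it takes a genuinely different route from the paper. The paper's argument defines vectors $\overline{\phi}^s_{a,b}\in\CC^6$ encoding the stable multiplicities $c_{\overline i}(M^s_{a,b})$, translates the recursions (c1)--(c4) into linear conditions (c5)--(c8) involving two shift operators $S,T$ on $\CC^6$, diagonalizes $S$ and $T$ by a discrete Fourier transform $\eta_{\overline i}=\sum_{\overline j}\omega^{\overline i\,\overline j}\xi_{\overline j}$, exhibits explicit families of eigenvectors $\eta_{\overline i,a,b}$ satisfying (c5)--(c7), and fixes the unique linear combination matching the initial condition (c8); the explicit formula is then read off. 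You instead work directly from Corollary~\ref{cor208}: you split $\gamma(n)$ into its cubic-polynomial part, its $\tfrac{(-1)^n}{16}$ part, and its $\tfrac{\beta(n)}{9}$ part, and evaluate the eight-term alternating sum $\delta^s_{a,b}(i)$ piece by piece, observing that the polynomial contribution telescopes to the constant $\tfrac{a^2-b^2}{6}$ (using $u-w=\tfrac12$ and $\big(\tfrac{3a\pm b}{2}\big)^2-\big(\tfrac{a\pm3b}{2}\big)^2=2(a^2-b^2)$), that the $(-1)^n$ contribution vanishes by parity bookkeeping, and that the $\beta$ contribution collapses via $g(m,1)=g(m,2)=-\beta(m)$ to $-\tfrac13\sigma_{a,b}\,(\beta(K)-\beta(K+1))$. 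Both arguments are sound; the paper's approach buys structural transparency and avoids case analysis on residues by diagonalizing the recursion, whereas yours is a direct evaluation that bypasses the need to guess eigenvectors and verify the ansatz, at the cost of more delicate mod-$2$ and mod-$3$ bookkeeping of the eight shifted arguments. Both rely on the same preparatory material (Lemma~\ref{le208}, Corollaries~\ref{cor208}, \ref{cor209}, \ref{cor207'}).
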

\begin{proof}
Let $\{\xi_{\bar i}\}_{\bar i\in\ZZ_6}$ denote the standard basis in $\CC^6$. Set
\[
\overline{\phi}_{a,b}^s\eqdef\sum_{\overline{i}\in\ZZ_6}c_{\overline{i}}(M_{a,b}^s)\xi_{\bar
  i}
\]
for $a,b\in\half+\ZZ$, $a\geq |b|$. Extend $\overline{\phi}_{a,b}^s$ to all $a,b\in\half+\ZZ$ by putting
\[
\overline{\phi}_{a,b}^s =-\overline{\phi}_{b,a}^s=-\overline{\phi}_{-b,-a}^s=\overline{\phi}_{-a,-b}^s,
\]
and let $S,T:\CC^6\to \CC^6$ be the linear operators
\[
S(\xi_{\overline{i}})\eqdef 2\xi_{\overline{i+3}}+\xi_{\overline{i+1}}+\xi_{\overline{i-1}},\quad T(\xi_{\overline{i}})\eqdef 2\xi_{\overline{i+2}}+2\xi_{\overline{i+4}}.
\]
Then $\overline{\phi}_{a,b}^s$ satisfy the following version of conditions (c1)-(c4):
\begin{itemize}
\item[(c5)] $S(\overline{\phi}_{a,b}^s )=\overline{\phi}_{a+1,b}^s +\overline{\phi}_{a,b+1}^s +\overline{\phi}_{a-1,b}^s +\overline{\phi}_{a,b-1}^s$,
\item[(c6)] $T(\overline{\phi}_{a,b}^s )=\overline{\phi}_{a+1,b+1}^s +\overline{\phi}_{a-1,b+1}^s +\overline{\phi}_{a+1,b-1}^s +\overline{\phi}_{a-1,b-1}^s$,
\item[(c7)] $\overline{\phi}_{a,b}^s =-\overline{\phi}_{b,a}^s =-\overline{\phi}_{-b,-a}^s =\overline{\phi}_{-a,-b}^s $,
\item[(c8)] $\overline{\phi}_{\threehalfs,\half}^s=\xi_{\overline{s}},\quad  \overline{\phi}_{\threehalfs,-\half}^s=\xi_{\overline{3+s}}$.
\end{itemize}
Denote by $\omega$ a primitive sixth root of unity. Then $\displaystyle\{\eta_{\overline{i}}\eqdef\sum_{\overline{j}\in \ZZ_6}\omega^{\overline{i}\overline{j}}\xi_{\overline{j}}\}_{\overline{i}\in\ZZ_6}$ is an eigenbasis for $S$ and $T$.
Put
\[
\eta_{\overline{0},a,b}\eqdef \frac{(a^2-b^2)}{2}\eta_{\overline{0}},\quad \eta_{\overline{3},a,b}\eqdef (-1)^{a+b}\frac{(a^2-b^2)}{2}\eta_{\overline{3}},
\]
\[
\eta_{\overline{2},a,b}\eqdef \sigma_{a,b}\eta_{\overline{2}},\quad \eta_{\overline{4},a,b}\eqdef \sigma_{a,b}\eta_{\overline{4}},
\]
\[
\eta_{\overline{3},a,b}\eqdef (-1)^{a+b}\sigma_{a,b}\eta_{\overline{3}},\quad \eta_{\overline{5},a,b}\eqdef (-1)^{a+b}\sigma_{a,b}\eta_{\overline{5}}.
\]
Using the identity
\[
\sigma_{a,b}=\frac{\omega^{2b}+\omega^{-2b}-\omega^{2a}-\omega^{-2a}}{3},
\]
one can easily check that $\eta_{\overline{i},a,b}$ satisfies (c5)-(c7). The linear combination
\[
\overline{\phi}_{a,b}^s=\frac{1}{6}\sum_{\overline{i}\in\ZZ_6}\omega^{-\overline{is}}\eta_{\overline{i},a,b}
\]
satisfies the condition (c8), hence its coefficients in the basis
$\{\xi_{\bar i}\}$ equal $c_{\overline{i}}\left(M_{a,b}^s\right)$.
\end{proof}

\begin{corollary}\label{cor712}
The following is a complete list of multiplicity free simple \gkm s: $M_{\threehalfs,\pm\half}^{s}$,  $M_{\frac{5}{2},\pm \threehalfs}^s$, $M_{\frac{5}{2},\pm\half}^{s}$,$M_{\frac{7}{2},\pm\frac{5}{2}}^{s}$, $s\in\{0,1\}$.
\end{corollary}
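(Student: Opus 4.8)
The plan is to run the argument entirely through the asymptotic $\kk$-multiplicities. By Corollaries~\ref{cor101} and~\ref{cor102}, every simple bounded infinite-dimensional $(\gg,\kk)$-module is isomorphic to $M_{a,b}^s$ for a uniquely determined triple $a,b\in\half+\ZZ$ with $a>|b|$ and $s\in\{0,1\}$, and $M_{a,b}^s$ is multiplicity free precisely when $c_i(M_{a,b}^s)\le 1$ for all $i$. In particular the eventually-periodic tail values $c_{\bar\imath}(M_{a,b}^s)$, $\bar\imath\in\ZZ_6$, provided by \refcor{cor209} and \refth{thX}, must all be $\le 1$. Writing $N:=\frac{a^2-b^2}{2}$ (which is a positive integer because $a,b\in\half+\ZZ$ and $a>|b|$), the nonzero ones among these tail values are $\tfrac13(N+2\sigma_{a,b})$ and $\tfrac13(N-\sigma_{a,b})$, so multiplicity-freeness forces $N+2\sigma_{a,b}\le 3$ and $N-\sigma_{a,b}\le 3$ (both being automatically $\ge 0$). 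Since $\sigma_{a,b}\in\{-1,0,1\}$, these inequalities give $N\le 3$ in every case.

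Given $N\le 3$, i.e. $a^2-b^2\le 6$ with $a>|b|$ and $a,b\in\half+\ZZ$, a short enumeration produces exactly the pairs
\[
(a,b)\in\bigl\{(\tfrac32,\pm\tfrac12),\ (\tfrac52,\pm\tfrac32),\ (\tfrac52,\pm\tfrac12),\ (\tfrac72,\pm\tfrac52)\bigr\},
\]
and for each of them the (forced) value of $\sigma_{a,b}$ is indeed compatible with the two inequalities above. This establishes that the $16$ modules $M_{a,b}^s$ in the statement are the only possible multiplicity free ones.

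It remains to show that each of them actually is multiplicity free, i.e. that $c_i\le 1$ for all $i$, not merely in the tail. For the four modules $M_{\threehalfs,\pm\half}^s$ this is immediate: they are exactly the modules of \refcor{cor36}, whose $\kk$-module decompositions are visibly multiplicity free. For the remaining twelve I will compute the full $\kk$-character from the closed formula \eqref{eq4} of \refth{th206} via $c(M_{a,b}^s)=\pi(\psi_{a,b}^s)$ (\refcor{cor207}), using the elementary action of $\pi$ on $\CC((z))$; alternatively, the characters of $M_{\frac52,\pm\frac12}^s$ and $M_{\frac52,\pm\frac32}^s$ may be extracted from the decompositions $V_{1,0}\otimes M_{\threehalfs,\pm\half}^s$ and $V_{1,1}\otimes M_{\threehalfs,\pm\half}^s$ of \refle{le200} together with \refle{le202}(b), starting from the known characters of $M_{\threehalfs,\pm\half}^s$. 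In each case the $\kk$-character comes out with all coefficients in $\{0,1\}$ (for example $c(M_{\frac52,\frac12}^0)=z^3+z^5+z^7+\cdots$ and $c(M_{\frac52,-\frac12}^0)=z^2+z^4+z^6+\cdots$). The main obstacle is purely computational: the sufficiency step requires running this character computation for several families and all values of $s$, and---if one uses \refle{le200}---being careful at the degenerate cases $a=|b|+1$ where the decomposition takes the form \eqref{star1star} or \eqref{star2star}; in the necessity step the only subtlety is checking that no pair is accidentally assigned a value of $\sigma_{a,b}$ forcing a negative tail multiplicity, which cannot happen because the divisibility conditions defining $\sigma_{a,b}$ already pin down its value on each of the eight pairs.
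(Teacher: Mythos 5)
Your proposal is correct and follows essentially the same route as the paper's own proof: use the asymptotic $\kk$-multiplicities from Theorem~\ref{thX} to show the eight pairs $(a,b)$ are the only candidates, and then check the full characters of the sixteen modules explicitly. The paper is terser on the necessity step (``a straightforward computation based on \refth{thX}'' and then the list of pairs) whereas you cleanly organize it via the bound $N=\tfrac{a^2-b^2}{2}\le 3$ extracted from $N+2\sigma_{a,b}\le3$ and $N-\sigma_{a,b}\le3$; for the sufficiency step the paper invokes \refcor{cor208} while you propose either \refcor{cor207}/\refth{th206} or the recursive decompositions of \refle{le200}, which are equivalent tools rather than a different argument.
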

\begin {proof}
A straightforward computation based on \refth{thX} shows that $c_{\overline{i}}(M_{a,b}^s)\in\{0,1\}$ for $\overline{i}\in\ZZ_6$ iff $(a,b)$ is one of the pairs $\displaystyle\left(\frac{3}{2},\pm\frac{1}{2} \right)$, $\displaystyle\left(\frac{5}{2},\pm\frac{3}{2} \right)$, $\displaystyle\left(\frac{5}{2},\pm\frac{1}{2} \right)$, and $\displaystyle\left(\frac{7}{2},\pm\frac{5}{2} \right)$. Then, using \refcor{cor208} one verifies that all modules $M_{a,b}^s$ for $(a,b)$ as above are indeed multiplicity free.
\end{proof}

\begin {theorem}\label{th813}~

\begin {itemize}
\item[(a)] The minimal $\kk$-type of any even (respectively, odd) bounded simple \gkm~M equals $V_0$, $V_2$ or $V_4$ (resp., $V_1$ or $V_3$).
\item[(b)] If $M$ is an even (respectively, odd) simple module in $\BB^{\chi(a,b)}$, then $c_0(M)$ (resp., $c_1(M)$) equals $\displaystyle \frac{a\pm b}{6}+\eps$ or
$\displaystyle \frac{a\pm b}{12}+\eps$ (resp., $\displaystyle \frac{a\pm b}{3}+\eps$ or $\displaystyle \frac{a\pm b}{6}+\eps$) for some $\eps$ with $|\eps|<1$.
\end{itemize}
\end{theorem}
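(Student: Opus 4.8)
The plan is to read off both assertions from the explicit $\kk$-character formula. By \refcor{cor207}, $c(M_{a,b}^s)=\phi_{a,b}^s=\pi(\psi_{a,b}^s)$, where $\psi_{a,b}^s$ is the Laurent series \refeq{eq4} of \refth{th206}, namely $\psi_{a,b}^s=\bigl(z^{5+s}P(a,b)-z^{6+s}Q(a,b)\bigr)/\bigl((1-z^2)^2(1-z^4)(1-z^6)\bigr)$ with $P(a,b)=z^{3a+b}-z^{a+3b}-z^{-a-3b}+z^{-3a-b}$ and $Q(a,b)=z^{3a-b}-z^{-a+3b}-z^{a-3b}+z^{-3a+b}$. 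Directly from the definition of $\pi$ in \refsec{secRank2Case}, $\pi\bigl(\sum_m c_m z^m\bigr)=\sum_{n\geq0}(c_n-c_{-n-2})z^n$, so writing $\psi_{a,b}^s=\sum_m\psi_m z^m$ one has $c_i(M_{a,b}^s)=\psi_i-\psi_{-i-2}$. Every simple bounded infinite-dimensional $(\gg,\kk)$-module equals $M_{a,b}^s$ for a unique triple with $a,b\in\half+\ZZ$, $a>|b|$, $s\in\{0,1\}$, and by \refcor{cor207'} its $\kk$-types all have the parity of $a+b+s$; so it suffices to treat these modules. Expanding $1/\bigl((1-z^2)^2(1-z^4)(1-z^6)\bigr)=\sum_{k\geq0}\gamma(k)z^{2k}$ by \refle{le208} (with $\gamma(m):=0$ for $m<0$, which the quasi-polynomial of \refle{le208} correctly reproduces for all arguments occurring below) and writing $z^{5+s}P-z^{6+s}Q=\sum_{j=1}^8\eps_jz^{e_j}$ with $\eps_j=\pm1$, one gets $c_i(M_{a,b}^s)=\sum_{j}\eps_j\bigl(\gamma(\tfrac{i-e_j}{2})-\gamma(\tfrac{-i-2-e_j}{2})\bigr)$, in which only the monomials with $e_j\leq0$ contribute; for an even module all eight $e_j$ are even (they all have the parity of $a+b+s$), and the relevant ones depend on $\mathrm{sign}(b)$ and on where $a$ sits relative to the thresholds $3|b|$, $6+s+3|b|$, etc.

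To prove (b), first note $\BB^{\chi(a,b)}$ has exactly four simple objects (\refcor{cor102}, \refcor{cor36}), which for $a>|b|>0$ are $M_{a,b}^0,M_{a,b}^1,M_{a,-b}^0,M_{a,-b}^1$; by \refcor{cor207'} the two even ones are $M_{a,b}^{s_1}$ and $M_{a,-b}^{s_2}$ for the $s_i$ forced by parity (using $2b\in1+2\ZZ$). For such a module one computes $c_0=\psi_0-\psi_{-2}$ from the formula above. Using \refle{le208} one has $\gamma(k)-\gamma(k-1)=q(k)+\tfrac{(-1)^k}{8}+\tfrac{\beta(k)-\beta(k-1)}{9}$ where $q(k)=\tfrac{k^2}{12}+\tfrac{k}{2}+\tfrac{47}{72}$ is a fixed quadratic. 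Substituting this into the (three- or four-term) sum for $c_0$, the constant and quadratic parts of $q$ cancel --- this is the same cancellation that makes the two Laurent polynomials in \refeq{eq4} satisfy conditions (c1)--(c3) of \refth{th206} --- leaving, for $a$ large, exactly one of the linear functions $\tfrac{a+b}{6},\tfrac{a-b}{6},\tfrac{a+b}{12},\tfrac{a-b}{12}$ (the regime fixes which), plus the contribution of the bounded terms $\sum_j\eps_j\bigl(\tfrac{(-1)^{-e_j/2}}{8}+\tfrac{\beta(-e_j/2)-\beta(-e_j/2-1)}{9}\bigr)$, a function of $a,b,s$ modulo small integers taking finitely many values; inspecting these residue classes shows it lies in $(-1,1)$, and this is the asserted $\eps$. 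The finitely many small $(a,b,s)$ not covered by ``$a$ large'' are verified from the same formula. The odd case is identical with $c_1=\psi_1-\psi_{-3}$ and produces the forms $\tfrac{a\pm b}{3},\tfrac{a\pm b}{6}$.

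For (a), let $M=M_{a,b}^s$. If $M$ is even, then by (b) $c_0(M)=\tfrac{a\pm b}{c}+\eps$ with $c\in\{6,12\}$ and $|\eps|<1$, hence $c_0(M)\geq1$ as soon as $a\pm b\geq c$; since $c_0(M)=\dim\Hom_\kk(V_0,M)\geq1$ means $V_0$ is the minimal $\kk$-type, it follows that the minimal $\kk$-type of $M$ is $V_0$ --- hence in $\{V_0,V_2,V_4\}$ --- for all but finitely many $(a,b,s)$. If $M$ is odd, then by (b) $c_1(M)=\tfrac{a\pm b}{c'}+\eps$ with $c'\in\{3,6\}$, $|\eps|<1$, and likewise the minimal $\kk$-type is $V_1$ --- hence in $\{V_1,V_3\}$ --- for all but finitely many $(a,b,s)$. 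For the remaining finitely many triples (those with the relevant $a\pm|b|$ at most $12$) one computes the minimal $\kk$-type directly --- either from $c_i(M)=\psi_i-\psi_{-i-2}$ for $i=0,1,2,3,4$, or by iterating the branching rules of \refle{le200} (equivalently \refle{le204}) starting from the known $\kk$-decompositions of $M_{\threehalfs,\pm\half}^s$ (\refcor{cor36}, \refle{le201}). In every such case the minimal $\kk$-type lies in $\{V_0,V_2,V_4\}$ when $M$ is even and in $\{V_1,V_3\}$ when $M$ is odd; in particular $V_5$ never occurs, so the odd list is exactly $\{V_1,V_3\}$.

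The main obstacle is purely computational: identifying, in each regime of $(a,b,s)$, which of the eight monomials $\eps_jz^{e_j}$ have non-positive exponent; carrying the cancellation of the higher-order terms of $q$ through without sign errors; and verifying $|\eps|<1$ by exhausting the residue classes of the bounded periodic correction (together with the short finite check in (a)). All the structural input is supplied by \refcor{cor207}, \refth{th206} and \refle{le208}; what remains is careful arithmetic.
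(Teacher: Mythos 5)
Your treatment of part (b) is essentially the paper's own: both compute $c_0$ and $c_1$ from the explicit Laurent series $\psi_{a,b}^s$ via a partial-fractions expansion of $1/\bigl((1-z^2)^2(1-z^4)(1-z^6)\bigr)$, extracting a dominant linear term plus a bounded periodic remainder. The paper uses separate quasi-polynomials $\gamma'$ and $\gamma''$ coming from different partial-fractions identities rather than your $\gamma(k)-\gamma(k-1)$, but this is only a bookkeeping difference.

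Part (a) has a genuine gap. You deduce from (b) that $c_0(M)\geq 1$ once the relevant linear combination of $a$ and $b$ exceeds roughly $12$, and then assert that only ``finitely many triples'' remain to be checked directly. But the complement is infinite: in the regimes where the (b)-formula reads $c_0\approx(a-|b|)/6$ or $(a-|b|)/12$, the strip $0<a-|b|<12$ with $a,b\in\half+\ZZ$ and $a>|b|$ contains infinitely many $(a,b)$, since $|b|$ can be arbitrary there. For those modules $c_0$ may legitimately be $0$ (the minimal $\kk$-type is then $V_2$ or $V_4$, not $V_0$), and moreover you never rule out the minimal $\kk$-type being $V_6$ or higher for this infinite family; no finite verification closes it. The paper avoids the problem entirely by forming the single quantity $e_{a,b}^s=c_0+c_2+c_4$ (resp.\ $d_{a,b}^s=c_1+c_3$), computing it in closed piecewise-polynomial form valid uniformly for all $a>|b|$, and observing that it never vanishes. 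That shows directly that some $c_i$ with $i\leq 4$ (resp.\ $i\leq 3$) is positive, and parity (\refcor{cor207'}) then yields exactly the asserted list of possible minimal $\kk$-types. To repair your plan you would have to analyse $c_2$ and $c_4$ across the infinite family of regimes where $c_0$ vanishes, which is considerably more work than the paper's combined computation.
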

\begin{proof}
(a) Note that for any bounded \gkm~M, $c_i(M)$ equals the constant term of the Laurent polynomial $z^{-i}(1-z^{2i+2})c(M)$. Hence $c_1(M)+c_3(M)$ equals the constant term in the Laurent expansion of $(z^{-1}(1-z^4)+z^{-3}(1-z^8))c(M)$. A straightforward calculation shows that for $M=M_{a,b}^s$ the latter is nothing but the constant term of the Laurent series
\[
\frac{z^{3a+b+2+s}-z^{a+3b+2+s}-z^{-a-3b+2+s}+z^{-3a-b+2+s}-z^{-3a+b+3+s}}{(1-z^2)^3}+
\]
\[
+\frac {z^{a-3b+3+s}+z^{-a+3b+3+s}-z^{3a-b+3+s}}{(1-z^2)^3}.
\]
Using the identity
\begin{equation}\label{eq851}
\frac{1}{(1-z^2)^3}=\sum_{n=0}^{\infty}\binom{n+2}{2}z^{2n},
\end{equation}
we obtain
\begin{equation}
\label{eqX}
c_1(M_{a,b}^s)+c_3(M_{a,b}^s)\eqdefrev d_{a,b}^s=\binom{\frac{-3a-b+2-s}{2}}{2}-\binom{\frac{-a-3b+2-s}{2}}{2}-\binom{\frac{a+3b+2-s}{2}}{2}+\binom{\frac{3a+b+2-s}{2}}{2}
\end{equation}
\[
-\binom{\frac{3a-b+1-s}{2}}{2}+\binom{\frac{-a+3b+1-s}{2}}{2}+\binom{\frac{a-3b+1-s}{2}}{2}-\binom{\frac{-3a+b+1-s}{2}}{2},
\]
where we set $\binom{l}{2}:=0$ for $l\notin\ZZ_{\geq 0}$.

This expression is a piecewise polynomial function which equals identically zero whenever $M_{a,b}^s$ is even, i.e. when $a+b+s$ is even. In fact, the right hand side of \refeq{eqX} turns out to be very simple as an explicit calculation shows that, for $a+b+s$ odd,
\begin{equation}\label{eq813}
d_{a,b}^s=\doublebrace{\displaystyle\frac{a+(-1)^{s+1}b}{2}}{\mathrm{ for~} a+(-1)^s3b\geq 0}{a+(-1)^{s}b}{\mathrm{ for~} a+(-1)^s3b\leq 0}.
\end{equation}
Since $a>|b|$, the right hand side of \refeq{eq813} is never 0, i.e. the minimal $\kk$-type of $M_{a,b}^s$ is $V_1$ or $V_3$ whenever $a+b+s$ is odd.

A similar analysis proves that the minimal $\kk$-type of $M_{a,b}^s$ is $V_0$, $V_2$, or $V_4$ whenever $a+b+s$ is even. Indeed, in this case
\[
e_{a,b}^s\eqdef c_0(M_{a,b}^s)+c_2(M_{a,b}^s)+c_4(M_{a,b}^s)
\]
equals the constant term of the Laurent series
\[
(1-z^2)+z^{-2}(1-z^6)+z^{-4}(1-z^{10})c(M)\quad .
\]
Using the identity
\[
\frac{(1-z^2)+z^{-2}(1-z^6)+z^{-4}(1-z^{10})}{(1-z^2)^2(1-z^4)(1-z^6)}=\frac{1}{8z^4}\left( \frac{7+4z^2+z^4}{(1-z^2)^3}+\frac{1}{(1+z^2)}\right),
\]
as well as the identity \refeq{eq851}, we calculate
\begin{eqnarray*}
e_{a,b}^s&=& \theta\left( \frac{-3a-b-1-s}{2} \right)-\theta\left( \frac{-a-3b-1-s}{2} \right)-\\
&&-\theta\left( \frac{a+3b-1-s}{2} \right)+\theta\left( \frac{3a+b-1-s}{2} \right)-\\
&&-\theta\left( \frac{3a-b-2-s}{2} \right)+\theta\left( \frac{-a+3b-2-s}{2} \right)+\\
&&+\theta\left( \frac{a-3b-2-s}{2} \right)-\theta\left( \frac{-3a+b-2-s}{2} \right),
\end{eqnarray*}
where $\displaystyle\theta(n)\eqdef \frac{3}{4}n^2+\frac{3}{2}n+\frac{7}{8}+\frac{(-1)^n}{8}$ for $n\in\ZZ_{\geq 0}$ and $\theta(n):=0$ otherwise. Further calculations show:
\begin{equation}\label{eqeabs}
e_{a,b}^s=\doublebrace{\displaystyle \frac{3}{4}\left(a+(-1)^{s+1}b\right)+\frac{(-1)^ {\frac{a+(-1)^{s+1}b-1}{2}}}{4}} {\mathrm{~for~}(-1)^sa+3b\geq 0}{\displaystyle\frac{3}{2}\left(a+(-1)^sb\right)}{\mathrm{~for~}(-1)^sa+3b\leq 0}
\end{equation}
under the assumption that $a+b+s$ is even. Since the right-hand side of \refeq{eqeabs} never equals 0, we obtain that $e_{a,b}^s\neq 0$ under the same assumption. Hence the minimal $\kk$-type of any even simple bounded \gkm~ equals $V_0$, $V_2$, or $V_4$.

(b) To compute $c_0(M)$ we use the identity
\begin{eqnarray*}
\frac{1-z^2}{(1-z^2)^2(1-z^4)(1-z^6)}&=&\frac{1}{(1-z^2)(1-z^4)(1-z^6)}\\
&=&\frac{47-52z^2+17z^4}{72(1-z^2)^3}+\frac{1}{8(1+z^2)}+\frac{2-z^2-z^4}{9(1-z^6)}
\end{eqnarray*}
which yields
\begin{eqnarray*}
c_0(M_{a,b}^s)&=&\gamma'\left( \frac{-3a-b-5-s}{2}\right)-\gamma'\left( \frac{-a-3b-5-s}{2}\right)-\\
&&-\gamma'\left( \frac{a+3b-5-s}{2}\right)+\gamma'\left( \frac{3a+b-5-s}{2}\right)-\\
&&-\gamma'\left( \frac{3a-b-6-s}{2}\right)+\gamma'\left( \frac{-a+3b-6-s}{2}\right)+\\
&&+\gamma'\left( \frac{a-3b-6-s}{2}\right)-\gamma'\left( \frac{-3a+b-6-s}{2}\right),
\end{eqnarray*}
where
\[
\gamma'(n)\eqdef
\frac{n^2}{12}+\frac{n}{2}+\frac{94}{144}+\frac{(-1)^n}{8}+\frac{\sigma'(n)}{9},
\]
\[
\sigma'(n)\eqdef\doublebrace{-1}{3\nmid n}{2}{3\mid n}
\]
for $n\in \ZZ_{\geq 0}$ and $\gamma'(n)=\sigma'(n):=0$ otherwise. Similarly, using the identity
\begin{eqnarray*}
\frac{z^{-1}(1-z^4)}{(1-z^2)^2(1-z^4)(1-z^6)}=z^{-1}\left( \frac{8-7z^2+2z^4}{9(1-z^2)^3}+\frac{1+z^2-2z^4}{9(1-z^6)}\right)
\end{eqnarray*}
we obtain
\begin{eqnarray*}
c_1(M_{a,b}^s)&=&\gamma''\left( \frac{-3a-b-4-s}{2}\right)-\gamma''\left( \frac{-a-3b-4-s}{2}\right)-\\
&&-\gamma''\left( \frac{a+3b-4-s}{2}\right)+\gamma''\left( \frac{3a+b-4-s}{2}\right)-\\
&&-\gamma''\left( \frac{3a-b-5-s}{2}\right)+\gamma''\left( \frac{-a+3b-5-s}{2}\right)+\\
&&+\gamma''\left( \frac{a-3b-5-s}{2}\right)-\gamma''\left( \frac{-3a+b-5-s}{2}\right),
\end{eqnarray*}
where
\[
\gamma''(n)\eqdef \frac{n^2}{6}+\frac{5n}{6}+\frac{8}{9}+\frac{\sigma''(n)}{9},
\]
\[
\sigma''(n)\eqdef\doublebrace{-2}{n= -1 (\mod 3)}{1}{n\neq -1 (\mod 3)}
\]
for $n\in \ZZ_{\geq 0}$ and $\gamma''(n)=\sigma''(n):=0$ otherwise. Using the expressions for $c_0(M^s_{a,b})$ and $c_1(M^s_{a,b})$ we notice that the terms $\frac{(-1)^n}{8}+\frac{\sigma'(n)}{9}$ and $\frac{\sigma''(n)}{9}$ will give a contribution $\eps$ with $|\eps|<1$. Thus, a direct computation implies
\[
c_0(M^s_{a,b})=\doublebrace{\frac{a+(-1)^sb}{6}+\eps}{\mathrm{~for~}a+(-1)^s3b<0}{\frac{a-(-1)^sb}{12}+\eps}{\mathrm{~for~}a+(-1)^s3b>0},
\]
\[
c_1(M^s_{a,b})=\doublebrace{\frac{a-(-1)^sb}{6}+\eps}{\mathrm{~for~}a+(-1)^s3b>0}{\frac{a+(-1)^sb}{3}+\eps}{\mathrm{~for~}a+(-1)^s3b<0.}
\]
\end{proof}

\begin{corollary}
For $a\pm b\geq 24$, the minimal $\kk$-type of $M_{a,b}^s$ equals $V_0$ (respectively, $V_1$) if $a+b+s$ is odd (resp., even).
\end{corollary}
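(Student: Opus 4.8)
The plan is to read this corollary straight off \refth{th813}. By \refcor{cor207'} the module $M_{a,b}^s$ is either even or odd, so \refth{th813}(a) already confines its minimal $\kk$-type to $\{V_0,V_2,V_4\}$ in the even case and to $\{V_1,V_3\}$ in the odd case. Thus the whole content of the corollary is that, once $a\pm b$ is large enough, the lowest $\kk$-type permitted by the parity of $M_{a,b}^s$ is actually attained; equivalently, that $c_0(M_{a,b}^s)\neq 0$ in the even case and $c_1(M_{a,b}^s)\neq 0$ in the odd case.

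To see this I would simply invoke the explicit estimates of \refth{th813}(b). In the even case $c_0(M_{a,b}^s)$ equals $\tfrac{a\pm b}{6}+\eps$ or $\tfrac{a\pm b}{12}+\eps$ for some $\eps$ with $|\eps|<1$, and in the odd case $c_1(M_{a,b}^s)$ equals $\tfrac{a\pm b}{3}+\eps$ or $\tfrac{a\pm b}{6}+\eps$ for some $\eps$ with $|\eps|<1$. In every one of these alternatives the leading term is of the form $\tfrac{a+b}{k}$ or $\tfrac{a-b}{k}$ with $k\in\{3,6,12\}$, hence is at least $\tfrac1{12}\min\{a+b,\,a-b\}$. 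The hypothesis $a\pm b\geq 24$ (that is, $a+b\geq 24$ and $a-b\geq 24$) therefore makes this leading term $\geq 2$, so the corresponding multiplicity is $\geq 2-1=1>0$. Together with the parity restriction of \refth{th813}(a) this forces the minimal $\kk$-type of $M_{a,b}^s$ to be $V_0$ when $M_{a,b}^s$ is even, i.e.\ when $a+b+s$ is even, and $V_1$ when $M_{a,b}^s$ is odd, i.e.\ when $a+b+s$ is odd, which is the content of the corollary.

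I anticipate no serious obstacle: all the arithmetic is already contained in \refth{th813}, and the present statement is just the remark that a finite sum of nonnegative integers which is forced to grow must have a nonzero bottom term. If one wishes to avoid the case split implicit in \refth{th813}(b), one may instead start from \refeq{eqeabs} and \refeq{eq813}, which state that $c_0(M_{a,b}^s)+c_2(M_{a,b}^s)+c_4(M_{a,b}^s)=e_{a,b}^s>0$ in the even case and $c_1(M_{a,b}^s)+c_3(M_{a,b}^s)=d_{a,b}^s>0$ in the odd case for all $a>|b|$; then \refth{th813}(b) is exactly what promotes ``the sum is positive'' to ``the first summand is positive'' once $a\pm b\geq 24$. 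The only mildly delicate point, namely deciding for a given $(a,b,s)$ which sign and which denominator occurs, is harmless here, since the threshold $24$ was chosen precisely so that positivity holds uniformly across all the cases of \refth{th813}.
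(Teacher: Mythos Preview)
Your argument is correct and is exactly the intended derivation: the paper states this corollary without proof because it is immediate from \refth{th813}(b), and you have spelled out precisely that step --- the leading term $\tfrac{a\pm b}{k}$ with $k\le 12$ exceeds $1>|\eps|$ once $a\pm b\ge 24$, forcing $c_0>0$ in the even case and $c_1>0$ in the odd case.

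One remark: what your argument actually establishes is ``minimal $\kk$-type $V_0$ when $a+b+s$ is \emph{even}, $V_1$ when $a+b+s$ is \emph{odd},'' which is the reverse of the parity condition printed in the statement. This is a typographical slip in the paper's statement, not an error in your reasoning: by \refcor{cor207'} the module $M_{a,b}^s$ is even precisely when $a+b+s$ is even, and the proof of \refth{th813}(a) explicitly says the minimal $\kk$-type lies in $\{V_0,V_2,V_4\}$ for $a+b+s$ even and in $\{V_1,V_3\}$ for $a+b+s$ odd. So your final sentence ``which is the content of the corollary'' should instead note that the printed parities are interchanged.
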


\begin{corollary}
A simple \gkm~ with minimal $\kk$-type $V_i$ for $i\geq 5$ is unbounded.
\end{corollary}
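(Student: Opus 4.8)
The plan is to deduce this corollary directly from the classification of simple bounded $(\sp(4),\sl(2))$-modules carried out in Sections \ref{se6} and \ref{secKchars4-2}: one reads off the possible minimal $\kk$-types of such modules in the two cases at hand (the short-root case and the principal case) and observes that none of them exceeds $V_4$; the statement then follows by contraposition. No genuinely new argument is needed.

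First I would dispose of the short-root case. By Corollaries \ref{cor101}, \ref{cor102} and \ref{cor35}, every infinite-dimensional simple bounded $(\gg,\kk)$-module is isomorphic to some $L_{a,b}$ or to its restricted dual $L'_{-a,-b}$ with $a>|b|$, $a,b\in\half+\ZZ$, and $c(L_{a,b})=c(L'_{-a,-b})$; since here $a-b\in\ZZ_{\geq 1}$, the character formula \refeq{eq94} exhibits the minimal $\kk$-type of such a module as $V_0$ when $a-b$ is odd and as $V_1$ when $a-b$ is even. Next I would treat the principal case. Here \refcor{cor207'} first guarantees that every simple bounded $(\gg,\kk)$-module is even or odd, so that \refth{th813}(a) applies to all of them and gives a minimal $\kk$-type in $\{V_0,V_2,V_4\}$ in the even case and in $\{V_1,V_3\}$ in the odd case. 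Combining the two cases, the minimal $\kk$-type of any infinite-dimensional simple bounded $(\gg,\kk)$-module lies in $\{V_0,V_1,V_2,V_3,V_4\}$; hence a simple module whose minimal $\kk$-type is $V_i$ with $i\geq 5$ cannot be bounded.

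Since the argument is essentially an extraction from the established classification, there is no real obstacle; the two points requiring attention are that \refcor{cor207'} must be invoked before \refth{th813}(a) (which is phrased only for even or odd modules), and that, by \refth{thPairsList}, the short-root and the principal $\sl(2)$ are, up to conjugacy, the only bounded $\sl(2)$-subalgebras of $\sp(4)$, so that the two cases above are exhaustive. If one wishes to cover finite-dimensional simple $\gg$-modules as well, the same bound holds for them: in the short-root case $\kk$ together with its centralizer is a $\gl(2)$-Levi subalgebra, so already $V_0$ or $V_1$ occurs in the restriction, and in the principal case the bound follows from the branching rules for $\sp(4)\downarrow\sl(2)$ (using $V_{1,0}\simeq V_3$). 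This last verification is the one slightly fiddly point, but it concerns modules that play no role elsewhere in the paper.
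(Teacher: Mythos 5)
Your core argument is exactly the paper's implicit one: combine Corollary \ref{cor207'} (every simple bounded module is even or odd) with Theorem \ref{th813}(a) (the minimal $\kk$-type of an even, resp.\ odd, bounded simple module lies in $\{V_0,V_2,V_4\}$, resp.\ $\{V_1,V_3\}$), and take the contrapositive. The additional material you include — the short-root case and the finite-dimensional branching check — is not needed here: this corollary sits in the subsection on the principal case of $\sp(4)$ (the root case was already handled in the preceding subsection, where the minimal $\kk$-type bound is read off from formula \refeq{eq94}), and the paper tacitly takes ``simple $(\gg,\kk)$-module'' to mean the infinite-dimensional modules $M^s_{a,b}$ that the classification has produced, so the finite-dimensional branching verification, while a reasonable thing to want, is outside the scope of the argument the paper is making. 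That said, your observation that the statement as literally worded would also require the finite-dimensional branching bound is a fair one to flag; your sketch of it (via $V_{1,0}\simeq V_3$ and branching rules for $\sp(4)\downarrow\sl(2)$) is plausible but, as you acknowledge, not fully carried out.
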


Note that all simple \gkm s of finite type over $\kk$ with minimal $\kk$-type $V_i$ for $i\geq 6$ are classified in \cite{PZ2}. In particular it is proved, \cite{PZ2}, that if $M$ is a \gkm~ with minimal $\kk$-type $V_i$ for $i\geq 6$, then $M$ is necessarily of finite type over $\kk$ and $c_i(M)=1$. Recently G. Zuckerman and the first named author have shown that this holds also for $i=5$, and \refth{th813} (b) implies that the statement is false for $i\leq 1$.


\end{document}